\definecolor{darkred}{rgb}{0.5,0,0}
\definecolor{darkgreen}{rgb}{0,0.5,0}
\definecolor{darkblue}{rgb}{0,0,0.5}
\newtheorem{theorem}{Theorem}[section]
\newtheorem{corollary}[theorem]{Corollary}
\newtheorem{proposition}[theorem]{Proposition}
\newtheorem{lemma}[theorem]{Lemma}
\newtheorem{lem}[theorem]{}
\theoremstyle{definition}
\newtheorem{definition}[theorem]{Definition}
\theoremstyle{remark}
\newtheorem{remark}[theorem]{Remark}
\newtheorem{example}[theorem]{Example}
\newcommand{\blem}{\begin{lem} \rm}
\newcommand{\elem}{\end{lem}}
\newcommand\B{\mathcal{B}}
\newcommand\E{\mathcal{E}}
\newcommand\M{\mathcal{M}}
\newcommand\D{\mathcal{D}}
\newcommand\cH{\mathcal{H}}
\newcommand\cG{\mathcal{G}}
\renewcommand\M{\mathcal{M}}
\newcommand\cS{\mathscr{S}}
\newcommand\cB{\mathscr{B}}
\newcommand{\K}{\mathcal{K}}
\newcommand{\T}{\mathcal{T}}
\newcommand{\J}{\mathcal{J}}
\newcommand{\U}{\mathcal{U}}
\newcommand{\F}{\mathcal{F}}
\newcommand{\N}{\mathbb{N}}
\newcommand{\R}{\mathbb{R}}
\renewcommand{\H}{\mathbb{H}}
\newcommand{\bH}{\mathbb{H}}
\newcommand{\RR}{\mathcal{R}}
\renewcommand{\SS}{\mathcal{S}}
\newcommand{\C}{\mathbb{C}}
\newcommand{\cC}{\mathcal{C}}
\newcommand{\Z}{\mathbb{Z}}
\newcommand{\ddt}{\frac{d}{dt}}
\renewcommand{\P}{\mathbb{P}}
\newcommand{\PP}{\mathcal{P}}
\newcommand{\on}{\operatorname}
\newcommand{\ainfty}{{$A_\infty$\ }}
\newcommand{\olp}{\ol{\partial}}
\newcommand\pre{{\on{pre}}}
\newcommand{\dual}{\vee}
\newcommand{\Ham}{\on{Ham}}
\newcommand{\GG}{\mathcal{G}}
\renewcommand{\top}{{\on{top}}}
\newcommand{\Fun}{\on{Func}}
\newcommand{\Obj}{\on{Obj}}
\newcommand{\Edge}{\on{Edge}}
\newcommand{\Symp}{\on{Symp}}
\newcommand{\Lag}{\on{Lag}}
\newcommand{\Ver}{\on{Vert}}
\newcommand{\radius}{\on{radius}}
\renewcommand{\det}{\on{det}}
\newcommand{\Aut}{ \on{Aut} } 
\newcommand{\aut}{ \on{aut} }
\newcommand{\Hom}{ \on{Hom}}
\newcommand{\Ind}{ \on{Ind}}
\newcommand{\coker}{ \on{coker}}
\newcommand{\ind}{ \on{ind}}
\newcommand{\UU}{ \mathcal{U}}
\newcommand{\codim}{\on{codim}}
\newcommand\dirac{/\kern-1.2ex\partial} 
\newcommand\qu{/\kern-.7ex/} 
\newcommand\lqu{\backslash \kern-.7ex \backslash} 
\newcommand\dr{r_+ \kern-.7ex - \kern-.7ex r_-}
\def\pd{\partial}
\renewcommand{\comment}[1]   {{}}
\newcommand{\labell}\label
\newcommand{\lra}{\longrightarrow}
\renewcommand{\d}{{\mbox{d}}}
\newcommand{\ol}{\overline}
\newcommand\eps{\epsilon}
\newcommand{\del}{\delta}
\newcommand{\f}{\frac}
\newcommand{\lan}{\langle}
\newcommand{\ran}{\rangle}
\newcommand{\hh}{{\f{1}{2}}}
\newcommand{\ti}{\tilde}
\newcommand\pt{\on{pt}}
\newcommand\cE{\mathcal{E}}
\newcommand\cF{\mathcal{F}}
\newcommand\mE{\mathcal{E}}
\newcommand\Map{\on{Map}}
\newcommand\Vect{\on{Vect}}
\newcommand\ul{\underline}
\renewcommand\H{\mathcal{H}}
\renewcommand\Im{\on{Im}}
\newcommand\reg{{\on{reg}}}
\newcommand\bra[1]{ \lan {#1} \ran} 
\newcommand\bdefn{\begin{definition}}
\newcommand\edefn{\end{definition}}
\newcommand\bea{\begin{eqnarray*}}
\newcommand\eea{\end{eqnarray*}}
\newcommand\bcv{\left[ \begin{array}{r} }
\newcommand\ecv{\end{array} \right] }
\newcommand\bma{\left[ \begin{array} }
\newcommand\ema{\end{array} \right]}
\newcommand\ben{\begin{enumerate}}
\newcommand\een{\end{enumerate}}
\newcommand\beq{\begin{equation}}
\newcommand\eeq{\end{equation}}
\newcommand\bex{\begin{example}}
\newcommand\bsj{\left\{ \begin{array}{rrr} }
\newcommand\esj{\end{array} \right\}}
\newcommand\Cone{\on{Cone}}
\newcommand\Id{\on{Id}}
\newcommand\cI{\mathcal{I}}
\newcommand\eex{\end{example}}
\newcommand\sx{*\kern-.5ex_X}
\def\mathunderaccent#1{\let\theaccent#1\mathpalette\putaccentunder}
\def\putaccentunder#1#2{\oalign{$#1#2$\crcr\hidewidth \vbox
to.2ex{\hbox{$#1\theaccent{}$}\vss}\hidewidth}}
\renewcommand\sharp{\includegraphics[height=.07in]{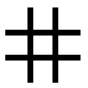}}
\newcommand{\CC}{\mathcal{C}}
\newcommand{\DD}{\mathcal{D}}
\newcommand{\HH}{\mathcal{H}}
\newcommand{\TT}{\mathcal{T}}
\newcommand\GFuk{{\on{Fuk}^{\sharp}}}
\newcommand\Fuk{{\on{Fuk}}}
\begin{document}
%
%
%
%
%

\title[\ainfty functors for Lagrangian correspondences] 
{\ainfty functors for Lagrangian correspondences} 

\author{S. Ma'u}

\author{K. Wehrheim}

\author{C. Woodward}

\begin{abstract}
We construct \ainfty functors between Fukaya categories associated to
monotone Lagrangian correspondences between compact symplectic
manifolds.  We then show that the composition of \ainfty functors for
correspondences is homotopic to the functor for the composition, in
the case that the composition is smooth and embedded.
\end{abstract}

\maketitle

\setcounter{tocdepth}{1}
\tableofcontents

\section{Introduction}  

Recall that to any compact symplectic manifold $(M,\omega)$ satisfying
suitable monotonicity conditions is a {\em Fukaya
  category} \label{cathere}
$\Fuk(M)$
whose objects are Lagrangian submanifolds, morphism spaces are
Lagrangian Floer cochain groups, and composition maps count
pseudoholomorphic polygons with boundary in a given sequence of
Lagrangians \cite{fuk:garc}.  A construction of Kontsevich
\cite{kon:hom} constructs a triangulated {\em derived Fukaya category}
which is related via the homological mirror symmetry conjecture to the
derived category of bounded complexes of coherent sheaves.  The latter
admits natural {\em Mukai functors} associated to correspondences
which play an important role in, for example, the McKay correspondence
\cite{re:mckay}, the work of Nakajima \cite{nak:quiv}, etc.

The main result of this paper constructs \ainfty functors associated
to monotone Lagrangian correspondences which are meant to be mirror
analogs to the Mukai functors.  We learned the idea of constructing
functors associated to Lagrangian correspondences from Fukaya, who
suggested an approach using duality.  In his construction the functor
maps a Fukaya category of the domain of the correspondence to the dual
of the codomain..  This makes composition \label{comphere} of functors problematic; the
approach here avoids that problem by enlarging the Fukaya category.
The results of this paper are chain-level versions of an earlier paper
\cite{we:co} in which the second two authors constructed
cohomology-level functors between categories for Lagrangian
correspondences.  We also showed in \cite{ww:isom} that composition of
these functors agrees with the geometric composition in the case that
the Lagrangian correspondences have embedded composition.
Applications of the calculus of \ainfty functors developed in this
paper can be found in Abouzaid and Smith \cite{ab:hms4torus} and Smith
\cite{smith:pq}, as well as in Wehrheim-Woodward \cite{fielda},
\cite{fieldb}, \cite{wo:ex}.

\subsection{Summary of results}

Given \ainfty categories $\CC_0,\CC_1,$ let $\Fun(\CC_0,\CC_1)$ denote
the \ainfty category of functors from $\CC_0$ to $\CC_1$ (see
Definition \ref{etc} for our conventions on \ainfty categories and
functors).  We construct for any pair of monotone symplectic manifolds
$M_0,M_1$ a Fukaya category of admissible correspondences
$\GFuk(M_0,M_1)$.  The objects of $\GFuk(M_0,M_1)$ are {\em sequences}
of compact Lagrangian correspondences with a brane structure, which we
call {\em generalized Lagrangian correspondences}.  The {\em brane
  structure} consists of an orientation, grading, and relative spin
structure.  The correspondences are also required to be {\em
  admissible} in the sense that the minimal Maslov numbers are at
least three, or vanishing disk invariant, and the fundamental groups
are torsion for any choice of base point.  \label{admhere} Denote by
$\GFuk(M) := \GFuk( \on{pt}, M)$ the natural enlargement of the Fukaya
category $\Fuk(M)$ whose objects are admissible generalized Lagrangian
correspondences with brane structures from points to a compact
monotone symplectic manifold $M$.  Our first main result is:

\begin{theorem}   \label{mainfunc}
{\rm (Functors for Lagrangian correspondences)} Suppose that $M_0,M_1$
are compact monotone symplectic manifolds with the same monotonicity
constant.  There exists an \ainfty functor
$$
\GFuk(M_0,M_1)  \to \Fun(\GFuk(M_0),\GFuk(M_1)) 
$$
inducing the functor of cohomology categories in \cite[Definition 5.1]{we:co}.
\end{theorem}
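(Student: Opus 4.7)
The first step is to recast the desired structure: an \ainfty functor
$\GFuk(M_0,M_1) \to \Fun(\GFuk(M_0),\GFuk(M_1))$
is equivalent to an \ainfty bifunctor
$\GFuk(M_0,M_1)\otimes \GFuk(M_0) \to \GFuk(M_1)$,
which amounts to a collection of multilinear chain maps, one for each pair $(a,b)$ of input lengths, satisfying a quadratic bifunctor analogue of the \ainfty relations. I would build this bifunctor directly. On objects, given an admissible generalized correspondence $\ul{L}_{01}$ and an admissible generalized Lagrangian $\ul{L}_0$ from the point to $M_0$, one sends $(\ul{L}_{01},\ul{L}_0)$ to the concatenation $\ul{L}_0 \# \ul{L}_{01}$, which is again an admissible generalized Lagrangian to $M_1$; brane structures and admissibility are preserved because the monotonicity constants of $M_0$ and $M_1$ agree.

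On morphism spaces, the structure maps would be defined by signed counts of rigid quilted pseudoholomorphic polygons. Given sequences of generalized intersection points $x^{01}_1,\ldots,x^{01}_a$ in $\GFuk(M_0,M_1)$ and $x^0_1,\ldots,x^0_b$ in $\GFuk(M_0)$, together with an incoming/outgoing generator $x^1_\pm$ between the appropriate concatenations in $\GFuk(M_1)$, one considers quilted disks with patches valued in the ambient manifolds appearing in $\ul{L}_0$ and $\ul{L}_{01}$, seams labelled by the individual correspondences in these sequences, and strip-like ends asymptotic to the specified generators. The rigid signed count assembles into a multilinear map of the correct total degree.

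The bulk of the argument is the verification of the bifunctor \ainfty relations, obtained in the usual way as the signed count of the boundary of one-dimensional moduli spaces of such quilts. Codimension-one boundary strata come from breaking off a strip-like end in either of the two input tensor factors or in the output, and these breakings match precisely the quadratic terms of the relation. The principal obstacle is ruling out additional codimension-one boundary contributions from bubbling. Sphere bubbles are codimension two by monotonicity; disk bubbles off any single Lagrangian are excluded in codimension one by the admissibility hypothesis (minimal Maslov number at least three, or pairwise-cancelling vanishing disk invariant together with torsion $\pi_1$). The most delicate contribution is the strip-shrinking or figure-eight bubble along a seam separating two different patches, which under the hypothesis that $M_0$ and $M_1$ share a monotonicity constant is codimension two in the quilted moduli space, in line with the analyses of \cite{we:co,ww:isom}. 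Coherent perturbation data giving transversality across the infinite family of combinatorial quilt types, and a signed gluing theorem compatible with the relative spin and grading data, are the main technical ingredients that have to be set up before the boundary-counting argument can be run.

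Finally, induction of the cohomology-level functor of \cite[Definition 5.1]{we:co} follows by comparing the lowest-order components of the bifunctor constructed above, where the quilted polygon counts reduce to the quilted disk counts that define that functor on $H^*$.
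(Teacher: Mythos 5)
Your proposal follows essentially the same route as the paper: your "bifunctor" components indexed by pairs $(e,d)$ of input lengths are exactly the pre-natural transformations $\TT^e(\alpha_1,\ldots,\alpha_e)^d$ of Definition \ref{nattransform}, defined by counting rigid quilts over the moduli spaces $\ol{\RR}^{d,e}$ of quilted disks with $d$ boundary markings and $e$ seam markings, with the quadratic relations read off from the codimension-one facets of these spaces (Theorem \ref{firstfunc}); the facet where consecutive seam inputs collide is realized geometrically by a quilted sphere bubbling off the seam circle. The only caveats are minor: the paper works directly with the curried form (avoiding any appeal to an \ainfty tensor product, which would require a choice of diagonal), and the figure-eight/strip-shrinking degeneration you flag as the delicate point does not arise in this theorem since all strip widths are fixed and positive — it only becomes relevant in the proof of the geometric composition theorem.
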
 

In particular, for each admissible Lagrangian correspondence $L_{01}
\subset M_0^-\times M_1$ equipped with a brane structure we construct
an \ainfty functor
$$\Phi(L_{01}) : \GFuk(M_0) \to
\GFuk(M_1)$$ 
acting in the expected way on Floer cohomology: for Lagrangian branes
$L_0 \subset M_0, L_1 \subset M_1$ there is an isomorphism with
$\Z_2$-coefficients
$$ H \Hom( \Phi(L_{01})L_0, L_1) \cong HF(L_0^- \times L_1, L_{01}) $$
where the right-hand-side is the Floer cohomology of the pair $(L_0^-
\times L_1,L_{01})$.  For a pair of Lagrangian correspondences
$L_{01}, L_{01}' \in M_0^- \times M_1$ and a Floer cocycle $\alpha \in
CF(L_{01},L_{01}')$ we construct a natural transformation
$$\TT_\alpha: \Phi(L_{01}) \to \Phi(L_{01}')$$ 
of the corresponding \ainfty functors. 

The behavior of the \ainfty functors for Lagrangian correspondences
under embedded geometric composition as defined in \cite{ww:isom} is
our second main result.  To state it, we recall that the geometric
composition of Lagrangian correspondences 
$$L_{01} \subset M_0^- \times
M_1, \quad L_{12} \subset M_1^- \times M_2$$ 
is
\begin{equation} \label{geomcomp}
 L_{01} \circ L_{12} := \pi_{02} (L_{01} \times_{M_1} L_{12}) \end{equation} 
where $\pi_{02}: M_0 \times M_1^2 \times M_2 \to M_0 \times M_2$ is
the projection onto the product of the first and last factors.  If the fiber product
is transverse and embedded by $\pi_{02}$ then $L_{01} \circ L_{12}$ is
a smooth Lagrangian correspondence.

\begin{theorem} \label{maincompose} {\rm (Geometric composition
    theorem)} Suppose that $M_0,M_1,M_2$ are monotone symplectic
  manifolds with the same monotonicity constant.  Let
  $L_{01} \subset M_0^- \times M_1, L_{12} \subset M_1^- \times M_2$
  be admissible Lagrangian correspondences with spin structures and
  gradings such that $L_{01} \circ L_{12}$ is smooth, embedded by
  $\pi_{02}$ in $M_0^- \times M_2$, and admissible.  Then there exists
  a homotopy of \ainfty functors
$$ 
\Phi(L_{12}) \circ \Phi(L_{01}) \simeq \Phi(L_{01} \circ L_{12}) .
$$
\end{theorem}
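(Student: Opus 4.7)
The plan is to construct an explicit $A_\infty$ homotopy between $\Phi(L_{12}) \circ \Phi(L_{01})$ and $\Phi(L_{01} \circ L_{12})$ via a one-parameter family of quilted pseudoholomorphic polygon moduli spaces, in which the width $\delta \in [0,\infty]$ of the strip between the two interior seams carrying $L_{01}$ and $L_{12}$ is allowed to vary. At $\delta = \infty$ the two seams fully separate and the quilt degenerates into the standard configuration defining functor composition, so that counts of rigid polygons reproduce $\Phi(L_{12}) \circ \Phi(L_{01})$. At $\delta = 0$ the two seams coincide into a single seam carrying the smooth embedded correspondence $L_{01} \circ L_{12}$, so that counts reproduce $\Phi(L_{01} \circ L_{12})$. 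The counts over $\delta \in (0,\infty)$ assemble into the components $T^n$ of a degree-zero pre-natural transformation $T$ in $\Fun(\GFuk(M_0),\GFuk(M_2))$, which will be the desired homotopy element.

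The next step is to verify that $T$ is closed, by analyzing the codimension-one boundary of the parameterized moduli spaces. Three kinds of boundary strata appear: the two endpoints $\delta \in \{0,\infty\}$ of the width parameter, which produce the difference $\Phi(L_{01}\circ L_{12}) - \Phi(L_{12})\circ \Phi(L_{01})$; strip breaking on the outer boundary arcs of the quilt, which contributes the terms of the differential in the functor category coming from composition of $T$ with the $A_\infty$ structure maps of the source and target categories; and Stasheff-type degenerations of the underlying quilted polygon, which contribute the terms in which $T$ is composed with lower-order components of $\Phi(L_{01})$, $\Phi(L_{12})$, or $\Phi(L_{01}\circ L_{12})$. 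Matching these contributions against the defining equation of a closed morphism in $\Fun(\GFuk(M_0),\GFuk(M_2))$ shows that $T$ is closed, and hence exhibits the two functors as homotopic.

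The principal obstacle is the analysis at the small-width end $\delta \to 0$, where one needs a chain-level strip-shrinking theorem: for all sufficiently small $\delta$, the moduli space of quilted polygons with a width-$\delta$ interior strip must be identified, as an oriented manifold with corners, with the moduli space of quilted polygons carrying the single seam $L_{01} \circ L_{12}$. The cohomology-level version of this identification is the main theorem of \cite{ww:isom} and relies on admissibility together with embeddedness to prevent figure-eight bubbling and sphere bubbling at the vanishing seam. Upgrading the argument to the chain level requires extending the bijection of moduli spaces to a diffeomorphism with matching orientations induced from the brane structures, establishing uniform bubble-exclusion on a neighbourhood of $\delta = 0$, and choosing coherent perturbation data making every parameterized moduli space in the family simultaneously regular. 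With these chain-level refinements of the strip-shrinking result in place, the boundary matching outlined above is a routine verification and completes the proof.
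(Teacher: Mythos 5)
Your overall strategy is the same as the paper's: the homotopy is built from a one-parameter family of quilted polygons in which the width of the strip between the $L_{01}$- and $L_{12}$-seams varies over $[0,\infty]$ (realized in the paper as the ratio-of-radii coordinate $\rho$ on the bimultiplihedron $\ol{\RR}^{d,0,0}$), with the $\infty$-end producing $\Phi(L_{12})\circ\Phi(L_{01})$, the $0$-end producing $\Phi(L_{01}\circ L_{12})$ via a parametrized version of the strip-shrinking theorem of \cite{ww:isom}, and the interior counts producing the homotopy. You also correctly identify the chain-level strip-shrinking (uniform bubble exclusion, surjective linearizations with uniformly bounded right inverse, matching orientations) as the key analytic input at $\delta\to 0$.

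There is, however, a genuine gap in your boundary analysis at the ``Stasheff-type degenerations.'' The codimension-one facets of $\ol{\RR}^{d,0,0}$ in which several two-seamed (biquilted) polygons bubble off onto a polygon mapping to $M_2$ are not products of lower-dimensional moduli spaces but \emph{fiber products} over the width parameter: each bubbled component carries its own width, and all of these are constrained to be equal. The resulting boundary counts are therefore products of several $T$-like factors at a common width, not the terms $\mu^m(\Phi(L_{01})^{i_1},\ldots,T^{i_k},\ldots,\Phi(L_{01}\circ L_{12})^{i_m})$ with a single $T$-factor required by the homotopy equation \eqref{mu1}. Closing this gap requires two nontrivial steps that your proposal omits: (i) the equal-width fiber products must be made transverse, which the paper achieves by an inductive construction of ``delay functions'' perturbing the equal-width condition (Lemma \ref{compat}); once transverse, exactly one bubbled factor is rigid and the remaining ones are one-dimensional at fixed width (Proposition \ref{mostly0s}); and (ii) those one-dimensional fixed-width factors must be converted into functor components $\Phi^{i}$ by a further cobordism in the width direction, which forces one to subdivide $(0,\infty)$ into intervals containing at most one rigid configuration and to assemble the total homotopy as a \emph{composition} of homotopies via \eqref{composehom}, rather than as a single count over all widths. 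Without these steps the facet contributions simply do not match the terms of \eqref{mu1}. Two smaller corrections: in the paper's conventions a homotopy is a pre-natural transformation of degree $-1$, not $0$, and it is not closed --- the defining relation is $\mu^1(T)=\F_1-\F_2$ as in \eqref{homotopy}; a closed $T$ would instead be a natural transformation.
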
  

\noindent 
There is a slightly more complicated statement in the case that the
correspondences are only relatively spin, which involves a shift in
the background class.  In particular, the theorem implies that the
associated derived functors
$$D (\Phi(L_{12})) \circ D(\Phi(L_{01})) \cong D(\Phi(L_{01} \circ
L_{12})) : D\GFuk(M_0) \to D\GFuk(M_2) $$
are canonically isomorphic.  The result extends to generalized
Lagrangian correspondences, in particular the empty correspondence.
In the last case the result shows that the Fukaya categories
constructed using two different systems of perturbation data are
homotopy equivalent.  

A complete chain-level version of the earlier work is still missing.
Namely, one would like to construct a {\em Weinstein-Fukaya} \ainfty
$2$-category whose objects are symplectic manifolds and morphism
categories are the extended Fukaya categories of correspondences.
Furthermore one would like an \ainfty categorification functor given
by the extended Fukaya categories on objects and the functor of
Theorem~\ref{mainfunc} on morphisms. This theory would be the chain
level version of the Weinstein-Floer $2$-category and categorification
functor constructed in \cite{we:co}.  Some steps in this direction
have been taken by Bottman \cite{bottman}, \cite{bottman2}.  Batanin
has pointed out to us a possibly-relevant construction of homotopy
higher categories in
\label{batglob} \cite[Definition 8.7]{bat:glob}.

The \ainfty structures, functors, and natural transformations are
defined using a general theory of family quilt invariants that count
pseudoholomorphic quilts with varying domain.  This theory includes
families of quilts associated to the associahedron, multiplihedron,
and other polytopes underlying the various \ainfty structures.
Unfortunately these families of quilted surfaces come with the rather
inconvenient (for analysis) property that degeneration is not given by
``neck stretching'' but rather by ``nodal degeneration''.  Our first
step is to replace these families by ones that are more analytically
convenient, see Section \ref{family} for the precise definitions.  We
say that a stratified space is {\em labelled by quilt data} if for
each stratum there is given a combinatorial type of quilted surface,
and each pair of strata there is given a subset of gluing parameters
for the strip like ends as in Definition \ref{quiltdata} below.  For
technical reasons (contractibility of various choices) it is helpful
to restrict to the case that each patch of each quilt is homeomorphic
to a disk with at least one marking, and so has homotopically trivial
automorphism group.  Such quilt data are called {\em irrotatable}; the
general case could be handled with more complicated data associated to
the stratified space.

\begin{theorem}  
{\rm (Existence of families of quilts with strip-like ends)} \label{A}
Given a stratified space $\RR$ equipped with irrotatable quilt data,
then there exists a family of quilted surfaces $\SS = (\ul{S}_r)_{r
  \in \RR}$ with strip-like ends over $\RR$ with the given data in
which degeneration is given by neck-stretching.  \end{theorem}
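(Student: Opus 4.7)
The plan is to build the family $\SS$ by induction on the stratification of $\RR$, starting from the deepest (smallest-dimensional) strata and working outward, at each step attaching a collar family built by gluing along strip-like ends. The irrotatability hypothesis --- that every patch is topologically a disk with at least one boundary marking --- ensures that each combinatorial type of quilted surface has contractible moduli of conformal structures and strip-like ends, so that a universal family exists stratum-by-stratum and any two choices are isotopic through families of quilts with strip-like ends.

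First I would fix, on each single stratum $\RR' \subset \RR$, a smooth family $\SS' \to \RR'$ realizing the prescribed combinatorial type, together with a smooth choice of strip-like ends at each puncture. Because every patch is a disk with at least one marking, this reduces to the classical fact that the Teichm\"uller space of such a disk is contractible and its universal family is unique up to isotopy; taking products of patches across the quilt and pulling back reference strip-like-end models through the universal family produces the desired $\SS'$.

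Next, to pass from a stratum $\RR'$ to an adjacent higher-dimensional stratum $\RR$ whose closure contains $\RR'$, I would use the gluing parameters specified by the quilt data of the pair $(\RR',\RR)$ to extend $\SS'$ across a collar. Concretely, for each strip-like end labelled by a gluing parameter $\rho \in (0,\infty]$, one truncates the corresponding pair of strip-like ends at length determined by $\rho$ and reglues by translation in the standard way. Performed simultaneously over a collar neighborhood of $\RR'$ in $\ol{\RR}$, this yields a family smoothly parametrized by $\rho$ all the way to the boundary $\rho = \infty$, where it recovers the broken quilt over $\RR'$. This is the step at which ``nodal degeneration is replaced by neck stretching'': in the glued coordinates the strip is a finite but arbitrarily long cylinder, and the degeneration is manifestly a neck-stretching limit rather than a collapse to a node.

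The main obstacle I expect is the compatibility of these collar constructions across strata of differing codimensions, since a given stratum may be adjacent to several higher-codimension faces whose gluing data must be mutually coherent, and the per-stratum universal families and strip-like ends need not agree on overlapping collars. I would address this by an Alexander-type interpolation: contractibility of the space of conformal structures and of strip-like-end choices on each stratum (again using irrotatability) allows one to isotope the data over a collar so that it agrees with the prescribed boundary family, so any two choices glue up to a smooth family. A standard stratified partition-of-unity argument, carried out by induction on the codimension filtration of $\RR$, then assembles the local constructions into a single global family $\SS \to \RR$ of quilted surfaces with strip-like ends realizing the given quilt data.
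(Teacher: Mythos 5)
Your proposal is correct and follows essentially the same route as the paper: the paper also proceeds by recursion over the strata, uses the gluing construction along strip-like ends to produce the family over a collar neighborhood of the boundary (so that degeneration becomes neck-stretching), and invokes irrotatability to get smooth trivializability of the family together with contractibility/convexity of the spaces of metrics of product form near seams, seam maps, and strip-like ends, which permits extending the choices over the interior by cutoff and patching. The only cosmetic difference is that the paper phrases the contractibility in terms of metrics and seam structures (Lemmas on tubular neighborhoods and product-form metrics) rather than Teichm\"uller-theoretic language, but the content is the same.
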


The next step is to define pseudoholomorphic quilt invariants associated to
these families.  Let $\SS = (\ul{S}_r, r \in \RR)$ be a family of
quilted surfaces with strip-like ends over a stratified space $\RR$, $
\ul{M}$ be a collection of admissible monotone symplectic manifolds
associated to the patches, and $\ul{L}$ a collection of admissible
monotone Lagrangian correspondences associated to the seams and
boundary components.  Given a family $\ul{J}$ of compatible almost
complex structures on the collection $\ul{M}$ and a Hamiltonian
perturbation $\ul{K}$, a {\em holomorphic quilt} from a fiber of $\SS$
to $\ul{M}$ is pair 
$$ (r \in \RR, \ul{u}:  \ul{S}_r \to \ul{M} )$$
consisting of a point $r \in \RR$ together with a
$(\ul{J},\ul{K})$-holomorphic map $\ul{u}: \ul{S}_r \to \ul{M}$ taking
values in $\ul{L}$ on the seams and boundary, see Definition
\ref{holquilt} for the precise equation.  The necessary regularity
\label{provedin} statement is the following, proved in Theorem
\ref{familytransversality} in Section \ref{family}.

\begin{theorem}   {\rm (Transversality for families of holomorphic quilts)}
 \label{B} 
 Suppose that $\SS \to \RR$ is a family of quilted surfaces with
 strip-like ends equipped with compact monotone symplectic manifolds
 $\ul{M}$ for the patches and admissible Lagrangian correspondences
 $\ul{L}$ for the seams/boundaries.  Suppose over the boundary of
 $\RR$ a collection of perturbation data $(\ul{J},\ul{K})$ is given
 making all pseudoholomorphic quilts of formal dimension at most one
 regular.  Then for a generic extension of $(\ul{J},\ul{K})$ agreeing
 with the extensions given by gluing near the boundary $\SS
 |_{\partial \RR}$, every pseudoholomorphic quilt $u: \ul{S}_r \to \ul{M}$
 of formal dimension at most one with strip-like ends is parametrized
 regular.
\end{theorem}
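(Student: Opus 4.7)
The plan is to prove Theorem \ref{B} by the standard universal moduli space + Sard--Smale argument, adapted to the family-quilted setting and consistent with the boundary extension.

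First I would set up a Banach manifold $\BB$ of triples $(r,\ul{u},(\ul{J},\ul{K}))$, where $r\in\RR$, the map $\ul{u}:\ul{S}_r\to\ul{M}$ lies in an appropriate $W^{k,p}$ completion compatible with the strip-like ends and seam/boundary conditions in $\ul{L}$, and $(\ul{J},\ul{K})$ ranges over a separable Banach space of perturbation data (Floer's $C^\eps$-trick) that agrees with the prescribed data in a collar of $\partial\RR$. Using the construction of Theorem \ref{A}, near $\partial\RR$ the family is modeled on neck-stretching, so the prescribed perturbations on $\partial\RR$ extend smoothly to a collar via gluing, and admissible extensions form a nonempty convex subset. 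The universal equation $\mF(r,\ul{u},\ul{J},\ul{K})=\olp_{\ul{J},\ul{K}}\ul{u}$ is Fredholm when restricted to fibers and its linearization in $(\ul{u},r,\ul{J},\ul{K})$ is what we must show to be surjective along $\mF^{-1}(0)$.

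The key step is surjectivity of $D\mF$ at a solution $(r_0,\ul{u}_0)$. Arguing by contradiction, suppose a nonzero $\eta\in L^q(\Omega^{0,1}(\ul{u}_0^*T\ul{M}))$ pairs trivially with the image. Pairing against the $\ul{u}$-variation forces $\eta$ to lie in the kernel of the formal adjoint of the linearized CR operator, hence $\eta$ is weakly continuous; pairing against the $(\ul{J},\ul{K})$-variation then forces $\eta$ to vanish at every point where we can locally vary the perturbation data. The main obstacle is therefore a somewhere-injectivity statement for quilts: I would locate an interior point $z\in\ul{S}_{r_0}$ lying in some patch at which $\ul{u}_0$ is injective and $d\ul{u}_0(z)\neq 0$, and moreover whose image is not hit by any seam-adjacent branches. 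This is the delicate analogue of McDuff--Salamon's somewhere-injectivity, and for quilts it follows from the unique continuation / seam-injectivity technique used in \cite{ww:isom}, which reduces the claim to showing that a non-constant holomorphic quilt cannot be everywhere multiply covered modulo its seam conditions. Constant quilts are excluded on the appropriate strata by transversality already imposed at the asymptotic limits, and the admissibility hypothesis (trivial $\pi_1$ for each Lagrangian up to torsion) rules out the subtle case where a nontrivial global symmetry could obstruct local perturbation.

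Once somewhere-injectivity is in hand, standard arguments show $\eta\equiv 0$, so $\mF$ is transverse to zero, the universal moduli space $\mF^{-1}(0)$ is a Banach manifold, and the projection to the space of perturbation data is Fredholm of index equal to the formal dimension plus $\dim\RR$. The Sard--Smale theorem produces a comeager set of regular perturbation data $(\ul{J},\ul{K})$, and at any such regular value all parametrized moduli components of formal dimension at most one are smooth manifolds cut out transversally. The relative version (fixing the perturbation on a neighborhood of $\partial\RR$) follows by applying Sard--Smale to the restriction map, using that the prescribed boundary data is itself regular by hypothesis.

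Finally I would promote the result from the $C^\eps$-Banach completion back to smooth perturbations by the usual Taubes argument, and argue that disk and sphere bubbling occur in codimension at least two: monotonicity with minimal Maslov number at least three, or vanishing disk invariant, ensures that any bubbled configuration has expected dimension strictly less than that of the principal stratum minus one, so parametrized moduli of formal dimension $\leq 1$ are unaffected. I expect the main obstacle to be the somewhere-injectivity step for quilted surfaces with many seams and boundaries simultaneously; the rest of the argument is a direct family-parametrized version of the standard scheme.
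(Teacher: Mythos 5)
Your overall architecture (universal moduli space, Sard--Smale, Taubes' argument to pass from $C^l$ to $C^\infty$, monotonicity to push bubbling into codimension two) matches the paper's proof of Theorem \ref{familytransversality}. But the step you single out as the crux --- a somewhere-injectivity statement for quilts --- is not the route the paper takes, and as written it is a genuine gap. The perturbation data $(\ul{J},\ul{K})$ in this theorem are \emph{domain-dependent}: $\ul{K}$ is a one-form on the fibers of $\SS$ with values in functions on the targets. To kill an annihilator $\eta$ of the image of the linearized universal operator, the paper varies only the Hamiltonian term, choosing $\delta\ul{K}_n$ supported in shrinking neighborhoods of a point $z$ of the \emph{domain} (away from the strip-like ends and from the region where the data is frozen), so that $(\delta\ul{Y}_n)^{0,1}$ concentrates to $\delta_z\otimes\eta(z)$ and forces $|\eta(z)|^2=0$. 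No injectivity of $\ul{u}$ is needed, because the localization happens on the source, not the target. By contrast, your appeal to ``the unique continuation / seam-injectivity technique used in \cite{ww:isom}'' to establish somewhere-injectivity for possibly multiply-covered quilts does not deliver that statement, and your claim that the torsion-$\pi_1$ admissibility condition rules out obstructions to local perturbation is not an argument --- that hypothesis enters only through monotonicity and index considerations. If you were restricted to domain-independent $J$ your sketch would stall exactly at this point; with the domain-dependent $\ul{K}$ the difficulty evaporates.

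A second, smaller omission: since the data is frozen on a neighborhood $V$ of $\partial_s\RR$, you must still show that solutions with $r$ in that neighborhood are regular even though the data there is not generic. The paper handles this by a Gromov-compactness argument: a sequence of irregular index-$\le 1$ solutions converging to the boundary would limit to a configuration of negative index after stripping bubbles, contradicting the assumed regularity over $\partial_s\RR$; regularity then propagates to nearby fibers by openness of surjectivity for Fredholm operators. Your remark that ``the relative version follows by applying Sard--Smale to the restriction map'' does not address this.
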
 

Using Theorem \ref{B} we construct moduli spaces of pseudoholomorphic quilts
and, using these, chain level {\em family quilt invariants} given as
counts of isolated elements in the moduli space.  As in the standard
topological field theory philosophy, these invariants map the tensor
product of cochain groups for the incoming ends $\E_-(\SS)$ to that
for the outgoing ends $\E_+(\SS)$:
$$ \Phi_{\SS} :\bigotimes_{\ul{e} \in \E_-(\SS)} CF(\ul{L}_{\ul{e}})
\to \bigotimes_{\ul{e} \in \E_+(\SS)} CF(\ul{L}_{\ul{e}}).
$$
These chain-level family invariants satisfy a {\em master equation}
arising from the study of one-dimension components of the moduli
spaces of pairs above:

\begin{theorem} 
{\rm (Master equation for family quilt invariants)} \label{C} Suppose
that, in the setting of Theorem \ref{B}, $\SS \to \RR$ is a family of
quilted surfaces with strip-like ends over an oriented stratified
space $\RR = \cup_{\Gamma} \RR_\Gamma$ (here the strata are indexed by
$\Gamma$) with boundary multiplicities $m_{\Gamma} \in \Z,
\codim(\RR_\Gamma) = 1$.  Then the chain level invariant $\Phi_{\SS}$
and the coboundary operators $\partial$ on the tensor products of
Floer cochain complexes satisfy the relation
$$ 
\partial \circ \Phi_{\SS} - \Phi_{\SS}
\circ \partial = \sum_{\Gamma, \codim(\RR_\Gamma) = 1} m_{\Gamma}
\Phi_{\SS_\Gamma} 
 .$$
\end{theorem}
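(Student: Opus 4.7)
The plan is to prove the master equation by a standard cobordism argument applied to the parametrized moduli spaces of pseudoholomorphic quilts of formal dimension one. For each tuple of limits $\ul{x}_\pm$ along incoming/outgoing ends with total expected dimension one, let $\M(\ul{x}_-, \ul{x}_+; \SS)$ denote the moduli space of pairs $(r, \ul{u})$ as in Theorem \ref{B} with those asymptotics. By Theorem \ref{B} this is a smooth oriented $1$-manifold. The coefficient of $\ul{x}_+$ in $(\partial \circ \Phi_{\SS} - \Phi_{\SS}\circ \partial)(\ul{x}_-)$ is, up to signs, a count of boundary points of a certain compactification $\overline{\M}(\ul{x}_-,\ul{x}_+;\SS)$ minus the contributions from the boundary of $\RR$; so the master equation will follow once I identify the boundary of this compactification and show all other contributions vanish.

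First I would establish Gromov compactness for the parametrized problem: any sequence $(r_n, \ul{u}_n)$ has a subsequence converging to either (i) an interior limit $(r,\ul{u})$ in $\M(\ul{x}_-,\ul{x}_+;\SS)$, (ii) a broken configuration in which a Floer trajectory breaks off at one of the strip-like ends, or (iii) a limit in which $r_n$ approaches a codimension-one stratum $\RR_\Gamma$, in which case $\ul{u}_n$ converges after gluing/neck-stretching (Theorem \ref{A}) to a quilt in the moduli space for $\SS_\Gamma$, or (iv) a bubbled configuration with a sphere or disk bubble. The admissibility hypotheses (minimal Maslov at least three, vanishing disk invariants where relevant, torsion $\pi_1$) together with the monotonicity assumption rule out case (iv) in expected dimension at most one by the usual dimension count on bubble trees, so only (i)--(iii) contribute to $\partial \overline{\M}$.

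Next I would match boundary contributions with the three terms in the claimed identity. Strip-breaking at an incoming end pairs an isolated Floer trajectory with an isolated quilt and contributes $\Phi_{\SS} \circ \partial$; strip-breaking at an outgoing end contributes $\partial \circ \Phi_{\SS}$. Limits into a codimension-one stratum $\RR_\Gamma$ contribute $m_\Gamma \Phi_{\SS_\Gamma}$: the multiplicity $m_\Gamma$ records the number of ways (with orientation signs from the orientation of $\RR$) in which $\RR_\Gamma$ appears in the boundary of $\RR$. A gluing theorem near the boundary of $\RR$, using the data of Theorem \ref{A} and the extension of $(\ul{J},\ul{K})$ specified in Theorem \ref{B}, identifies a neighborhood of each boundary point in $\overline{\M}(\ul{x}_-,\ul{x}_+;\SS)$ with a half-open interval, so each broken or degenerate configuration of the appropriate types contributes exactly once.

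The main obstacle will be the gluing analysis near strata of $\RR$ where the quilted surface degenerates: one needs a uniform Fredholm gluing construction for families of quilts (in the neck-stretching sense, per Theorem \ref{A}) that is compatible with the fibered structure $\SS \to \RR$ and consistent with the choice of extension of perturbation data. The orientation and sign bookkeeping, while routine once the coherent orientations on the relevant determinant line bundles are set up over $\RR$, also requires care: the sign with which a boundary stratum of $\RR$ enters $m_\Gamma$ must match the sign produced by strip-breaking conventions, and this is fixed by orienting the linearized operator at a glued solution using the product orientation on the domain and the coherent orientations on the quilted strips. Once these pieces are in place, $\partial \overline{\M}(\ul{x}_-,\ul{x}_+;\SS) = 0$ yields the identity in the statement.
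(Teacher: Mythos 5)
Your proposal is correct and follows essentially the same route as the paper: the paper derives the master equation from Theorem \ref{masterthm}, which identifies the boundary of the compactified one-dimensional moduli spaces as the union of the moduli spaces over codimension-one strata of $\RR$ (with multiplicities $m_\Gamma$) and the Floer-trajectory breakings at the ends, with compactness from Theorem \ref{gromov}, gluing as in \cite{mau:gluing}, and signs from \cite{orient}. Your identification of the three boundary contributions and your use of monotonicity/admissibility to exclude bubbling in dimension at most one match the paper's argument.
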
 

\noindent In other words, if $\partial \SS$ denotes the contribution
from boundary components of $\RR$ counted with multiplicity and
$\partial \Phi_{\SS} = [\partial, \Phi_{\SS}]$ denotes the boundary of
$\Phi_\SS$ considered as a morphism of chain complexes then
\begin{equation} \label{master}
 \partial \Phi_{\SS} = \Phi_{\partial \SS} .\end{equation} 
The master equation \eqref{master} specializes to the \ainfty
associativity, functor, natural transformation, and homotopy axioms
for the various families of quilts we consider.

The paper is divided into two parts.  The first part covers the
general theory of parametrized pseudoholomorphic quilts and the
construction of family quilt invariants.  The second part covers the
application of this general theory to specific families of quilts.
These applications include the construction of the generalized Fukaya
category, \ainfty functors between generalized Fukaya categories, as
well as natural transformations and homotopies of \ainfty functors.
The reader is encouraged to look at the constructions of Section
\ref{parttwo} while reading Sections \ref{partone1} and \ref{partone2}, in
order to have concrete examples of families of quilts in mind.

The present paper is an updated and more detailed version of a paper
the authors have circulated since 2007. The second and third authors
have unreconciled differences over the exposition in the paper, and
explain their points of view at
\href{https://math.berkeley.edu/~katrin/wwpapers/}{math.berkeley.edu/$\sim$katrin/wwpapers/}
resp.
\href{http://christwoodwardmath.blogspot.com/}{christwoodwardmath.blogspot.com/}. The
publication in the current form is the result of a mediation.

\section{Families of quilted surfaces with strip-like ends}

\label{partone1}
\label{family}

In this and the following section we construct invariants of families
of pseudoholomorphic quilts over stratified spaces, mapping tensor products
of the Floer cochain groups for the incoming ends to those for the
outgoing ends.  We also show that Theorems \ref{A}, \ref{B} and
\ref{C} from the introduction hold.  

First we define a surface with strip-like ends.  The definition below
is essentially the same as the definition given in Seidel's book
\cite{se:bo}, except that each strip-like end comes with an extra
parameter prescribing its width.

\begin{definition}  \label{surfstrip}
{\rm (Surfaces with strip-like ends)} A {\em surface with
  strip-like ends} consists of the following data:
\begin{enumerate}
\item 
A compact oriented surface $\ol{S}$ with boundary $\pd\ol{S}$ the
disjoint union of circles 
$\pd \ol{S} = C_1 \sqcup \ldots \sqcup C_m $
and $d_n \ge 0$ distinct points
$\ul{z}_n = (z_{n,1},\ldots,z_{n,d_n}) \subset C_n$
in cyclic order on each boundary circle $C_n\cong S^1$ for each
$ n = 1,\ldots, m$.  We use the indices on $C_n$ modulo $d_n$, and
index all marked points by
\begin{equation} \label{eseq}  \E=\E(S)=\bigl\{ e=(n,l) \,\big|\, n\in\{1,\ldots,m\},
l\in\{1,\ldots,d_n\} \bigr\} .
\end{equation} 
Here we use the notation $e\pm 1:=(n,l\pm 1)$ for the cyclically
adjacent \label{indiceshere} indices to $e=(n,l)$.  Denote by
$I_e :=I_{n,l}\subset C_n$ the component of $\partial S$ between
$z_e:=z_{n,l}$ and $z_{e+1}:=z_{n,l+1}$.  However, the boundary
$\partial S$ may also have compact components $I=C_n\cong S^1$;
\item 
A complex structure $j_S$ on $S:=\ol{S}\setminus\{z_e \,|\, e\in \E
\}$;
\item
A set of {\em strip-like ends} for $S$, that is a set of embeddings
with disjoint images
$$ \eps_e : \R^\pm \times [0,\delta_e] \to S $$
for all $e\in\cE$ such that the following hold:
$$ \begin{array}{l} \eps_{e}(\R^\pm\times\{0,\delta_e\})\subset\partial S \\ \lim_{s \to
  \pm \infty}(\eps_{e}(s,t)) = z_e, \ \ \ \forall t \in [0,\delta_e] \\ \eps_{e}^*j_S=j_0 \end{array} $$
where in the first item $\R^\pm = (0,\pm \infty)$ and in the third
item $j_0$ is the canonical complex structure on the half-strip
$\R^\pm \times[0,\delta_e]$ of width $\delta_e>0$.  Denote the set
of incoming ends $\eps_{e}: \R^- \times [0,\delta_e] \to S$ by
$\E_-=\E_-(S)$ and the set of outgoing ends $\eps_{e}: \R^+ \times
[0,\delta_e] \to S$ by $\E_+=\E_+(S)$;
\item \label{order} An ordering of the set of (compact) boundary components of
  $\ol{S}$ and orderings 
$$ \E_-=(e^-_1,\ldots, e^-_{N_-}), \quad \E_+=(e^+_1,\ldots,
  e^+_{N_+})$$ 
of the sets of incoming and outgoing ends; Here
$e^\pm_i=(n^\pm_i,l^\pm_i)$ denotes the incoming or outgoing end at
$z_{e^\pm_i}$.
\end{enumerate}
A {\em nodal surface with strip-like ends} consists of a surface with
strip-like ends $S$, together with a set of pairs of ends (the {\em
  nodes} of the nodal surface)
$$ \ul{w} = \{ \{ w_1^+, w_1^- \}, \ldots, \{ w_m^+,w_m^- \} \},
w^\pm_i \in \mE = \sqcup_{k \in K} \mE_k $$
such that for each $w_j^+,w_j^-$, the widths satisfy
$\delta_j^+ = \delta_j^-$ (the widths of the strips are the same).  A
nodal surface $(S,\ul{w})$ give rise to a topological space obtained
by from the union
$S \cup \{ w_j^\pm, j = 1,\ldots, m \} \subset \ol{S}$ by identifying
$w_j^+$ with $w_j^-$ for each $j = 1,\ldots, m$.  The resulting
surface is still denoted $S$.
\label{endshere}
\end{definition}

The structure maps of the Fukaya category, according to the definition
in Seidel \cite{se:bo}, are defined by counting points in a
parametrized moduli space in family of surfaces with strip-like ends.
These are defined as follows:

\begin{definition}
  \label{smoothfam} {\rm (Families of nodal surfaces with strip-like
    ends)} A {\em smooth family} of nodal surfaces with strip-like
  ends over a smooth base $\RR$ consists of
\begin{enumerate} 
\item a smooth manifold with boundary $\SS$, 
\item a fiber bundle $\pi: \SS \to \RR$ and 
\item a structure of a nodal surface with strip-like ends on each
  fiber $\SS_r := \pi^{-1}(r)$, whose diffeomorphism type is
  independent of $r$;
\end{enumerate}
such that $\SS_r$ varies smoothly with $r$ (that is, the complex
structures $j_{S_r}$ fit together to smooth maps $T^{\on{vert}} \SS
\to T^{\on{vert}} \SS$) and each $r \in \RR$ contains a neighborhood
$U$ in which the seam maps extend to smooth maps $\varphi_\sigma:
I_\sigma \times U \overset{\sim}{\to} I_{\sigma}' $.  
\end{definition} 

\begin{example} \label{gluingstrip} {\rm (Gluing strip-like ends)}  
A typical example of a family of surfaces with strip-like ends is
obtained by gluing strip-like ends by a neck of varying length.  Given
a nodal surface $S$ with strip-like ends and $m$ nodes and a pair of
ends with the same width $\delta_e$, define a family of surfaces with
strip-like ends over $\RR = \R_{\ge 0}^m$ by the following {\em gluing
  construction}: For any $\gamma = (\gamma_1,\ldots,\gamma_m) \in
\R_{\ge 0}^m$ define
\begin{equation} \label{glues}
 G_{\gamma}(S)= \left( S - \cup_{k=1}^m 
    \eps^{-1}_{k,\pm}( \pm (1/\gamma_k,\infty)\times [0,1])
  \right)/\sim \end{equation}
by identifying the ends $w_k^\pm, k =1,\ldots, m$ of $S$ by the gluing
in a neck of length $1/\gamma_k$, if $\gamma_k \neq 0$.  That is, if
both ends are outgoing then one removes the ends $w_k^\pm$ with
coordinate $s > 1/\gamma_k$ and identifies
$$ \eps_{w_k^+}(s,t)  \sim  \eps_{w_k^-}(s - 1/\gamma_k ,t) $$
for $s \in (0,1/\gamma_k)$ and $t \in [0,\delta_e]$.  If $\gamma_k = 0$ then the gluing
construction leaves the node in place.  This construction gives a
family of surfaces with the same number of strip-like ends and one
less node than $S$ over $\RR$ called the {\em glued surface}.  More
generally, given a family $\SS = (\SS_r, r \in \RR)$ of nodal surfaces
with strip-like ends with $m$ nodes over a base $\RR$, we obtain via
the gluing construction a family 
$$G(\SS) = \bigcup_{(r,\gamma)}  G_{\gamma}(S_r) $$
over the base $\RR \times (\R_{\ge 0})^m$ whose fiber at $(r,\gamma)$
is the glued surface $G_{\gamma}(S_r)$.
\end{example} 

In our earlier papers \cite{ww:quiltfloer}, \cite{ww:quilts} we
associated invariants to Lagrangian correspondences by counting maps
from {\em quilted surfaces}.  The notion of family and gluing
construction generalize naturally to the quilted setting.  Recall the
definition of quilted surface from \cite{ww:quilts}. 

\begin{definition}  \label{qsurf} {\rm (Quilted surfaces with strip-like ends)}  
A {\em quilted surface} $\ul{S}$ with strip-like ends consists of the
following data:
\ben 
\item {\rm (Patches)} A collection $\ul{S} = (S_k)_{k=1,\ldots,m}$ of
  {\em patches}, that is surfaces with strip-like ends as in
  Definition~\ref{surfstrip}~(a)-(c).  In particular, each $S_k$
  carries a complex structure $j_k$ and has strip-like ends
  $(\eps_{k,e})_{e\in \E(S_k)}$ of widths $\delta_{k,e}>0$ near marked
  points:
$$\lim_{s \to \pm \infty} \eps_{k,e}(s,t) = z_{k,e} \in
  \partial\overline{S}_k, \quad \forall t \in [0,\delta_e] .$$  
Denote by $I_{k,e}\subset\partial S_k$ the noncompact boundary
component between $z_{k,e-1}$ and $z_{k,e}$.
\item {\rm (Seams)} A collection of {\em seams}, pairwise-disjoint
  pairs
$$ \cS = \bigl( \{(k_\sigma,I_\sigma),(k_\sigma',I_\sigma')\}
  \bigr)_{\sigma\in\cS}, \quad \sigma \subset \bigcup_{k=1}^m \{ k \}
  \times \pi_0(\partial S_k) , $$
and for each $\sigma \in \cS$, a diffeomorphism of boundary components
$$\varphi_\sigma: \partial S_{k_\sigma} \supset I_\sigma \overset{\sim}{\to}
I_{\sigma}'\subset \partial S_{k_\sigma'}  $$
that satisfy the conditions:
\begin{enumerate}
\item {\rm (Real analytic)} Every $z\in I_\sigma$ has an open
  neighborhood $\U\subset S_{k_\sigma}$ such that
  $\varphi_\sigma|_{\U\cap I_\sigma}$ extends to an embedding
$$\psi_z:\U\to S_{k'_\sigma}, \quad \psi_z^* j_{k'_\sigma} = -
  j_{k_\sigma} .$$  
In particular, this forces $\varphi_\sigma$ to reverse the orientation
on the boundary components.  One might be able to drop the real
analytic condition, but we have not developed the necessary technical
results.
\item {\rm (Compatible with strip-like ends)} Suppose that
  $I_{\sigma}$ (and hence $I_{\sigma}'$) is noncompact, i.e.\ lie
  between marked points, $I_{\sigma}=I_{k_\sigma,e_\sigma}$ and
  $I_{\sigma}'=I_{k_\sigma',e_\sigma'}$.  In this case we require that
  $\varphi_\sigma$ matches up the end $e_\sigma$ with $e_\sigma'-1$
  and the end $e_\sigma-1$ with $e_\sigma'$.  That is
  $\eps_{k_\sigma',e_\sigma'}^{-1}\circ\varphi_\sigma\circ\eps_{k_\sigma,e_\sigma-1}$
  maps $(s,\delta_{k_\sigma,e_\sigma-1}) \mapsto (s,0)$ if both ends
  are incoming, or it maps $(s,0)\mapsto
  (s,\delta_{k_\sigma',e_\sigma'})$ if both ends are outgoing.  We
  disallow matching of an incoming with an outgoing end.  The
  condition on the other pair of ends is analogous.
\end{enumerate}
\item {\rm (Orderings of the ends)} There are orderings
$$\E_-(\ul{S})=(\ul{e}^-_1,\ldots,\ul{e}^-_{N_-(\ul{S})}), \quad 
\E_+(\ul{S})=(\ul{e}^+_1,\ldots,\ul{e}^+_{N_+(\ul{S})})$$ 
of the quilted ends.
\end{enumerate} 
As a consequence of (a) and (b) we obtain 
\begin{enumerate} 
\item {\rm (True boundary components)} a set of remaining boundary
  components $I_b\subset\partial S_{k_b}$ that are not identified with
  another boundary component of $\ul{S}$.  These {\em true boundary
    components} of $\ul{S}$ are indexed by
\begin{equation} \label{trueB} \cB  = \bigl(  (k_b,I_b) \bigr)_{b\in\cB}
 := \bigcup_{k=1}^m \{ k \} \times \pi_0(\partial S_k) \; \setminus \;
\bigcup_{\sigma\in\cS} \sigma .
\end{equation}
\item {\rm (Quilted Ends)} The {\em quilted ends}
  $\ul{e}\in \E(\ul{S})=\E_-(\ul{S})\sqcup \E_+(\ul{S})$ consist of a
  maximal sequence $\ul{e}=(k_i,e_i)_{i=1,\ldots,n_{\ul{e}}}$ of ends
  of patches with boundaries
  $\eps_{k_i,e_i}(\cdot,\delta_{k_i,e_i}) \cong
  \eps_{k_{i+1},e_{i+1}}(\cdot,0)$
  identified via some seam $\phi_{\sigma_i}$.  This end sequence could
  be cyclic, i.e.\ with an additional identification
  $\eps_{k_n,e_n}(\cdot,\delta_{k_n,e_n}) \cong
  \eps_{k_{1},e_{1}}(\cdot,0)$
  via some seam $\phi_{\sigma_n}$.  Otherwise the end sequence is
  noncyclic, i.e.\ $\eps_{k_1,e_1}(\cdot,0)\in I_{b_0}$ and
  $\eps_{k_n,e_n}(\cdot,\delta_{k_n,e_n})\in I_{b_n}$ take values in
  some true boundary components $I_{b_0}, I_{b_n}$.  In both cases,
  the ends $\eps_{k_i,e_i}$ of patches in one quilted end $\ul{e}$ are
  either all incoming, $e_i\in \E_-(S_{k_i})$, in which case we call
  the quilted end incoming, $\ul{e}\in\cE_-(\ul{S})$, or they are all
  outgoing, $e_i\in \E_+(S_{k_i})$, in which case we call the quilted
  end incoming, $\ul{e}\in\cE_+(\ul{S})$.
\end{enumerate} 
As part of the definition we fix an ordering
$\ul{e}=\bigl((k_1,e_1),\ldots (k_{n_{\ul{e}}},e_{n_{\ul{e}}})\bigr)$
of strip-like ends for each quilted end $\ul{e}$. For noncyclic ends,
this ordering is determined by the order of patches in \ref{qsurf} and
ends as in \ref{surfstrip} \eqref{order}. For cyclic ends, we choose a
first strip-like end $(k_1,e_1)$ to fix this ordering.
\end{definition} 

\begin{figure}[ht]
\begin{picture}(0,0)%
\includegraphics{quilted.pstex}%
\end{picture}%
\begin{picture}(0,0)%
\includegraphics{quilted.pstex}%
\end{picture}%
\setlength{\unitlength}{3947sp}%
\begingroup\makeatletter\ifx\SetFigFont\undefined%
\gdef\SetFigFont#1#2#3#4#5{%
  \reset@font\fontsize{#1}{#2pt}%
  \fontfamily{#3}\fontseries{#4}\fontshape{#5}%
  \selectfont}%
\fi\endgroup%
\begin{picture}(2215,1973)(2961,-1623)
\put(3311,-309){\makebox(0,0)[lb]{{{{$L_1$}%
}}}}
\put(3703,-469){\makebox(0,0)[lb]{{{{$L_{12}$}%
}}}}
\put(4141,-778){\makebox(0,0)[lb]{{{{$L_2$}%
}}}}
\put(4951,-1222){\makebox(0,0)[lb]{{{{$M_2$}%
}}}}
\put(3625,-40){\makebox(0,0)[lb]{{{{$M_1$}%
}}}}
\put(4254,-14){\makebox(0,0)[lb]{{{{$M_3$}%
}}}}
\put(4332,-228){\makebox(0,0)[lb]{{{{$L_{23}$}%
}}}}
\put(4935,-563){\makebox(0,0)[lb]{{{{$L_2'$}%
}}}}
\end{picture}%

\caption{Lagrangian boundary conditions for a quilt}
\label{bc quilt}
\end{figure}

Later in the construction of invariants arising from families of
quilted surfaces, we will need the following auxiliary results
concerning convexity of seams and tubular neighborhoods of them.
Let $\ul{S}$ be a quilted surface with strip-like ends and $S$ the
unquilted surface obtained by gluing together the seams.

\begin{definition} \label{tubnbds}  {\rm (Tubular neighborhoods of seams)} A {\em
    tubular neighborhood} of a seam $I$ is an embedding
  $I \times (-\eps,\eps) \to S$ such that $I \times \{ 0 \}$ is an
  orientation-preserving diffeomorphism onto the image of $I$ in $S$.
  Two tubular neighborhoods $I \times (-\eps_j,\eps_j) \to S, j = 1,2$
  are {\em equivalent} if they agree on $I \times (-\eps,\eps)$ for
  some $\eps \in (0, \min(\eps_1,\eps_2))$.  A {\em germ} of a tubular
  neighborhood is its equivalence class.  
\end{definition}  

\begin{lemma} \label{product} \label{contractible} {\em (Contractibility of tubular
    neighborhoods of seams)} Let $I$ be a seam in a quilted surface
  $\ul{S}$.  The set of germs of tubular neighborhoods of $I$ is in
  bijection with the set of germs of vector fields on $S$ normal to
  $I$, which is contractible in the $C^k$ topology for any $k \ge 0$.
\end{lemma}

\begin{proof} 
  Tubular neighborhoods can be constructed using normal flows as
  follows.  Let $S$ be the surface obtained from $\ul{S}$ by gluing
  together the seams.  Let $I \subset S$ be a seam equipped with a
  metric $g_I\in \on{Sym}^{\otimes 2}(T^\dual I)$, base point
  $z \in S$ and a vector field $v \in \Vect(S)$ transverse to the
  seam.  The flow of the vector field $v$ gives a map
$$\varphi:I \times (-\eps,\eps) \to S, \quad \ddt \varphi(s,t) =
v(\varphi(s,t)), \quad \varphi(I \times \{ 0 \}) = I .$$
Since $v$ is transverse to the seam, the flow $\varphi$ is a
diffeomorphism for $\eps$ sufficiently small by the inverse function
theorem.  This construction produces a bijection between germs of
tubular neighborhoods $\varphi$ and germs of vector fields
$v \in \Vect(S)$ transverse to the seam and agreeing with the given
orientation.  Let $\Lambda^{\top}_+(TS) \cong S \times \R_{> 0}$ be
the space of orientation forms on $S$.  The space of such vector
fields is
\begin{equation}\label{VI}
\{ v \in \Vect(S) \ | \ v |_I \wedge w \subset \Lambda^{\top}_+(TS)
|_I \}\end{equation}
where $w \in \Vect(I)$ is the positive unit vector field on the seam,
and as such is convex.  It follows that the space of germs of such
vector fields, hence also the space of germs of tubular neighborhoods,
is contractible.
\end{proof}

\begin{lemma} \label{homtriv}
{\rm (Contractibility of the space of metrics of product form near a
  seam)} Let $\ul{S}$ be a quilted surface with strip-like ends.  The
space of metrics on $\ul{S}$ that are locally of product form near the
seams is non-empty and homotopically trivial.
\end{lemma}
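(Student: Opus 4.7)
The plan is to establish non-emptiness by a direct local construction with a partition of unity, and homotopy triviality via a fibration argument over the contractible space of tubular neighborhoods from Lemma \ref{product}.

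For non-emptiness, I first choose mutually disjoint germs of tubular neighborhoods $\nu_I : I \times (-\eps_I, \eps_I) \hookrightarrow \ul{S}$ of each seam $I$, whose existence follows from Lemma \ref{product}. After fixing an auxiliary Riemannian metric $g_I$ on each seam $I$, the pullback $\nu_I^*(g_I + dt^2)$ defines a product metric on the image of $\nu_I$. I then patch these local product metrics with an arbitrary smooth metric on the complement of smaller sub-neighborhoods $\nu_I(I \times (-\eps_I/2, \eps_I/2))$ using a partition of unity whose constituent bump functions are identically one on the sub-neighborhoods. Since the positive-definite symmetric bilinear forms on each tangent space form a convex cone, the resulting smooth metric on $\ul{S}$ equals the chosen product metric on each sub-neighborhood and therefore lies in the desired space.

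For homotopy triviality, I would introduce the auxiliary space $\mathcal{P}$ of pairs $(\nu, g)$ where $\nu = (\nu_I)_I$ is a tuple of germs of tubular neighborhoods (one per seam) and $g$ is a metric on $\ul{S}$ with the property that $\nu_I^* g = g_I' + dt^2$ on some sub-neighborhood of $I$ for each seam. The forgetful map $\mathcal{P} \to \mathcal{T}$ to the space of tubular-neighborhood tuples has convex fiber over each $\nu$, since the condition of product form with respect to a fixed $\nu$ is linear in the metric components, and the space of extensions to the rest of $\ul{S}$ is affine. The base $\mathcal{T}$ is contractible, being a product over seams of the contractible spaces furnished by Lemma \ref{product}, so $\mathcal{P}$ is homotopically trivial.

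It remains to show that the second projection $\mathcal{P} \to \mathcal{M}$ forgetting $\nu$ is a weak homotopy equivalence. Its fiber over $g \in \mathcal{M}$ consists of tubular neighborhoods with respect to which $g$ is of product form. For such $g$, the pushforward $(\nu_I)_* \partial_t$ must be a unit vector field normal to $I$ with respect to $g$, and conversely the geodesic flow in $g$ of any such unit normal vector field recovers a compatible tubular neighborhood; once the orientation of the normal direction is fixed, the fiber reduces to a single germ. Combining the three steps, $\mathcal{M}$ is homotopically trivial. The main technical obstacle is verifying that the forgetful maps are genuine Serre fibrations in the $C^\infty$ topology, so that contractibility of fibers and base propagate to the total spaces; this is handled by standard parametrized arguments for smooth section spaces in the spirit of the proof of Lemma \ref{product}.
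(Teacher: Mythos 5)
Your proof is correct and follows essentially the same route as the paper's: existence by pulling back product metrics through tubular neighborhoods and patching via convexity of the space of metrics, and contractibility by combining convexity of the space of metrics with the contractibility of germs of tubular neighborhoods from Lemma \ref{product}. Your explicit identification of the fiber of $\mathcal{P}\to\mathcal{M}$ (a product metric determines its compatible tubular neighborhood as the normal exponential map of the oriented unit normal) is a detail the paper leaves implicit, but the underlying argument is the same.
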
 

\begin{proof}  
  Let $S$ denote the unquilted surface obtained by gluing along the
  seams of $\ul{S}$.  For each seam $I$ and a tubular neighborhood
  $I \times (-\eps,\eps)\to S $ one obtains from the standard metric
  on the domain a metric $g_I: T^{\otimes 2}U_I \to \R$ on a
  neighborhood $U_I$ of $I$ in $S$.  After shrinking the tubular
  neighborhood, there exists an extension
  $ g : T^{\otimes 2}S \to \R $ such that $g | U_I = g_I$ for all
  seams $I$.  Indeed, the fact that the space of metrics compatible
  with the given complex structure is contractible.  Contractibility
  follows from contractibility of the space of metrics and of germs of
  tubular neighborhoods, as in Lemma \ref{product}.
\end{proof}

Next we introduce a definition of nodal quilted surfaces suitable for
the purpose of defining family quilt invariants.

\begin{definition} \label{nqs} {\rm (Nodal quilted surfaces)}  
  A {\em nodal quilted surface} consists of a quilted surface $\ul{S}$
  with a set of pairs of ends (the {\em nodes} of the quilted surface)
$$ \{ \{ \ul{w}_1^+, \ul{w}_1^- \}, \ldots, \{ \ul{w}_m^+,\ul{w}_m^-
\} \}, \quad \ul{w}^\pm_i \in \ul{\mE}, i = 1,\ldots, m $$
such that each is distinct and for each pair $\ul{w}_j^+,\ul{w}_j^-$,
the data of the ends (number of seams and widths of strips) is the
same. 
\end{definition} 

\begin{definition} \label{qglue}
\begin{enumerate} 
\item {\rm (Gluing quilted surfaces with strip-like ends)} Given a
  nodal quilted surface $\ul{S}$ with $m$ nodes, a non-zero gluing
  parameter $\gamma$, and a node represented by ends
  $\ul{e}_\pm = (k_{i,\pm}, e_{i,\pm})_{i=1}^{m_\pm}$ with the same
  widths, we obtain a {\em glued quilted surface}
  \begin{equation} \label{qglues} G_\gamma(\ul{S}) = \ul{S} - \cup_{i
      = 1,\ldots m_\pm} \eps^{-1}_{k_i,e_i,\pm}( \pm
    (1/\gamma,\infty)\times [0,1]) / \sim \end{equation}
  by identifying the ends of $\ul{S}$ by gluing in a neck of length
  $1/\gamma$, that is,
  \begin{equation} \label{identify} \eps_{k_{-,i},e_{-,i}}(s,t) \sim
    \eps_{k_{+,i},e_{+,i}}(s - 1/\gamma,t) \end{equation}
  for $s \in (0,1/\gamma)$.  More generally, a similar definition
  constructs a glued surface given a {\em collection} of gluing
  parameters $\ul{\gamma} = (\gamma_1,\ldots, \gamma_m)$ associated to
  the nodes.  We extend the definition to allow gluing parameters in
  $[0,\infty)^m$ with the convention that if a gluing parameter is
  zero, then we leave the node as is (that is, do not perform the
  gluing).
\item {\rm (Isomorphisms of nodal quilted surfaces)} An {\em
  isomorphism} between nodal quilted surfaces is a diffeomorphism
  between the disjoint union of the components, that preserves the
  matching of the ends, the ordering of the seams and boundary
  components.  
\item {\rm (Smooth families of nodal quilted surfaces)} A {\em smooth
  family} $\SS = (\ul{S}_r)_{r \in \RR}$ of quilted surfaces over a
  manifold $\RR$ {\em of fixed type} is a collection of families of
  surfaces with strip-like ends $(S_j \to \RR)_{j=1}^m$ each of fixed
  type together with seam identifications that vary smoothly in $r \in
  \RR$ in the local trivializations.  Each fiber $\ul{S}_r$ is a
  quilted surface with strip-like ends.
\end{enumerate}  
\end{definition}  

\begin{remark} \label{insertrem} {\rm (Inserting strips construction)}  
Another way of producing families of quilted surfaces is by the
following {\em inserting strips} construction produces from a family
of surfaces with strip-like ends and no compact boundary components a
family of quilted surfaces with strip-like ends.  Given a collection
$(n_1,\ldots, n_d)$ of positive integers and for each $j = 1,\ldots,
d$ a sequence $\delta_j = (\delta_j^1,\ldots, \delta_j^{n_j})$ of
positive real numbers let
$$\ul{S}(\ul{\delta}) = \ul{S} \sqcup \coprod_{i,j} ([0,\delta_j^i]
\times \R) $$
denote the quilted surface with strip-like ends obtained by gluing on
strips of width $\delta_j^i$ to the boundary, using the given local
coordinates near the seams.  If $\SS \to \RR$ is a family of quilted
surfaces with strip-like ends, this construction gives a bundle
$\SS(\ul{\delta})$ over $\RR$ whose fiber is a quilted surface, with
$n_j$ strips corresponding to the $j$-th component of the boundary of
the underlying quilted disk.
\end{remark} 

Later we will need that certain families of quilted surfaces are
automatically trivializable, after forgetting the complex structures:

\begin{lemma} {\rm (Trivializability of families of quilted surfaces)}  
\label{trivfam}   Suppose that $\SS = (\ul{S}_r)_{r \in \RR}$ is a smooth family of
  quilted surfaces over a base $\RR$ such that each patch $S_j$ is
  homeomorphic to the disk and has at least one marking.  Then $\SS$
  is smoothly globally trivializable in the sense that there exists a
  diffeomorphism $\SS \to \RR \times \ul{S}_0$ mapping $\SS_r$ to
  $\{ r \} \times \ul{S}_0$ for any $r \in \RR$ for a fixed quilted
  surface with strip-like ends $\ul{S}_0$, not necessarily preserving
  the complex structures or strip-like ends.
\end{lemma}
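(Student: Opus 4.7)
The plan is to realize $\SS\to\RR$ as a smooth fiber bundle whose structure group $G$ consists of orientation-preserving diffeomorphisms of $\ul{S}_0$ that preserve the patches setwise (respecting their labels), the seams as subsets (respecting the $\sigma$-indexing), the marked points, and the orderings of boundary/end data. Local trivializations of $\SS\to\RR$ with transitions in $G$ exist by Definition~\ref{smoothfam} (the seam maps fit together smoothly in local trivializations, and the combinatorial type is constant), and since $\RR$ is paracompact, a smooth global trivialization will follow once one knows that $G$ is weakly contractible.

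To show that $G$ is weakly contractible, I would build it up from the patch diffeomorphism groups via successive restriction fibrations at the seams. For an individual patch $S_j\cong D^2$ carrying $n_j\ge 1$ boundary markings, Smale's theorem $\Diff^+(D^2)\simeq SO(2)$ together with the evaluation fibration onto the position of the first marking shows that $\Diff^+(S_j;\text{markings})$ is contractible; hence $G_0:=\prod_j\Diff^+(S_j;\text{markings})$ is contractible as well. Ordering the seams $\sigma_1,\ldots,\sigma_N$, I would then form the tower
$$ G = G_N \to G_{N-1} \to \cdots \to G_0, $$
where $G_k\subset G_0$ is the subgroup whose elements agree along $\sigma_1,\ldots,\sigma_k$. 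Each map $G_k\to G_{k-1}$ would be shown to be a Serre fibration whose fiber is homotopy equivalent to $\Diff(I_{\sigma_k},\partial I_{\sigma_k})$, the diffeomorphism group of an arc fixing its endpoints (the endpoints are markings). This is contractible, so $G_k\simeq G_{k-1}$; inductively, $G\simeq G_0\simeq\{\pt\}$.

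Once $G$ is weakly contractible, standard obstruction theory ($BG\simeq\{\pt\}$, so the classifying map $\RR\to BG$ is null-homotopic, and the vanishing of $H^{k+1}(\RR;\pi_k(G))$ allows inductive construction of a section) yields a continuous trivialization of $\SS\to\RR$. The local smooth trivializations coming from Definition~\ref{smoothfam} then allow one to upgrade this, via a partition-of-unity argument, to a smooth trivialization $\SS\to\RR\times\ul{S}_0$ sending each fiber $\SS_r$ to $\{r\}\times\ul{S}_0$.

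The main obstacle is the technical verification that the restriction maps $G_k\to G_{k-1}$ really are fibrations with the claimed fibers. This amounts to extending a given diffeomorphism of a seam arc to a diffeomorphism of the adjacent patches, coherently in parameters. For this I would invoke the real-analytic seam model of Definition~\ref{qsurf}(b)(i) to obtain a bicollar of each seam, and then extend arc diffeomorphisms via a smooth cutoff of the flow of a compactly supported transverse vector field; the contractibility of the space of tubular-neighborhood germs (Lemma~\ref{product}) guarantees that all auxiliary choices contribute nothing to the homotopy type. The hypothesis that each patch is a disk with at least one marking is essential both for the contractibility of $\Diff^+(S_j;\text{markings})$ (Smale without a fixed point only gives $SO(2)$) and for ensuring that seam arcs have marked endpoints, so that $\Diff(I_{\sigma_k},\partial I_{\sigma_k})$ really is contractible.
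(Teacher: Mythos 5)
Your argument is correct in outline but takes a genuinely different route from the paper's. The paper trivializes each disk bundle $S_k \to \RR$ directly: using the first three boundary markings (or the translation/dilation normalization when $n_k \le 2$) it identifies each fiber canonically with a standard marked disk, views $S_k$ as pulled back from the universal marked disk bundle over the contractible base $\RR^{n_k} \cong \R^{n_k-3}$, and lifts a contraction of that base via a connection preserving the marking sections; the seam identifications are then normalized using convexity of the space of such identifications. You instead classify the bundle by its structure group $G$ and prove $G$ weakly contractible via Smale's theorem together with a tower of restriction fibrations over the seams. Your route proves something stronger (contractibility of the whole automorphism group of the marked quilted surface, not just the existence of one trivialization) at the cost of invoking infinite-dimensional classifying-space theory; the paper's route is more hands-on and avoids that machinery entirely. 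Two points in your sketch need repair. First, the inclusion $G_k \hookrightarrow G_{k-1}$ is not itself a fibration with fiber $\Diff(I_{\sigma_k},\partial I_{\sigma_k})$; what you want is that the \emph{discrepancy map} $G_{k-1}\to \Diff^+(I_{\sigma_k},\partial I_{\sigma_k})$, comparing the two adjacent patch diffeomorphisms across the seam identification, is a surjective fibration whose fiber over the identity is $G_k$, so that convexity (hence contractibility) of the base yields $G_k\simeq G_{k-1}$. Second, the seams are open arcs (markings are punctures) and two seams of one patch may share a limiting marking, so the collar extension realizing the path-lifting must be cut off so as not to disturb the seams already matched; working in the compactification $\ol{S}$, where the arcs are closed and the shared endpoint is fixed by everything in sight, makes this routine. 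With those adjustments the proof goes through.
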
  

\begin{proof}   Suppose that $S_{k,r}$ are holomorphic disks with $n_k$ markings
$z_1,\ldots, z_{n_k} \in \partial S_{k,r}$ on the boundary.  If $n_k
  \ge 3$ then $S_{k,r}$ admits a canonical isomorphism to the unit
  disk
$$ S_{k,r} \to D := \{z \in \C \ | \ | z | \leq 1 \}$$ 
in the complex plane with first three markings $z_1,z_2,z_3$ mapping
to $1,i,-1$, and the remainder to the lower half of the unit circle
$\partial D \cap \{ \on{Im}(z) < 0 \}$.  Consider over the set of
equivalence classes of such tuples the universal marked disk bundle
$$ \UU^{n_k} = \{ (D, z_1,\ldots, z_{n_k} \in \partial D) \} /\Aut(D)
.$$
Choose a connection on this bundle preserving the markings.
Identifying $\RR^{n_k} \cong \R^{n_k - 3}$, such a connection is given
by lifts
$$ \ti{\partial_i} \in \Vect(\UU^{n_k}), \quad \ti{\partial_i}(
z_i(r)) \in \on{Im}( \D z_i ( T_r \RR^{n_k})) $$
of the coordinate vector fields $\partial_i \in \Vect(\RR^{n_k - 3})$
tangent to the images of the sections
$$z_i: \RR^{n_k} \to \UU^{n_k}, \quad i =1,\ldots, n_k $$
given by the markings.  (The lifts may be defined first locally, using
the fact that the sections are disjoint, and then patched together
using contractibility of the space of lifts.)  Using the connection,
any homotopy of the identity
$\varphi_t: \RR^{n_k} \to \RR^{n_k}, \varphi_0 = \on{Id}$ lifts to a
homotopy $\ti{\varphi}_t: \UU^{n_k} \to \UU^{n_k}$ by requiring
$\ddt \ti{\varphi}_t (u)$ to be horizontal and project to
$\ddt \varphi_t (r)$ for $u \in \UU^{n_k}_r$.  In particular, a
contraction of $\RR^{n_k}$ to a point $r_0$ lifts to a trivialization
$\UU^{n_k} \to \UU^{n_k}_{r_0} \times \RR^{n_k}$.  Similarly if
$n_k = 2$ or $1$ then $S_{k,r}$ admits such an isomorphism canonical
up to translation (resp. translation and dilation).  Since the groups
of such are contractible, the bundle $S_k \to \RR$ is again trivial.
The nodal sections $w^\pm_k: \RR \to \SS$ are trivial with respect to
these trivialization of the disk bundles, by construction.  Finally
the space of seam identifications is convex, hence in any family
contractible to a fixed choice.
\end{proof} 

\label{varying}

Next we discuss families of varying combinatorial type.  The natural
category of base spaces for these are {\em stratified spaces} in the
sense of Mather, whose definition we now review, c.f. \cite{go:st}. We
begin with the definition of {\em decomposed spaces}.

\begin{definition} \label{decomposed} {\rm (Decomposed spaces)}  
 Let $\GG$ be a partially ordered set with partial order $\leq$.  Let
 $\RR$ be a Hausdorff paracompact space.  A {\em $\GG$-decomposition}
 of $\RR$ is a locally finite collection of disjoint locally closed
 subspaces $\RR_\Gamma, \Gamma \in \GG$ each equipped with a smooth
 manifold structure of constant dimension $\dim(\RR_\Gamma)$, such
 that
$$ \RR = \bigcup_{\Gamma \in \GG} \RR_\Gamma $$
and 
$$ (\RR_\Gamma \cap \ol{\RR_{\Gamma'}} \neq \emptyset) \iff ( \RR_{\Gamma}
\subset \ol{\RR_{\Gamma'}}) \iff (\Gamma \leq \Gamma') .$$
The {\em dimension} of a $\GG$-decomposed space $\RR$ is
$$ \dim \RR = \sup_{\Gamma \in \GG} \dim( \RR_\Gamma) .$$
The {\em stratified boundary} $\partial_s \RR$ resp. {\em stratified
  interior} $\on{int}_s \RR$ of a $\GG$-decomposed space $\RR$ is the
union of pieces $\RR_\Gamma$ with $\dim(\RR_\Gamma) < \dim(\RR)$,
resp. $\dim(\RR_\Gamma) = \dim(\RR)$.  An {\em isomorphism} of
$\GG$-decomposed spaces $\RR_0,\RR_1$ is a homeomorphism $\RR_0 \to
\RR_1$ that restricts to a diffeomorphism on each piece.
\end{definition}  

\begin{example} \label{cone}
\begin{enumerate} 
\item {\rm (Cone construction)} Let $\RR$ be a $\GG$-decomposed space,
  and let 
  \begin{equation} \label{CGeq} C\GG := \left( \GG \times \{ \{ 0 \},
      (0,\infty) \} \right) \sqcup \{ \infty \} \end{equation}
  with the partial order determined by
$$\{ \infty \} \preceq (\Gamma, (0,\infty)), \quad (\Gamma,\{ 0 \} ) \preceq (\Gamma,
(0,\infty)), \quad \forall \Gamma \in \GG .$$
  The {\em cone} on $\RR$
$$ \Cone(\RR) := \left( \RR \times [0,\infty] \right) / \left( (r,\infty) \sim (r',\infty), r,r' \in \RR 
\right) $$
has a natural $C\GG$-decomposition with
$$ ( \Cone(\RR))_{ (\Gamma, (0,\infty))}
\cong \RR_\Gamma \times (0,\infty), \quad \dim(\Cone(\RR)) = \dim(\RR)
+ 1 .$$
  More generally, if $\RR$ is a $\GG$-decomposed space equipped with a
  locally trivial map $\pi$ to a manifold $B$, the {\em cone bundle}
  on $\RR$ is the union of cones on the fibers, that is,
$$ \Cone_B (\RR) := \left( \RR \times [0,\infty] \right) / \left( (r,\infty) \sim (r',\infty), \pi(r) = \pi(r')
  \in \RR \right),
 $$
 is again a $C\GG$-decomposed space with dimension $\dim(\RR) + 1$.
\item {\rm (Convex polyhedra)} Let $V$ be a vector space over $\R$
  with dual $V^\dual$.  A {\em half-space} in $V$ is a subset of the
  form $\lambda^{-1}[c,\infty)$ for some $\lambda \in V^\dual$ and
  $c \in \R$.  A {\em convex polyhedron} in $V$ is the the
  intersection of finitely many half-spaces in $V$.  A half-space
  $H \subset V$ is {\em supporting} for a polytope $P$ if and only if
  $P \subseteq H$.  A {\em closed face} of $P$ is the intersection
  $F = P \cap \partial H$ of $P$ with the boundary of a supporting
  half-space $H$.  Let $\cF(P)$ denote the set of closed faces.  The
  {\em open face} $F^\circ$ corresponding to a face $F$ is the closed
  face minus the union of proper subfaces
  $F^\circ := F - \cup_{F' \subsetneq F} F' .$ The decomposition of
  $P$ into open faces
$$P = \bigcup_{F \subset \cF(P)} F^\circ$$ 
gives $P$ the structure of a decomposed space.  
\item {\rm (Locally polyhedral spaces)}  
 \label{locpoly}  A decomposed space is {\em locally polyhedral} if it is locally
  isomorphic to a polyhedral space, that is, any point has an open
  neighborhood that, as a decomposed space, is isomorphic to a
  neighborhood of a point in a convex polyhedron.
\end{enumerate}
\end{example} 

\begin{definition} {\rm (Stratified spaces)}  
A decomposition $\RR = \cup_{\Gamma \in \GG} \RR_\Gamma$ of a space
$\RR$ is a {\em stratification} if the pieces $\RR_\Gamma$ fit
together in a nice way: Given a point $r$ in a piece $\RR_\Gamma$
there exists an open neighborhood $U$ of $r$ in $\RR$, an open ball
$B$ around $r$ in $\RR_\Gamma$, a stratified space $L$ (the {\em link}
of the stratum) and an isomorphism of decomposed spaces $B \times CL
\to U $ that preserves the decompositions in the sense that it
restricts to a diffeomorphism from each piece of $B \times CL$ to a
piece $U \cap \RR$.  A {\em stratified space} is a space equipped with
a stratification.
\end{definition} 

\begin{remark}  {\rm (Recursion on depth versus recursion on dimension)}  
The definition of stratification is recursive in the sense that it
requires that stratified spaces of lower dimension have already been
defined; in general one can allow strata with varying dimension and
the recursion is on the {\em depth} of the piece, see
e.g. \cite{sj:st}.
\end{remark} 

The master equation for our family quilt invariants involves the following 
notion of boundary of a stratified space. 

\begin{definition} {\rm (Boundary with multiplicity)}  
\begin{enumerate} 
\item An {\em orientation} on a stratified space $\RR = \cup_{\Gamma
  \in \GG} \RR_\Gamma$ is an orientation on the top-dimensional
  pieces.  If $\RR_{\Gamma_1} \subset \ol{\RR_{\Gamma_2}}$ is the
  inclusion of a codimension one piece in a codimension zero piece,
  then the finite fibers of the link bundle $L_{\Gamma_1}$ inherit an
  orientation from the top-dimensional pieces and the positive
  orientation of $\RR$.
\item Summing the signs over the points in the fibers of the link
  bundle defines a locally constant {\em multiplicity function}
$$m_{\Gamma}: \RR_{\Gamma} \to \Z$$ 
on the codimension one pieces $\RR_\Gamma$.  
\item The {\em boundary with multiplicity} $\partial_m \RR$ of $\RR$
  is the union of codimension one pieces $\RR_{\Gamma_1}$ equipped
  with the given multiplicity function $m_{\Gamma_1}$.
\item 
Let $\RR = \cup_{\Gamma \in \GG} \RR_\Gamma $ be a stratified space.
A {\em family of quilted surfaces with strip like ends} over $\RR$ is
a stratified space 
$$\SS = \cup_{\Gamma \in \GG} \SS_\Gamma $$ 
equipped with a stratification-preserving map to $\RR$ such that each
$\SS_\Gamma \to \RR_\Gamma$ is a smooth family of quilted surfaces
with fixed type.  Furthermore local neighborhoods of $\SS_\Gamma$ in
$\SS$ are given by the gluing construction: there exists a
neighborhood $U_\Gamma$ of $\SS_\Gamma$, a projection $\pi_\Gamma:
U_\Gamma \to \RR_\Gamma$, and a map $\gamma_\Gamma: U_\Gamma \to
(\R_{\ge 0})^m$ such that if $r \in \RR_\Gamma$ then
$$\ul{S}_r = G_{\gamma_\Gamma(r) } \ul{S}_{\pi_\Gamma(r)} .$$
\end{enumerate} 
\end{definition} 

In other words, for a family of quilted surfaces with strip-like ends,
degeneration as one moves to a boundary stratum is given by
neck-stretching.  Often we will be given a family of quilted surfaces
without strip-like ends in which degeneration is difficult to deal
with analytically, and we wish to produce a family with strip-like
ends where degeneration is given by neck-stretching.  The following
theorem allows us to replace our original family with a nicer one.

\begin{definition} \label{quiltdata} {\rm (Quilt data for a stratified space)}  
A stratified space $\RR$ is {\em equipped with quilt data} if the
index set $\GG$ is a subset of the set of combinatorial types of
quilts and for each piece $\RR_\Gamma$ such that $\Gamma$ has $m$
nodes there exists a stratified subspace 
$Z_\Gamma \subset \R_{\ge   0}^{m}$
\footnote{That is, the stratification of $Z_\Gamma$ is induced from
  the stratification of $\R_{\ge 0}^{m}$ as a manifold with corners
  indexed by subsets of $\{ 1,\ldots, m \}$, defining the strata to be
  submanifolds where those coordinates are zero.}  and collar
neighborhoods\footnote{That is, open embeddings mapping $\RR_\Gamma
  \times \{ 0 \}$ diffeomorphically onto $\RR_\Gamma$ .}
$$ \varphi_\Gamma : \RR_\Gamma \times Z_\Gamma 
\to \RR $$
such that the following compatibility condition holds: for any two
strata $\Gamma_1, \Gamma_2$ such that $\Gamma_1 \leq \Gamma_2$ the
diagram
$$ 
\begin{diagram} 
\node{\RR_{\Gamma_1} \times Z_{\Gamma_1}} 
\arrow{s} \arrow{e} \node{\RR}
\node{\RR_{\Gamma_2} \times Z_{\Gamma_2}} 
\arrow{w}  \arrow{s} \\ 
\node{\R^{m_1}} \node{}
\node{\R^{m_2}} \arrow[2]{w} 
\end{diagram} 
$$
commutes where defined, that is, on the overlap of the images of the
open embeddings in $\RR$.
\end{definition} 

The following result builds up families of quilted surfaces by
induction on the dimension of the stratum in the base of the family.

\begin{theorem}  {\rm (Extension of quilt data over the interior)}  
\label{extension} 
Let $\RR$ be a stratified space labelled by quilt data.  Given a
family $\SS$ of quilted surfaces with strip-like ends on the boundary
$\partial \RR$ such that the family of quilts in the neighborhood of
the boundary obtained by gluing is smoothly trivializable, there
exists an extension of $\SS$ to a family of quilted surfaces with
strip-like ends over the interior of $\RR$.
\end{theorem}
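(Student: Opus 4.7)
The plan is to construct the family of quilted surfaces $\SS$ over $\RR$ by induction on the dimension of the strata, using the collar neighborhoods $\varphi_\Gamma$ built into the quilt data together with the gluing construction $G_\gamma$ of Definition \ref{qglue}. Start with a deepest stratum $\RR_\Gamma$ where no prior data is prescribed: the combinatorial type $\Gamma$ is fixed, and by the irrotatability hypothesis each patch is a disk with at least one marking, so Lemma \ref{trivfam} supplies a smooth trivialization of the underlying topological family. The remaining data (complex structures, strip-like ends of the prescribed widths, germs of real-analytic seam identifications, and metrics of product form near the seams) form the sections of a bundle with contractible fibers by Lemmas \ref{product} and \ref{homtriv} together with the standard contractibility of the space of compatible complex structures. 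Hence a section exists, matching whatever has been inductively specified on $\partial_s \RR_\Gamma$.

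Next I propagate $\SS|_{\RR_\Gamma}$ to the collar image $\varphi_\Gamma(\RR_\Gamma \times Z_\Gamma)$ by the formula $\ul{S}_{\varphi_\Gamma(r,\gamma)} := G_\gamma(\ul{S}_r)$ from \eqref{qglues}: each positive parameter $\gamma_i > 0$ resolves the corresponding node by a neck of length $1/\gamma_i$, while $\gamma_i = 0$ leaves the node in place. The compatibility diagram in Definition \ref{quiltdata} is precisely what guarantees that whenever two strata $\Gamma_1 \leq \Gamma_2$ are both adjacent to a given point of $\RR$, the two candidate extensions from $\varphi_{\Gamma_1}$ and $\varphi_{\Gamma_2}$ coincide on the overlap: the inclusion $\R^{m_1} \hookrightarrow \R^{m_2}$ identifies the gluing parameters resolving the nodes of $\Gamma_1$ with a subset of those for $\Gamma_2$, so the surface glued from $\RR_{\Gamma_1}$ in two steps agrees with the one-step glued surface from $\RR_{\Gamma_2}$. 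One then iterates to the next-higher-dimensional stratum: the previously constructed collar data furnish boundary values, the smooth trivializability hypothesis near $\partial \RR$ identifies these values with a section of a bundle with contractible fibers over a collar of the boundary of the new stratum, and contractibility allows one to extend this section over the interior of the stratum.

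The main obstacle I anticipate is verifying that the gluing assembly \eqref{qglues} produces a genuinely \emph{smooth} family as the gluing parameters $\gamma_i$ approach $0$, so that the resulting $\SS$ satisfies the smooth structure required of a family of quilted surfaces with strip-like ends. The identification \eqref{identify} is written in terms of $1/\gamma_i$ and is therefore naively singular at $\gamma_i = 0$, so smoothness of the glued complex structures over the stratified parameter space $Z_\Gamma \subset \R_{\ge 0}^m$ must be checked via the standard device of interpolating the complex structures by cutoff functions supported away from the forming neck, so that pushing $\gamma_i$ to $0$ returns precisely the complex structure of the nodal surface in a smooth manner. Once this smoothness is in hand, the inductive extension over each stratum is routine obstruction theory in contractible fibers, and the output is a family satisfying all the requirements.
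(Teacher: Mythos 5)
Your proposal is correct and follows essentially the same route as the paper: propagate the boundary family into a collar via the gluing construction of Definition \ref{qglue}, then use the smooth trivializability hypothesis together with the contractibility of the spaces of metrics, seam maps, and strip-like ends (Lemmas \ref{product} and \ref{homtriv}) to extend the data over the interior by patching. The only structural difference is that you fold the stratum-by-stratum recursion into the proof itself, whereas the paper treats Theorem \ref{extension} as the single extension step and performs the recursion separately when deducing Theorem \ref{A}; also, your worry about smoothness at $\gamma_i = 0$ is absorbed by the definition of a family over a stratified space, which only demands smoothness on each stratum of fixed type with collars given by gluing.
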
  

\begin{proof}  
  The existence of an extension is a combination of the gluing
  construction and the contractibility of the space of metrics and
  seam structures.  Namely, via the gluing construction \ref{qglue}
  one obtains in an open neighborhood $U$ of $\partial \RR$ a family
  of quilted surfaces $S_r, r \in U$ with strip-like ends
  $\eps_{r,i}$, compatible metrics $g_r$ and seam maps
  $\varphi_{\sigma,r}$ so that the metrics are of product form near
  the seams.  By assumption, this family is smoothly trivial and so by
  Lemma \ref{homtriv} the family $g_r, \eps_{r,i}, \varphi_{\sigma,r}$
  extends over the interior, possibly after shrinking the neighborhood
  of the boundary.  Indeed, since the spaces of metrics $g_r$ and seam
  maps $\varphi_{\sigma,r}$ are contractible, the metrics and seam map
  extend over the interior using cutoff functions and patching;
  similarly the space of strip-like ends is convex, as it is
  isomorphic to the space of local coordinates at a point on the
  boundary of complex half-space.  Finally, choose collar
  neighborhoods of the seams $\varphi_{\sigma,r}(I_r)$.  The
  corresponding complex structures $j_r : TS_r \to TS_r$ have the
  property that the seams are automatically real analytic.
\end{proof} 

Theorem \ref{A} of the Introduction now follows by recursively
applying Theorem \ref{extension} to the strata.

\section{Moduli spaces of pseudoholomorphic quilts in families} 
\label{partone2}

In this section we construct the moduli spaces of pseudoholomorphic quilts
for families of quilts.  Let $ \SS = (\ul{S}_r, r \in \RR)$ be a
family of quilted surfaces with strip-like ends over a stratified
space $\RR$.  Let $\SS_\Gamma \to \RR_\Gamma$ denote the pieces of
$\SS$.

\begin{definition} \label{datum}
\begin{enumerate}
\item {\rm (Symplectic datum for a family of quilted surfaces)} $\SS =
  (\ul{S}_r)_{r \in \RR}$ is {\em labelled by symplectic data}
  $(\ul{M},\ul{L})$ if each patch $S_k$ is labelled by a component
  $M_k$ of $\ul{M}$ (we assume the same indexing for simplicity) that
  is a symplectic background, each seam $I_{\sigma}$ is labelled by a
  Lagrangian correspondence $L_\sigma \subset M_p^- \times M_{p'}$ for
  the product of symplectic manifolds for the adjacent patches
  $S_p,S_{p'}$, with admissible brane structure.
\item {\rm (Almost complex structures and Hamiltonian perturbations
  for the ends)} For each end $e \in \mE(S)$ with widths $\delta_j$
  and symplectic labels $M_j$ for $j=0,\ldots,r$ we assume that we
  have chosen almost complex structures
$$\ul{J}_e = (J_j) \in \oplus_{j=0}^r C^\infty([0,\delta_j]
  ,\J(M_j,\omega_j)) $$
and Hamiltonian perturbations
$$\ul{H}_e = (H_j) \in \oplus_{j=0}^r C^\infty([0,\delta_j] \times
M_j) ,$$
with Hamiltonian vector fields $Y_j,j = 0,\ldots,r$ as in
\cite[Theorem 5.2.1]{ww:quiltfloer} so that the set of perturbed
intersection points
$$ \cI(\ul{L}_e) := \left\{ \ul{x}=\bigl(x_j: [0,\delta_j] \to
M_j\bigr)_{j=0,\ldots,r} \, \left|
\begin{aligned}
\dot x_j(t) = Y_j(t,x_j(t)), \\ (x_{j}(\delta_j),x_{j+1}(0)) \in
L_{j(j+1)}
\end{aligned} \quad \forall j \right.\right\} .
$$
is cut out transversally.  Denote by 
$$ \ul{K}_e = ( K_j \in \Omega^1([0,\delta_j], C^\infty( M_j)), K_j :=
H_j \d t )_{j = 0,\ldots, r} $$
the corresponding family of function-valued one-forms.
\item {\rm (Perturbation datum for a family of quilted surfaces with
  symplectic data)} Let $\ul{\J}_{\ul{\omega}}$ denote the space of
  almost complex structures on the symplectic manifolds $\ol{M}$
  compatible with (or, it would suffice, tamed by) the symplectic
  forms $\ol{\omega}$.  An {\em almost complex structure} for a family
  $\SS \to \RR$ of quilted surfaces with strip-like ends equipped with
  a symplectic labelling is a collection of maps
$$\ul{J}_\Gamma \in C^\infty( \SS_\Gamma, \ul{\J}_{\ul{\omega}}) $$
agreeing with the given almost complex structures on the strip-like
ends $\ul{J}_e$ and agreeing on the ends corresponding to any node,
with the additional property that if $\Gamma_1 < \Gamma_2$ then
$\ul{J}_{\Gamma_2}$ is obtained from the gluing construction
\ref{qglue} from $\ul{J}_{\Gamma_1}$ under the identifications
\eqref{identify}.  A {\em Hamiltonian perturbation} for $\SS$ is a
family
$$\ul{K}_\Gamma \in \Omega^1_{\SS_\Gamma/\RR_\Gamma}(\cH(\ul{M})) $$
agreeing with the given Hamiltonian perturbations $\ul{K}_e$ on the
ends with the additional property that if $\Gamma_1 < \Gamma_2$ then
$\ul{K}_{\Gamma_2}$ is given on a neighborhood of $\ul{K}_{\Gamma_1}$
by the gluing construction \eqref{qglue}.
\end{enumerate} 
To clarify the notation $C^\infty( \SS_\Gamma, \ul{\J}_{\ul{\omega}})
$ each quilted surface $ \ul{S}_r, r \in \RR$ splits into a union of
patches, $\cup_i (S_i)_r$, with all patches $S_i$ labeled by a target
symplectic manifold $ (M_i, \omega_i)$.  For each $(r, z), z\in
(S_i)_r$, $J_i(r,z)$ is an $\omega_i$-compatible almost complex
structure on $M_i$.  The notation $
\Omega^1_{\SS_\Gamma/\RR_\Gamma}(\cH(\ul{M}))$ represents the space of
1-forms on each quilted surface $\SS_r$ that on each patch $S_i$ of
the quilt take values in the space of Hamiltonians on $M_i$.
\end{definition} 


The domains of the pseudoholomorphic quilts associated to a family of
quilted surface are pseudoholomorphic maps from {\em destabilizations} of
elements of the family in the following sense:

\begin{definition}  \label{holquilt} 
\begin{enumerate} 
\item {\rm (Destabilizations)} Let $\ul{S}$ be a quilted surface with
  strip-like ends.  A {\em destabilization} of $\ul{S}$ is a quilted
  surface with strip-like ends $\ul{S}^{\on{ds}}$ obtained from $\ul{S}$ by
  inserting a finite collection of quilted strips (twice marked disks)
  at the nodes and ends.  See Figure \ref{destab}.
\begin{figure}
\includegraphics[height=3in]{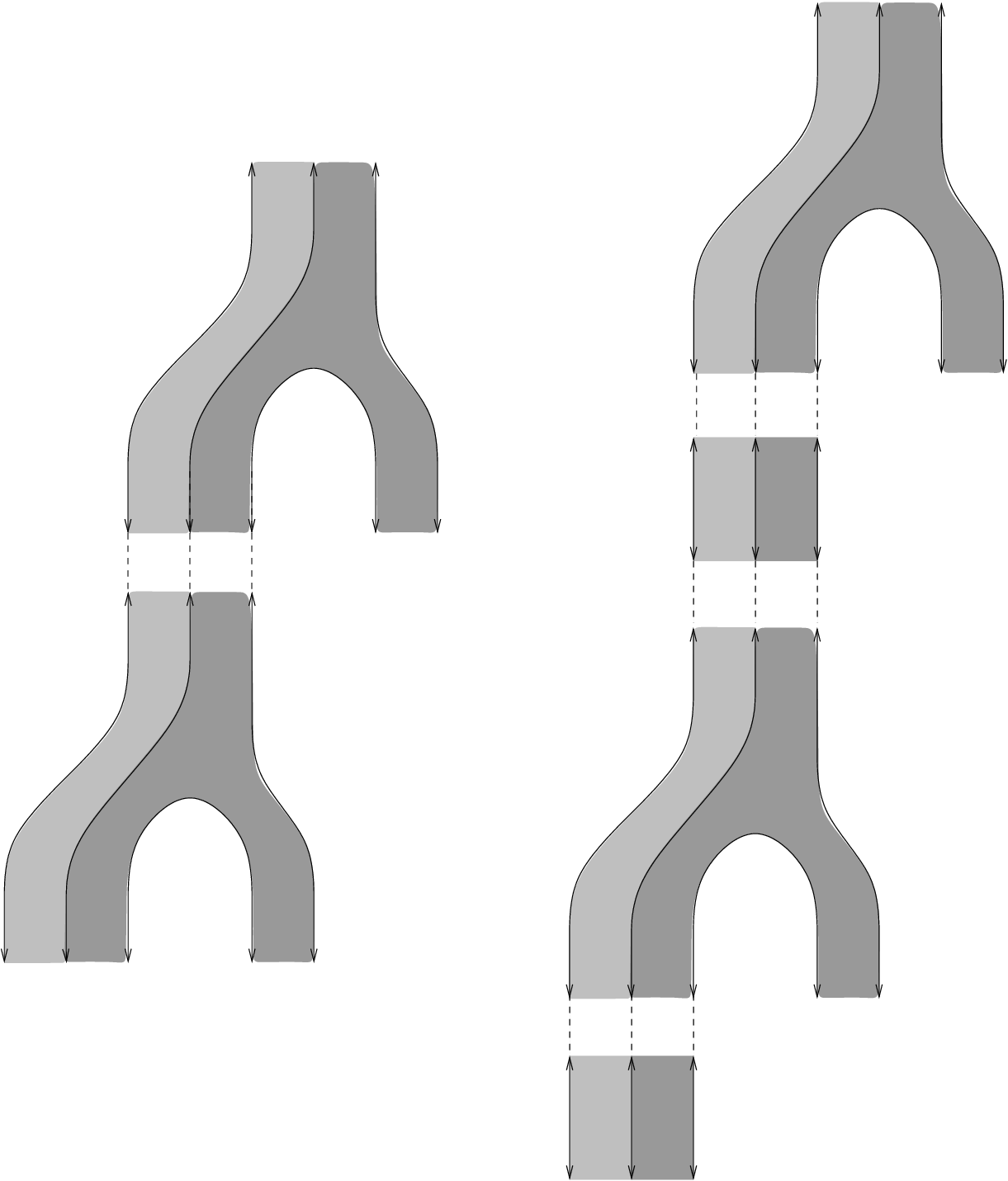}
\caption{A quilted surface with strip-like ends and a destabilization
  of it}
\label{destab}
\end{figure}
\item {\rm (Pseudoholomorphic quilts with varying domain)} Let $\SS \to \RR$
  be a smooth family of quilted surfaces with strip-like ends.  A {\em
    pseudoholomorphic quilt} for $\SS$ is a datum $(r,\ul{S}_r^{\on{ds}},\ul{u})$
  where $r \in \RR$, $\ul{S}_r^{\on{ds}}$ is a destabilization of $\ul{S}_r$,
  $\ul{u} \in C^\infty(\ul{S}_r^{\on{ds}}, \ul{M})$ and $\ul{u}$ satisfies the
  inhomogeneous pseudoholomorphic map equation on each patch
\begin{multline}
d u_p(z) + J_p(r, z,u_p(z)) \circ \d u_p(z) \circ j_p(r, z) =
Y_p(r,z,u_p(z)) + \\ J_p(r,z,u_p(z)) \circ Y_p(r,z,u_p(z)) \circ
j_p(r,z), \quad \forall z \in S_p, \forall p = 1,\ldots, k
\end{multline}  
where $j_p(r,z)$ is the complex structure on the quilt $S_{r,p}$ at $z
\in S_{r,p}$, and $Y_p(r,z) \in \Vect(M_p)$ is the Hamiltonian vector
field associated to the Hamiltonian perturbation $K_p(r,z)$.
\item {\rm (Isomorphism)} Two pseudoholomorphic quilts
  $(r,\ul{S}_r^{\on{ds}},\ul{u}), (r',\ul{S}_r^{\on{ds},'},\ul{u}')$ are
  {\em isomorphic} if $r = r'$ and there exists an isomorphism of
  destabilizations $\phi: \ul{S}_r^{\on{ds}} \to \ul{S}_r^{\on{ds},'}$
  inducing the identity on $\ul{S}_r$ such that $u' \circ \phi = u$.
\item {\rm (Regular pseudoholomorphic quilts)} Associated to any
  pseudoholomorphic quilt $\ul{u}: \SS^{\on{ds}} \to \ul{M}$ with
  destabilization $\SS$ is a Fredholm {\em linearized operator} for
  any integer $p > 2$
\begin{multline} \label{linop}
 D_{\SS, r,\ul{u}} : T_r \RR \times 
\Omega^0( \ul{S}^{\on{ds}}_r, \ul{u}^* T
 \ul{M}, \partial \ol{u}^* T \ul{L})_{1,p} 
\to \Omega^{0,1}(
 \ul{S}_r^{\on{ds}}, \ul{u}^* T \ul{M})_{0,p} \\ ( \delta r, \ul{\xi}) \mapsto
 \hh (\ul{J}_{\ul{u}} \circ \d \ul{u} \circ
 D\ul{j}_{\ul{S}_r^{\on{ds}}}(\delta) ) + D_{\ul{S}_r^{\on{ds}},\ul{u}}
 (\ul{\xi}) .\end{multline}
%
Here $D_{\ul{S}_r^{\on{ds}},\ul{u}} (\ul{\xi}) $ is the usual linearized
Cauchy-Riemann operator of e.g. \cite[Chapter 3]{ms:jh}, acting on the
space $\Omega^0( \ul{S}^{\on{ds}}_r, \ul{u}^* T \ul{M}, \partial \ol{u}^* T
\ul{L})_{1,p} $ of sections of $\ol{u}^* T\ul{M}$ of Sobolev class
$W^{1,p}$ with Lagrangian boundary and seam conditions, and
$D\ul{j}_{\ul{S}_r^{\on{ds}}}(\delta r)$ is the infinitesimal variation of the
complex structure on $\ul{S}_r^{\on{ds}}$ determined by $\delta r$.  A
pseudoholomorphic quilt is {\em regular} if the associated linearized
operator is surjective.
\end{enumerate}
\end{definition} 

We introduce the following notation for moduli spaces.  Denote the
moduli space of isomorphism class of pseudoholomorphic quilts with
varying domain
$$\M = \{ (r, \ul{u}: \ul{S}_r^{\on{ds}} \to \ul{M} ) \}/{\rm isomorphism} .$$
Let $\M(\ul{y}_{\ul{e}}, \ul{e} \in \mE)$ be the subspace of pseudoholomorphic
quilts with limits $\ul{y}_{\ul{e}}$ along the ends $\ul{e} \in \mE$, and
$\M(\ul{y}_{\ul{e}}, \ul{e} \in \mE)_d$ the component of formal dimension
$$d = \Ind(D_{\SS, r,\ul{u}}) - \dim(\aut(\ul{S}_r^{\on{ds}})) $$ 
where the last term arises from strip components.

The Gromov compactness theorem has a straight-forward generalization to families
of pseudoholomorphic quilts, as follows.  

\begin{theorem} \label{gromov} {\rm (Gromov compactness for families of quilts)} 
Suppose that $\RR$ is a compact stratified space equipped with a
family of quilts $\SS \to \RR$ with patches labelled by symplectic
backgrounds $\ul{M}$ and boundary/seams labelled by Lagrangians
$\ul{L}$, and $\M$ is the moduli space of pseudoholomorphic quilts
with this data.
\begin{enumerate} 
\item {\rm (Gromov convergence for bounded energy)} Any sequence
  $[\ul{u}_\nu: \ul{S}_{r_\nu} \to \ul{M}]$ in $\M$ with bounded
  energy has a Gromov convergent subsequence, that is, a sequence of
  representatives such that $r_\nu$ converges to some $r \in \RR$,
  there exists a destabilization $\ul{S}^{\on{ds}}_r$ of $\ul{S}_r$, a
  pseudoholomorphic quilt $\ul{u}: \ul{S}^{\on{ds}}_r \to \ul{M}$ and a finite
  bubbling set $Z \subset \ul{S}^{\on{ds}}_r$ such that $\ul{u}_\nu$ converges
  to $\ul{u}$ uniformly in all derivatives on compact subsets of the
  complement of $Z$:
$$ (G_{\gamma(r_\nu)})^* \lim \ul{u}_\nu | C = \ul{u} | C , \quad C
\subset \ul{S} - Z \ \text{ compact} $$
where $G_{\gamma(r_\nu)}: \ul{S}_r^{\on{ds}} \to \ol{S}_{r_\nu}$ is the
identification of domains given by the gluing parameters in \eqref{qglues}.
\item {\rm (Convergence in the admissible, low dimension case)} If in
  addition the formal dimension satisfies $d \leq 1$ and all moduli
  spaces are regular then (sphere and disk) bubbling is ruled out by the
  monotonicity conditions and the bubbling set is empty (although
  there still may be bubbling off trajectories on the strip-like
  ends:)
$$ (d \leq 1) \implies (Z = \emptyset ) .$$
\end{enumerate} 
\end{theorem}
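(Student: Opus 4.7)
The plan is to adapt the standard Gromov compactness for quilted pseudoholomorphic maps from \cite{ww:quiltfloer,ww:quilts} to the present family setting, with the base $\RR$ providing an extra parameter and the gluing structure of Definition \ref{quiltdata} controlling how domains degenerate at strata.

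For part (a), I would begin by using compactness of $\RR$ to extract a subsequence with $r_\nu \to r$, and let $\Gamma$ be the stratum containing $r$. The quilt data give a collar description in which, for large $\nu$, the domain is $\ul{S}_{r_\nu} = G_{\gamma_\nu}(\ul{S}_{r'_\nu})$ with $r'_\nu \to r$ in $\RR_\Gamma$ and normal gluing parameters $\gamma_\nu \to 0$ componentwise (those that are not identically zero do the neck-stretching; the rest leave the corresponding node intact). This isolates two sources of degeneration: smooth convergence of domains within the single stratum $\RR_\Gamma$, and neck-stretching at those nodes where $\gamma_\nu^i \to 0^+$. Standard Floer-theoretic compactness on each patch, combined with the uniform energy bound and elliptic bootstrapping on compact subsets, yields a subsequence converging in $C^\infty_{\on{loc}}$ away from a finite set $Z$ to a pseudoholomorphic quilt on a destabilization $\ul{S}_r^{\on{ds}}$. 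Inserting quilted strips at the stretching necks records any Floer trajectories that peel off there, and inserting them at the strip-like ends of $\ul{S}_r$ records the usual end-of-strip trajectory breaking.

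For part (b), I would rule out $Z \neq \emptyset$ by the standard energy-index argument in the monotone, admissible regime. Under $N_L \geq 3$, a disk bubble attached at a seam or boundary point has Maslov index $\geq N_L$ and reduces the formal dimension of the configuration stratum by at least $N_L - 2 \geq 1$; a sphere bubble reduces it by at least $2N - 2 \geq 2$ under the shared monotonicity. Consequently any stratum that carries a genuine sphere or disk bubble has formal dimension strictly below $d$, and the hypothesis that all moduli spaces are regular forces such strata to be empty when $d \leq 1$. The alternative ``vanishing disk invariant'' clause in the admissibility definition absorbs the extremal case via a cancellation argument in the monotone Fukaya setting of \cite{fuk:garc}. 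Bubbling on the strip-like ends, which is trajectory breaking and not point bubbling, is already encoded in the destabilization and therefore consistent with $Z = \emptyset$.

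The main obstacle will be the careful neck-stretching analysis in the quilted setting, in particular when several gluing parameters $\gamma_\nu^i$ tend to zero at different rates simultaneously while $r'_\nu$ also moves within $\RR_\Gamma$. I would handle this by a diagonal extraction argument combined with the patchwise quilted compactness already established in \cite{ww:quiltfloer}, using the recursive stratified structure given by the quilt data to reduce inductively to the single-patch, single-node case.
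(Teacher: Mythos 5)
Your proposal follows essentially the same route as the paper: part (a) is exactly the ``standard arguments'' the paper invokes (subsequence extraction over the compact base, neck-stretching via the collar/gluing structure with uniform exponential decay for varying widths, energy quantization, and $C^\infty_{\on{loc}}$ convergence away from a finite bubbling set), and part (b) is the same monotonicity energy--index argument: any sphere or disk bubble carries index at least two, so the configuration with bubbles removed would lie in a moduli space of negative expected dimension, contradicting the regularity hypothesis. The only difference is your aside that the minimal-Maslov-two case needs a separate ``cancellation'' argument via the vanishing disk invariant; this is unnecessary here, since even Maslov index two bubbles reduce the index by two and the regularity assumption already rules them out (the disk-invariant cancellation only enters later, in the $\partial^2 = w(\ul{L})\Id$ discussion of the Maslov two case).
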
 

\begin{proof} The proof of the first statement is a combination of
  standard arguments (exponential decay on strip-like ends, energy
  quantization for sphere and disk bubbles as well as Floer
  trajectories) and left to the reader. In particular, uniform
  exponential decay results are also proved in \cite[Lemma
  3.2.3]{ww:isom} for one varying width; the case of several varying
  widths is similar. For the second statement, suppose a sphere or
  disk bubble develops for some sequence
  $\ul{u}_\nu: \ul{S}_\nu \to \ul{M}$.  Any such sphere or disk bubble
  captures Maslov index at least two, by monotonicity, and so the
  index of the limiting configuration $\ul{u}:\ul{S} \to \ul{M}$
  without the bubbles (obtained by removal of singularities) is at
  most two less than the indices of the maps $\ul{u}_\nu$ in the
  sequence.  But this implies that $\ul{u}$ lies in a component of the
  moduli space with negative expected dimension, contradicting the
  regularity assumption.
\end{proof}  

The Theorem does not quite show that the moduli spaces are compact;
for this one needs to show in addition that convergence in the
topology whose closed sets are closed under Gromov convergence is the
same as Gromov convergence, see \cite[5.6.5]{ms:jh}. 

Next we turn to transversality.  The following is a more precise
version of Theorem \ref{B} of the introduction, that for a
sufficiently generic choice of perturbation data the moduli space is a
smooth manifold of expected dimension.

\begin{theorem} \label{familytransversality} {\rm (Existence of a
    regular extension of perturbation data over the interior)} Let
  $\SS \to \RR$ be a family of quilts with patch labels $\ul{M}$ and
  boundary/seam conditions $\ul{L}$ as in Definition \ref{datum} so
  that $\ul{M},\ul{L}$ are in particular monotone.  Suppose that a
  collection of perturbation data $(\ul{J},\ul{K})$ on the restriction
  of $\SS \to \RR$ to the stratified boundary $\partial_s \RR$ is
  given such that holomorphic quilted surfaces with strip-like ends of
  formal dimension at most one are regular.  Let $U$ be a sufficiently
  small open neighborhood of the stratified boundary $\partial_s \SS$
  with compact complement
\footnote{Hence including some subset of the strip-like
    ends over $\RR$ as well as the entire family over a neighborhood of
    $\partial_s R$ } 
and let $(\ul{J}_0,\ul{K}_0)$ be a pair over $\SS$ agreeing with the
complex structures obtained by gluing on $U$.  There exists a comeager
subset $\PP^{\on{\reg}}(\SS)$ of the set $\PP(\SS)$ of perturbations
$(\ul{J},\ul{K})$ agreeing with $(\ul{J}_0,\ul{K}_0)$ on slightly
smaller open neighborhood $V \subset U$ of $\partial_s \SS$ such that
every holomorphic quilt with strip-like ends with formal dimension at
most one is parametrized regular.
\end{theorem}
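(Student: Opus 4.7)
The plan is to construct a universal moduli space over $\PP(\SS)$ and apply the Sard-Smale theorem, adapting the transversality theory for pseudoholomorphic quilts developed in \cite{ww:quiltfloer, ww:isom} to the parametrized, constrained-perturbation setting. First, fix $p > 2$ and let $\B$ denote the Banach manifold of triples $(r, \ul{u})$ with $r \in \RR$ and $\ul{u} : \ul{S}_r^{\on{ds}} \to \ul{M}$ of Sobolev class $W^{1,p}$ satisfying the Lagrangian seam and boundary conditions and converging to prescribed asymptotic limits $\ul{y}_{\ul{e}}$ at each end. Give the space of admissible perturbations a Banach structure using either a $C^\ell$-completion with $\ell$ large or Floer's $C^\eps$-space of smooth perturbations agreeing with $(\ul{J}_0, \ul{K}_0)$ on $V$. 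The universal moduli space
\begin{equation*}
\M^{\on{univ}} := \{(r, \ul{u}, \ul{J}, \ul{K}) \in \B \times \PP(\SS) : \ul{u} \text{ is } (\ul{J}, \ul{K})\text{-holomorphic}\}
\end{equation*}
is the zero set of the Fredholm section $\cF$ of the Banach bundle whose fiber over $(r, \ul{u}, \ul{J}, \ul{K})$ is $\Omega^{0,1}(\ul{S}_r^{\on{ds}}, \ul{u}^*T\ul{M})_{0,p}$.

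Second, I would show that the full linearization $D\cF$ is surjective at every zero. Its restriction in the $(\delta r, \ul{\xi})$-directions is the operator \eqref{linop}, typically non-surjective, while the $(\delta \ul{J}, \delta \ul{K})$-variation contributes an additional term $T(\delta \ul{J}, \delta \ul{K})$ to the image. By a standard duality argument, surjectivity reduces to showing that any $\eta$ in the $L^q$-cokernel ($1/p + 1/q = 1$) which is $L^2$-orthogonal to the image of $T$ for all admissible variations must vanish. At a somewhere-injective point $z \in \ul{S}_r^{\on{ds}} \setminus V$, one constructs $(\delta \ul{J}, \delta \ul{K})$ supported in a small neighborhood of $z$ pairing nontrivially with $\eta$ unless $\eta$ vanishes near $z$; unique continuation for the adjoint Cauchy-Riemann operator with real-analytic seam conditions (Definition \ref{qsurf}) then forces $\eta \equiv 0$ globally. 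Existence of simple points outside $V$ follows from the authors' theory of simple points for quilted maps, after shrinking $V$ to a narrow collar of $\partial_s \SS$ so that its complement meets every patch openly; if a patch has constant image, the seam conditions and asymptotic limits propagate the constancy to a configuration that, combined with the regularity assumption on $\partial_s \RR$ and the bound $d \leq 1$, leads to a negative-dimensional component, a contradiction.

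Third, once $D\cF$ is surjective, the implicit function theorem makes $\M^{\on{univ}}$ a Banach manifold, and Sard-Smale applied to the projection $\pi : \M^{\on{univ}} \to \PP(\SS)$ yields a comeager set of regular values in the $C^\ell$ (or $C^\eps$) topology. A standard Taubes-type intersection argument over $\ell \to \infty$ then produces the desired comeager subset $\PP^{\reg}(\SS)$ of smooth perturbations agreeing with $(\ul{J}_0, \ul{K}_0)$ on a slightly smaller neighborhood $V$ of $\partial_s \SS$. The hardest step is the second one: in the quilted parametrized setting the notion of somewhere-injective point must simultaneously avoid the seams, the strip-like ends, the nodes being opened by the family, and the fixed region $V$, while remaining generic in each patch; bookkeeping the various degenerations together with the real-analytic unique continuation across seams is the technical heart of the argument, and depends essentially on the compatibility of $(\ul{J}_0, \ul{K}_0)$ with the gluing construction built into Definition \ref{datum}.
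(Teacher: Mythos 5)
Your overall architecture (universal moduli space, Hahn--Banach duality for surjectivity, Sard--Smale, Taubes intersection over $\ell\to\infty$) matches the paper's proof of Theorem \ref{familytransversality}. But there are two substantive discrepancies, one of which is a genuine gap.

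First, you have no argument for quilts whose parameter $r$ lies near $\partial_s\RR$. By the footnote in the statement, the neighborhood $U$ (hence also $V$) contains the \emph{entire fibers} $\ul{S}_r$ for $r$ in a neighborhood of $\partial_s\RR$, because the perturbation data there is forced by the gluing construction from the boundary data. For such $r$ there is no point of the domain at which the perturbation is free to vary, so your cokernel argument produces nothing; your proposed fix of ``shrinking $V$ to a narrow collar of $\partial_s\SS$ so that its complement meets every patch openly'' is not permissible in this setup. The paper handles this with a separate first step: if a sequence of non-regular quilts of index $\le 1$ had parameters $r_\nu$ converging to $\partial_s\RR$, Gromov compactness plus the energy--index relation and monotonicity would force a limit configuration of negative index over the boundary, contradicting the regularity hypothesis there; hence all quilts over a sufficiently small $U$ are automatically regular, and Sard--Smale is only needed over the compact complement. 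You need this step.

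Second, your surjectivity argument routes through somewhere-injective points and an unproven ``theory of simple points for quilted maps.'' This is unnecessary here: the perturbations are domain-dependent, and the paper uses only the Hamiltonian term $(\delta\ul{Y})^{0,1}$, choosing $\delta\ul{K}_n$ supported near an arbitrary interior point $z$ of a patch so that $(\delta\ul{Y}_n)^{0,1}$ concentrates to $\delta_z\otimes\eta(z)$, forcing $\eta(z)=0$ without any injectivity hypothesis on $\ul{u}$; unique continuation for $D^*\eta=0$ then finishes. Your route could perhaps be completed, but establishing somewhere-injectivity for quilts (with possibly constant or multiply covered patches) is exactly the kind of delicate issue the construction is designed to avoid, and as written it is a gap rather than a proof.
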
 

\begin{proof}   
  First we note regularity of any perturbation system for quilts near
  the boundary of the moduli space.  Indeed for sufficiently small
  $U \subset \RR$ such that $\RR \setminus U$ is compact, every
  pseudoholomorphic quilt $(r,u)$ with $r \in U$ of formal dimension
  at most one is regular.  Indeed, otherwise we would obtain by Gromov
  convergence a sequence $(r_\nu,u_\nu)$ with $r_\nu$ converging to a
  point in the boundary of $\RR$.  By Gromov compactness, the maps
  $u_\nu$ converge to a map $u: C \to X$ where the domain $C$ is the
  quilted surface corresponding to $r$ with a collection of disks,,
  spheres, and Floer trajectories added.  After removing these
  additional disk, spheres one obtains a configuration with lower
  energy.  By the energy-index relation \cite[(4)]{ww:quilts}, the
  index of the resulting configuration would be at most $-1$, and so
  does not exist by the regularity assumption on the boundary.  Hence
  disk and sphere does not occur and the domain of $u$ is the quilted
  surface corresponding to $r$, up to the possible addition of strips
  at the strip-like ends.  Since $(r,u)$ is regular by assumption,
  $(r_\nu,u_\nu)$ is also regular by standard arguments involving
  linearized operators.

A regular extension of the perturbation system over the interior of
the family is given by the Sard-Smale theorem.  In order to apply it
we introduce suitable Banach manifolds of almost complex structures.
Let $\J_U(\ul{M},\ul{\omega})$ be the set of all smooth
$\ul{\omega}$-compatible almost complex structures parametrized by
$\SS$, that agree with the original choice on $\SS\setminus
\pi^{-1}(V)$ and with the given choices on the images of the
strip-like ends.  For a sufficiently large integer $l$ let $\J^l$
denote the completion of $\J_U(\ul{M},\ul{\omega})$ with respect to
the $C^l$ topology.  The tangent space to $\J^l$ is the linear space
\label{isthelinearspace}
$$ \T_J^l := T_{\ul{J}} \J^l = \left\{ \delta \ul{J} \in
C^l(\SS\times T\ul{M}, T\ul{M}) \left| 
\begin{array}{l}  \delta \ul{J} \circ
\ul{J} + \ul{J}\circ \delta \ul{J} = 0 \\  
\ul{\omega}(\delta \ul{J}(\ul{v}), \ul{w}) + \ul{\omega}(v, \delta
\ul{J} (w)) = 0 \\   \delta\ul{J} \lvert_{\SS\setminus
  \pi^{-1}(U)} \equiv 0 \end{array} \right. \right\} .$$
For small $\delta \ul{J}$ there is a smooth exponentiation map to
$\J^l$, given explicitly by
$\delta \ul{J} \mapsto \ul{J} \circ \exp(-\ul{J}\circ \delta \ul{J})$.
Similarly we introduce suitable Banach spaces of Hamiltonian
perturbations.  Let $\mathcal{K}^l$ denote the completion in the $C^l$
norm of the subset $\Omega^1_{\SS/\RR}(\cH(\ul{M}); \ul{K})$ of
$\Omega^1_{\SS/\RR}(\cH(\ul{M}))$ consisting of 1-forms that equal
$\ul{K}$ on the complement of the inverse image of $V$.  The tangent
space $\T_{K}^l := T_{\ul{K}} \mathcal{K}^l$ consists of
$\delta \ul{K} \in \Omega^1_{\SS/\RR}(\cH(\ul{M}))$ such that
$\delta \ul{K} \lvert_{\SS\setminus V} \equiv 0$.  Such elements can
be exponentiated to elements of $\mathcal{K}^l$ via the map
$\ul{K} + \delta \ul{K}$.

Construct a smooth universal space of pseudoholomorphic quilts as follows.
Let
$$\B = \{ (r, \ul{u}) | r \in \RR, \ul{u}: (\ul{S}_r,\ul{I}_r) \to
(\ul{M}, \ul{L}) \} $$
denote the space of pairs $r$ in the parameter space $\RR$ and maps
$\ul{u}$ from $\ul{S}_r$ to $\ul{M}$ of Sobolev class $W^{1,p}$ with
seams/boundaries in $\ul{L}$ and
$$\cE = \bigcup_{(r,\ul{u}) \in \B} \Omega^{0,1}_{\SS/\RR}(\ul{u}^* T
\ul{M})_{0,p} .$$
Here it suffices to consider the case that the domain of $\ul{u}$ is a
stable surface, since the strip components are already assumed
regular.  Since the bundle $\SS \to \RR$ is
trivializable, \label{trivable} $\cE$ is a Banach vector bundle of
class $C^{l-1}$.  The universal moduli space
$\M^{\on{univ}}(\ul{y}_{\ul{e}}, \ul{e} \in \mE) \subset \B \times
\J^l\times \mathcal{K}^l$
is a Banach submanifold of class $C^{l-1}$, by a discussion parallel
to \cite[Lemma 3.2.1]{ms:jh}.  Indeed, the universal moduli space
$\M^{\on{univ}}(\ul{y}_{\ul{e}}, \ul{e} \in \mE)$ is the intersection
of the section
$$ \ol{\partial}: \B \times \J^l\times \mathcal{K}^l \to \cE, \quad
(r, \ul{u}, \ul{J}, \ul{K}) \mapsto \ol{\partial}_{\ul{J},
  \ul{K}}(r, \ul{u}) $$
with the zero-section of the bundle $\cE \to \B$:
$$\M^{\on{univ}}(\ul{y}_{\ul{e}},
\ul{e} \in \mE) := \ol{\partial}^{-1}(0) .$$
To show that the universal moduli space is a Banach manifold, it
suffices to show that the linearized operator
\begin{multline}\label{linop2}
 D_{\SS, r, \ul{u},J, K} : T_{(r,\ul{u})} \B \times \T_J^l \times
 \T_K^l \lra \cE_{\SS,r,\ul{u}} \\ ( \delta r, \ul{\xi}, \delta
 \ul{J}, \del \ul{K}) \mapsto (\delta \ul{Y})^{0,1} +
 \ul{j}_{\ul{S}_r} \circ \frac{1}{2}(d\ul{u} - \ul{Y})\circ \delta
 \ul{J} + D_{\SS, r,\ul{u}}(\delta r, \ul{\xi})
\end{multline}
is surjective at all $(r,\ul{u},\ul{J},\ul{K})$ for which
$\ol{\partial}_{(\ul{J},\ul{K})}(r,\ul{u}) = 0$.  Since the last
operator in \eqref{linop2}
is Fredholm, the image of $D_{\SS, r, \ul{u}, \ul{J},
  \ul{K}}$ is closed.  

We prove that the linearized operator cutting out the universal moduli
space is surjective.  Suppose that the cokernel is not zero.  By the
Hahn-Banach theorem, there exists a linear functional
$$\eta \in (\cE_{\SS,r,\ul{u}})^{*} = L^q(\ul{S}_r,
(\Omega_{\ul{S}_r}^{0,1}\otimes \ul{u}^*T\ul{M})),\quad 1/p + 1/q = 1 $$
that is non-zero and that vanishes on the image of the linearized
operator.  In particular $\eta$ vanishes on the image of $D_{\ul{S}_r,
  \ul{u}}$, i.e.
\bea \int_{\ul{S}_r} \langle D_{\ul{S}_r, \ul{u}} (\ul{\xi}), \eta\rangle = 0 =
\int_{\ul{S}_r} \langle \ul{\xi}, D_{\ul{S}_r, \ul{u}}^* (\eta)\rangle \eea 
for all $\ul{\xi} \in W^{1,p}(\ul{S}_r, \ul{u}^*T\ul{M})$.  This
argument implies $D_{\ul{S}_r, \ul{u}}^* (\eta) = 0$, and elliptic
regularity ensures that $\eta$ is of class $C^l$ at least in the
interiors of the patches of the quilt $\ul{S}_r$.  To prove that
$\eta$ is zero, it suffices to show that it vanishes on an open subset
of each patch of the quilt $\ul{S}_r$, since by unique continuation
for solutions of $D_{\ul{S}_r,\ul{u}}^*\eta = 0$ it follows that it
vanishes on all of $\ul{S}_r$.  We may assume that the complement of
the images of the strip-like ends contains such an open subset.
Considering the image of $(0,0, 0, \delta \ul{K})$ shows that
\bea \int_{\ul{S}_r} \langle (\delta
\ul{Y})^{0, 1}, \eta \rangle = 0 \eea 
for all $\delta \ul{Y}$, where $\delta \ul{Y}$ is the Hamiltonian
vector field associated to $\delta \ul{K}$.  But now, for each $z$ in
the complement of the inverse image of $U$ and the complement of the
images of the strip-like ends, there exists a sequence of functions
$\delta \ul{K}_n$ in $\T_{\ul{K}}^l$ that are supported on
successively smaller neighborhoods of $z$ and such that $ (\delta
\ul{Y}_n)^{0,1}$ converges to the delta function $\delta_z \otimes
\eta(z)$.  It follows that there exists a limit
\bea |\eta (z)|^2 = \lim_{n\to\infty}\int_{\ul{S}_r} \langle (\delta
\ul{Y}_n)^{0,1}, \eta \rangle = 0.  \eea 
So $\eta(z) = 0$ on an open subset of each patch of the quilt.  By
unique continuation it must vanish everywhere. Thus, $\eta = 0$, which
is a contradiction. Hence the linearized operator is surjective, so by
the implicit function theorem for $C^{l-1}$ maps of Banach spaces,
$\M^{\on{univ}}(\ul{y}_{\ul{e}}, \ul{e} \in \mE)$ is also a Banach
manifold of class $C^{l-1}$.  One can now consider the projection
\bea \Pi: \M^{\on{univ}}(\ul{y}_{\ul{e}}, e \in \mE)_d \to \J^l \times
\K^l, \ \ (r,\ul{u}, \ul{J}, \ul{K}) \mapsto (\ul{J}, \ul{K}) \eea
on the subset $\M^{\on{univ}}(\ul{y}_{\ul{e}}, \ul{e} \in \mE)_d $ of
parametrized index $d$, which is a Fredholm map between Banach
manifolds of index $d$.  By the Sard-Smale theorem, for $l >
\max(d,0)$ the subset of regular values 
$$\PP_{\reg}^l = \{ (\ul{J},\ul{K}) \in  \J^l
\times \K^l \ | \  \coker(D_{r,\ul{u},\ul{J},\ul{K}} \Pi) = 0, \ \ 
\forall (r,\ul{u}, \ul{J}, \ul{K}) \in \M^{\on{univ}}(\ul{y}_{\ul{e}} ) \} $$
is comeager, hence dense. Now the regular values of the projection
correspond precisely to regular perturbation data for the moduli
spaces $\M(\ul{y}_{\ul{e}}, \ul{e} \in \mE)$, thus the subset of
regular $C^l$-smooth perturbation data comeager in $\J^l \times \K^l$.
 
The final step is to pass from $C^l$-smooth regular perturbation data,
to $C^\infty$ regular perturbation data. This is a standard argument
due to Taubes, see Floer-Hofer-Salamon \cite{fhs:tr} and
McDuff-Salamon \cite{ms:jh} for its use in pseudoholomorphic curves,
which we explain in the unquilted case for simplicity.  Let us write
$$\J := \bigcap_{l\geq 0} \J^l, \quad \K := \bigcap_{l\geq 0} \K^l,
\quad \PP := \J\times \K$$
with the $C^\infty$ topology on each factor.  Let $C > 0$ be a
constant such that any pseudoholomorphic quilt has exponential decay
satisfying $| \d u (s,t) | < \exp( -C s)$ for all $(s,t)$ coordinates
on each end $\ul{e} \in \mE$, see \cite[Theorem 5.2.4]{ww:quiltfloer}.
Let $\psi: S \to \R$ be a positive function given by $\psi(s,t) = s$
on each end, for each surface $S_r, r \in \RR$.  For $k > 0$, let
$\PP_{\reg, k} \subset \PP$ consist of the perturbation data for which
the associated linearized operators $D_{\SS, \ul{u}}$ are surjective
for all $\ul{u} \in \M(\ul{y}_{\ul{e}}, \ul{e} \in \mE)$ satisfying
\begin{eqnarray}
  \ind D_{\SS, \ul{u}} \in \{ 0, 1 \},\ \ \|\d \ul{u} \exp( C
  \psi)\|_{L^\infty} \leq k.
\end{eqnarray}
The index restrictions above suffice since we only consider moduli
spaces of expected dimension 0 and 1.  We will show that the set
$$ \PP_{\reg} := \bigcap_{k > 0 } \PP_{\reg, k}$$
is comeager in $\PP$, by showing that each of the sets $\PP_{\reg, k}$
is open and dense in $\PP$ with respect to the $C^\infty$ topology.

To show that $\PP_{\reg,k}$ is open, consider a sequence
$\{(\ul{J}_\nu, \ul{K}_\nu)\}_{\nu=1}^{\infty}$ in the complement of
$\PP_{\reg, k}$, converging in the $C^\infty$ topology to a pair
$(\ul{J}_\infty, \ul{K}_\infty)$.  We claim that $(\ul{J}_\infty,
\ul{K}_\infty) \notin \PP_{\reg,k}$. By assumption, there exists a
sequence $\ul{u}_{\nu}$ such that
$$\ol{\partial}_{\ul{J}_\nu, \ul{K}_\nu} \ul{u}_\nu = 0 \quad \ind
D_{\SS, \ul{u}} \in \{ 0 , 1 \}, \quad \|\ul{u}_\nu\|_{L^\infty} \leq
k .$$
We may take the elements of the sequence to have the same index.  The
uniform bound on the derivative $\|\d \ul{u}_\nu \exp(C \psi)
\|_{L^\infty} \leq k$ implies that $\ul{u}_\nu$ converges to a
pseudoholomorphic quilt.  Since surjectivity of Fredholm operators is an
open condition, $D_{\SS, \ul{u}_\nu}$ must be surjective for
sufficiently large $\nu$. This argument proves that $\PP_{\reg, k}$ is
open in $\PP$ for each $k>0$.

To show that $\PP_{\reg, k}$ is dense, note that we can write
$\PP_{\reg,k} = \PP^l_{\reg,k} \cap \PP$, where the definition of
$\PP^l_{\reg,k}$ is the same as the definition for $\PP_{\reg,k}$, but
as a subset of $\PP^l$.  The argument given above to prove that
$\PP_{\reg,k}$ is open in $\PP$ with respect to the $C^\infty$
topology can be repeated to show that for all sufficiently large $l$
the subset $\PP_{\reg,k}^l$ is open in $\PP^l$ with respect to the
$C^l$ topology.  The set $\PP_{\reg}^l$ is dense $\PP^l$,
and since $\PP_{\reg, k}^l \supset \PP_{\reg}^l$, this implies that
$\PP_{\reg,k}^l$ is dense in $\PP^l$.  So fix $(\ul{J}, \ul{K}) \in
\PP$.  We find a sequence $(\ul{J}_\nu, \ul{K}_\nu) \in \PP_{\reg,k}$
that converges to $(\ul{J}, \ul{K})$ in the $C^\infty$ topology.  
Consider a sequence
$$(\ul{J}_l, \ul{K}_l) \in \PP_{\reg,k}^l, \quad\|
\ul{J}_l - \ul{J}\|_{C^l} + \|\ul{K}_l - \ul{K}\|_{C^l} \leq 2^{-l} .$$
Such a sequence exists because $\PP_{\reg,k}^l$ is dense in $\PP^l$
for each $l$, and $(\ul{J}, \ul{K}) \in \PP \subset \PP^l$.  Now, by
assumption $\PP_{\reg,k}^l$ is open in $\PP^l$, and so for each
$(\ul{J}_l,\ul{K}_l) \in \PP^l$ there exists an $\eps_l >0$ such that
$$
\|\ul{J} - \ul{J}_l\|_{C^l} + \|\ul{K} - \ul{K}_l\|_{C^l} < \eps_l \implies (\ul{J}, \ul{K}) \in \PP_{\reg,k}^l $$
for all $(\ul{J},\ul{K}) \in \PP^l$. Finally, $\PP$ is dense in $\PP^l$
for each $l$ (i.e. $C^\infty$ functions are dense in the space of
$C^l$ functions). Therefore, for each $l$ we may find an element
$(\tilde{\ul{J}}_l, \tilde{\ul{K}}_l) \in \PP$ such that
$$
\| \tilde{\ul{J}}_l - \ul{J}_l \|_{C^l} + \|\tilde{\ul{K}}_l -  \ul{K}_l\|_{C^l} < \min \{\eps_l, 2^{-l} \}. $$
Thus, every term in the sequence $(\tilde{\ul{J}}_l,
\tilde{\ul{K}}_l)$ is in $\PP \cap \PP_{\reg,k}^l = \PP_{\reg,k}$, and
it converges in all $C^l$ norms, hence in the $C^\infty$ topology, to
the pair $(\ul{J},\ul{K})$.  Thus, $\PP_{\reg} = \underset{k>0}{\cap}
\PP_{\reg,k}$ is a countable intersection of open, dense sets in
$\PP$ as claimed.
\end{proof}

\begin{remark}
\begin{enumerate} 
\item 
 {\rm (Zero and one-dimensional components of the moduli spaces)} For
 $d = 0$, the moduli space $\M_\SS(\ul{y}_{\ul{e}}, \ul{e} \in \mE)_d$
 lies entirely over the highest dimensional strata of $\RR$.  On the
 other hand for $d = 1$ the intersection with the highest dimensional
 strata is one-dimensional, while the intersection with the
 codimension one strata is a discrete set of points.
\item {\rm (Comparison with Seidel)} Seidel's book \cite{se:bo} uses
  perturbations that are supported arbitrarily close to the boundary.
  The advantage of these is that one can make the higher-dimensional
  moduli spaces regular as well.  However, only the zero and
  one-dimensional moduli spaces are needed here.
\end{enumerate} 
\end{remark} 

\begin{remark} {\rm (Orientations for families of pseudoholomorphic quilts)}  
To define family quilt invariants over the integers we require that
the moduli spaces are oriented.  Orientations on the moduli spaces may
be constructed as follows \cite{orient}.  At any element $(r,\ul{u})
\in \M_\SS(\ul{y}_{\ul{e}}, \ul{e} \in \mE)$ the tangent space to the
moduli space of pseudoholomorphic quilts is the kernel of the linearized
operator \eqref{linop}.  The operator $D_{\ul{u}}$ is canonically
homotopic to the operator $0 \oplus D_{\ul{u},r}$ (the latter is the
operator for the trivial family $ \{ r\}$, that is, the unparametrized
linearized operator) via a path of Fredholm operators.  This induces
an isomorphism
\begin{equation} \label{split1}
 \det(T_{(r,\ul{u})} \M(\ul{y}_{\ul{e}}, \ul{e} \in \mE)) \to \det(T_r
 \RR) \otimes \det(D_{\ul{u}}) .\end{equation}
 First one deforms the seam conditions $ \ul{u}^* T \ul{L}$ to
 condition of split type, that is for each seam $I$ adjacent to
 patches $S_{p_-}, S_{p_+}$ deform the map $I \to \Lag(M_{p_-} \times
 M_{p_+})$ defined by $u,\ul{L}$ to a map $I \to \Lag(M_{p_-}) \times
 \Lag( M_{p_+})$.  This deformation identifies the corresponding
 determinant lines and reduces the claim to the case of an unquilted
 pseudoholomorphic map $u: \ul{S}_r \to M$ with boundary condition $L$.  The
 determinant line $\det(D_{u})$ is oriented by ``bubbling off
 one-pointed disks'', see \cite[Theorem 44.1]{fooo} or \cite[Equation (36)]{orient}.  The
 orientation at $u$ is determined by an isomorphism \label{familytransversality2}
 \begin{equation} \label{split2} \det(D_{{u}}) \cong \DD^+_{{x}_0}
   \DD^-_{x_1} \ldots \DD^-_{x_d}
 \end{equation}
where $\DD^+_{x_j}$ are determinant lines associated with one-marked
disks with marking ${x}_j$, $\DD^-_{x_j}$ is the tensor product of the
determinant line for the once-marked disk with $\det(T_{x_j}L)$ and
the orientations on $\DD^\pm_{x_j}$ are chosen so that there is a
canonical isomorphism $\DD^-_{x_j} \otimes \DD^+_{x_j} \to \R .$ The
isomorphism \eqref{split2} is determined by degenerating surface with
strip-like ends to a nodal surface with each end replaced by a disk
with one end attached to the rest of the surface by a node.  The
boundary condition on these disks is given by a chosen path of
Lagrangian subspaces in the tangent space at the end.  Furthermore,
the Lagrangian boundary condition is deformed to a constant boundary
condition using the relative spin structure.  These choices are
analogous to choice of orientations on the tangent spaces to the
stable manifolds in Morse theory, on which the orientations of the
moduli spaces of Morse trajectories depend.
\end{remark} 

The master equation for family quilt invariants is a consequence of
the following description of the boundary of the one-dimensional
moduli spaces of quilts:

\begin{theorem} \label{masterthm} \label{familymod} {\rm (Description of the boundary of
    one-dimensional moduli spaces of pseudoholomorphic quilts)}
  Suppose that $\SS \to \RR$ is a family of quilted surfaces over a
  compact stratified space $\RR$ with a single open stratum denoted
  $\SS_0 \to \RR_0$ labelled with monotone symplectic data
  $\ul{M},\ul{L}$, and $\ul{J},\ul{K}$ are a regular set of
  perturbation data.  Then for any limits
  $\ul{y}_{\ul{e}}, \ul{e} \in \mE$
\begin{enumerate} 
\item {\rm (Zero-dimensional component)} the zero-dimensional
  component $\M_\SS(\ul{y}_{\ul{e}}, \ul{e} \in \mE)_0$ of the moduli space
  of pseudoholomorphic quilts for $\SS$ is a finite set of points and
\item {\rm (One-dimensional component)} the one-dimensional component
  $\M_{\SS_0}(\ul{y}_{\ul{e}}, \ul{e} \in \mE)_1$ has a compactification as
  a one-manifold with boundary
\begin{multline} 
 \partial \M_\SS(\ul{y}_{\ul{e}}, \ul{e} \in \mE)_1 = \M_{\partial
  \SS}(\ul{y}_{\ul{e}}, \ul{e} \in \mE)_1 \\ \cup \bigcup_{{\ul{f}} \in \mE} \M(y_{\ul{f}},y_{\ul{f}}')_0
\times 
\M_{\SS}(\ul{y}_{\ul{e}}, \ul{e} \in \mE; \ul{y}_{\ul{f}} \mapsto 
\ul{y}_{\ul{f}}')_0
\end{multline}
with sign of inclusion given by $+1$ for the first factor and $\pm 1$
for the second factor, depending on whether $\ul{y}_{\ul{f}}$ is an
incoming or outgoing end.  Here
$\M_{\SS}(\ul{y}_{\ul{e}}, \ul{e} \in \mE; \ul{y}_{\ul{f}} \mapsto
\ul{y}_{\ul{f}}')_1 $
denotes $\M_{\SS}(\ul{y}_{\ul{e}}, \ul{e} \in \mE)$ with the end label
$\ul{y}_{\ul{f}}$ replaced by $\ul{y}_{\ul{f}}')_1 $ while
$ \M(y_{\ul{f}},y_{\ul{f}}')_0$ denotes the space of Floer
trajectories from $y_{\ul{f}}$ to $y_{\ul{f}}'$ of formal dimension
$0$.
\end{enumerate} 
\end{theorem}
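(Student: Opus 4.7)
The plan is to combine the compactness result (Theorem \ref{gromov}) with the regularity result (Theorem \ref{familytransversality}) and a standard gluing analysis, essentially following the template of Seidel's book \cite{se:bo} but adapted to the quilted parametric setting.

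For the zero-dimensional component, I would first argue that $\M_\SS(\ul{y}_{\ul{e}}, \ul{e}\in\mE)_0$ is compact. By Theorem \ref{gromov}, any sequence has a Gromov convergent subsequence with limit consisting of a pseudoholomorphic quilt over a destabilization of some $\ul{S}_r$, possibly with Floer trajectories breaking off at the strip-like ends (sphere/disk bubbling is excluded for $d \leq 1$ by monotonicity combined with regularity). Each broken Floer trajectory and each collapse of a parameter $\gamma_i \to 0$ (moving to a boundary stratum $\RR_\Gamma$) decreases the expected dimension of the remaining principal component by at least one. Since $d=0$, no such breaking can occur for regular perturbation data. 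Hence the moduli space is compact, and being a zero-dimensional manifold cut out transversally by Theorem \ref{familytransversality}, it is a finite set of points.

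For the one-dimensional component, the interior is a smooth one-manifold by the implicit function theorem applied to the surjective linearized operator \eqref{linop}. To describe its boundary, I would compactify by taking the closure with respect to Gromov convergence. The possible limit configurations are of three types: (i) the parameter $r_\nu$ stays in $\RR_0$ but a Floer trajectory breaks off at some end $\ul{f}\in\mE$, yielding the contribution $\M(y_{\ul{f}},y_{\ul{f}}')_0 \times \M_\SS(\ul{y}_{\ul{e}}, \ul{e}\in\mE;\ul{y}_{\ul{f}}\mapsto\ul{y}_{\ul{f}}')_0$; (ii) the parameter $r_\nu$ converges to the stratified boundary $\partial \RR$, giving the term $\M_{\partial\SS}(\ul{y}_{\ul{e}},\ul{e}\in\mE)_1$; (iii) a combination of the two. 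Case (iii) is excluded by dimension counting for regular perturbation data, since each type of degeneration reduces the effective dimension by one. Conversely, each such limit configuration must be shown to arise as an actual boundary point via a gluing theorem: this provides local homeomorphisms from a half-neighborhood $[0,\eps)$ into the compactified moduli space, so the compactified space is a one-manifold with boundary.

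The main obstacle is the gluing step, which has two flavors. The Floer trajectory gluing at strip-like ends is essentially standard and parallel to the arguments in \cite{se:bo}, the authors' earlier work \cite{ww:quiltfloer}, and the analysis in \cite{ww:isom}. The more subtle piece is the boundary stratum gluing: one must glue a pseudoholomorphic quilt over a stratum $\RR_\Gamma$ to the neighboring top-dimensional stratum using the gluing parameter $\gamma_\Gamma$, and show that the resulting family of approximate solutions can be corrected to genuine solutions via a Newton iteration based on the uniformly invertible (with appropriate weights) right inverse of the linearized operator. Uniform exponential decay on strip-like ends as in \cite[Lemma 3.2.3]{ww:isom} is the key analytic input controlling the norms as $\gamma_\Gamma \to 0$.

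Finally, the signs on the boundary inclusions follow from the orientation conventions described after \eqref{split1}--\eqref{split2}: the splitting $\det(T_{(r,\ul{u})}\M) \cong \det(T_r\RR)\otimes\det(D_{\ul{u}})$ is compatible with degenerations, and the asymmetry between incoming and outgoing ends arises from the usual sign in the fiber product orientation of a broken Floer trajectory together with the orientation of the evaluation map at the corresponding end. The positive sign for the $\M_{\partial\SS}$ contribution is just the convention for the outward normal to $\partial\RR$.
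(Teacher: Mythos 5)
Your proposal is correct and follows essentially the same route as the paper: the paper's own (very brief) proof simply cites Gromov compactness (Theorem \ref{gromov}), the gluing along strip-like ends from Ma'u \cite{mau:gluing}, and the orientation analysis of \cite{orient}, which is exactly the compactness--transversality--gluing--orientation outline you spell out. Your additional remarks on excluding mixed degenerations by index counting and on neck-stretching at boundary strata are consistent with how the paper sets up families over stratified spaces, so nothing essential is missing.
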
 

\begin{proof}  
  The gluing theorem is proved in the same way as for Ma'u
  \cite{mau:gluing}, who considered the gluing along strip-like ends
  that arise in the definition of the generalized Fukaya category.
Compactness is Theorem \ref{gromov}.
The claim on orientations is proved in
  \cite{orient}.
\end{proof}

Finally we use the moduli spaces of quilts to construct chain-level
invariants.  Let $\RR$ be a stratified space labelled by quilt data
$\ul{M},\ul{L}$ as in Theorem \ref{B}, and $\SS \to \RR$ a family of
quilted surfaces with strip-like ends constructed in Section
\ref{varying}.

\begin{definition} {\rm (Family quilt invariants)}  
Given a regular pair $(\ul{J},\ul{K})$ as in Theorem \ref{B} we define
a (cochain level) {\em family quilt invariant}
$$ \Phi_{\SS} :\bigotimes_{\ul{e} \in \E_-(\SS)} CF(\ul{L}_{\ul{e}}) 
\to  \bigotimes_{\ul{e} \in \E_+(\SS)} CF(\ul{L}_{\ul{e}})
$$
by 
\begin{equation*} \label{defrel}
 \Phi_{\SS} \biggl( \bigotimes_{\ul{e} \in \E_-} \bra{\ul{x}^-_{\ul{e}}}
 \biggr) := \sum_{\ul{u} \in
   \M_{\SS}(\bra{\ul{x}_{\ul{e}}}_{\ul{e} \in \E_-},\bra{\ul{x}_{\ul{e}}}_{\ul{e} \in \E_+})_0} \sigma(u)
    \bigotimes_{\ul{e} \in \E_+} \bra{\ul{x}^+_{\ul{e}}}
   ,\end{equation*}
where
$$ \sigma: \M_{\SS}(x_{\ul{e}}, \ul{e} \in \mE)_0 \to \{-1,+1\} $$
is defined by comparing the orientation to the canonical orientation
of a point.  
\end{definition} 

Theorem \ref{C} follows from Theorem \ref{familymod} and the following
discussion of orientations.  In particular, if $\RR$ has no
codimension one strata, then $\Phi_{\SS}$ is a cochain map.  The case
that $\RR$ is a point was considered in \cite{ww:quilts}.

\section{The Fukaya category of generalized Lagrangian branes}
\label{parttwo}

In the remainder of the paper we apply the results of the first two
sections to construct \ainfty categories, \ainfty functors, \ainfty
pre-natural transformations and \ainfty homotopies and prove Theorems
\ref{mainfunc} and \ref{maincompose} from the introduction.  The
Fukaya category of a symplectic manifold, when it exists, is an
\ainfty category whose objects are Lagrangian submanifolds with
certain additional data, and morphism spaces are Floer cochain spaces.
In \cite{we:co} we explained that in order to obtain good
functoriality properties one should allow certain more general
objects, which we termed {\em generalized Lagrangian branes},
comprised of {\em sequences of Lagrangian correspondences}.  The
necessary analysis for defining Fukaya categories with these
generalized objects for compact monotone symplectic manifolds was
\label{parttwo2} developed by the first author in \cite{mau:gluing}, and extends the
constructions of Fukaya \cite{fuk:garc} and Seidel \cite{se:bo} to
include generalized Lagrangian branes as introduced in
\cite{we:co}.

\subsection{Quilted Floer cochain groups}
\label{concat} 

In this section we review the construction of Floer cochain groups for
certain symplectic manifolds with additional structure.  The cochain
groups are the morphism spaces in the version of the Fukaya category
on which our functors are defined.  We begin by stating the technical
hypotheses under which our Floer cochain complexes are well-defined.

\begin{definition} \label{backgrounds} 
{\rm (Symplectic backgrounds)}  
Fix a monotonicity constant $\tau\geq 0$ and an even integer $N > 0$.
A {\em symplectic background} is a tuple $(M,\omega,b,\Lag^N(M))$ as
follows.
\begin{enumerate}
\item {\rm (Bounded geometry)} $M$ is a smooth manifold, which is
  compact if $\tau>0$.
\item {\rm (Monotonicity)} $\omega$ is a symplectic form on $M$ which
  is monotone, i.e.\ $[\omega] = \tau c_1(TM) $ and if $\tau=0$
  then $M$ satisfies ``bounded geometry'' assumptions as in e.g.
  \cite{se:bo}.
\item {\rm (Background class)} $b \in H^2(M,\Z_2)$ is a {\em
  background class}, which will be used for the construction of
  orientations.
\item {\rm (Maslov cover)} $\Lag^N(M) \to \Lag(M)$ is an $N$-fold
  Maslov cover in the sense of \cite{se:gr}, \cite{ww:quiltfloer} such
  that the induced $2$-fold Maslov covering $\Lag^2(M)$ is the
  oriented double cover.
\end{enumerate}
\end{definition}

We often refer to a symplectic background
$(M,\omega,b,\Lag^N(M))$ as $M$.

\begin{example}  {\rm (Point background)}  
The point $M=\{\pt\}$ can be viewed as a canonical $\tau$-monotone,
$N$-graded symplectic background $(\{\pt\},\omega=0,b=0,\Lag^N(\pt))$,
which we denote by $\pt$.
\end{example} 

Next introduce Lagrangian branes, which will be the objects of the
Fukaya categories we consider.  Let $M$ be a symplectic background.

\begin{definition} \label{admissible}  {\rm (Admissible Lagrangians)} 
\ben
\item 
A Lagrangian submanifold $L \subset
M$ is {\em admissible} if
\ben
\item $L$ is compact and oriented;
\item $L$ is monotone, that is, for $u : (D,\partial D) \to (M,L)$ the
  symplectic action $A(u)$ and index $I(u)$ are related by
$$ 2 A(u) = \tau I(u) \quad \forall u : (D,\partial D) \to (M,L) , $$
where $\tau$ is the monotonicity constant for $M$;
\item $L$ has minimal Maslov number at least $3$, or minimal Maslov
  number $2$ and disk invariant $\Phi_L = 0$ in the sense of
  \cite{oh:fl1} (that is, the signed count of Maslov index $2$ disks
  with boundary on $L$); and
\item the image of $\pi_1(L)$ in $\pi_1(M)$ is torsion, for any
  choice of base point.  \een
\item 
An {\em admissible grading} of an oriented Lagrangian submanifold $L
\subset M$ is a lift 
$$\sigma_L^N: L \to \Lag^N(M)$$ 
of the canonical section $L \to \Lag(M)$ such that the induced lift
$\sigma_L^2$ equals to the lift induced by the orientation.  See
\cite{ww:quiltfloer} for details.
\item A {\em relative spin structure} on an admissible Lagrangian
  submanifold $L\subset M$ with respect to the background class 
$$b \in H^2(M,\Z_2)$$ 
is \cite{fooo},\cite{orient} a lift of the class of $TL$ defined in
the first relative \v{C}ech cohomology group for the inclusion $i: L
\to M$ with values in $\on{SO}(\dim(L))$ to first relative \v{C}ech
cohomology with values in $\on{Spin}(\dim(L)))$, with associated class
$b$.  \een
\end{definition}  

Recall that a {\em Lagrangian correspondence} is a Lagrangian
submanifold of a product of symplectic manifold with the symplectic
form on the first factor reversed.  Given symplectic manifolds $M_0$
and $M_1$ and a Lagrangian correspondence
$L_{01} \subset M_0^- \times M_1$, the {\em transpose} of $L_{01}$ is
the generalized Lagrangian correspondence $L_{01}^T$ from $M_1$ to
$M_0$ obtained by applying the anti-symplectomorphism
$M_0^- \times M_1 \to M_1^- \times M_0, \ (m_0,m_1) \mapsto (m_1,m_0)$
to $L_{01}$.

\begin{definition}    \label{branes} 
{\rm (Generalized Lagrangian branes)} Let ${M_s} :=
  (M_s,\omega_s,b_s,\Lag^N(M_s))$ and ${M_t} :=
  (M_t,\omega_t,b_t,\Lag^N(M_t))$ be two symplectic backgrounds.  A
  {\em generalized Lagrangian brane from ${M_s}$ to ${M_t}$} is a tuple
  $\ul{L}=(L_{(j-1)j})_{j=1,\ldots,r}$ of length $r \ge 0$ of
  Lagrangian correspondences equipped with gradings, relative spin
  structures, and widths as follows.
\begin{enumerate}
\item {\rm (Sequence of backgrounds)} 
$(N_i,\omega_i,b_i,\Lag^N(N_i))_{i=0,\ldots,r}$ is a sequence of symplectic backgrounds such that $N_0 = M_s$ and
  $N_r = M_t$ as symplectic backgrounds;
\item {\rm (Sequence of correspondences)} $L_{(j-1)j} \subset
  N_{j-1}^-\times N_{j}$ is an admissible Lagrangian submanifold for
  each $j=1,\ldots,r$ with respect to $-\pi_{j-1}^* \omega_{j-1} +
  \pi_{j}^* \omega_{j}$, where $\pi_{j-1},\pi_j$ are the projections
  to the factors of $N_{j-1}^-\times N_{j}$;
\item {\rm (Gradings)} a grading on $\ul{L}$, by which we mean a
  collection of gradings
$$\sigma_{L_{(j-1)j}}^N: L_{(j-1)j} \to \Lag^N(N_{j-1}^-\times N_j)$$
for $j=1,\ldots,r$ with respect to the Maslov cover induced by the
product of covers of $N_{j-1}$ and $N_j$.  
\item {\rm (Relative spin structures)} a relative spin structure on
  $\ul{L}$ is a collection of relative spin structures on $L_{(j-1)j}$
  for $j=1,\ldots,r$ with background classes $-\pi_{j-1}^* b_{j-1} +
  \pi_{j}^* b_{j}$;
\item {\rm (Widths)} a collection of {\em widths}
  $\ul{\delta}=(\delta_j>0)_{j=1,\ldots,r-1}$.
\end{enumerate}
Let ${M} := (M,\omega,b,\Lag^N(M))$ be a symplectic background.  Then
a {\em generalized Lagrangian brane in ${M}$} is a generalized
Lagrangian brane from $\pt$ to $M$.
\end{definition} 

Next we define brane structures on Lagrangian correspondences.  Given
symplectic backgrounds $M_s, M_t,M_u$ with the same monotonicity
constant, admissible generalized Lagrangian correspondence branes
$\ul{L}^+,\ul{L}^-$ from $M_s$ to $M_t$ resp. $M_t $ to $M_u$ with the
same background class in $M_t$ and width $\eps > 0$ we can concatenate
them to obtain a generalized Lagrangian correspondence
$\ul{L}^+\sharp_\eps \ul{L}^-$ from $M_s$ to $M_u$.  More precisely,
we define $\ul{L}:=\ul{L}^+ \sharp_\eps (\ul{L}^-)^T$ to be the
generalized Lagrangian correspondence with gradings and relative spin
structures, given as follows:
\begin{enumerate}
\item
symplectic backgrounds with the same monotonicity constant
indexed up to $r:=r^++r^-$
\begin{align*}
(N_0,\ldots,N_r) :=
\bigl(M_s=N^+_0,\ldots,N^+_{r^+}=M_t=N^-_{r^-},\ldots,N^-_0=M_u \bigr);
\end{align*}
\item the admissible Lagrangian submanifolds
\begin{equation} \label{algcomp}
(L_{01},\ldots,L_{(r^++r^--1)(r^++r^-)}):=
  \bigl(L^+_{01},\ldots,L^+_{(r^+-1)r^+},L^-_{01}, \ldots ,L^-_{(r^-
    - 1)r^-} \bigr);
\end{equation}
\item the relative spin structures on $L^+_{(j-1)j}$ for
  $j=1,\ldots,r^+$ and the relative spin structures on $L_{(j-1)j}^-$
  induced from those on $L_{(j-1)j}^-$ for $j=1,\ldots,r^-$;
\item the widths are those of $\ul{L}^+,\ul{L}^-$ together with
  $\eps$.
\end{enumerate}

\noindent In particular given symplectic backgrounds $M_s, M_t$ with the same
monotonicity constant, admissible generalized Lagrangian
correspondence branes $\ul{L}^+,\ul{L}^-$ from $M_s$ to $M_t$, and
width $\eps > 0$ we can transpose one and then concatenate them to
obtain a cyclic Lagrangian correspondence $\ul{L}^+\sharp_\eps
(\ul{L}^-)^T$.  Here the gradings $\sigma_{L^-_{(j-1)j}}^N$ of
$L^-_{(j-1)j}$ for $j=1,\ldots,r^+$ inducing gradings of
$(L_{(j-1)j}^-)^T$ for $j=1,\ldots,r^-$ and similarly for the relative
spin structures.  The resulting sequence can be visualized as
$$
\begin{diagram} \node{M_s=N^+_0} \arrow{s,r}{=} 
\arrow{e,t}{L^+_{01}} 
\node{\ldots} \arrow{e,t}{L^+_{(r^+-1)r^+}} \node{N^+_{r^+} = M_t} \arrow{s,r}{=} 
\\
\node{M_s = N_0^-} 
\node{\ldots}
\arrow{w,t}{(L_{01}^-)^T} 
\node{ N^-_{r^-} = M_t}
\arrow{w,t}{(L_{(r^--1)r^-}^-)^T} 
\end{diagram} 
$$

The Floer cohomology of a cyclic generalized Lagrangian correspondence
is defined as follows.  Choose regular Hamiltonian perturbations
$$\ul{H}\in \oplus_{j=0}^r C^\infty([0,\delta_j] \times N_j) ,$$ 
and almost complex structures
$$\ul{J}\in \oplus_{j=0}^r C^\infty([0,\delta_j] ,\J(N_j,\omega_j)) $$
as in \cite{ww:corr}.  The generators of the quilted Floer cochain
complex of a generalized Lagrangian brane $\ul{L}$ are the perturbed
intersection points
$$ \cI(\ul{L}) := \left\{ \ul{x}=\bigl(x_j: [0,\delta_j] \to
N_j\bigr)_{j=0,\ldots,r} \, \left|
\begin{aligned}
\dot x_j(t) = Y_j(t,x_j(t)), \\ (x_{j}(\delta_j),x_{j+1}(0)) \in
L_{j(j+1)}
\end{aligned} \quad \forall j \right.\right\} .
$$
Here $Y_j$ is the Hamiltonian vector field corresponding to $H_j$.
The gradings on $\ul{L}$ induce a grading $|\ul{x}|\in \Z_N$ for
$\ul{x}\in\cI(\ul{L})$, and hence induce a $\Z_N$-grading on the
space of quilted Floer cochains
$$ CF(\ul{L}) := \bigoplus_{\ul{x}\in\cI(\ul{L})} \Z \bra{\ul{x}} =
\bigoplus_{k \in \Z_{N}} CF^k(\ul{L}), \qquad CF^k(\ul{L}) :=
\bigoplus_{|\ul{x}|=k} \Z \bra{\ul{x}} . $$
The Floer coboundary operator is defined by counts of the moduli
spaces of quilted pseudoholomorphic strips,
$$\partial : \ CF^\bullet(\ul{L}) \to CF^{\bullet+1}(\ul{L}), \quad \bra{\ul{x}_-} 
\mapsto \sum_{\ul{x}_+\in\cI(\ul{L})} \Bigl(
\sum_{\ul{u} \in\M(\ul{x}_-,\ul{x}_+)_0} \eps(\ul{u})\Bigr)
\bra{\ul{x}_+} ,
$$
where the signs 
\begin{equation} \label{epseq}
\eps: \M(\ul{x}_-,\ul{x}_+)_0 \to \{ \pm 1 \} \end{equation} 
are given by the orientation of the moduli space
$$ \M(\ul{x}_-,\ul{x}_+)_0 := \bigl\{ \ul{u}=\bigl( u_j : \R \times
[0,\delta_j] \to N_j \bigr)_{j=0,\ldots,r} \,\big|\, \eqref{Jjhol} - \eqref{ulim2}, \Ind(D_{\ul{u}})= 1 \bigr\} / \R
$$
of tuples of pseudoholomorphic maps
\beq \label{Jjhol}
\overline\partial_{J_j,H_j} u_j = \partial_s u_j + J_j \bigl(\partial_t u_j - Y_j(u_j)  \bigr) = 0 
\qquad\forall j = 0,\ldots r ,
\eeq
satisfying the seam conditions
\beq\label{ubc}
(u_{j}({s},\delta_j),u_{j+1}({s},0)) \in L_{j(j+1)}\qquad\forall j = 0,\ldots r ,\ s\in\R ,
\eeq
with finite energy 
\beq \label{ulim1}
\sum_{j=0}^r \int_{\R \times [0,\delta_j]} u_j^*\omega_j + {\rm d}(H_j(u_j) {\rm d}t) <\infty,
\eeq
and prescribed limits
\beq \label{ulim2} \qquad\lim_{s\to\pm\infty} u_j(s,\cdot) = x^\pm_j \quad\forall j = 0,\ldots,r . 
\eeq

The Floer coboundary operator is the first in a sequence of operators
associated to pseudoholomorphic quilts with varying domain.  In
\cite{ww:quiltfloer} we showed that $\partial^2 = 0$, and hence the
quilted Floer cohomology
$$HF^\bullet(\ul{L}) = H^\bullet(CF(\ul{L},\partial))$$ 
is well defined.  Here we work on chain level, and in case $M_s=\pt$
interpret $\partial=:\mu^1$ as 
the 
first of the \ainfty composition maps on $\GFuk(M)$,
$$ \mu^1 : CF^\bullet(\ul{L}^+,\ul{L}^-) \to
CF^{\bullet+1}(\ul{L}^+,\ul{L}^-) .
$$
The objects in the extended Fukaya category are generalized Lagrangian
branes.  The morphism spaces in the extended Fukaya category are the
quilted Floer chain complex associated to the cyclic generalized
Lagrangian correspondence $\ul{L}$ of length $r = r^+ + r^-$ shifted
in degree
$$ \Hom(\ul{L}^+,\ul{L}^-) := CF(\ul{L}^+,\ul{L}^-)[d], \quad d = \hh
\Bigl( \sum_{k^+} \dim(N_{k^+}) + \sum_{k^-} \dim(N^-_{k^-}) \Bigr) $$
where 
$$ CF(\ul{L}^+,\ul{L}^-) := CF(\ul{L}^+\sharp_{\eps = 1} (\ul{L}^-)^T) =
CF(\ul{L}).
$$

\subsection{The associahedra}
\label{assocsec}

The higher composition maps in Fukaya categories are defined by
counting pseudoholomorphic polygons with Lagrangian boundary
condition. 

\label{domainofeach} The domain of each polygon corresponds to a point in a Stasheff
associahedra as follows.  Let $d \ge 2$ be an integer.  The $d$-th
associahedron $\K^d$ is a cell complex of dimension $d-2$ defined
recursively as the cone over a union of lower-dimensional
associahedra, whose vertices correspond to the possible ways of
parenthesizing $d$ variables $a_1,\ldots,a_d$, see Stasheff
\cite{st:ho}.  More precisely, any such expression corresponds to a
tree $\Gamma$ describing the parenthisization, which is {\em stable}
in the sense that the valence $|v|$ of any vertex $\Ver(\Gamma)$ is at
least $3$.

The recursive construction starts from the case that the space is a
point, and builds up from lower dimensional associahedra.  In the base
case, $\K^2$ is by definition a point.  Let $d \ge 3$ and suppose that
the associahedra $\K^n$ for $n < d$ have already been constructed.
For any tree $\Gamma$ with $n$ semi-infinite edges at least two
vertices and vertices $v \in \Ver(\Gamma)$ define
$$ \K^\Gamma = \prod_{v \in \Ver(\Gamma)} \K^{|v|} .$$
By assumption, the spaces $\K^n$ are equipped with injective maps
$\iota_\Gamma : \K^{\Gamma} \to \K^n$ for any stable tree with $n$
leaves.  For any morphism of stable trees $\Gamma' \to \Gamma$, we
have a natural injective map $\K^{\Gamma'} \to \K^\Gamma$ defined by
the product of the maps $\iota_{\pi^{-1}(v)}$ where $v$ ranges over
vertices of $\K^\Gamma$.  Define
\begin{equation} \label{boundary}
  \partial \K^d := \left( \bigcup_\Gamma \K^\Gamma \right) / 
  \sim \end{equation}
where the equivalence relation $\sim$ is the one induced by the
various maps $\iota_\Gamma^{\Gamma'}$. Let $\K^d$ be the cone on
$\partial \K^d$
$$ \K^d = \Cone(\partial \K^d) .$$
For example, the associahedron $\K^4$ is the pentagon shown in Figure
\ref{K4}.
\begin{figure}[h]
\begin{picture}(0,0)%
\includegraphics{k4sym.pstex}%
\end{picture}%
\setlength{\unitlength}{4144sp}%
\begingroup\makeatletter\ifx\SetFigFont\undefined%
\gdef\SetFigFont#1#2#3#4#5{%
  \reset@font\fontsize{#1}{#2pt}%
  \fontfamily{#3}\fontseries{#4}\fontshape{#5}%
  \selectfont}%
\fi\endgroup%
\begin{picture}(2501,2041)(775,-2963)
\put(1230,-1013){\makebox(0,0)[lb]{{{{$(a_1(a_2a_3))a_4$}%
}}}}
\put(2722,-996){\makebox(0,0)[lb]{{{{$a_1((a_2a_3)a_4)$}%
}}}}
\put(3261,-2156){\makebox(0,0)[lb]{{{{$a_1(a_2(a_3a_4))$}%
}}}}
\put(1955,-3027){\makebox(0,0)[lb]{{{{$(a_1a_2)(a_3a_4)$}%
}}}}
\put(390,-2303){\makebox(0,0)[lb]{{{{$((a_1a_2)a_3)a_4$}%
}}}}
\end{picture}%
\caption{$\K^4$}
\label{K4}
\end{figure}

The associahedra admit homeomorphism to convex polytopes, so that the
interiors of the cones are the faces of the polytope \cite{assoc}.
That is, for any $\Gamma$ let $\on{int}(\K^\Gamma) = \K^\Gamma
- \partial \K^\Gamma$.  Then 
$ \K^n = \sqcup_\Gamma \on{int}( \K^\Gamma ) $
is the decomposition into open faces.

The associahedra can be realized as the moduli space of
stable \label{stablepage} marked disks, which provides a connection
with Deligne-Mumford moduli spaces of stable spheres.

\begin{definition} 
\begin{enumerate} 
\item {\rm (Nodal disks)} A {\em nodal disk} $D$ is a contractible
  space obtained from a union of disks $D_i,i =1,\ldots,l$ (called the
  components of $D$) by identifying pairs of points $w_j^+,w_j^-, j
  =1,\ldots, k$ on the boundary (the {\em nodes} in the resulting
  space)
$$ D = \sqcup_{i=1}^l D_i / (w_j^+ \sim w_j^-, j = 1,\ldots,k ) $$
so that each node $w_j\in D$ belongs to exactly two disk components
$D_{i_-(j)}, D_{i_+(j)}$.
\item {\rm (Marked nodal disks)} A {\em set of markings} is a set $\{
  z_0,\ldots,z_d \}$ of the boundary $\partial D$ in counterclockwise
  order, distinct from the singularities.  A {\em marked} nodal disk
  is a nodal disk with markings.  A {\em morphism of marked nodal
    disks} from $(D,\ul{z})$ to $(D',\ul{z}')$ is a homeomorphism
  $\varphi:D \to D'$ restricting to a holomorphic isomorphism
  $\varphi|_{D_i}$ on each component $i=1,\ldots,l$ and mapping the
  marking $z_j$ to $z_j'$.
\item {\rm (Stable disks)} A marked nodal disk $(D,\ul{z})$ is {\em
  stable} if it has no automorphisms or equivalently if each disk
  component $D_i \subset D$ contains at least three nodes or markings.
\item {\rm (Combinatorial types)} The {\em combinatorial type} of a
  nodal disk with markings is the tree 
$$\Gamma = (\Ver(\Gamma),\Edge(\Gamma)), \quad \Edge(\Gamma) = \Edge_{<\infty}(\Gamma)
\sqcup \Edge_\infty(\Gamma) $$ 
obtained by replacing each disk with a vertex $v \in \Ver(\Gamma)$,
each node with a finite edge $e \in \Edge_{<\infty}(\Gamma)$, and each
marking with a semi-infinite edge $e \in \Edge_{\infty}(\Gamma)$.  The
semi-infinite edges $\Edge_\infty(\Gamma)$ are labelled by
$0,\ldots,d$ corresponding to which marking they represent, and the
tree has a planar structure given by the ordering of the leaves.  \label{planarst}
 \end{enumerate}
\end{definition}

A suitable notion of convergence, similar to that for stable marked
genus zero curves in \cite[Appendix]{ms:jh} defines a topology on
\label{RRpage} $\ol{\RR}^d$ that we will not detail \label{topologypage} here.  In
fact, $\ol{\RR}^d$ embeds as a subset of the real locus in the moduli
space of stable genus zero curves; see also the discussion of the
topology on $\ol{\RR}^{d,e}$ in Section \ref{biassoc}.  For each
combinatorial type $\Gamma$ let $\RR_\Gamma^d$ denote the space of
isomorphism classes of stable nodal $d+1$-marked disks of
combinatorial type $\Gamma$, and
$$ \ol{\RR}^d = \bigcup_{\Gamma} \RR^d_{\Gamma} .$$
Write $\Gamma \leq \Gamma'$ if and only if \label{porder} there is a
surjective morphism of trees (composition of morphisms collapsing an
edge) from $\Gamma$ to $\Gamma'$ in which case $\RR_\Gamma$ is
contained in the closure of $\RR_{\Gamma'}$.  In case $d = 3$, there
is a canonical homeomorphism $\ol{\RR}^{3} \to [0,1]$ given by the
cross-ratio.  The moduli space $\ol{\RR}^{4}$ is shown in Figure
\ref{M4}.
\begin{figure}[h]
\includegraphics[width=2.5in]{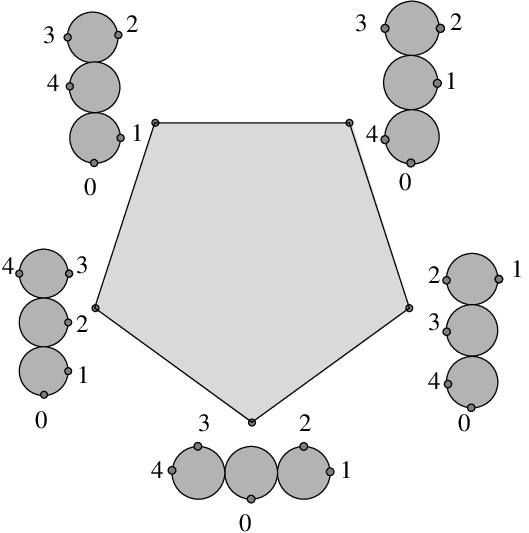}
\caption{$\ol{\RR}^{4}$}
\label{M4}
\end{figure}

The moduli space of stable marked disks comes with a {\em universal
  curve} whose fibers over any isomorphism class of curves is the
curve itself.  That is, the universal curve $\ol{\UU}^d$ is the space
of isomorphism classes of tuples $[D,z_1,\ldots, z_n, z]$ where
$(D,z_1,\ldots, z_n)$ is a stable $n$-marked disk and $z \in D$ is a
possibly nodal or marked point.  The topology on the universal curve
is defined in a similar way to the moduli space of stable disks, and
the map forgetting the additional point
$$ \ol{\UU}^d \to \ol{\RR}^d, \quad 
[D,z_1,\ldots,z_n,z] \mapsto [D,z_1,\ldots, z_n] $$
is continuous with respect to these topologies.

Each moduli space of disks has the structure of a manifold with
corners.  Coordinates near each stratum are given by the gluing
construction, as follows.

\begin{theorem}  
\label{collard} 
{\rm (Compatible tubular neighborhoods for associahedra)} For integers
$d \ge 2, m \ge 0$, each stratum $\RR_\Gamma^d$ with $m$ nodes has an
open neighborhood homeomorphic to $\RR_{\Gamma} \times [0,\infty)^m$.
  The normal coordinates can be chosen compatibly in the sense that if
  $\Gamma < \Gamma'$ and $\Gamma'$ has $m'$ nodes then the diagram
$$ 
\begin{diagram}
\node{\RR_\Gamma^d \times [0,\eps)^m} \arrow[2]{e}\arrow{se} \node{}
  \node{\ol{\RR}_{\Gamma'}^d \times [0,\eps)^{m'} } \arrow{sw}
    \\ \node{} \node{\ol{\RR}^d} \node{}
\end{diagram} 
$$
commutes.
\end{theorem}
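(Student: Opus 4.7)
The plan is to construct the tubular neighborhoods by the standard plumbing/gluing construction for nodal marked disks and then verify that, with consistent local coordinate choices, the resulting normal coordinates for different strata are compatible.

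First I would fix, for every combinatorial type $\Gamma$ appearing in $\ol{\RR}^d$ and every node $w_j$ of a representative nodal disk $(D,\ul{z})$ of type $\Gamma$, a germ of holomorphic coordinates $(\zeta_j^+, \zeta_j^-)$ on the two disk components $D_{i_+(j)}, D_{i_-(j)}$ adjacent to $w_j$, chosen so that $\zeta_j^\pm(w_j)=0$ and the coordinates vary smoothly with the point of $\RR^d_\Gamma$. Using these coordinates, given a vector $\ul{t}=(t_1,\ldots,t_m)\in [0,\eps)^m$ of gluing parameters, I produce a partially smoothed nodal disk $G_{\ul{t}}(D,\ul{z})$ by removing, for each $j$ with $t_j>0$, small disks about $w_j$ and identifying the resulting annular neighborhoods via the relation $\zeta_j^+\zeta_j^- = t_j$ (plumbing); for $t_j=0$ the node is left in place. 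This produces a continuous map
$$
\Phi_\Gamma : \RR^d_\Gamma \times [0,\eps)^m \lra \ol{\RR}^d ,
$$
with image contained in the union of strata $\RR^d_{\Gamma'}$ for $\Gamma' \geq \Gamma$, and which is the identity on $\RR^d_\Gamma\times\{0\}$.

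Next I would check that, for $\eps$ sufficiently small, $\Phi_\Gamma$ is a homeomorphism onto an open neighborhood of $\RR^d_\Gamma$ in $\ol{\RR}^d$. Continuity follows from the construction. Injectivity uses that the plumbing parameters $t_j$ can be recovered from the modulus of the gluing annulus at the $j$-th former node. Surjectivity onto a neighborhood, i.e.\ that every disk near a point of $\RR^d_\Gamma$ arises from plumbing, is the standard convergence-of-nodal-curves statement in the real case and matches the topology on $\ol{\RR}^d$ inherited from Deligne--Mumford; this is where uniformization of the ``thin'' annular parts gives back the $t_j$.

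For the compatibility diagram, suppose $\Gamma \leq \Gamma'$ with $\Gamma'$ obtained from $\Gamma$ by collapsing exactly the edges in some subset $T\subseteq\Edge_{<\infty}(\Gamma)$, so that $\Edge_{<\infty}(\Gamma')$ is identified with the complement $\Edge_{<\infty}(\Gamma)\setminus T$. A point $([D,\ul{z}],\ul{s})\in\RR^d_{\Gamma'}\times[0,\eps)^{m'}$ lies in the image of $\Phi_\Gamma$ precisely when its underlying surface $\Phi_{\Gamma'}([D,\ul{z}],\ul{s})$ can be written as $\Phi_\Gamma([D_0,\ul{z}_0],\ul{t})$ with $t_j=s_j$ for $j\in\Edge_{<\infty}(\Gamma')$ and $t_j>0$ for $j\in T$. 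Provided the germs of local coordinates chosen for $\Gamma'$ are induced from those for $\Gamma$ via the partial plumbing that resolves exactly the nodes in $T$, the two descriptions agree by the associativity of plumbing: performing plumbing first at the nodes in $T$ and then at those in $\Edge_{<\infty}(\Gamma')$ yields the same nodal curve as performing all the plumbings simultaneously. This is the key compatibility and forces one to make the coordinate choices recursively over strata, starting from the deepest stratum (the most nodal type) and propagating the chosen germs to all less degenerate types by the resolution. The main technical step is this coherent choice of coordinate germs; once it is made, commutativity of the diagram on the overlap of the two open embeddings is an immediate consequence of the equality of plumbings.

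I expect the main obstacle to be precisely the coherent choice of coordinate germs at nodes across all strata simultaneously, together with checking that the resulting maps glue to honest collars rather than just germs. Both issues are handled by the contractibility of the space of local coordinate germs at a point of a Riemann surface (so one can interpolate choices between incident strata using partitions of unity over the base $\RR^d_\Gamma$) and by standard quadratic-differential/cross-ratio estimates showing that plumbing parameters can be shrunk uniformly on compact subsets of $\RR^d_\Gamma$.
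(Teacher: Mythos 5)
Your proposal is correct and follows essentially the same route as the paper's proof: both construct the collars by the gluing/plumbing construction at the nodes using families of local holomorphic coordinates chosen smoothly over each stratum, both derive the commutativity of the diagram from the associativity of gluing once the coordinates on a stratum are required to be induced from deeper strata in its closure, and both justify that coherent choice by the convexity (equivalently contractibility) of the space of local coordinate germs at a node.
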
 

\begin{proof}[Sketch of proof]
Given a stable $n$-marked nodal disk $D$ with components $D_0,\ldots,
D_r$ with $m$ nodes $w_1^\pm,\ldots, w_m^\pm$, let $\delta_1,\ldots,
\delta_m \in \R_{\ge 0}$ be a set of {\em gluing parameters}.  Suppose
that each node is equipped with {\em local coordinates}: holomorphic
embeddings 
$$\phi_j^\pm: (B^+_\eps(0),0) \to (D,w_j^\pm)$$ 
where $B^+_\eps(0) \subset B_\eps(0)$ is the part of the $\eps$-ball
around $0$ in the complex plane with non-negative imaginary part.  The
glued disk $G_\delta(D)$ is obtained by removing small balls around
the $j$-th node and identifying points by the map
$z \mapsto \delta_j/z$ for every gluing parameter that is non-zero.
Suppose that one has for every point $r \in \RR_\Gamma^d$ a set of
such local coordinates varying smoothly in $r$.  Then one obtains from
the gluing construction a collar neighborhood as in the statement of
the theorem.

To check the compatibility relation, suppose that
$I \subset \{ 1,\ldots, m \}$ is a subset of the nodes and
$\delta_I \in \R_{\ge 0}^{|I|}$ the corresponding gluing parameters.
Starting with the disk \label{diskpage} above, glue together open balls around the
nodes $w_i^\pm, i \in I$ to obtain a disk $G_{\delta_I}(D)$, equipped
with local coordinates near the unresolved nodes given by the local
coordinates near the nodes of $D$, of combinatorial type $\Gamma'$
with $m' = m - |I|$ nodes.  Suppose that a family of local coordinates
near the nodes of $\RR_{\Gamma'}$ is given such that in a neighborhood
of $\RR_\Gamma$ the local coordinates are induced by those on
$\RR_\Gamma$ by gluing.  In this case the collar neighborhoods for
$\Gamma$ and $\Gamma'$ are compatible in the sense that the diagram in
the theorem commutes.

One may always choose the local coordinates to be given by gluing near
the boundary, since the space of germs of local coordinates is convex.
Indeed a map $\phi_j^\pm: (B^+_\eps(0),0) \to (D,w_j^\pm)$ defines a
local coordinate in some neighborhood of $0$ iff $D\phi_j^\pm(0) > 0$,
which is a convex condition.  So we may assume that on each stratum
$\RR_\Gamma^d$ there is a family of local coordinates such that near
any stratum $\RR_{\Gamma'}^d$ contained in the closure the local
coordinates are those induced by $\RR_{\Gamma'}^d$ from gluing.  This
completes the proof.
\end{proof} 

It follows that the stratified space $\ol{\RR}^d$ is {\em equipped
  with quilt data} in the sense of Definition \ref{quiltdata}: each
stratum comes with a collar neighborhood described by gluing
parameters compatible with the lower dimensional strata.

The moduli spaces of marked disks admit forgetful morphisms to moduli 
spaces with fewer numbers of markings.   For each $i = 1,\ldots, d$, 
we have a forgetful morphism 
\begin{equation} \label{fi} f_i: \ol{\RR}^d \to \ol{\RR}^{d-1} ,\quad 
  [D,z_1,\ldots,z_n] \mapsto [D,z_1,\dots, z_{i-1}, z_{i+1}, \ldots,
  z_n]^{\on{st}} \end{equation}
where the superscript $\on{st}$ indicates the disk obtained from $D$
by collapsing unstable components.  There is a well-known description
of the forgetful map (in the context of Deligne-Mumford spaces) as the
projection from the universal curve, developed in the case of closed
curves by Knudsen \cite[Sections 1,2]{kn:proj2}.  In the case of
disks, the universal curve $\ol{\UU}^d$ has a fiber-wise boundary
$\partial \UU^d$ which splits as a union of intervals
$$ \partial \UU^d = (\partial \UU^d)_0 \cup \ldots \cup (\partial
\UU^d)_d $$
where $(\partial \UU^d)_i$ is the part of the boundary between the
$i-1$-st and $i$-th marking, where $i$ is taken mod $d+1$.  See Figure
\ref{bpart}.  The map $f_i$ naturally lifts to a continuous map
\begin{equation} \label{univcurve} \ti{f}_i : \ol{\RR}^d \to (\partial
  \UU^{d-1})_i, \quad [D,z_1,\dots, z_n] \mapsto
  [D^{\on{st}},z_1,\dots, z_{i-1},z_{i+1}, \ldots, z_n, z_i^{\on{st}}
  ] \end{equation}
where $z_i^{\on{st}} \in D^{\on{st}}$ is the image of $z_i$ under the
stabilization map $D \to D^{\on{st}}$.  A continuous inverse to
$\ti{f}_i$ is defined by inserting $z_i^{\on{st}}$ in between
$z_{i-1}$ and $z_{i+1}$, and creating an additional component if
$z_i^{\on{st}}$ is a nodal point of $D^{\on{st}}$, showing that
$\ti{f}_i$ is a homeomorphism.

\begin{figure}[ht]
\begin{picture}(0,0)%
\includegraphics{bpart.pstex}%
\end{picture}%
\setlength{\unitlength}{3947sp}%
\begingroup\makeatletter\ifx\SetFigFont\undefined%
\gdef\SetFigFont#1#2#3#4#5{%
  \reset@font\fontsize{#1}{#2pt}%
  \fontfamily{#3}\fontseries{#4}\fontshape{#5}%
  \selectfont}%
\fi\endgroup%
\begin{picture}(2438,2070)(2993,-2423)
\put(4059,-2381){\makebox(0,0)[lb]{{{{$z_0$}%
}}}}
\put(5043,-2124){\makebox(0,0)[lb]{{{{$z_1$}%
}}}}
\put(3453,-611){\makebox(0,0)[lb]{{{{$z_5$}%
}}}}
\put(3181,-1140){\makebox(0,0)[lb]{{{{$z_6$}%
}}}}
\put(3128,-2093){\makebox(0,0)[lb]{{{{$z_7$}%
}}}}
\put(5164,-1182){\makebox(0,0)[lb]{{{{$z_2$}%
}}}}
\put(4582,-589){\makebox(0,0)[lb]{{{{$z_4$}%
}}}}
\end{picture}%
\caption{The part of the boundary between two markings} 
\label{bpart}
\end{figure}

The forgetful maps induce the structure of fiber bundles on the moduli
spaces with contractible fibers.  Indeed the gluing construction
identifies all nearby fibers $G_\delta D$ with the interval obtained
by removing small disks around the nodes and identifying the
endpoints:
\begin{equation} \label{removeballs} (\partial G_\delta D )_i = \left(
    (\partial D)_i - \cup_{k=1}^m B_{\delta_k^{1/2}} (w_k)
  \right)/\sim .\end{equation}
On the other hand the intervals on the right-hand side of 
\eqref{removeballs} are homeomorphic to $(\partial D)_i$ itself, 
by extending a homeomorphism of the complement 
$$ (\partial D)_i - \cup_{k=1}^m B_{\delta_k^{1/2}} (w_k)
\to (\partial D)_i - \cup_{k=1}^m \{ w_k\} $$
to the boundary.  It follows that $f_i$ induces on $\ol{\RR}^d$ the
structure of a fiber bundle over $\ol{\RR}^{d-1}$ with interval
fibers.  The discussion above shows by induction that the moduli
space $\ol{\RR}^d$ is a topological disk.  (And it follows from the
isomorphism with the associahedron that they are isomorphic, as
decomposed spaces, to convex polytopes.)

\subsection{Higher compositions}

In this section we construct the higher composition maps on the Fukaya
category of generalized Lagrangian branes.  These are defined by
family quilt invariants for families of surfaces with strip-like ends
over the associahedra constructed in the following proposition:

\begin{proposition}  \label{Ad}
{\rm (Existence of families of strip-like ends over the associahedra)}
For each $d \ge 2$ there exists a collection of families of quilted
surfaces $\ol{\SS}^d$ with strip-like ends over $\ol{\RR}^d$ with the
property that the restriction $\SS^d_\Gamma$ of the family to a
stratum $\RR_\Gamma^d$ that is isomorphic to a product of $\RR^{i_j},
j = 1,\ldots, k$ is a product of the corresponding families
$\SS^{i_j}$, and collar neighborhoods of $\SS^d_\Gamma$ are given by
gluing along strip-like ends.
\end{proposition}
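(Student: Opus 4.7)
The plan is to prove the proposition by induction on $d \ge 2$, using Theorem \ref{extension} to extend families from the boundary to the interior at each inductive step. For the base case $d = 2$, the space $\ol{\RR}^2$ is a single point, so one simply chooses a fixed quilted surface with strip-like ends (a disk or quilted disk with three boundary markings equipped with any admissible strip-like ends, metric of product form near seams, and orderings) and declares $\ol{\SS}^2$ to be the corresponding trivial one-point family.

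For the inductive step, assume $\ol{\SS}^n$ has been constructed for all $2 \leq n < d$. On each boundary stratum $\RR_\Gamma^d \cong \prod_{v \in \Ver(\Gamma)} \RR^{|v|}$ indexed by a stable tree $\Gamma$, define
\[
\SS_\Gamma^d := \prod_{v \in \Ver(\Gamma)} \SS^{|v|},
\]
where each fiber is the quilted surface with strip-like ends obtained by matching ends of the patches in accordance with the edges of $\Gamma$ (with compatible widths on the matched ends, determined by the choices made in smaller $n$). This is well-defined by the inductive hypothesis. The next step is to verify compatibility between strata: whenever $\Gamma' \leq \Gamma$, so that $\RR_{\Gamma'}^d$ lies in the closure of $\RR_\Gamma^d$, the product family for $\Gamma'$ must agree near $\RR_{\Gamma'}^d$ with the family on $\RR_\Gamma^d$ obtained by gluing along the strip-like ends that correspond to the collapsed edges of $\Gamma' \to \Gamma$. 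This compatibility is exactly the commutativity condition in Theorem \ref{collard} lifted from base to total space, combined with the associativity of the gluing construction of Example \ref{gluingstrip} and Definition \ref{qglue}.

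Having defined $\ol{\SS}^d$ on $\partial \RR^d$, use the compatible tubular neighborhoods of Theorem \ref{collard} to extend $\ol{\SS}^d$ to a collar neighborhood of the boundary via the gluing construction: on $\RR_\Gamma^d \times [0,\eps)^m$, define the fiber over $(r,\gamma)$ to be $G_\gamma(\ul{S}_r)$. Since every patch of every fiber is a disk with at least one marking, Lemma \ref{trivfam} guarantees that this collar family is smoothly trivializable. We may then apply Theorem \ref{extension} (i.e.\ Theorem \ref{A}) to extend $\ol{\SS}^d$ from the collar neighborhood over the interior of $\ol{\RR}^d$, since $\ol{\RR}^d$ is equipped with quilt data as noted immediately after Theorem \ref{collard}. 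The resulting family automatically has collar neighborhoods given by gluing along strip-like ends, by construction.

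The main obstacle is the consistency check in Step 2: one must verify that the product families defined on different boundary strata agree on their overlaps and are genuinely related to each other by the gluing construction prescribed in the tubular neighborhoods of Theorem \ref{collard}. This amounts to a combinatorial bookkeeping argument tracking how the strip-like ends, seam identifications, widths, and orderings on each factor $\SS^{|v|}$ assemble into the correct structure on the glued surface, and uses contractibility of the space of strip-like end coordinates, metrics of product form near seams (Lemma \ref{homtriv}), and germs of tubular neighborhoods (Lemma \ref{product}) to make consistent choices across all strata simultaneously.
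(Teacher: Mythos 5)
Your proposal is correct and follows essentially the same route as the paper: the published proof is simply "induction using Theorem \ref{A} (via Theorem \ref{extension}) applied to the stratified space $\ol{\RR}^d$ with the quilt data from Theorem \ref{collard}, starting from the three-marked disk," and your argument is a detailed unpacking of exactly that, with the correct supporting ingredients (Lemma \ref{trivfam} for the trivializability hypothesis and the contractibility lemmas for consistent choices).
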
 

\begin{proof}   The claim follows 
by induction using Theorem \ref{A} applied to the stratified space
$\ol{\RR}^d$ constructed in Theorem \ref{collard}, starting from the
case of three-marked disk where we choose a genus zero surface with
strip-like ends. \end{proof}

The compactness and regularity properties of the family moduli spaces
in the previous section combine to the following statement, in the
case of the constructed families over the associahedra:

\begin{proposition}   \label{holassoc} {\rm (Existence of compact families of holomorphic quilts 
over the associahedra)} Let $M$ be a symplectic background, and for $d
  \ge 2$ let $\ul{L}^0,\ldots,\ul{L}^d$ be admissible generalized
  Lagrangian branes in $M$.  For generic choices of inductively-chosen
  perturbation data $\ul{J},\ul{K}$ and $d \ge 2$
\begin{enumerate} 
\item the moduli space of pseudoholomorphic quilts $\M^d_0$ of dimension
  zero in $M$ with boundary in $\ul{L}^j, j = 0,\ldots, d$ is compact,
  and 
\item the one-dimensional component $\ol{\M}^d_1$ has a
  compactification as a one-manifold with boundary the union
$$ \partial \ol{\M}^d_1 = \bigcup_{\Gamma} {\M}^{d}_{\Gamma,1}$$
of strata $\M^d_{\Gamma,1}$ of $\ol{\M}^d_{1}$ corresponding to trees
with two vertices (where either (1) $\Gamma$ is stable with two
vertices, or (2) $\Gamma$ is unstable and corresponds to bubbling of a
Floer trajectory).
\end{enumerate} 
A similar statement holds for $d = 1$, using moduli spaces of {\em
  unparametrized} Floer trajectories.
\end{proposition}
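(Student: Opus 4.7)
The plan is to prove this by induction on $d$, where the hypothesis is that for all $d' < d$ one has already chosen regular perturbation data $(\ul{J}^{d'}, \ul{K}^{d'})$ over the families $\ol{\SS}^{d'} \to \ol{\RR}^{d'}$ realizing the claimed compactness and boundary properties. The base case $d = 1$ corresponds to unparametrized Floer trajectories and is handled by the standard quilted Floer theory of \cite{ww:quiltfloer}. For the inductive step, I would first construct perturbation data on a neighborhood of $\partial_s \ol{\RR}^d$ by pulling back the inductively chosen data along the gluing parametrizations provided by Theorem \ref{collard} and Proposition \ref{Ad}; the compatibility of these gluing neighborhoods across strata is what makes the pullback well-defined on overlaps. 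Then I would invoke Theorem \ref{familytransversality} to extend to a regular perturbation datum over the interior of $\ol{\RR}^d$, which yields a comeager set of admissible extensions so that all holomorphic quilts of formal dimension at most one become parametrized regular.

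For compactness of the zero-dimensional moduli space $\M^d_0$, I would apply Gromov compactness (Theorem \ref{gromov}) to extract a limiting holomorphic quilt from any sequence $(r_\nu, \ul{u}_\nu)$. Monotonicity of $M$ and admissibility of the branes $\ul{L}^j$ ensure that any sphere or disk bubble absorbs Maslov index at least two, forcing the residual configuration into a component of formal dimension at most $-2$; regularity rules this out. A bubbled-off Floer trajectory at a strip-like end would likewise reduce the index of the principal component by at least one, contradicting regularity. Degeneration onto a lower stratum of $\ol{\RR}^d$ would exhibit the limit inside a product of zero- and positive-codimension moduli spaces of total formal dimension strictly less than zero, again impossible. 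Hence the Gromov limit lies in $\M^d_0$ itself, so $\M^d_0$ is compact; it is also a finite set of points since it is a compact zero-dimensional manifold.

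For the one-dimensional component, I would apply Theorem \ref{masterthm} directly: the boundary of the compactified moduli space decomposes into contributions from (i) the family restricted to the stratified boundary $\partial_s \ol{\RR}^d$, whose codimension-one strata by Proposition \ref{Ad} are products $\RR^{d_1} \times \RR^{d_2}$ with $d_1 + d_2 = d+1$ and whose moduli spaces are correspondingly products of zero-dimensional quilt moduli spaces, matching the stable two-vertex trees $\Gamma$ in the statement, together with (ii) Floer trajectory bubbling at a strip-like end, yielding pieces of the form $\M(\ul{y}_{\ul{e}}, \ul{e} \in \mE; \ul{y}_{\ul{f}} \mapsto \ul{y}_{\ul{f}}')_0 \times \M(\ul{y}_{\ul{f}}, \ul{y}_{\ul{f}}')_0$ that correspond to the unstable two-vertex trees. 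Monotonicity again excludes sphere and disk bubbles in dimension one. The main obstacle I anticipate is the coherent bookkeeping of the inductive perturbation data across overlapping gluing charts: one must check that the pullbacks from distinct codimension-one strata agree on their common higher-codimension intersections before extending. This is resolved by the compatibility statement in Theorem \ref{collard} and the recursive product structure of the associahedra in Proposition \ref{Ad}, which together ensure that the inductively defined data on any codimension-$k$ face is independent of the order in which adjacent higher-dimensional faces are approached.
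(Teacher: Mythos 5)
Your argument is correct and follows essentially the same route as the paper: regular perturbation data are obtained by recursively applying Theorem \ref{familytransversality} (Theorem \ref{B}) to the family $\ol{\SS}^d \to \ol{\RR}^d$ of Proposition \ref{Ad}, with product-form data on the boundary strata coming from lower-dimensional associahedra, and the boundary description of the one-dimensional component is read off from Theorem \ref{masterthm} (with compactness via Theorem \ref{gromov} and monotonicity excluding sphere/disk bubbles). The extra detail you supply on Gromov limits and coherence of the gluing charts is consistent with, and already implicit in, the paper's citations of Theorems \ref{collard}, \ref{gromov}, and \ref{masterthm}.
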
 

\begin{proof}  
  For $d \ge 2$, regular perturbation data exist by the recursive
  application of Theorem \ref{B} to the family of quilted surfaces
  $\ol{\SS}^d \to \ol{\RR}^d$ constructed in Proposition \ref{Ad}; the
  perturbation over the boundary of $\ol{\SS}^d$ is that of product
  form for the lower-dimensional associahedra.  The description of the
  boundary follows from Theorem \ref{masterthm}.
\end{proof}  

 Theorem \ref{C} gives chain-level family quilt invariants associated
 to the family over the associahedron defined in Theorem
 \ref{holassoc}.  The \ainfty composition maps are related to these by
 additional signs:

\begin{definition}  {\rm (Higher composition maps for the extended Fukaya category)}  
For $d \ge 2$ let $\ol{\SS}^d \to \ol{\RR}^d$ be the family of
surfaces with strip-like ends over the associahedron $\ol{\RR}^d$
constructed in Theorem \ref{Ad} and $\Phi_{\ol{\SS}^d}$ the associated
family quilt invariants.  Given objects $\ul{L}^0,\ldots,\ul{L}^d$
define
$$ \mu^d:  \Hom(\ul{L}^0,\ul{L}^1) \times \ldots \times \Hom(\ul{L}^{d-1},\ul{L}^d) 
\to \Hom(\ul{L}^0,\ul{L}^d) $$
by 
\begin{equation} \label{highercomp} \mu^d(\bra{\ul{x}_1},\ldots,\bra{\ul{x}_d}) = 
(-1)^\heartsuit \Phi_{\ol{\SS}^d} ( \bra{\ul{x}_1},\ldots,\bra{\ul{x}_d}) 
\end{equation}
where 
\begin{equation} \label{heartsuit}
 \heartsuit = {\sum_{i=1}^d i|\ul{x}_i|} .\end{equation} 
\end{definition} 

\begin{theorem} \label{ainftycat}  Let $M$ be a symplectic background, and $\mu^d$ 
for $d \ge 1$ the maps defined in \eqref{highercomp} for some choice
of family of surfaces of strip-like ends and perturbation data.  Then
the maps $\mu^d, d \ge 1$ define an \ainfty category $\GFuk(M)$.
\end{theorem}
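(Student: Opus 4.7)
The plan is to derive the \ainfty associativity relations from the master equation (Theorem \ref{C}) applied to the family of quilted surfaces $\ol{\SS}^d \to \ol{\RR}^d$ constructed in Proposition \ref{Ad}. By Proposition \ref{holassoc}, for a generic inductive choice of perturbation data, the zero-dimensional component $\M^d_0$ is a finite set (so the maps $\mu^d$ are well-defined), and the one-dimensional component $\ol{\M}^d_1$ is a compact one-manifold whose boundary decomposes into contributions from (i) codimension one strata $\ol{\RR}^d_\Gamma \cong \ol{\RR}^{d-m+1} \times \ol{\RR}^m$ indexed by two-vertex stable trees, which correspond bijectively to insertions of an $m$-ary subcomposition at position $n$ with $1 \le n \le d-m+1$, and (ii) bubbling of a Floer strip at one of the strip-like ends, which is governed by the differential $\mu^1$.

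First I would apply Theorem \ref{masterthm} to rewrite the algebraic statement $\partial \ol{\M}^d_1 = \sum_\Gamma \M^d_{\Gamma,1}$ as an identity among compositions of the family quilt invariants $\Phi_{\ol{\SS}^{d-m+1}}$ and $\Phi_{\ol{\SS}^m}$. Since by construction the perturbation data on a codimension one stratum $\ol{\RR}_\Gamma^d$ is obtained by gluing the inductively chosen perturbation data on $\ol{\SS}^{d-m+1}$ and $\ol{\SS}^m$, the standard gluing theorem (as in \cite{mau:gluing}) identifies the contributions from stratum (i) with the composition $\Phi_{\ol{\SS}^{d-m+1}} \circ_n \Phi_{\ol{\SS}^m}$. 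The contributions from (ii) are compositions with $\mu^1 = \partial$ on an input or output. This yields, before signs, the unsigned \ainfty relation.

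Second, I would translate between the geometric signs produced by the orientation conventions of Theorem \ref{masterthm} and the Koszul signs that appear in the \ainfty associativity relations. This is precisely the role of the shift $(-1)^\heartsuit$ with $\heartsuit = \sum_{i=1}^d i |\ul{x}_i|$ in the definition \eqref{highercomp}: a standard bookkeeping computation, essentially identical to the one carried out in Seidel's book \cite{se:bo} for unquilted Fukaya categories, shows that after applying this shift the orientation induced by the gluing construction on each codimension one face of $\ol{\RR}^d$, combined with the fiber orientations from \eqref{split1}--\eqref{split2}, matches the Koszul sign $(-1)^{\maltese_n}$ with $\maltese_n = \sum_{i=1}^{n-1}(|\ul{x}_i|+1)$ appearing in the associativity relations. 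The cases $d=1$ and $d=2$ are handled separately but analogously: for $d=1$ the family lives over a point but the one-dimensional moduli space of unparametrized Floer trajectories has broken-trajectory boundary yielding $(\mu^1)^2 = 0$, and for $d=2$ the boundary of the one-dimensional moduli space gives the Leibniz rule for $\mu^1$ acting on $\mu^2$.

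The main obstacle is the sign verification; the compactness, transversality, and gluing have been packaged into Theorems \ref{A}, \ref{B}, \ref{masterthm} and Proposition \ref{holassoc}, so the remaining work is essentially combinatorial. The delicate point is that the orientation on a face of $\ol{\RR}^d$ induced from the product orientation of $\ol{\RR}^{d-m+1} \times \ol{\RR}^m$ via the gluing collar of Theorem \ref{collard} differs from the outward normal orientation by a sign depending on $m,n,d$; this sign must be computed and then combined with the shift $\heartsuit$ and with the Koszul signs produced when comparing tensor products of determinant lines at the inputs. Once this computation is carried out—following the scheme of \cite{orient} for relative spin structures—the resulting identity is precisely the \ainfty associativity relation for the composition maps $\mu^d$ on $\GFuk(M)$.
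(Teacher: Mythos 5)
Your proposal follows essentially the same route as the paper: the unsigned \ainfty relation is read off from the boundary description of the one-dimensional moduli spaces over the associahedron (Proposition \ref{holassoc}, packaged via the master equation of Theorem \ref{C}), and the remaining work is the sign bookkeeping comparing the gluing orientation on codimension-one faces of $\ol{\RR}^d$, the shift $(-1)^\heartsuit$, and the permutation of determinant lines, exactly as in the paper's proof. The only caveat is that you state rather than carry out the explicit mod-2 computation showing the combined sign is independent of the gluing position $(n,m)$, which is the substantive content of the paper's verification.
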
 

\begin{proof}  
  Without signs the theorem holds for $d \ge 2$ by the description of the ends of the one-dimensional
moduli spaces in Theorem \ref{holassoc}.  To check the signs in the
\ainfty associativity relation, suppose for simplicity that all
generalized Lagrangian branes are length one.  Let $x_j \in
\cI(\ul{L}^j,\ul{L}^{j+1})$ for $j = 0,\ldots, d$ indexed mod $d+1$
and $\ol{\M}^d (x_0,\ldots,x_d)$ the moduli space of quilts
with limits $x_0,\ldots, x_d$ along the strip-like ends.
Consider the gluing map
constructed in Ma'u \cite[Theorem 1]{mau:gluing}
\begin{equation} \label{gmap} \M^m({y},{x}_{n+1},\ldots, {x}_{n+m})_0 \times \M^{d-m
  +1}({x}_0,{x}_1,\ldots,{y},\ldots,{x}_d)_0 \to
  \M^d({x}_0,\ldots,{x}_d)_1 .\end{equation}
For any intersection point $x_j$ let $\DD^+_{x_j}$ denote the
determinant line associated to $x_j$ in \cite[Equation (40)]{orient}
of the Fredholm operator on the once-punctured disk associated to a
choice of path from $T_{x_j} \ul{L}^{j-1}$ and $T_{x_j} \ul{L}^{j}$ to
a collection of Lagrangians of split form in $T_{x_j} \ul{M}_j$ (where
here $\ul{M}_j$ denotes the collection of patches meeting the $j$-th
end).  The determinant lines $\DD^-_{x_j}$ is defined similarly, but
with an added determinant $\det(T_{x_j} \ul{L}^j)$ of the Lagrangians
meeting the end.  By assumption, the orientations on $\DD^\pm_{x_j}$
are defined so that the tensor product
$$ \DD^-_{x_j} \otimes \DD^+_{x_j} \cong \R $$
is canonically trivial.  By deforming the parametrized linear operator
to the linearized operator plus a trivial operator, and bubbling off
marked disk on each strip like end we may identify via \cite[Equation
(40)]{orient}
\begin{equation} \label{convent} \det( T {\M}^d (x_0,\ldots,x_d)) \to
 \det(T\RR^d) \DD^+_{x_0}   
  \DD^-_{x_1} 
 \ldots 
\DD^-_{x_d}
   .\end{equation}
After this identification the gluing map \eqref{gmap} takes the form
(omitting tensor products from the notation to save space)
\begin{multline} 
\det(\R)
 \det(T\RR^m) 
\DD^+_y 
 \DD^-_{x_{n+1}}  
\ldots \DD^-_{x_{n+m}} 
\det(T\RR^{d-m+1})
\DD^+_{x_0}   
  \DD^-_{x_1} \ldots \DD^-_y \ldots \DD^-_{x_d} 
  .\end{multline}
To determine the sign of this map, first note that the gluing map
$$(0,\eps) \times \RR^m \times \RR^{d-m+1} \to \RR^d$$ 
on the associahedra is given in coordinates (using the automorphisms
to fix the location of the first and second point in $\RR^m$ to equal
$0$ resp. $1$ and $\RR^{d- m + 1}$) by
\begin{multline} \label{signs} (\delta,(z_3,\ldots,z_m)
  ,(w_3,\ldots,w_{d-m+1}) ) \\ \to (w_3,\ldots, w_{n+1}, w_{n+1} +
  \delta, w_{n+1} + \delta z_3, \ldots, w_{n+1} + \delta z_m,
  w_{n+2},\ldots, w_{d-m}) .\end{multline}
This map acts on orientations by a sign of $-1$ to the power
\begin{equation} \label{signa} (m-1)(n-1). \end{equation}
These signs combine with the contributions
\begin{equation} \label{signb} 
\sum_{k=1}^n k |x_k| +
(n+1)|y| + 
 \sum_{k=n+m+1}^d 
  (k-m+1) |x_k| 
+ 
\sum_{k=n+1}^m (k-n) |x_k|
 \end{equation} 
in the definition of the structure maps, and a contribution
\begin{equation} \label{signc}
(d-m+1)m + m \left(  |y| + \sum_{i \ge n } |x_i| \right) 
 \end{equation} 
 from permuting the determinant lines
 $\DD_{x_j}^-, j =n+1,\dots,n+m, \DD_y^+$ with $\det(T \RR^{d-m+1})$
 and permuting these determinant lines with the
 $\DD_{x_i}^-, i \leq n, \DD_y^-$.  On the other hand, the sign in the
 \ainfty axiom contributes
\begin{equation} \label{signd} \sum_{k=1}^n (|x_k| + 1) \end{equation}
Combining the signs \eqref{signa}, \eqref{signb}, \eqref{signc},
\eqref{signd} one obtains mod $2$ \label{mod2}
\begin{multline} 
\left(   mn + n + m \right) +
\left( \sum_{k=1}^n k |x_k| + (n+1) |y| + \sum_{k=n+m+1}^d (k-m+1)
  |x_k| +  \sum_{k=n+1}^{n+m} 
(k-n) |x_k|  \right) \\ 
 + 
(d-m+1)m + m \left( |y| +  \sum_{i \leq n} |x_i| \right)
+ \sum_{k=1}^n (|x_k| + 1) \\
\equiv 
( mn + m + n) + 
\sum_{k=1}^d k |x_k| + (n+1) |y| + 
 \sum_{k=n+m+2}^d (m-1) |x_k| + \sum_{k=n+1}^{n+m}  n |x_k| 
\\  + 
(d-m+1)m + m \left( d + \sum_{i \ge n + m + 1} |x_i| \right)
+ \sum_{k=1}^n |x_k| + n  \end{multline} 
\begin{multline}
  \equiv mn + m + \sum_{k=1}^d k |x_k| + |y| + \sum_{k=n+m+2}^d |x_k|
  + nm +
  (d-m+1)m + md + \sum_{k=1}^n |x_k|  + n  \\
  \equiv m + \sum_{k=1}^d k |x_k| + \sum_{k=n+1}^{n+m} |x_k| + m +
  \sum_{k=n+m+2}^d |x_k| + \sum_{k=1}^n |x_k|
\end{multline}
which is congruent mod $2$ to 
\begin{equation} \label{assocsign} \sum_{k=1}^d (k+1)
  |x_k|. \end{equation}
Since \eqref{assocsign} is independent of $n,m$, the
\ainfty-associativity relation \eqref{ainftyassoc} follows.
\end{proof} 

\begin{remark} {\rm (Units)} 
  Cohomological units are constructed in \cite{we:co}.  The unit for
  $\ul{L}$ is defined by counting perturbed pseudoholomorphic
  once-punctured disks with boundary in $\ul{L}$.  That is, the unit
  is the relative invariant associated to the quilt obtained from the
  once-punctured disk by attaching sequence of strips so that the
  boundaries lie in the Lagrangian correspondences in $\ul{L}$.
\end{remark} 

\subsection{The Maslov index two case}
\label{maslovtwo} 
The definitions of the previous sections extend to the case of Maslov
index two Lagrangians, once one fixes a {\em total disk invariant} as
in Oh \cite[Addendum]{oh:fl1}.  Given a Lagrangian $L \subset M$ and a
point $\ell \in L$, we denote by $\M_1^2(L,J,\ell)$ the moduli space
of Maslov index two holomorphic maps $u: (D,\partial D) \to (X,L)$
mapping $1 \in D$ to $\ell \in L$.

\begin{proposition} \label{prop:disk number} {\rm (Disk invariant of a
    Lagrangian)} For any $\ell \in L$ there exists a comeager subset
  $\J^{\rm reg}(\ell) \subset \J(M,\omega)$ such that
  $\M_1^2(L,J,\ell)$ is cut out transversally.  Any relative spin
  structure on $L$ induces an orientation on $\M_1^2(L,J,\ell )$.
  Letting $\eps: \M_1^2(L,J, \ell ) \to \{ \pm 1 \} $ denote the map
  comparing the given orientation to the canonical orientation of a
  point, the disk number of $L$,
$$ w(L) := \sum_{[u] \in \M_1^2(L,J,\ell)} \eps([u]) ,$$
is independent of $J \in J^{\reg}(\ell)$ and $\ell \in L$.
\end{proposition}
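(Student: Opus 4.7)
The plan is to prove the three assertions---transversality, orientation, and independence---by the standard scheme from Oh \cite{oh:fl1}, adapted to the monotone setting of Definition \ref{admissible}.

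For transversality, I would form the universal moduli space
$$ \M^{2,\on{univ}}_1(L,\ell) = \{ (u,J) \mid J \in \J(M,\omega),\ u \in \M^2_1(L,J,\ell)\}$$
of Maslov index two $J$-holomorphic disks with $u(1) = \ell$. The key input is that any non-constant Maslov index two disk is somewhere injective: a non-trivial multiple cover would underlie a non-constant disk of strictly smaller Maslov index, but the orientability of $L$ forces the minimal Maslov number to be even and hence at least $2$. Somewhere injectivity gives surjectivity of the linearized operator on $\M^{2,\on{univ}}_1(L,\ell)$ by the usual Floer-Hofer-Salamon argument at injective points in the complement of the constraint, so the universal space is a Banach manifold. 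Applying Sard-Smale to the projection to $\J$, then passing from $C^l$ to $C^\infty$ by the Taubes trick already used in the proof of Theorem \ref{familytransversality}, yields the comeager subset $\J^{\on{reg}}(\ell)$.

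For orientations, I would invoke the construction outlined in the orientation remark after Theorem \ref{familytransversality} in the special case of a once-marked disk: a relative spin structure on $L$ canonically orients the determinant line $\DD^+_{\ell}$ of the Cauchy-Riemann operator on the once-marked disk with constant boundary condition $T_\ell L$, and deformation of the Lagrangian boundary condition along $u$ to this constant condition transports the orientation to $\det(D_u)$. Since $\M^2_1(L,J,\ell)$ has expected dimension $n+\mu-2-n = 0$, this gives a sign $\eps([u]) \in \{\pm 1\}$ for each $[u]$.

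For independence, I would argue separately with respect to $J$ and $\ell$ by the standard cobordism method. Given $J_0,J_1 \in \J^{\on{reg}}(\ell)$, a generic path $\{J_t\}$ makes the parametrized moduli space $\bigcup_t \M^2_1(L,J_t,\ell)$ into an oriented $1$-manifold; analogously, for $\ell_0,\ell_1$ in the same connected component of $L$, a generic path $\gamma$ in $L$ makes $\bigcup_s \M^2_1(L,J,\gamma(s))$ into an oriented $1$-manifold. The Gromov-compactified boundary consists of the two fibers at the endpoints (giving $w(L)$ at $(J_0,\ell)$ minus $w(L)$ at $(J_1,\ell)$, and similarly for $\ell$), together with nodal limits from sphere or disk bubbling. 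Sphere bubbles are excluded in a one-parameter family for dimension reasons: by monotonicity of $(M,\omega)$ the moduli space of $J$-holomorphic spheres through $\ell$ has dimension $2c_1 - 4 \le -2$ for any non-trivial class contributing. Disk bubbles force the principal component to be a constant Maslov zero disk mapped to $\ell$ with an attached Maslov two disk through $\ell$; a codimension count on these nodal configurations, together with the fact that such a configuration is itself an element of $\M^2_1(L,J,\ell)$ smoothed by a gluing parameter, shows that they either have codimension at least $2$ or cancel in pairs on the two sides of the gluing. The main obstacle is precisely this disk-bubbling analysis, which is handled exactly as in \cite[Addendum]{oh:fl1} using the orientation formulas of FOOO; independence of the connected component chosen follows from the final well-definedness statement by fixing $J$ and varying $\ell$ along a path within each component.
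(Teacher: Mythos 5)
Your proposal is correct and follows essentially the same route as the paper, which in fact offers no proof of its own but simply refers to Oh's Addendum to \cite{oh:fl1}; your sketch is a faithful reconstruction of that argument (transversality via simplicity of Maslov-index-two disks, orientation via the relative spin structure, and a one-parameter cobordism with the disk-bubbling analysis deferred to Oh). The only point worth flagging is that simplicity of a Maslov-two disk does not follow from a naive multiple-cover argument, since non-simple disks need not be multiple covers of simple ones; one needs the decomposition results of Kwon--Oh/Lazzarini, though the conclusion is as you state under the minimal-Maslov-number-two hypothesis.
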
  

See Oh \cite[Addendum]{oh:fl1} for the proof. The quilted Floer
operator in the minimal Maslov index two case satisfies the following
relation involving the disk invariant above.  Let
$\ul{L}=(L_{j(j+1)})_{j=0,\ldots,r}$ be a cyclic generalized
Lagrangian brane between symplectic backgrounds
$M_j, j = 0,\ldots, r$.  Let
$$ \J_t(\ul{L}) := \prod_{j=0}^r
C^\infty([0,\delta_j],\J(M_j,\omega_j)) $$ 
denote the space of time-dependent almost complex structures on strips
with width $\delta_j$.

\begin{theorem} \label{thm:d2} 
{\rm (Quilted Floer cohomology)} For any $\ul{H}\in\Ham(\ul{L})$,
widths $\ul{\delta}=(\delta_j>0)_{j=0,\ldots,r}$, and $\ul{J}$ in a
comeager subset $\J^{\reg}_t(\ul{L},\ul{H}) \subset \J_t(\ul{L})$, the
Floer differential $\partial: CF(\ul{L}) \to CF(\ul{L})$ satisfies
$$\partial^2= w(\ul{L}) \Id , \qquad w(\ul{L})= \sum_{j=0}^r
w(L_{j(j+1)}) .$$
The pair $( CF(\ul{L}),\partial)$ is independent of the choice of
$\ul{H}$ and $\ul{J}$, up to cochain homotopy.
\end{theorem}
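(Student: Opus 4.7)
The plan is to compute $\partial^2$ by analyzing the boundary of the one-dimensional component $\M(\ul{x}_-, \ul{x}_+)_1$ of the moduli space of quilted Floer trajectories. By an application of the family transversality Theorem \ref{familytransversality} to the family of quilted strips (and the associated disk/sphere bubble moduli), there is a comeager subset $\J^{\on{reg}}_t(\ul{L},\ul{H})$ for which all moduli spaces of formal dimension $\leq 1$, including the configurations with a single Maslov index two disk bubble, are cut out transversally. Given regularity, the Gromov compactification of $\M(\ul{x}_-, \ul{x}_+)_1$ is then described by Theorem \ref{masterthm}: the boundary consists of broken Floer trajectories $\M(\ul{x}_-,\ul{y})_0 \times \M(\ul{y},\ul{x}_+)_0$, together with configurations in which a disk bubble develops. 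Since the minimal Maslov number of each $L_{j(j+1)}$ is now allowed to be two, the bubbling of a Maslov index two disk at an interior point of some seam $L_{j(j+1)}$ is no longer excluded by the index argument in Theorem \ref{gromov}(b).

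The boundary contribution from broken trajectories is precisely the matrix coefficient of $\partial^2 \bra{\ul{x}_-}$ at $\bra{\ul{x}_+}$. The key dimension count is: if a disk bubble of Maslov index two forms in a dimension-one family, the residual quilted map has Fredholm index $-1$ after quotienting by the $\R$-translation, so it must be a constant trajectory and one necessarily has $\ul{x}_- = \ul{x}_+$. Thus the disk-bubble stratum only contributes to the diagonal of $\partial^2$. The bubble point can lie anywhere on any seam $L_{j(j+1)}$ of the quilted strip, and the ensuing moduli space of bubbles, evaluated at the point on the seam through which the constant trajectory passes, is by definition $\M_1^2(L_{j(j+1)}, J, \ell)$ for some choice of basepoint $\ell \in L_{j(j+1)}$. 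Summing the signed count in Proposition \ref{prop:disk number} over all $j$ yields the coefficient $w(\ul{L}) = \sum_j w(L_{j(j+1)})$ on the identity, giving the identity $\partial^2 = w(\ul{L}) \Id$.

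The main technical obstacle is the sign computation: one must check that the orientations coming from the gluing theorem for disk bubbles of Maslov index two, as induced by the relative spin structures on the $L_{j(j+1)}$, reproduce exactly the orientation convention of Proposition \ref{prop:disk number}, with no extra signs coming from the seams or from the shift conventions used in the definition of $\Hom$. This follows by the same determinant-line degeneration argument used in the proof of Theorem \ref{ainftycat}, degenerating the once-broken trajectory with a bubble to a nodal configuration and applying the isomorphism \eqref{split2} at the node through which the bubble attaches; the relative spin structure provides the deformation of the Lagrangian boundary condition to split form, and the positional signs cancel because the residual Floer trajectory is constant.

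For the independence statement, given two admissible choices $(\ul{H}^0,\ul{J}^0)$ and $(\ul{H}^1,\ul{J}^1)$ of regular perturbation data, construct a continuation map $K: CF(\ul{L};\ul{H}^0,\ul{J}^0)\to CF(\ul{L};\ul{H}^1,\ul{J}^1)$ by counting solutions to an $s$-dependent quilted Floer equation interpolating between the two choices along the cylindrical parameter. This is an instance of the family quilt invariant of Theorem \ref{C} applied to the one-parameter family of quilted cylinders $\SS \to \RR = [0,1]$ with boundary conditions $\ul{L}$; Theorem \ref{familytransversality} supplies a regular extension. A second continuation $K'$ in the reverse direction, together with the family invariant associated to the $\RR = [0,1]^2$ family of two-parameter interpolations, yields a cochain homotopy $H$ with $\partial H + H \partial = K' K - \Id$ (up to an equal disk-bubble defect on both sides, which therefore cancels), establishing that the pair $(CF(\ul{L}),\partial)$ is independent of choices up to cochain homotopy.
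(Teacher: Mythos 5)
Your overall architecture matches the paper's: compute $\partial^2$ from the boundary of the one-dimensional moduli space of self-connecting trajectories, identify the broken-trajectory strata with the matrix entries of $\partial^2$, observe that a Maslov-index-two disk bubble forces the residual trajectory to be constant so only the diagonal is affected, and handle independence by continuation maps. But two steps are genuinely incomplete. First, you account only for disk bubbles, whereas in the monotone setting a sequence of index-two trajectories can equally well Gromov-converge to a constant trajectory together with a \emph{sphere} bubble of Chern number one; such configurations would contribute an extra diagonal term not present in $w(\ul{L})\Id$. The paper excludes them by building into $\J^{\reg}_t(\ul{L},\ul{H})$ the condition that the moduli spaces $\M_1^1(M_j,\{J_j(t)\},\{x_j\})$ of Chern-one spheres through the points $x_j$ be empty, which holds generically because such constrained spheres have negative expected dimension. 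Your argument needs this, or an equivalent exclusion.

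Second, you attribute the compactification, including the disk-bubble boundary points, to Theorem \ref{masterthm}; but that theorem is proved under admissibility hypotheses (minimal Maslov number at least three, or vanishing disk invariant) that rule out exactly the phenomenon at issue, and its boundary description contains no disk-bubble strata. The technical heart of Theorem \ref{thm:d2} is a gluing theorem of \emph{non-transverse} type showing that each pair consisting of the constant trajectory at $\ul{x}$ and a Maslov-two disk through $(x_j,x_{j+1})$ is an isolated endpoint of $\ol{\M(\ul{x},\ul{x})_1}$: the paper constructs the pregluing explicitly (rescaling the disk into a half-disk of radius $\tau^{-1/2}$ inside the strip, interpolating to the constant solution, and invoking the implicit function theorem together with its uniqueness clause). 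Without this, the assertion that these configurations are boundary points of a one-manifold, each counted once, is unsupported. Note also that the bubble is constrained to pass through the specific point $(x_j,x_{j+1})$ determined by $\ul{x}$, not ``anywhere on the seam''; the freedom in the attaching point along the strip is the translation factor absorbed by $\Aut(D,\partial D,1)\cong(0,\infty)\times\R$, and it is the identification of the dilation factor with the gluing parameter---rather than the positional cancellations of \eqref{split2}---that produces the orientation reversal $\M_1^2(\cdots)^-$ and hence the sign in $\partial^2 - w(\ul{L})\Id=0$.
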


\begin{proof}   
  We sketch the proof, following Oh \cite[Addendum]{oh:fl1} in the
  case of $\Z_2$ coefficients.  For any $\ul{x}_\pm \in \cI(\ul{L})$,
  the zero dimensional component $\M(\ul{x}_-,\ul{x}_+)_0$ of Floer
  trajectories is a finite set.  As in \cite[Proposition 4.3]{oh:fl1}
  the one-dimensional component $\M(\ul{x}_-,\ul{x}_+)_1$ is smooth,
  but the ``compactness modulo breaking'' does not hold in general:
  Apart from the breaking of trajectories, a sequence of Floer
  trajectories of Maslov index $2$ could in the Gromov
  compactification converge to a constant trajectory and either a
  sphere bubble of Chern number one or a disk bubble of Maslov number
  two. All other bubbling effects are excluded by monotonicity.  Thus
  failure of ``compactness modulo breaking'' can occur only when
  $\ul{x}_- = \ul{x}_+$.

The proof follows from the claim that each one-dimensional moduli
space $\M(\ul{x},\ul{x})_1$ of self-connecting trajectories has a
compactification as a one-dimensional manifold with boundary
$$ 
\partial \ol{\M(\ul{x},\ul{x})_1} \;\cong\; 
\bigcup_{\ul{y}\in\cI(\ul{L})} \bigl( \M(\ul{x},\ul{y})_0 \times \M(\ul{y},\ul{x})_0 \bigr)
\; \cup \; \bigcup_{j=0,\ldots,r} \M_1^2(L_{j(j+1)},J_{j(j+1)},\{ (x_j,x_{j+1}) \})^-
$$
and that furthermore the orientations on these moduli spaces induced
by the relative spin structures are compatible with the inclusion of
the boundary.  Here $\M_1^2(\ldots)^-$ denotes the moduli space
$\M_1^2(\ldots)$ with orientation reversed.  The subset
$\J^{\reg}_t(\ul{L};\ul{H})$ consists of collections of time-dependent
almost complex structures $J_j:[0,\delta_j]\to\J(M_j,\omega_j)$ for
which all $\M(\ul{x}_-,\ul{x}_+)$ are smooth and the universal moduli
spaces of spheres $\M_1^1(M_j,\{J_j(t)\}_{t\in[0,\delta_j]},\{x_j\})$
are empty for all $\ul{x}=(x_j)_{j=0,\ldots,r}\in \cI(\ul{L})$.  This
choice excludes the Gromov convergence to a constant trajectory and a
sphere bubble.  We now restrict to those
$\ul{J}\in\J^{\reg}_t(\ul{L};\ul{H})$ such that
$$J_{j(j+1)}:=(- J_j(\delta_j) ) \oplus J_{j+1}(0) \in \J^{\rm
  reg}(L_{j(j+1)},\{(x_j,x_{j+1})\})$$ 
for all $\ul{x}\in \cI(\ul{L})$ and $j=0,\ldots,r$.  This still
defines a comeager subset in $\J_t(\ul{L})$.

To finish the proof of the claim we use a gluing theorem of
non-transverse type for pseudoholomorphic maps with Lagrangian
boundary conditions.  The required gluing theorem can be adapted from
\cite[Chapter 10]{ms:jh} as follows: Replace $\ul{L}$ with its
translates under the Hamiltonian flows of $\ul{H}$, so that the Floer
trajectories are unperturbed $J_j$-holomorphic strips (where the $J_j$
have suffered some Hamiltonian transformation, too).  Pick
$[v_{j(j+1)}]\in \M_1^2(L_{j(j+1)},J_{j(j+1)},\{ (x_j,x_{j+1}) \})$
and a representative $v_{j(j+1)}$.  The gluing construction gives a
map
\begin{equation} \label{pointglue}
(T,\infty) \longrightarrow \M(\ul{x},\ul{x})_1 \end{equation}
to the moduli space of parametrized Floer trajectories of index $2$,
where $T \gg 0$ is a real parameter.  This construction first
identifies $v_{j(j+1)}$ with a map from the half space $\bH\cong
D\setminus\{1\}$ to $M_j^-\times M_{j+1}$. For the pregluing choose a
{\em gluing parameter} $\tau \in (T,\infty)$.  Outside of a half disk
of radius $\frac{1}{2} \tau^{1/2}$ around $0$, interpolate the map to
the constant solution $(x_j,x_{j+1})$ outside of the half disk of
radius $\tau^{1/2}$ using a slowly varying cutoff function in
submanifold coordinates of $L_{j(j+1)}\subset M_j^-\times M_{j+1}$
near $(x_j,x_{j+1})$.  Then rescale this map by $\tau$ to a half-disk
of radius $\tau^{-1/2}$ centered around $0$ in the strip
$\R\times[0,\tau^{-1/2})$, again extended constantly.  The components
  give an approximately $J_{j+1}$-holomorphic map
  $u_{j+1}:\R\times[0,\tau^{-1/2}) \to M_{j+1}$ and, after reflection,
    an approximately $J_{j}$-holomorphic map $u_j:
    \R\times(\delta_j-\tau^{-1/2},\delta_j] \to M_j$.  For
  $T\geq\max\{\delta_j^{-2},\delta_{j+1}^{-2} \}$ these strips can be
  extended to width $\delta_j$ resp.\ $\delta_{j+1}$.  Together with
  the constant solutions $u_\ell \equiv x_\ell$ for
  $\ell\not\in\{j,j+1\}$ we obtain a tuple
$$\ul{u}=(u_{\ell}: \R \times [0,\delta_\ell] \to
  M_\ell)_{\ell=0,\ldots,r}$$
that is an approximate Floer trajectory.  An application of the
implicit function theorem gives an exact solution for $T$ sufficiently
large.  The uniqueness part of the implicit function theorem gives
that $[v_{j(j+1)}]$ is an isolated limit point of
$\M(\ul{x},\ul{x})_1$, so that $\ol{\M}(\ul{x},\ul{x})_1$ is a
one-dimensional manifold with boundary in a neighborhood of the nodal
trajectory with disk bubble $[v_{j(j+1)}]$.

It remains to examine the effect of the gluing on orientations for
which we need to recall the construction of orientations in
\cite{orient}.  Choose a parametrization
$$ [T,\infty] \to \ol{\M}(\ul{x},\ul{x})_1, \quad \infty \mapsto
     [v_{j(j+1)}] $$
homotopic to the gluing map.  Now the action on orientations is given
by the action on local homology groups, and homotopic maps induce the
same action.  So by replacing the gluing map with this parametrization
we may assume that the gluing map is an embedding.  The moduli space
$\M_1^2(L_{j(j+1)},J_{j(j+1)},\{(x_j,x_{j+1})\})$ has orientation at
$[u]$ induced by determinant line $\det(D_u)$ and the determinant of
the infinitesimal automorphism group $\aut(\R \times [0,1]) \cong \R$.
On the other hand, the orientation on $\M_1^2(L_{j(j+1)},J_{j(j+1)},\{
(x_j,x_{j+1}) \})$ is induced by the orientation of the determinant
line $\det(D_{v_{j(j+1)}})$ and the determinant line of the
automorphism group $\Aut(D,\partial D,1) \cong (0,\infty) \times \R$.
The gluing map induces an orientation-preserving isomorphism of
determinant lines $\det(D_u) \to \det(D_{v_{j(j+1)}})$ by
\cite{orient}.  Under gluing the second factor in $\Aut(D,\partial
D,1)$ agrees with the translation action on $\R \times [0,1]$.  On the
other hand, the first factor agrees approximately with the gluing
parameter, in the sense that gluing in a dilation of $v_{j(j+1)}$ by a
small constant $\rho > 0$ using gluing parameter $\tau$ is
approximately the same as gluing in $v_{j(j+1)}$ with gluing parameter
$\tau \rho$.  Thus $v_{j(j+1)}$ represents a boundary point of $
\ol{\M}(\ul{x},\ul{x})_1$ with the opposite of the induced orientation
from the interior.  Summing over the boundary of the one-dimensional
manifold $\partial \ol{\M}(\ul{x},\ul{x})_1$ proves that $\partial^2 -
\sum_{j=0}^r w(L_{j(j+1)})\Id =0$.  The proof that two choices lead to
cochain-homotopic pairs $(CF(\ul{L}),\partial)$ is given in
\cite{ww:quiltfloer}.
\end{proof}

\begin{remark} {\rm (Floer theory of a pair of Lagrangians)} 
In the special case $\ul{L}=(L_0,L_1)$ of a cyclic correspondence
consisting of two Lagrangian submanifolds $L_0,L_1\subset M$ we have
$w(\ul{L})=w(L_0)-w(L_1)$.  Indeed the $-J_1$-holomorphic discs with
boundary on $L_1\subset M^-\times \{ \on{pt} \}$ are identified with
$J_1$-holomorphic discs with boundary on $L_1\subset M$ via a
reflection of the domain.  This reflection is orientation reversing
for the moduli spaces. In particular, the differential for a monotone
pair $\ul{L}=(L,\psi(L))$ with any symplectomorphism $\psi\in\Symp(M)$
always squares to zero, since $w(L) = w(\psi(L))$.
\end{remark}

Following Sheridan \cite{sh:hmsfano}, for each integer $w \in \Z$
define the generalized Fukaya category $\GFuk(M,w)$ whose objects are
generalized Lagrangians $\ul{L}$ whose total disk invariant is
$w(\ul{L}) = w$, and whose morphisms are the quilted Floer cochain
groups defined above.

\begin{theorem}  Let $M$ be a symplectic background, $w \in \Z$ an integer and $\mu^d$ 
for $d \ge 1$ the maps defined in \eqref{highercomp} for some choice
of family of surfaces of strip-like ends and perturbation data.  Then
the maps $\mu^d, d \ge 1$ define an \ainfty structure on $\GFuk(M,w)$.
\end{theorem}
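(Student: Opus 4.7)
The argument extends Theorem \ref{ainftycat} by adding the Maslov-index-two disk-bubble analysis from Theorem \ref{thm:d2}. The construction of the moduli spaces using the family $\ol{\SS}^d \to \ol{\RR}^d$ of Proposition \ref{Ad}, the inductive choice of regular perturbation data via Theorem \ref{familytransversality}, and the definition \eqref{highercomp} of $\mu^d$ all carry over verbatim. The new input is the enumeration of boundary strata of the one-dimensional component $\ol{\M}^d_1$: since objects of $\GFuk(M,w)$ may have minimal Maslov number two, Gromov compactness (Theorem \ref{gromov}) no longer excludes Maslov-index-two disk bubbles, and Proposition \ref{holassoc}(ii) must be refined accordingly.

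First, I would refine the boundary decomposition of $\partial \ol{\M}^d_1$ into three classes: (A) Stasheff/associahedron strata from Proposition \ref{holassoc}; (B) broken Floer trajectories at the strip-like ends; and (C) configurations with a single Maslov-index-two disk bubble attached at an interior point of one of the $d+1$ boundary arcs. Classes (A) and (B) contribute precisely the terms of the $A_\infty$ associativity relation, with signs computed exactly as in Theorem \ref{ainftycat}. For class (C) I would adapt the bubble-gluing analysis sketched in the proof of Theorem \ref{thm:d2}: each seam labelled by a correspondence $L$ appearing in some $\ul{L}^i$ contributes a stratum whose signed count equals $\pm w(L)$ times the count of the underlying polygon with a cohomological unit, of the kind constructed in \cite{we:co} and noted in the Remark ending Section \ref{assocsec}, inserted at the bubble location.

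Summing over constituent correspondences of each $\ul{L}^i$ gives $\pm w(\ul{L}^i)$, and summing over all $d+1$ boundary arcs yields a total class (C) contribution of the form $\sum_{i=0}^d w(\ul{L}^i)\,\Psi_i$, where $\Psi_i$ is a signed count of polygons with a unit inserted on the $i$-th arc. For $d \geq 2$ the unit-insertion property forces each $\Psi_i$ to vanish up to the chain homotopies recorded by $\mu^{d+1}$-terms, so the class (C) contribution vanishes regardless of the $w(\ul{L}^i)$. For $d = 1$, the class (C) contribution reduces to $w(\ul{L}^0) - w(\ul{L}^1)$ by the transpose computation in the remark following Theorem \ref{thm:d2}, which vanishes by the constraint $w(\ul{L}^i) = w$ defining $\GFuk(M,w)$. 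In either case the disk-bubble contributions cancel, and the $A_\infty$ associativity relations follow as in Theorem \ref{ainftycat}.

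The main obstacle is managing the gap between cohomological and strict units: since the units from Section \ref{assocsec} are defined only cohomologically, the unit-insertion identities hold only up to explicit chain homotopies, which must be tracked carefully. Equivalently, one can view the disk-bubble terms as the curvature $\mu^0_{\ul{L}} = w(\ul{L}) \cdot e_{\ul{L}}$ of a curved $A_\infty$ structure that becomes uncurved upon restriction to the level-$w$ subcategory $\GFuk(M,w)$. Once this unit bookkeeping is handled, the orientation and gluing machinery of \cite{orient} and Theorem \ref{thm:d2} suffices to complete the sign computation.
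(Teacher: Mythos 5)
Your identification of the new issue (Maslov-index-two disk bubbles) and your treatment of the $d=1$ case are right, but your handling of $d\ge 2$ rests on a false premise and on a step that would not close. The paper's proof is much shorter because the ``class (C)'' strata you postulate are in fact \emph{empty} for $d\ge 2$: if a Maslov-index-two disk bubbles off from a configuration of formal dimension at most one, the remaining principal component lies in a moduli space of negative expected dimension and hence does not exist for regular perturbation data --- this is exactly the monotonicity argument of Theorem \ref{gromov}(b) and Proposition \ref{holassoc}, and it applies verbatim whether the minimal Maslov number is $2$ or $3$. The one place this argument fails is the computation of $(\mu^1)^2$ on self-connecting trajectories $\ul{x}_-=\ul{x}_+$, because there the ``remaining component'' is the constant strip, which exists despite living in a stratum of negative expected dimension. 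That is the only source of disk-bubble boundary, and by Theorem \ref{thm:d2} (and the remark following it on transposes) it contributes $w(\ul{L}^+)-w(\ul{L}^-)$, which vanishes on $\GFuk(M,w)$. You never identify the constant-trajectory mechanism, which is the actual content of the proof.

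Beyond the unnecessary work, your proposed resolution for $d\ge 2$ is not a valid argument even on its own terms: boundary points of the compactified one-dimensional moduli space contribute directly, with signs, to the $\ainfty$ relation; they cannot be dismissed as ``vanishing up to the chain homotopies recorded by $\mu^{d+1}$-terms.'' If such strata were genuinely present, the relation would acquire extra terms and the theorem would be false as stated, not true up to homotopy. Relatedly, the units of Remark after Theorem \ref{ainftycat} are only cohomological, so the unit-insertion identities you invoke are not available at chain level in the form you need. The curved-$\ainfty$ picture you sketch at the end is a reasonable heuristic, but the curvature manifests itself only through $(\mu^1)^2 = w(\ul{L})\,\Id$, not through additional boundary strata of the higher composition maps; once you see that, the proof reduces to the one-line observation in the paper plus the requirement that the perturbation data on the ends make the Maslov-two disks regular.
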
 

\begin{proof} The possibility of disk bubbling occurs only the definition of
$\mu_1^2$, since it is only in this case that there exist holomorphic
  quilts in a moduli space that is not of expected dimension (the
  constant trajectories).  The proof is therefore the same as in the
  case of Maslov number at least three, with the added requirement
  that the perturbation data on the ends makes the disks in the
  definition of the disk invariant regular.
 \end{proof}

\section{Functors for Lagrangian correspondences} 
\label{multiplihedron}
\label{functors}

In this section we construct \ainfty functors associated to any
admissible Lagrangian correspondence equipped with a brane structure.

\subsection{Moduli of stable quilted disks} 

The multiplihedron is a polytope introduced by Stasheff in
\cite{st:ho} which plays the same role in the theory of \ainfty
morphisms as the associahedron does for \ainfty algebras.  

\begin{definition} \label{kd0def} For $d\geq 1$, the complex $\K^{d,0}$ is a compact
  cell complex of dimension $d - 1$ whose vertices correspond to the
  ways of maximally bracketing $d$ formal variables $a_1,\ldots,a_d$
  and applying a formal operation $h$.  The complex $\K^{d,0}$ is
  defined recursively as the cone over the union of products of
  lower-dimensional spaces $\K^e$ and $ \K^{f,0}$.
  \cite{st:ho}.  \end{definition}

\begin{example}
\begin{enumerate} 
\item {\rm (Second multiplihedron)} The second multiplihedron
  $\K^{2,0}$ is homeomorphic to a closed interval with end points
  corresponding to the expressions $h(a_1 a_2)$ and $h(a_1) h(a_2)$.
\item {\rm (Third multiplihedron)} The multiplihedron $\K^{3,0}$ is
  homeomorphic to a hexagon shown in Figure \ref{K31}, with vertices
  corresponding to the expressions $h((a_1a_2)a_3)$, $h(a_1(a_2a_3))$,
  $h(a_1)h(a_2a_3)$, $h(a_1a_2)h(a_3)$, $(h(a_1)h(a_2))h(a_3)$,
  $h(a_1)(h(a_2)h(a_3))$.
\end{enumerate} 
\end{example}

\begin{figure}[h]
\begin{picture}(0,0)%
\includegraphics{hexsym.pstex}%
\end{picture}%
\setlength{\unitlength}{4144sp}%
\begingroup\makeatletter\ifx\SetFigFont\undefined%
\gdef\SetFigFont#1#2#3#4#5{%
  \reset@font\fontsize{#1}{#2pt}%
  \fontfamily{#3}\fontseries{#4}\fontshape{#5}%
  \selectfont}%
\fi\endgroup%
\begin{picture}(2557,2001)(872,-2662)
\put(1339,-727){\makebox(0,0)[lb]{{{{$h((a_1a_2)a_3)$}%
}}}}
\put(2933,-736){\makebox(0,0)[lb]{{{{$h(a_1(a_2a_3))$}%
}}}}
\put(3414,-1683){\makebox(0,0)[lb]{{{{$h(a_1)h(a_2a_3)$}%
}}}}
\put(387,-1643){\makebox(0,0)[lb]{{{{$h(a_1a_2)h(a_3)$}%
}}}}
\put(900,-2729){\makebox(0,0)[lb]{{{{$(h(a_1)h(a_2))h(a_3)$}%
}}}}
\put(2811,-2729){\makebox(0,0)[lb]{{{{$h(a_1)(h(a_2)h(a_3))$}%
}}}}
\end{picture}%
\caption{ Vertices of $\K^{3,0}$}
\label{K31}
\end{figure}
We review from \cite{mau:mult} the realization of the multiplihedron
as the moduli space of stable nodal {\em quilted disks} with marked
points. \label{qdhere}

\begin{definition} \label{qddef} {\rm (Quilted disks)}  
Let $d \ge 2$. A {\em quilted  disk with $d+1$ markings} is a tuple 
$(D,C,z_0,z_1 \ldots, z_d)$ where
\begin{enumerate}
\item $D$ is a holomorphic disk, which can be taken to be the closed
  unit disk in $\C$.
\item $C$ is a circle in $D$ with unique intersection point $C\cap
  \partial D = \{ z_0 \}$ equal to the $0$-th marking.  That is, if
  $D$ is identified with a disk in $\C$ then $C$ is a circle in $D$
  tangent to $\partial D$ at $z_0$.
\item $(z_0, z_1,\ldots, z_d)$ is a tuple of distinct points in
  $\partial D$ whose cyclic order is compatible with the orientation
  of $\partial D$.
\end{enumerate} 
An {\em isomorphism} of marked quilted disks from $(D,C,z_0,z_1
\ldots, z_d)$ to $(D',C',z'_0,z'_1 \ldots, z'_d)$ is a holomorphic
isomorphism preserving the quiltings and markings:
$$ \psi: D \to D', \quad \psi(C) = C', \quad \psi(z_j) = z_j', \quad j
= 0,\ldots, d .$$
\end{definition}

The space of isomorphism classes of marked quilted disks admits a
natural compactification by stable quilted disks, defined as follows.

\begin{definition}  \label{coltreedef}
\begin{enumerate} 
\item {\rm (Colored trees)} A {\em colored tree} is a pair
  $(\Gamma,\Edge_\infty(\Gamma))$ consisting of a tree $\Gamma =
  (\Ver(\Gamma),\Edge(\Gamma))$ with semiinfinite edges
  $\Edge_\infty(\Gamma) \subset \Edge(\Gamma)$ labelled
  $z_0,\dots,z_d$ equipped with a distinguished set of {\em colored
  vertices} $\Ver^{(1)}(\Gamma)$ with the following property: 
\begin{itemize}
\item[] For each
  $j = 1,\ldots, d,$ the unique shortest path in $\Gamma$ from the
  semi-infinite {\em root edge} marked $z_0$ to the semi-infinite edge
  $z_j$ crosses exactly one colored vertex.  
\end{itemize} 
A colored tree is {\em stable} if each colored resp. uncolored vertex
has valence at least two resp. three.
\item {\rm (Nodal quilted disks)} A {\em nodal $(d+1)$-marked quilted
  disk} $\ul{S}$ of combinatorial type equal to a colored tree
  $\Gamma$ is a collection $D_i, i = 1,\ldots, a$ and $(D_i', C_i'), i
  = 1,\ldots, b$ of quilted and unquilted marked disks, identified at
  pairs of points on the boundary $w_k^- \in \partial D_{i^-(k)},w_k^+
  \in \partial D_{i^+(k)}$ distinct from each other and the interior
  circles, together with a collection $z_0,\ldots,z_d$ of distinct
  smooth points on the boundary of $D = \sqcup_{i=1}^a D_i \sqcup
  \sqcup_{i=1}^b D_i'$ such that the graph obtained by replacing disks
  resp. quilted disks with unquilted resp. quilted vertices is the
  given colored tree $\Gamma$.
An {\em isomorphism} of nodal marked quilted disks $D,D'$ with the
same combinatorial type $\Gamma$ is a collection \label{colhere} of complex isomorphisms
$\phi_v: D_v \to D^\prime_v, v \in \Ver(\Gamma)$ of the corresponding
disk components, identifying nodal points, marked points, and/or inner
circles.

\item {\rm (Stable quilted disks)} A nodal quilted disk is {\em
    stable} if and only if it has no automorphisms, or equivalently
  the corresponding colored tree is stable.  More concretely, a nodal
  quilted disk is stable if and only if each quilted disk component
  $(D_i,C_i)$ contains at least $2$ singular or marked points
$$ D_i \text{ \ quilted \ } \implies \# \{ z_k \in D_i \} + \# \{ w_j
  \in D_i \} \ge 2 $$
and each non-quilted disk component $D_i$ contains at least $3$
singular or marked points
$$ D_i \text{ \ unquilted \ } \implies \# \{ z_k \in D_i \} + \# \{
w_j \in D_i \} \ge 3 .$$
Equivalently $(D,C,\ul{z})$ has no non-trivial automorphisms: 
$$\# \on{Aut}(D,C, \ul{z}) = 1 .$$
\end{enumerate}
\end{definition}

We introduce the following notations for moduli spaces of quilted
disks.  Denote by $\RR^{d,0}_\Gamma$ for the set of isomorphism
classes of combinatorial type $\Gamma$.  The moduli space
$\ol{\RR}^{d,0}$ is the union of $\RR^{d,0}_\Gamma$ over combinatorial
types $\Gamma$.  As before in \eqref{fi}, forgetting a marking and
stabilizing produces a fiber bundle $f_i: \RR^{d,0} \to \RR^{d-1,0}$
with interval fibers isomorphic to the part of the boundary between
the $i-1$ and $i+1$-st markings.  This implies by induction that
$\ol{\RR}^{d,0}$ is a topological disk.  There is also a forgetful map
\begin{equation} \label{forgetseam} f: \ol{\RR}^{d,0} \to
  \ol{\RR}^d \end{equation}
forgetting a seam which is, restricted to any stratum
$\RR^{d,0}_\Gamma$, a fiber bundle with open interval fibers.  For
example, the top-dimensional stratum $\RR^{d,0}$ may be identified
with the space of tuples $(w_2 < \ldots < w_d)$ by fixing $w_1 = 0$
and taking the seam to be given by $\on{Im}(z) =1$ in
$\H \cong D - \{ 1 \}$.  The forgetful morphism for forgetting the
seam is then in coordinates
$$ (w_2 < \ldots < w_d) \to (w_3/w_2 < \ldots < w_d/w_2 ) .$$
This map is smooth and surjective with fiber the space of quilted
disks represented by tuples
$[0,\lambda , \lambda w_3/w_2, \ldots, \lambda w_d/w_2]$.

\begin{example} {\rm (The third multiplihedron)}  The moduli space
$\ol{\RR}^{3,0}$ is the hexagon shown in Figure \ref{m31}.  The
  picture shows how the interior circle on the open stratum can
  ``bubble off'' into the bubble disks on the boundary.
\end{example} 

\begin{figure}[ht]
\includegraphics[height=3in]{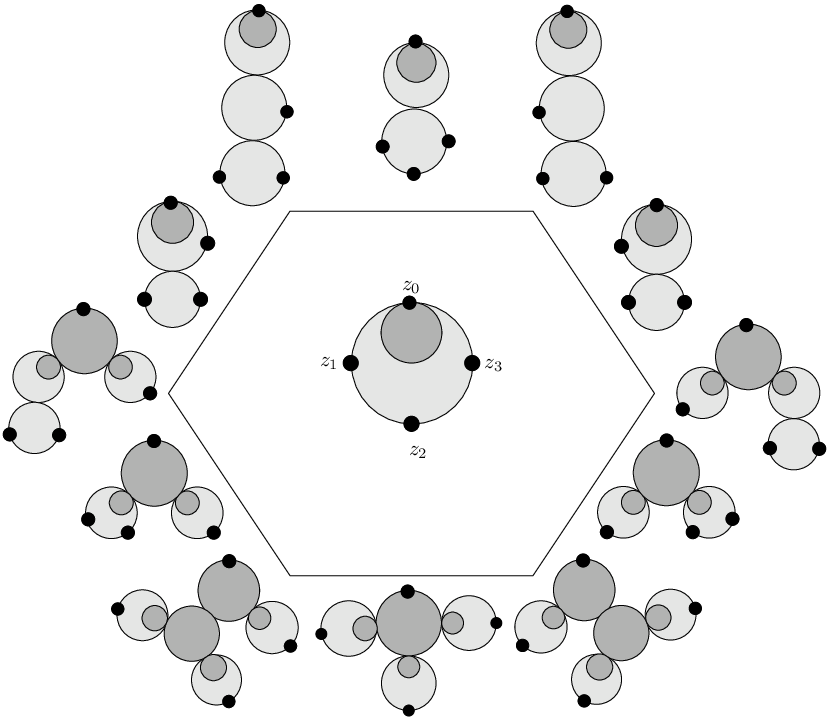}
\caption{$\ol{\RR}^{3,0}$}
\label{m31}
\end{figure}
The local structure of the moduli space of quilted disks is described
as follows, see Ma'u-Woodward \cite{mau:mult}.

\begin{definition} \label{balanced} 
{\rm (Balanced gluing parameters)} A collection of gluing parameters 
$$ \delta: \Edge_{< \infty}(\Gamma) \to [0,\infty)$$
is {\em balanced} if the following condition holds: for each pair
$v_1,v_2 \in \Ver^{(1)}(\Gamma)$ of colored vertices, let
$\gamma_{12}$ denote the shortest path in $\Gamma$ from $v_1$ to
$v_2$.  Then the relation
\begin{equation} \label{balancedrel}
\prod_{e \in \gamma_{12}} \delta(e)^{\pm 1} = 1, \quad \forall v_1,v_2
\in \Ver^{(1)}(\Gamma)\end{equation}
holds where the sign is $+1$ resp. $-1$ if the edge points towards
resp. away from the root edge.
\end{definition} 

The following theorem describes the local structure of the moduli
space of quilted disks near any stratum.  Let $Z_\Gamma \subset
\Map(\Edge_{< \infty}(\Gamma),\R_{\ge 0})$ be the set of balanced
gluing parameters from Definition \ref{balanced}.

\begin{theorem} 
 \label{glueRd0}
{\rm (Existence of compatible tubular neighborhoods)} For any integer $d \ge 1$ there exists a collection
of open neighborhoods $U_\Gamma$ of $0$ in $Z_\Gamma$ and collar
neighborhoods
$$ G_\Gamma:  \RR_\Gamma^{d,0} \times U_\Gamma \to \ol{\RR}^{d,0} $$
satisfying the following compatibility property: If
$\RR^{d,0}_{\Gamma'}$ is contained in the closure of
$\RR^{d,0}_{\Gamma}$ and the local coordinates on $\RR^{d,0}_\Gamma$
are induced via the gluing construction from those on
$\RR^{d,0}_{\Gamma'}$ then the diagram
$$ 
\begin{diagram}
\node{\RR_{\Gamma'}^{d,0} \times U_{\Gamma'}}  \arrow[2]{e}\arrow{se} \node{}
\node{\ol{\RR}_{\Gamma}^{d,0} \times U_{\Gamma} } \arrow{sw} \\
\node{} \node{\ol{\RR}^{d,0}} \node{} 
\end{diagram} 
$$
commutes. 
\end{theorem}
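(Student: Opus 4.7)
The plan is to imitate the proof of Theorem \ref{collard} for the unquilted case, inserting the extra ingredient that on a quilted disk the interior seam circle has to be consistently transported under gluing, which is precisely what the balanced condition \eqref{balancedrel} encodes. First I would fix a stratum $\RR^{d,0}_\Gamma$ together with a representative $(D,C,\ul{z})$ of combinatorial type $\Gamma$, and choose, for each node $w_e^\pm, e\in\Edge_{<\infty}(\Gamma)$, a pair of holomorphic local coordinates $\phi_e^\pm\colon (B_\eps^+(0),0) \to (D,w_e^\pm)$ varying smoothly in the point of $\RR^{d,0}_\Gamma$. For each balanced gluing parameter $\delta \in U_\Gamma$ with $\delta(e) > 0$, I perform the usual plumbing $z \mapsto \delta(e)/z$ on small half-disks around $w_e^\pm$ to produce an underlying nodal disk $G_\delta(D)$ of a combinatorial type $\Gamma_\delta$ obtained from $\Gamma$ by collapsing the edges with $\delta(e)>0$. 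This is identical to the construction in the proof of Theorem \ref{collard}.

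Next I would glue in the seams. On each quilted component $(D_v,C_v)$ with $v\in\Ver^{(1)}(\Gamma_\delta)$, the seam $C_v$ is intrinsic to the glued component. The issue is that when several colored vertices of $\Gamma$ are merged into a single quilted vertex of $\Gamma_\delta$ by the plumbing, the seams must match to form a single circle tangent to $\partial G_\delta(D)$ at the root marking. I would show that the balanced condition \eqref{balancedrel} is exactly the obstruction to existence of a consistent seam: choosing one colored vertex $v_1$ as reference, the location of the seam in any other colored vertex $v_2$ inside the same quilted component of $G_\delta(D)$ is determined by parallel transport of the tangency ratio along the path $\gamma_{12}$, and this is single-valued precisely when the product condition \eqref{balancedrel} holds. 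Conversely, given $\delta\in Z_\Gamma$, the seams can be assembled into a unique circle in each quilted component, producing a stable quilted disk $G_\delta(D,C,\ul{z})\in \ol{\RR}^{d,0}$; this is where the description of $\ol{\RR}^{d,0}$ from \cite{mau:mult} and Definition \ref{balanced} are used crucially. The resulting map $G_\Gamma\colon \RR_\Gamma^{d,0}\times U_\Gamma \to \ol{\RR}^{d,0}$ is a homeomorphism onto a neighborhood of $\RR_\Gamma^{d,0}$ for $U_\Gamma$ small, by the same inverse-function argument (in the complex-analytic category of Knudsen-type parametrizations) underlying Theorem \ref{collard}.

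For the compatibility of collar neighborhoods when $\Gamma'\leq\Gamma$, one argues exactly as in the unquilted case: if $I\subset\Edge_{<\infty}(\Gamma')$ is the subset of edges collapsed in passing to $\Gamma$ and $\delta_I$ are nonzero balanced parameters on $I$, then $G_{\delta_I}(D')$ is a stable quilted disk of type $\Gamma$, and choosing the family of local coordinates on $\RR_\Gamma^{d,0}$ to be induced by the gluing construction from $\RR_{\Gamma'}^{d,0}$ forces the two parametrizations to agree on the overlap. Here one uses, as in Theorem \ref{collard}, that the space of germs of holomorphic local coordinates at a boundary point is convex (the constraint $D\phi_e^\pm(0)>0$ is convex), and similarly that the space of germs of interior seam circles tangent to $\partial D$ at a boundary marking is contractible; this allows one to choose coordinates recursively over strata so that near every deeper stratum they are of the form induced by gluing.

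The main obstacle is the second step: verifying that the balanced condition is sharp for existence of a compatible seam after gluing, i.e.\ that \eqref{balancedrel} is both necessary and sufficient. This is a bookkeeping computation with the ratios of the radii of the interior circles, carried out by tracking how the plumbing map $z\mapsto \delta(e)/z$ rescales the tangency parameter at each end of an edge; once this is done correctly, the rest of the proof reduces to a direct adaptation of the unquilted argument, and the induction over strata (as in the derivation of Theorem \ref{A} from Theorem \ref{extension}) goes through verbatim.
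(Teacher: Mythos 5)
Your proposal follows essentially the same route as the paper's (sketch of) proof: glue at the nodes, use the balanced relation \eqref{balancedrel} to place a single well-defined seam on each glued quilted component, and exploit convexity/contractibility of the space of local coordinates to arrange the compatibility of the collars recursively over strata. The only real difference is presentational: the ``bookkeeping computation'' you defer is sidestepped in the paper by choosing half-plane coordinates on each component (the node toward the root at infinity, the seam at $\{\Im(z)=1\}$) and gluing by $z\mapsto\gamma_j z$, so that the glued seam sits at $\{\Im(z)=\gamma_j\}$ and its independence of $j$ is literally the balanced relation.
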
 

\begin{proof}[Sketch of proof] 
The proof uses a version of the gluing construction for nodal disks to
the quilted case.  Let $D$ be a stable nodal quilted disk and $\delta
\in Z_\Gamma$.  The {\em glued disk} $G_\delta(D)$ is defined as
follows.  For each component $D_j$, let $w_j$ be the node connecting
the disk with component containing the root marking $z_0$, or $z_0$ if
$D_j$ is that component.  We assume that for each $D_j$ an
identification of $D_j - \{ w_j \}$ with half-space $\H = \{ \Im(z)
\ge 0 \}$ has been fixed.  Such an identification is given by fixing
two additional markings or nodes for an unquilted component, or one
additional marking or node and the seam of the quilt as the line $C =
\{ \Im(z) = 1 \}$.  Note that the space of such coordinates is convex.
\begin{enumerate} 
\item {\rm (Unquilted case)} In the case of a node not meeting any
  seam of a quilted disk corresponding to a gluing parameter
  $\gamma_j$, remove small balls around the node and glue together
  small annuli around the nodes using the map $z \mapsto \gamma_j z$.
  (Note the coordinate on the component ``further away from
  $z_0$'' \label{unquiltedcase} is already inverted.)
\item {\rm (Quilted case)} In the case of several nodes meeting seams
  of quilted disks, remove small balls around the nodes, glue together
  annuli around the nodes using the map $z \mapsto \gamma_j z$.
  Define the seam on the glued component is $C = \{ \Im(z) = \gamma_j
  \}$ independent of $j$ by the relation on the gluing parameters.
\end{enumerate} 

The collar neighborhoods of the strata are given by the following
global version of the gluing construction of the previous paragraph.
Suppose that for each point $r \in \RR^{d,0}_\Gamma$ a collection of
local coordinates as above is given varying smoothly in $r$.  Let
$U_\Gamma$ be a neighborhood of $0$ in $Z_\Gamma$.  Construct a collar
neighborhood $ G_\Gamma: \RR_\Gamma^{d,0} \times U_\Gamma \to
\ol{\RR}^{d,0} $ by mapping each disk to the isomorphism class of the
corresponding glued disk.  Since the space of coordinates on the disks
is contractible, we may assume that we have chosen local coordinates
so that whenever a point $r \in \RR^{d,0}_{\Gamma}$ is in the image of
such a gluing map from $\RR^{d,0}_{\Gamma'}$, the local coordinates
are those induced from $\RR^{d,0}_{\Gamma'}$.
\end{proof}  

\noindent Thus the moduli space of quilted disks is {\em equipped with
  quilt data} as in Definition \ref{quiltdata}: each stratum comes
with a collar neighborhood described by gluing parameters compatible
with the lower dimensional strata.  The space of gluing parameters is
the positive real part of an affine toric variety by
\cite[4.12]{morph}, and in particular, is isomorphic as a decomposed
space to a convex polyhedral cone.  Thus the moduli of quilted disks
is a locally polyhedral stratified space, and in particular there
exists a collar neighborhood of the boundary.

\begin{remark}  {\rm (Orientations)}  
  Orientations on $\RR^{d,0}$ for $d \ge 1$ can be constructed as
  follows.  The open stratum $\RR^{d,0}$ may be identified with the
  set of sequences $0 = w_1 < \ldots < w_{d}$, by identifying the
  complement of the $0$-th marking in the disk with the half-plane and
  using the translation symmetry to fix the location of the first
  marked point.  The bubbles form either when the points come
  together, in which case a disk bubble forms, or when the markings go
  to infinity, in which case one re-scales to keep the maximum
  distance between the markings constant and then has possibly quilted
  disk bubbles for the markings that come together at the same rate
  that the last marking goes to infinity.  In particular this
  realization induces an orientation on $\RR^{d,0}$.  The boundary
  strata are oriented by their identifications with $\RR^{e,0}$ and
  $\RR^f$.
\end{remark} 

\begin{proposition} \label{signs2} {\rm (Signs of boundary inclusions
for the multiplihedron)} The sign of the inclusions of boundary strata
  are
\begin{enumerate} 
\item {\rm (Facets corresponding to unquilted bubbles)}
  $(-1)^{ij + j} $ for facets given by embeddings
  $\RR^i \times \RR^{d-i+1,0} \to \ol{\RR}^{d,0}$ as for the
  associahedron in \eqref{boundary};
\item {\rm (Facets corresponding to quilted bubbles)}
  $ 1 + \sum_{j=1}^m (m-j) (i_j - 1)$ for facets given by
  embeddings
  $\RR^m \times \RR^{i_1,0}\times \ldots \times \RR^{i_m,0} \to
  \ol{\RR}^{d,0}$.
\end{enumerate} 
\end{proposition}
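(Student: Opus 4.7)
The plan is to compute the signs of the boundary inclusions by explicit coordinate calculations, paralleling the associahedron computation carried out in Theorem \ref{ainftycat}. First I would orient the open stratum $\RR^{d,0}$ via its identification with the ordered configuration space $\{0 = w_1 < w_2 < \ldots < w_d\}$ in the upper half-plane, with the seam fixed at $\on{Im}(z) = 1$, and orient $\RR^d$ analogously with coordinates $z_1 < \ldots < z_d$. The boundary orientation convention places the outward normal, i.e.\ the gluing parameter $\delta$, first.

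For type (a) facets, the gluing map
$$(0,\eps) \times \RR^i \times \RR^{d-i+1, 0} \to \ol{\RR}^{d,0}$$
inserts an unquilted associahedron bubble at position $j$ of the configuration in the quilted base. Because the seam of the quilted disk is not disturbed by the unquilted bubbling, the Jacobian of this gluing has exactly the same structure as the one in \eqref{signs} for the associahedron, with an unchanged seam coordinate appearing as a passive factor. Accounting for the outward-normal convention and the shift in position indices on the target $\RR^{d,0}$, one recovers the sign $(-1)^{ij+j}$.

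For type (b) facets, the gluing map
$$(0,\eps) \times \RR^m \times \RR^{i_1, 0} \times \ldots \times \RR^{i_m, 0} \to \ol{\RR}^{d,0}$$
places $m$ quilted bubbles whose $m$ node gluing parameters are reduced to a single outward normal coordinate $\delta$ via the balancing relation \eqref{balancedrel}. I would write the gluing in coordinates as $W_{j,k} = \delta^{-1} v_j + w^{(j)}_k$, with $v_1 = 0$ and $w^{(j)}_1 = 0$, and then compute the Jacobian sign needed to rearrange
$$\d\delta \wedge \d v_2 \wedge \ldots \wedge \d v_m \wedge \d w^{(1)}_2 \wedge \ldots \wedge \d w^{(m)}_{i_m}$$
into the standard ordering $\d W_{1,2} \wedge \ldots \wedge \d W_{m, i_m}$ of the target coordinates. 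Each interior block $(\d w^{(j)}_2, \ldots, \d w^{(j)}_{i_j})$ of length $i_j - 1$ must be moved past the remaining position variables $v_{j+1}, \ldots, v_m$, producing $(m-j)(i_j - 1)$ sign flips. Combined with the outward-normal convention, which contributes the leading $+1$, this yields the stated sign $(-1)^{1 + \sum_{j=1}^m (m-j)(i_j-1)}$.

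The hard part will be verifying the precise coordinate form of the type (b) gluing, in particular that the $m$ a priori independent node gluing parameters collapse into a single normal coordinate in the manner dictated by the balancing condition of Definition \ref{balanced}, and that the seam heights on the $m$ bubbles approach compatible limits as $\delta \to 0$. Once these coordinate identifications are pinned down, the remaining sign count is essentially combinatorial. The type (a) computation is substantially easier, because it reduces to a calculation already carried out for the associahedron.
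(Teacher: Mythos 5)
Your proposal is correct and follows essentially the same route as the paper: the paper also writes the two gluing maps in explicit configuration coordinates (its type (b) formula is exactly your $W_{j,k}=\delta^{-1}v_j+w^{(j)}_k$, cf.\ \eqref{gluemap2}) and reads off the permutation sign, and your block-by-block count $(m-j)(i_j-1)$ reproduces the correct Jacobian sign. One caveat: attributing the leading $+1$ in case (b) to the outward-normal convention is not consistent, since that convention would contribute equally to case (a), where no such term appears; the extra sign actually comes from the $\delta\mapsto\delta^{-1}$ in the type (b) parametrization (the quilted bubbles escape to infinity as $\delta\to 0$, so $d(\delta^{-1})=-\delta^{-2}\,d\delta$ reverses the normal direction), whereas in case (a) the bubble is inserted at the finite position $w_{j+1}+\delta(\cdot)$. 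Relatedly, with $v_1=0$ your list $\d\delta\wedge\d v_2\wedge\ldots\wedge\d v_m$ has one coordinate too many; you need $v_2$ normalized (its role is played by $\delta^{-1}$), after which your count goes through unchanged.
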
  

\begin{proof} The first claim follows from the explicit description of
  the gluing map as in \eqref{signs} 
\begin{multline}
 (\delta,(z_3,\ldots,z_i), (w_2,\ldots,w_{d-i+1})) \\ \to (w_2,
  w_3,\ldots, w_{j+1}, w_{j+1} + \delta , w_{j+1} z_3 , \ldots, w_{j+1} + \delta 
  z_{i} , w_{j+2},\ldots, w_{d-i+1}) \end{multline}
which has gluing sign $ij +j$.  For the second, a map equivalent to
the gluing map is (taking $\RR^{m} \cong (0,1)^{m-2}$ by setting the
first resp. last point equal to $0$ resp. $1$)
$$ \R \times \RR^m \times \prod_{j=1}^m \RR^{i_j,0} \to \RR^{d,0}$$
\begin{multline} \label{gluemap2} (\delta, z_3,\ldots, z_{m},
  (w_{2,j},\ldots, w_{i_j,j} )_{j=1}^m)  \mapsto (w_{2,1},\ldots,
  w_{i_1,1}, \delta^{-1} ,\delta^{-1}  + w_{2,2}, \ldots , \\
  \delta^{-1}  + w_{i_2,2}, z_3 \delta^{-1}, \ldots, \delta^{-1} z_m, \delta^{-1} z_m +
  w_{2,m}, \ldots, \delta^{-1}z_m + w_{i_m,m}) \end{multline}
from which the claim follows. 
\end{proof} 

\subsection{The \ainfty functor for a correspondence} 

The definition of the functor on objects is trivial: by definition we
have allowed sequences of Lagrangian correspondences as objects and
the functor adds the correspondence to the sequence:

\begin{definition}  {\rm (Functor for Lagrangian correspondences
on objects)} Let $M_0,M_1$ be symplectic backgrounds 
 with
  the same monotonicity constant and $L_{01} \subset M_0^- \times M_1$
  an admissible Lagrangian brane equipped with a width $\delta_0 > 0$.
  Define
  \begin{equation} \label{phichange} \Phi(L_{01},\delta_0): \GFuk(M_0) \to
    \GFuk(M_1) \end{equation} 
  on objects by
  \begin{multline} \label{defobj}
    \Phi(L_{01},\delta_0)(L_{(-r)(-r+1)},
    \ldots,L_{(-1)0},\delta_{-r},\ldots, \delta_{-1}) \\=
    (L_{(-r)(-r+1)}, \ldots,L_{(-1)0},L_{01},\delta_{-r},\ldots,
    \delta_0) .\end{multline}
\end{definition} 

The definition of the functor on morphisms is by a count of quilted
surfaces with strip-like ends, where the domain of the quilt is one of
a family of quilted surfaces parametrized by the multiplihedron.  For
analytical reasons, this requires replacing the family of quilted
disks in the previous section with one in which degeneration is given
by neck-stretching:

\begin{proposition}
\label{Ad0} 
{\rm (Existence of families of quilted surfaces with strip-like ends)} 
Let $\ul{L}_0,\ldots, \ul{L}_d$ be a collection of generalized
Lagrangian branes as above, and $L_{01}$ a Lagrangian correspondence.
There exists a collection of families of quilted surfaces
$\ol{\SS}^{d,0}$ with strip-like ends over $\ol{\RR}^{d,0}$ for
$d \ge 1$ with the given widths of strip-like ends and the additional
properties that
\begin{enumerate}
\item {\rm (Recursive definition on boundary)} the restriction
  $\SS^{d,0}_\Gamma$ of the family to a stratum $\RR_\Gamma^{d,0}$
  isomorphic to a product of $\RR^{e_i,0}$ and $\RR^{f_j}$ is a
  product of the corresponding families of surfaces and quilted
  surfaces with strip-like ends, and
\item {\rm (Gluing near boundary)} collar neighborhoods of
  $\SS^{d,0}_\Gamma$ are given by gluing along strip-like ends.
\end{enumerate}
\end{proposition}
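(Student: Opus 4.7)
The plan is to prove Proposition \ref{Ad0} by induction on $d$, mirroring the strategy of Proposition \ref{Ad}, using Theorem \ref{A} applied to the stratified space $\ol{\RR}^{d,0}$ whose quilt data was established in Theorem \ref{glueRd0}. Since Theorem \ref{glueRd0} exhibits $\ol{\RR}^{d,0}$ as a locally polyhedral stratified space with gluing charts modeled on the balanced subspaces $Z_\Gamma$ compatible with inclusions of strata, the space carries quilt data in the sense of Definition \ref{quiltdata} once we prescribe the combinatorial type of a quilted surface for each stratum.

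For the base case $d=1$ the moduli space $\ol{\RR}^{1,0}$ is a point, and I would choose the quilted surface associated to it to be a quilted disk consisting of two patches separated by a seam: an "inner" patch labeled by $M_1$ with one outgoing quilted end (inserting strips for the outgoing generalized correspondence $\ul{L}_1$), and an "outer" patch labeled by $M_0$ with one incoming end (inserting strips for $\ul{L}_0$), and with the seam labeled by $L_{01}$ carrying width $\delta_0$. For the general $d \ge 2$ case, the assignment is determined by the recursive structure: a stratum of combinatorial type $\Gamma$ whose underlying colored tree decomposes as products of lower-dimensional associahedra and multiplihedra is labeled by the fiberwise product of the families $\SS^{f_j}$ (from Proposition \ref{Ad}) and $\SS^{e_i,0}$ (assumed already constructed at smaller values). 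This labeling is consistent with the gluing parameters of Theorem \ref{glueRd0} because balanced gluing corresponds exactly to neck-stretching that preserves the property that the seam in each quilted component is a single connected curve.

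In the inductive step, having specified the family $\SS^{d,0}|_{\partial_s \ol{\RR}^{d,0}}$ over the stratified boundary via the lower-dimensional families, I would invoke Theorem \ref{A} (equivalently, Theorem \ref{extension}) to extend the family over the interior of $\ol{\RR}^{d,0}$. The hypothesis of smooth trivializability over a collar of the boundary is satisfied by Lemma \ref{trivfam}: each patch of a quilted disk is a disk with at least one marking (the root end or an incoming end, or induced by destabilization), so each patch-bundle is trivial, and the remaining choices (complex structures, seam identifications, strip-like ends, metrics of product form near seams) form contractible spaces by Lemmas \ref{product} and \ref{homtriv}. The contractibility allows the boundary data to be extended using cutoff-and-patch constructions compatible with the collar neighborhoods coming from Theorem \ref{glueRd0}.

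The main technical subtlety, as opposed to the associahedral case of Proposition \ref{Ad}, lies in matching the widths of the strip-like ends along the seams across the balanced gluing relations \eqref{balancedrel}. Concretely, when two colored vertices are glued together the balancing condition on $Z_\Gamma$ precisely ensures that the widths of the circular seams on the two quilted components match after rescaling, so that the neck-stretching model in Theorem \ref{A} is consistent with the convex polyhedral structure on the collar of $\partial_s \ol{\RR}^{d,0}$. Verifying this matching on every stratum, and checking that the induced trivialization on the collar is compatible with the product structure prescribed in (a), is the step that requires the most care; once it is in place, the extension over the interior of each top-dimensional open stratum is a direct application of the general existence theorem.
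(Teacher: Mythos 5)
Your proposal is correct and takes essentially the same route as the paper: the published proof is a two-sentence induction on $d$ invoking Theorem \ref{A} over the quilt data on $\ol{\RR}^{d,0}$ furnished by Theorem \ref{glueRd0}, with boundary strata given by products of the previously constructed families from Proposition \ref{Ad} and the inductive hypothesis. You have simply unpacked the hypotheses of Theorem \ref{A} (trivializability via Lemma \ref{trivfam}, contractibility via Lemmas \ref{product} and \ref{homtriv}, compatibility of widths with the balanced gluing relations), which the paper leaves implicit.
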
 

\begin{proof} 
The claim follows by induction using Theorem \ref{A}, starting from
the case of three-marked disk. In that case we choose a genus zero
surface with strip-like ends, using the already constructed families
of surfaces with strip-like ends in Proposition \ref{Ad}.
\end{proof} 

\begin{proposition} 
  \label{Bd0} {\rm (Existence of compact families of pseudoholomorphic
    quilts)} Let $M_0,M_1$ be symplectic backgrounds with the same
  monotonicity constant, $\ul{L}^0,\ldots,\ul{L}^d$ admissible
  generalized Lagrangian branes in $M_0$, and
  $L_{01} \subset M_0^- \times M_1$ an admissible generalized
  Lagrangian correspondence from $M_0$ to $M_1$.  For generic choice
  of perturbation data the moduli space $\ol{\M}^{d,0}$ of
  pseudoholomorphic quilts with target in $M_0,M_1$ and boundary and
  seam conditions $\ul{L}^j, j =0,\ldots,d, L_{01}$ is such that
\begin{enumerate}
\item the expected dimension zero component $\M^{d,0}_0$ is finite;
  and
\item the expected dimension one component $\ol{\M}^{d,0}_1 \subset
  \ol{\M}^{d,0} $ has boundary given
  by the union
$$ \partial \ol{\M}^{d,0}_1 = \bigcup_{\Gamma}
  {\M}^{d,0}_{\Gamma,1} $$
where either (1) $\Gamma$ is a stable combinatorial type and
$\RR_{\Gamma}^{d,0}$ is a codimension one stratum in $\ol{\RR}^{d,0}$
in which case ${\M}^d_{\Gamma,1}$ is the product of (possibly more
than two!)  quilted and unquilted components corresponding to the
vertices of $\Gamma$, or (2) $\Gamma$ is unstable and corresponds to
bubbling off a Floer trajectory.
\end{enumerate}
\end{proposition}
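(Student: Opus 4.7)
The plan is to follow the pattern of Proposition \ref{holassoc}, replacing the family of surfaces over the associahedron with the family $\ol{\SS}^{d,0} \to \ol{\RR}^{d,0}$ of quilted surfaces with strip-like ends constructed in Proposition \ref{Ad0}, and then to apply Theorems \ref{B}, \ref{gromov} and \ref{masterthm} of the first part of the paper in this quilted setting. Because $\ol{\RR}^{d,0}$ is built recursively from lower-dimensional pieces of two different types (associahedra $\ol{\RR}^m$ and smaller multiplihedra $\ol{\RR}^{i_j,0}$), both the construction of perturbation data and the description of the boundary will be simultaneous inductions over the two families, grounded in the already-constructed data for $\GFuk(M_0)$ and $\GFuk(M_1)$ from Section \ref{assocsec}.

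First I would set up the symplectic data for the family $\ol{\SS}^{d,0}$: each patch lying on the $M_0$-side of the quilting circle is labelled by $M_0$, each patch on the $M_1$-side by $M_1$, the seams arising from the internal circles are labelled by $L_{01}$, and the outer boundary/seam components between strip-like ends are labelled by the relevant $\ul{L}^j$. On the strip-like ends one uses the already-fixed Floer perturbation data for the cyclic correspondences $(\ul{L}^{j-1},\ul{L}^j)$ (input ends) and for $(\Phi(L_{01},\delta_0)\ul{L}^0, \Phi(L_{01},\delta_0)\ul{L}^d)$ (output end), where $\Phi(L_{01},\delta_0)$ acts on objects as in \eqref{defobj}. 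Induction on $d$: assume that regular perturbation data $(\ul{J},\ul{K})$ have been chosen over $\ol{\SS}^{d',0}$ for all $d' < d$ and that the data for $\GFuk(M_0), \GFuk(M_1)$ are already fixed. By the recursive description in Proposition \ref{Ad0}\,(a) together with the gluing property Proposition \ref{Ad0}\,(b), these choices prescribe perturbation data in a neighborhood of the stratified boundary $\partial_s \ol{\SS}^{d,0}$, agreeing with the gluing construction there. Theorem \ref{B} (in its precise form, Theorem \ref{familytransversality}) then produces a comeager set of extensions over the interior making all holomorphic quilts of formal dimension at most one regular.

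With the perturbation data fixed, compactness of $\M^{d,0}_0$ and existence of the compactification of $\ol{\M}^{d,0}_1$ follow from Theorem \ref{gromov}: monotonicity of $M_0,M_1$ together with admissibility of all $\ul{L}^j$ and $L_{01}$ (minimal Maslov at least three, or Maslov two with vanishing disk invariant) rules out sphere and disk bubbling in formal dimension at most one, leaving only strip-breaking at the ends and nodal degeneration of the domain in $\ol{\RR}^{d,0}$ as potential sources of non-compactness. The description of the boundary of $\ol{\M}^{d,0}_1$ is then a direct application of Theorem \ref{masterthm}: the codimension-one strata of $\ol{\RR}^{d,0}$ are enumerated by the two classes of facets listed in Proposition \ref{signs2}, namely (a) embeddings $\RR^i \times \RR^{d-i+1,0} \hookrightarrow \ol{\RR}^{d,0}$, corresponding to bubbling off an unquilted disk on the $M_0$-side (these are stable $\Gamma$ with two vertices, one uncolored and one colored), and (b) embeddings $\RR^m \times \RR^{i_1,0}\times \cdots \times \RR^{i_m,0} \hookrightarrow \ol{\RR}^{d,0}$, corresponding to bubbling off one unquilted $M_1$-disk carrying $m$ quilted bubbles beneath it (stable $\Gamma$ with $m+1$ vertices). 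Combined with strip-breaking at any input or output end, these exhaust the possible boundary strata, giving the stated decomposition.

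The main obstacle, as usual in the quilted setting, is the simultaneous inductive construction of perturbation data: the perturbation chosen on $\ol{\SS}^{d,0}$ must agree on each codimension-one face with the product of perturbations for the corresponding lower-dimensional $\ol{\SS}^{e,0}$ and $\ol{\SS}^f$, via the gluing identifications coming from Theorem \ref{glueRd0}, and moreover these agreements must be compatible when faces meet in higher codimension (the balanced gluing condition of Definition \ref{balanced}). The nontrivial point is that the balanced cone $Z_\Gamma$ is precisely what makes the gluing construction extend consistently to higher codimension; once this is in hand, the contractibility of the space of perturbation data compatible with the given boundary values (Lemma \ref{homtriv} for metrics, and the corresponding contractibility for almost complex structures and Hamiltonian perturbations used in Theorem \ref{familytransversality}) allows the extension to the interior. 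A secondary technical point is verifying the gluing theorem of non-transverse type in the unstable case (Floer trajectory bubbling at a strip-like end), which is handled as in Ma'u \cite{mau:gluing} and requires only an obvious modification from the associahedron case treated in Theorem \ref{ainftycat}.
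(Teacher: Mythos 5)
Your proposal is correct and follows essentially the same route as the paper: the paper's (very terse) proof likewise obtains regular perturbation data by recursively applying Theorem \ref{B} to the family $\ol{\SS}^{d,0} \to \ol{\RR}^{d,0}$ of Proposition \ref{Ad0} with product-form data on the boundary strata, and defers compactness and gluing to Gromov compactness and Ma'u \cite{mau:gluing}. Your write-up simply makes explicit the inductive bookkeeping, the facet enumeration from Proposition \ref{signs2}, and the role of the balanced gluing cone, all of which the paper leaves implicit.
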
 

\begin{proof} Regular perturbation data exist by applying Theorem
  \ref{B} recursively to the family of quilts
  $\ol{\SS}^{d,0} \to \ol{\RR}^{d,0}$ constructed in Proposition
  \ref{Ad0}, taking the perturbation over the boundary of
  $\ol{\SS}^{d,0}$ to be that of product form for the
  lower-dimensional spaces over boundary strata.  Compactness and
  gluing are proved in Ma'u \cite{mau:gluing}.
\end{proof}  

From Theorem \ref{C} we obtain chain-level invariants from the moduli
spaces of pseudoholomorphic quilts.  The functor on morphism spaces is
related to these invariants by additional signs: Define
$$ \Phi(L_{01},\delta_0)^d: \Hom( \ul{L}^0,\ul{L}^1) \times \ldots \times
\Hom(\ul{L}^{d-1},\ul{L}^d) \to \Hom(\Phi(L_{01},\delta_0)(\ul{L}^0),
\Phi(L_{01},\delta_0)(\ul{L}^d)) $$
(see \eqref{defobj} for the definition of $\Phi(L_{01},\delta_0)$ on objects)
by setting for generalized intersection points $x_1,\ldots,
x_d$
\begin{equation} \label{phid} 
\Phi(L_{01},\delta_0)^d( 
\bra{x_1},\ldots, \bra{x_d}) = \Phi_{\SS^{d,0}}
(\bra{x_1},\ldots,\bra{x_d}) (-1)^{\heartsuit} \end{equation}
where $\heartsuit$ is defined in \eqref{heartsuit}.

\begin{theorem}
\label{ainftyfunctorthm} 
 {\rm (\ainfty functor for a Lagrangian correspondence)}.  Let
 $L_{01}$ be a Lagrangian correspondence from $M_0$ to $M_1$ with
 admissible brane structure and and $\Phi(L_{01},\delta_0)$ defined by
 \eqref{phid}.  Then $\Phi(L_{01},\delta_0)$ is an \ainfty functor from
 $\GFuk(M_0)$ to $\GFuk(M_1)$.
\end{theorem}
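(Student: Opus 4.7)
The strategy is to apply the master equation of Theorem \ref{C} to the family of quilted surfaces with strip-like ends $\ol{\SS}^{d,0} \to \ol{\RR}^{d,0}$ constructed in Proposition \ref{Ad0}, and then verify that the resulting identity on $\Phi_{\SS^{d,0}}$ translates, after accounting for the $\heartsuit$-signs \eqref{heartsuit} inserted in \eqref{highercomp} and \eqref{phid}, into the \ainfty functor axiom for the collection $(\Phi(L_{01},\delta_0)^d)_{d \ge 1}$. The argument is an elaboration of the associativity verification carried out in Theorem \ref{ainftycat}, with more bookkeeping arising from the mixed combinatorics of quilted and unquilted bubbles.

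First I would assemble the geometric input. By Proposition \ref{Bd0} the one-dimensional component $\ol{\M}^{d,0}_1$ is compact up to Floer breaking at the strip-like ends together with bubbling into a codimension one stratum $\RR_\Gamma^{d,0} \subset \partial\ol{\RR}^{d,0}$. Proposition \ref{signs2} shows that the codimension one strata of $\ol{\RR}^{d,0}$ are of two types: (a) strata of the form $\RR^i \times \RR^{d-i+1,0}$, corresponding to an unquilted disk bubble carrying $i$ of the incoming ends, and (b) strata of the form $\RR^m \times \RR^{i_1,0} \times \cdots \times \RR^{i_m,0}$, corresponding to a top unquilted disk whose $m$ boundary marked points each carry a quilted bubble. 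Because $\ol{\SS}^{d,0}$ was built so that degeneration at these strata is given by neck-stretching, the family invariant $\Phi_{\SS^{d,0}}$ factors on each boundary face as a geometric composition of lower-dimensional family invariants:
\begin{align*}
\Phi_{\SS^{d-i+1,0}} \circ_{j} \Phi_{\SS^{i}} & \quad\text{on faces of type (a),} \\
\Phi_{\SS^{m}} \circ \bigl(\Phi_{\SS^{i_1,0}} \otimes \cdots \otimes \Phi_{\SS^{i_m,0}}\bigr) & \quad\text{on faces of type (b).}
\end{align*}
Meanwhile, the Floer breaking contributions at the strip-like ends contribute precisely $\partial \Phi_{\SS^{d,0}} = [\mu^1,\Phi_{\SS^{d,0}}]$ in the sense of Theorem \ref{C}.

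Next I would read off the algebraic identity. Substituting the factorizations above into the master equation \eqref{master} for $\ol{\SS}^{d,0}$ yields, up to signs to be determined, a relation of the shape
\[
\partial \Phi_{\SS^{d,0}} \;=\; \sum_{i,j} \pm \,\Phi_{\SS^{d-i+1,0}}\bigl(x_1,\ldots, \Phi_{\SS^i}(x_{j+1},\ldots,x_{j+i}),\ldots,x_d\bigr)
\;+\; \sum_{m,\ul{i}} \pm \,\Phi_{\SS^m}\bigl(\Phi_{\SS^{i_1,0}}(\ldots),\ldots,\Phi_{\SS^{i_m,0}}(\ldots)\bigr).
\]
After incorporating the $(-1)^\heartsuit$ twist that converts $\Phi_{\SS^n}$ into $\mu^n$ and $\Phi_{\SS^{n,0}}$ into $\Phi(L_{01},\delta_0)^n$, this is the defining relation of an \ainfty functor. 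The key observation, as in Theorem \ref{ainftycat}, is that the $\heartsuit$-twist absorbs the $k$-dependent grading shifts coming from Koszul reordering, leaving only a shift that depends on the boundary face type.

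The main obstacle is the sign bookkeeping, and this step I would treat in most detail. Four contributions must be combined modulo $2$: (i) the orientation signs of the inclusions of the boundary faces, namely $ij+j$ for type (a) and $1 + \sum_j (m-j)(i_j-1)$ for type (b) given in Proposition \ref{signs2}; (ii) the signs from permuting the determinant lines $\DD^\pm_{x_k}$ and $\det(T\RR^e)$ past one another under the factorization \eqref{convent}, which are Koszul signs in the gradings $|x_k|$ and dimensions; (iii) the two layers of $\heartsuit$-signs, one from each copy of $\mu^\bullet$ or $\Phi(L_{01},\delta_0)^\bullet$ appearing in each summand; and (iv) the overall $\heartsuit$-sign attached to $\Phi(L_{01},\delta_0)^d$ itself. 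For type (a) faces the computation reduces to the one already performed in the proof of Theorem \ref{ainftycat}, giving exactly the sign $(-1)^{\sum_{k \le j}(|x_k|+1)}$ that appears in the standard \ainfty functor axiom. For type (b) faces, a parallel but slightly longer calculation, using the explicit gluing coordinates \eqref{gluemap2}, shows that all sign contributions again collapse to a function of $d$ and the $|x_k|$ that is independent of $(m, i_1,\ldots, i_m)$ and matches the sign prescribed for the quilted side of the \ainfty functor axiom. Once this is verified, the \ainfty functor relation holds on morphisms; compatibility with units follows from the once-punctured disk count of Remark on Units, whose perturbation data is chosen consistently with the families $\ol{\SS}^{d,0}$.
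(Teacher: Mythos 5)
Your proposal is correct and follows essentially the same route as the paper: both arguments read the \ainfty functor axiom off the boundary description of $\ol{\M}^{d,0}_1$ from Proposition \ref{Bd0} (unquilted bubble, collection of quilted bubbles, Floer breaking), reduce the type (a) sign check to the computation in Theorem \ref{ainftycat}, and verify for type (b) that the combination of the facet-inclusion signs from Proposition \ref{signs2}, the determinant-line permutation signs, and the $\heartsuit$-twists collapses to an expression independent of the partition $(m,i_1,\ldots,i_m)$. The paper simply carries out that last computation explicitly in \eqref{signs2a}--\eqref{total}, which you defer but correctly identify as the remaining work.
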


\begin{proof}   
  The proof uses the description of the boundary of the
  one-dimensional moduli space in Theorem \ref{Bd0}: Let $d \ge 0$ be
  an integer and $x_0,\ldots,x_d$ generalized intersection points of
  $(\ul{L}^0, L_{01}, L_{01}^t, \ul{L}^d), (\ul{L}^0,
  \ul{L}^1),\ldots, (\ul{L}^{d-1},\ul{L}^d)$.
  The boundary of the one-dimensional component
  $\ol{\M}^{d,0} (x_0,\ldots,x_d)_1$ of the moduli space of
  pseudoholomorphic quilts with limits $x_j, j = 0,\ldots d$ consists of
  three combinatorial types: configurations containing (1) a single
  unquilted bubble, (2) a collection of quilted bubbles, or (3) a
  bubbled trajectory.  These three types of terms correspond to the
  terms in the definition of \ainfty functor \eqref{faxiom}.  The
  signs for the terms of the first type are similar to those for the
  \ainfty axiom in the proof of \ref{ainftycat} were computed to equal
  $\sum_{k=1}^d (1+ |x_k|)$ due to offsetting contributions of $m$ in
  \eqref{signa}, \eqref{signc}.  For terms of the second type we
  suppose that the bubbles define a partition
  $I_1 \cup \ldots \cup I_m = \{ 1,\ldots d \}$, with the markings
  $z_j, j \in I_j$ on the $j$-th bubble, each containing an interior
  circle.  To check the signs we determine the sign of the gluing
  isomorphism
\begin{multline} 
 \det(\R) \det(T \ol{\M}^m(\ul{y}_0,\ldots,\ul{y}_m))
 \bigotimes_{j=1}^m \det(T \ol{\M}^{i_j,0}(\ul{y}_i,x_{I_j})) \\ \to
 \det(T \ol{\M}^{d,0}(\ul{y}_0,x_1,\ldots,x_d)) \end{multline}
where $x_{I_j} = (x_i)_{i \in I_j}$.  The orientation on the former is
determined by an isomorphism involving the determinant lines
$\DD_{x_j}^\pm, \DD_{\ul{y}_k}^\pm$ attached to the intersection
points in \cite{orient}, c.f.  \cite{wo:gdisk}:
\begin{multline} 
 \det( \R )  \det( T \ol{\RR}^m )
\DD_{\ul{y}_0}^+  \DD_{\ul{y}_1}^- \ldots  \DD_{\ul{y}_m}^- 
\bigotimes_{j = 1}^m
 \left( \det( T {\RR}^{i_j,0} )
\DD_{\ul{y}_j}^+  \bigotimes_{k \in I_j} \DD_{x_k}^- 
\right) 
.\end{multline}
Permuting each $\det(T \RR^{i_j,0}) $ past the previous determinant
lines produces a factor
\begin{equation} \label{signs2a} 
\sum_{j=1}^m (i_j - 1)  m + \sum_{j=1}^m (i_j - 1) \sum_{k=1}^{j-1} (i_k - 1)) . \end{equation} 
Permuting the $\DD_{y_j}^-$ to be adjacent to the corresponding 
$\DD_{y_j}^+$ produces signs of the amount 
\begin{equation} \label{signs2b} 
\sum_{j =1}^m |y_j| \sum_{k=1}^{j-1} (i_k - 1)
 . \end{equation} 
There is also a contribution from the signs in the definition of 
$\phi_{i_j}$ and the sign from the definition of $\mu^m$,
\begin{equation} \label{signs2c} \sum_{j = 1}^m \sum_{i=1}^{i_j}
  i|x_{i+l_j}| + \sum_{j=1}^m j |\ul{y}_j|  \end{equation}
where $l_i = i_1 + \ldots + i_{j-1}$.  The gluing map has sign given
in Proposition \ref{signs2}
\begin{equation} \label{signs2d} 1 +  \sum_{j=1}^m (m-j) (i_j -
  1). \end{equation}
Combining \eqref{signs2a}, \eqref{signs2b}, \eqref{signs2c},
\eqref{signs2d}, the total number of signs mod $2$ is 
\begin{multline} \label{total} 
\sum_{j=1}^m (i_j - 1)  \left(  m + \sum_{k=1}^{j-1} (i_k - 1) \right)
+ \sum_{j =1}^m |y_j| \sum_{k=1}^{j-1} (i_k - 1)
\\ +
\sum_{j = 1}^m \sum_{i=1}^{i_j}
  i|x_{i+l_j}| + \sum_{j=1}^m j |\ul{y}_j| 
 + \sum_{j=1}^m (m-j) (i_j -
  1) +1\\
\equiv 
\sum_{j=1}^m (i_j - 1)  m + 
\left( \sum_{j =1}^m 
(|x_{l_j+1}| + \ldots + | x_{l_i + i_j}| ) 
\right) 
\sum_{k=1}^{j-1} (i_k - 1) 
\\ +  \sum_{j = 1}^m \sum_{i=1}^{i_j}
  i |x_{i+l_j}| + \sum_{j=1}^m j |\ul{y}_j| 
+ \sum_{j=1}^m (m-j) (i_j -  1) +1  \\
\equiv 
\sum_{j=1}^m (i_j - 1)  m + \sum_{j=1}^d j |x_j| + 
 \left( \sum_{j =1}^m (|x_{l_j+1}| + \ldots + | x_{l_i + i_j}| ) 
\right) (j-1) 
\\ +
 \sum_{j=1}^m j (|x_{l_j+1}| + \ldots + | x_{l_i + i_j}| + i_j - 1) 
 + \sum_{j=1}^m (m-j) (i_j -  1)  +1  \\
\equiv 
1+ \sum_{j=1}^d (j+1) |x_j| . 
 \end{multline} 
This sign is opposite to the gluing sign \eqref{assocsign} for the other
 type of facet, proving the statement of the Theorem.
\end{proof}

\begin{remark} {\rm (Functors for generalized Lagrangian correspondences)}  
More generally, define functors for generalized Lagrangian
correspondences as follows.  Let $\ul{L} = (L_{01},L_{12},\ldots,
L_{(k-1)k})$ be an admissible generalized Lagrangian correspondence
with brane structure from $M_0$ to $M_k$ together with a sequence of
widths $ \delta_0,\ldots, \delta_{k-1} $.  Define an \ainfty functor
$$ \Phi(\ul{L},\ul{\delta}):\ \GFuk(M_0) \to \GFuk(M_k) $$ %
for objects by concatenation with $\ul{L}$ as in Section \ref{concat}
$$ \Obj(\GFuk(M_0)) \ni \ul{L}' \mapsto \ul{L}' \sharp \ul{L} \in
\Obj(\GFuk(M_k)) $$
and for morphisms by counting quilted disks of the following form:
Replace each seam in the family $\ol{\SS}^d$ by a sequence of infinite
strips of widths $\delta_0,\ldots, \delta_{k-1}$ to obtain a family
$\ol{\SS}^d(\delta_0,\ldots, \delta_{k-1})$.  Then the map on
morphisms is given by the formula \eqref{phid} but using the relative
invariant for $\ol{\SS}^d(\delta_0,\ldots, \delta_{k-1})$.  The proof
of the \ainfty axiom is similar to the unquilted case.  We show in
Section \ref{indep} that the functor $\Phi(\ul{L},\ul{\delta})$ is independent of
the choice of widths, up to quasiisomorphism.
\end{remark} 

\subsection{The functor for the empty correspondence}

In this section we discuss the functor associated to the empty
correspondence, that is, the sequence of length zero. Let $M$ be a
symplectic background.  The empty sequence $\emptyset$ may be
considered as an element of $\GFuk(M,M)$, namely a sequence of length
zero.  The corresponding functor
$\Phi(\emptyset): \GFuk(M) \to \GFuk(M) $ is defined by counting pairs
$r,u$ where $r \in \RR^{d,0}$ and $u : \ul{S}_r^{d,0,'} \to M$ is a
map from the surface with strip-like ends $\SS^{d,0,'}_r$ obtained
from $\SS^{d,0}_r$ by removing the seam.

\begin{proposition} \label{emptyset} {\rm (Functor for the length zero
    correspondence)} Let $M$ be a symplectic background.  For suitably
  chosen complex structures on the fibers of
  $\ol{\SS}^{d,0} \to \ol{\RR}^{d,0}$ and coherent, regular,
  perturbation data, the functor
  $\Phi(\emptyset): \GFuk(M) \to \GFuk(M) $ is the identity functor.
\end{proposition}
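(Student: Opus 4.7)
The identity $\ainfty$ functor on $\GFuk(M)$ is characterized by $\on{id}^1=\on{id}$ and $\on{id}^d=0$ for $d\ge 2$, so the plan is to choose the complex structures and perturbation data on $\ol{\SS}^{d,0}\to\ol{\RR}^{d,0}$ so that these equalities hold strictly for $\Phi(\emptyset)$.

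For $d\ge 2$, I would pull back the complex structures and perturbation data from a regular choice on $\ol{\SS}^d\to\ol{\RR}^d$ (the family defining $\mu^d$) along the forgetful map $f:\ol{\RR}^{d,0}\to\ol{\RR}^d$ of \eqref{forgetseam}. This map is a fiber bundle whose fibers are one-dimensional intervals recording the radius of the internal circle $C$. With the pulled-back data, a holomorphic map $\ul u:\ul S_r^{d,0,\prime}\to M$ is literally the same as a holomorphic map on the underlying unquilted disk $\ul S_{f(r)}^d$, so the moduli space $\M^{d,0,\prime}$ is the pullback of $\M^d$ under $f$ and its formal dimension is exactly one greater. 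Hence the formal-dimension-zero part of $\M^{d,0,\prime}$ comes from the formal-dimension-$(-1)$ part of $\M^d$, which is empty by regularity, and $\Phi(\emptyset)^d=0$ for all $d\ge 2$.

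For $d=1$, the base $\ol{\RR}^{1,0}$ is a single point and the fiber $\ul S^{1,0,\prime}$ is the strip obtained by removing the internal circle from the quilted disk with two markings. The tangency point of $C$ on $\partial D$ is retained as a marker, which breaks the $\R$-translation symmetry, so the automorphism group of this decorated strip is trivial and formal dimension equals Fredholm index. I would choose translation-invariant perturbation data $(\ul J,\ul K)$ agreeing with the data used to define the Floer differential on the strip-like ends. For such data any non-constant solution lies in a nontrivial $\R$-orbit of translates and so generates a parametrized moduli space of dimension at least two, while the constant trajectories $u(s,t)=x(t)$ at perturbed intersection points $x\in\cI(\ul L^0,\ul L^1)$ are regular at non-degenerate $x$. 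Each constant contributes $+1$ with the orientation convention of \eqref{convent}, yielding $\Phi(\emptyset)^1(\bra x)=\bra x$, so $\Phi(\emptyset)^1=\on{id}$.

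The main obstacle is to make the two choices above coherent with the boundary stratification of $\ol{\RR}^{d,0}$. By Proposition \ref{signs2} the facets are of two types: unquilted bubbles $\RR^i\times\RR^{d-i+1,0}$ and clusters of quilted bubbles $\RR^m\times\RR^{i_1,0}\times\cdots\times\RR^{i_m,0}$. The forgetful map $f$ is natural with respect to both, sending these facets to $\RR^i\times\RR^{d-i+1}$ and to $\RR^m\times\RR^{i_1}\times\cdots\times\RR^{i_m}$ respectively, so on facets all of whose $\RR^{i_j,0}$ factors satisfy $i_j\ge 2$ the pulled-back data automatically restricts to the inductively chosen pullback. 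At facets containing an $\RR^{1,0}$ factor one must verify that the translation-invariant strip data from the $d=1$ step matches the product factor induced by the pullback; this is achieved by taking the translation-invariant strip data in the $d=1$ case to agree with the strip-like end datum of the perturbation chosen on $\ol{\SS}^d$. Regularity of the pulled-back data over the interior of $\ol{\RR}^{d,0}$ is automatic, because the additional $T_r\ol{\RR}^{d,0}$ direction only enlarges the domain of the parametrized linearized operator without altering its range.
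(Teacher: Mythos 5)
Your proposal is correct and follows essentially the same route as the paper: pull back the strip-like ends and perturbation data from $\ol{\RR}^d$ along the forgetful map $\ol{\RR}^{d,0}\to\ol{\RR}^d$, observe that the seamless quilts then fiber over $\M^d$ with one-dimensional fibers (so the rigid count vanishes for $d\ge 2$ and reduces to constants for $d=1$), and note that regularity is inherited because the linearized forgetful map is surjective. The only point the paper adds is an explicit remark on why this pullback is legitimate here — the usual construction forces the seam to be real analytic and would destroy the forgetful map, but with the diagonal (i.e.\ absent) seam condition no such adaptation is needed — which your construction implicitly exploits.
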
 

\begin{proof} We wish to construct the moduli spaces $\ol{\M}^{d,0}$
  so that there is a {\em forgetful map}
$\ol{\M}^{d,0} \to \ol{\M}^{d}, \quad (r,u) \mapsto (r',u)$
where $r' \in \ol{\RR}^d$ is the image of $r$ under the forgetful map
$\ol{\RR}^{d,0} \to \ol{\RR}^d$.  It then follows that a pair $(r,u)
\in \ol{\M}^{d,0} $ can be isolated if and only if $u$ is constant and
$d = 1$.  Indeed, otherwise the fibers are one-dimensional.  The
resulting count gives
$ \Phi(\emptyset)_1 = \Id_{CF(L,L)}$ and $\Phi(\emptyset)_d = 0 , d
> 0 .$

However, it is {\em not} possible to achieve a moduli space admitting
a forgetful map with the construction that we gave before.  Namely,
our previous construction (which worked for any correspondence)
adapted the complex structure on the quilted surface so that the seam
was real analytic.  The family of complex structures that one obtains
depends on the location of the seam, hence destroys the forgetful map.
Fortunately in this case it is not necessary that the seam be real
analytic, since a quilted pseudoholomorphic map with diagonal seam
condition can be considered as an unquilted pseudoholomorphic map with
no seam, and so all the necessary analytic results apply.

A simpler construction of perturbation data produces the identity
functor.  Choose a family of strip-like ends for the universal curve
over $\ol{\RR}^d$, and pull these back to strip-like ends on the
universal quilted disk over $\ol{\RR}^{d,0}$.  With respect to these
strip-like ends (given as local coordinates near the boundary marked
points) the quilted circle does not appear linear near the ends, but
this does not affect the construction since there is no seam
condition.  Choose perturbation data $\ul{J}_{d,0},\ul{K}_{d,0}$ for
the quilted surfaces given by pull-back of perturbation data
$\ul{J}_d,\ul{K}_d$ from $\ol{\RR}^d$.  Since for any point $p \in
\ol{\RR}^{d,0}$ mapping to $q \in \ol{\RR}^d$ (we may assume that $q$
is in a codimension $0$ or $1$ stratum) the linearized forgetful map
$T_p \ol{\RR}^{d,0} \to T_q \ol{\RR}^d$ is surjective, regularity of a
map $\ul{u} \in \ol{\RR}^d$ implies regularity of any map in the
fiber.  Counting quilted pseudoholomorphic maps with no seam condition is
then the same as counting pseudoholomorphic maps of the underlying disks,
together with a choice of lift from $\ol{\RR}^d$ to $\ol{\RR}^{d,0}$.
\end{proof}  

\section{Natural transformations for Floer cocycles}
\label{natural}

In this section we associate to any Floer cocycle for a pair of
correspondences a natural transformation between the functors
constructed in the previous section.  We then complete the proof of
Theorem \ref{mainfunc}.  

\subsection{Quilted disks with boundary and seam markings}
\label{biassoc}

The natural transformations are defined by counts of quilted disks with
markings on the interior circle.  In the manner of Stasheff
\cite{st:ho} we define for any pair $d,e$ of positive integers a cell
complex $\K^{d,e}$ inductively.  Each face of $\K^{d,e}$ corresponds
to an expression in formal {\em variables } $a_1,\ldots,a_d$, formal
{\em 1-morphism symbols} $ h$, and formal {\em $2$-morphism symbols
  $t_1,\ldots,t_e$}. Each type of symbol must appear in the given
order with parentheses in an expression of the form
$$ 
h \left( \begin{array}{c} t_1 \ldots t_e \\ a_1 \ldots 
  a_d \end{array} \right) .$$
Each such expression corresponds to a {\em pair} of colored trees
$\Gamma = (\Gamma_1,\Gamma_2)$ equipped with an isomorphism of
subtrees $\Gamma_1^- \to \Gamma_2^-$ of subtrees above the colored
vertices.  The expression is built from the tree by using the parts of
the tree below the colored vertices to build the parenthesized
expressions in the variables $t_i$ and $a_j$, using the rule that any
uncolored vertex corresponds to a parenthesis enclosing the symbols
attached to the incoming markings, and the part of the tree above the
colored vertices to build the parenthesized expressions in the
expressions $h(\cdot)$, where $\cdot$ represents the expression below
the colored vertex.  Thus in the figure on the right Figure
\ref{fig:Q35}, where the red color is used for the second tree below
the colored vertices, the resulting expression is \label{resultingexp}
$$ h \left( \begin{array}{c} (t_1 t_2) \\ \   \end{array} \right) h
\left( \begin{array}{c}  \\ a_1(a_2 a_3) \end{array} \right) 
 h \left( \begin{array}{c} t_3 \\ a_4 a_5 \end{array} \right) .$$
We define
$$\Ver(\Gamma) = (\Ver(\Gamma_1) \sqcup \Ver(\Gamma_2))/\sim $$ 
the set of vertices modulo the natural identification of the subtrees
and similarly for the edge set.  The subtrees
$\Gamma_1^- \cong \Gamma_2^-$ describe the parenthesizations of the
symbols of the form $h(\cdot)$, while the trees below the colored
vertices describe the parenthesization of symbols $t_i$ resp. $a_j$.
There is a natural partial order on such tree pairs where
$\Gamma \preceq \Gamma'$ if there is a morphism of trees
$\Gamma_k \to \Gamma'_k, k = 1,2$ contracting edges compatible with
the identifications $\Gamma_1^- \cong \Gamma_2^-$.  For any
combinatorial type of bicolored tree $\Gamma$ and each colored
resp. uncolored vertex $v$ of $\Gamma$, let $t(v) = (d(v),e(v))$
resp. $t(v) = d(v)$ denote the number of edges of each type incident
to $v$.  The pair $\Gamma$ is {\em stable} if each uncolored vertex
$v$ has valence $t(v)$ at least three, while each colored vertex $v$
has valences $(d(v),e(v))$ with either $d(v) \ge 1$ or $e(v) \ge 1$.
Thus for example the expressions
$h \left( \begin{array}{c} (t_1 t_2) \\ \ \end{array} \right) $
$h \left( \begin{array}{c} t_1 \\ \ \end{array} \right) h 
\left( \begin{array}{c} t_2 \\ \ \end{array} \right) $
correspond to stable combinatorial types.  On the other hand the expressions 
$ \left( h \left( \begin{array}{c} (t_1 t_2) \\ \ \end{array} \right) \right)$
and 
$h \left( \begin{array}{c} (t_1) \\ \ \end{array} \right) h 
\left( \begin{array}{c} (t_2) \\ \ \end{array} \right) $
are unstable combinatorial types, since a single expression in a 
parenthesis corresponds to an uncolored vertex with valence two.

The definition of the Stasheff-style complexes is inductive.  Suppose
$\K^{d',e'}$ have been defined for $d' <d$ or $e' < e$ in such a way
that each $\K^{d',e'}$ is a stratified space with decomposition into
stable combinatorial types
$$ \K^{d',e'} = \bigcup_\Gamma \K^\Gamma, \quad \K^{\Gamma} :=
\prod_{v \in \Ver(\Gamma)} \K^{t(v)} .$$
If $\pi: \Gamma \to \Gamma'$ is a morphism of stable combinatorial
types then there is a natural inclusion
$$ \iota_\Gamma^{\Gamma'} : \K^{\Gamma} \to \K^{\Gamma'} $$
induced by the maps $\K^{\pi^{-1}(v)} \to \K^{t(v)}$, where
$\pi^{-1}(v) \subset \Gamma$ is the subgroup that maps to $v$.   
Define 
$$ \partial \K^{d,e} = \bigcup_{\Gamma} \K^\Gamma / \sim $$
where the union is over all combinatorial types with at least two
vertices, and $\sim$ is the equivalence relation induced by the maps
$\iota_\Gamma^{\Gamma'}$.  Then $\partial \K^{d,e}$ is naturally a
stratified space whose links are the disjoint unions of the links in
the stratified spaces $\K^\Gamma$.  Let
$$\K^{d,e} = \Cone( \partial \K^{d,e} ) $$
denote the cone on $\partial \K^{d,e}$.  Define
$ \on{int} (\K^\Gamma) = \K^\Gamma - \partial \K^\Gamma .$ We call the
sets $\on{int}(\K^\Gamma)$ {\em faces} of $\K^{d,e}$.

\begin{example} {\rm (One or two seam markings)}
  $\K^{1,1}$ is the interval with faces 
corresponding to the expressions 
$$ h \left( \begin{matrix} t_1 \\ a_1 \end{matrix} \right), \quad h
  \left( \begin{matrix} \ \\ a_1 \end{matrix} \right) h
  \left( \begin{matrix} t_1 \\\ \end{matrix} \right), \quad h
    \left( \begin{matrix} t_1 \\\ \end{matrix} \right) h
      \left( \begin{matrix}\ \\ a_1 \end{matrix} \right) .$$
$\K^{2,1}$ is the octagon with open face corresponding to the
expression $ h \left( \begin{matrix}
   t_1 \\ a_1 a_2 \end{matrix} \right)$ and facets corresponding to
the expressions 
$$
 h \left( \begin{matrix} t_1 \\ (a_1 a_2) \end{matrix} \right), 
 h \left( \begin{matrix} t_1 \\ a_1  \end{matrix} \right) 
 h \left( \begin{matrix}  \\  a_2 \end{matrix} \right) 
 h \left( \begin{matrix}  \\ a_1  \end{matrix} \right)
 h \left( \begin{matrix}  t_1 \\  a_2 \end{matrix} \right), 
 h \left( \begin{matrix} t_1 \\ \  \end{matrix} \right)
 h \left( \begin{matrix}  \\ a_1  \end{matrix} \right)
 h \left( \begin{matrix}  \\  a_2 \end{matrix} \right) ,$$
$$  h \left( \begin{matrix}  \\ a_1  \end{matrix} \right)
 h \left( \begin{matrix} t_1 \\  \  \end{matrix} \right)
 h \left( \begin{matrix}  \\  a_2 \end{matrix} \right) ,
 h \left( \begin{matrix}  \\ a_1  \end{matrix} \right)
 h \left( \begin{matrix}  \\  a_2 \end{matrix} \right)
h \left( \begin{matrix} t_1 \\ \  \end{matrix} \right) ,
 h \left( \begin{matrix} t_1 \\ \   \end{matrix} \right)
 h \left( \begin{matrix}  \\  a_1 a_2 \end{matrix} \right),
 h \left( \begin{matrix}  \\ a_1 a_2  \end{matrix} \right)
 h \left( \begin{matrix} t_1 \\ \   \end{matrix} \right)  .$$
\end{example} 

\begin{remark} \label{facrem} {\rm (Facets)}  
For any positive integers $d,e$, there is a bijection between facets
of $\K^{d,e}$ and the following expressions:
\begin{enumerate} 
\item insertion of parenthesis around a sub-expression $a_{i+1} \ldots
  a_{i+j-1}$;
\item insertion of parentheses around a sub-expression $t_i \ldots t_{i+j - 1}
  t_{i+j}$;
\item a product of expressions $h( \cdot) h(\cdot) \ldots h( \cdot)$,
  corresponding to a partition of the symbols $t_1,\ldots, t_e,
  a_1,\ldots,a_d$, such that each element of the partition contains at
  least one symbol.
\end{enumerate} 
In particular, the facets of $\K^{d,1}$ correspond to the terms in the
definition \eqref{mu1} of $\mu^1$ for pre-natural transformations.
\end{remark} 

The spaces above may be identified with moduli spaces of {\em stable
  quilted disks with markings on the boundary or seam}:

\begin{definition} \label{innerouterdef} {\rm (Quilted disks with
    inner and outer markings)} Let $d,e$ be positive integers.  A {\em
    quilted disk with $d$ outer markings and $e$ inner markings} is a
  tuple $(D,C,\zeta,z_1 \ldots, z_d, w_1,\ldots,w_e)$ where
\begin{enumerate}
\item $D$ is a holomorphic disk;
\item $C$ is a circle in $D$ with unique intersection point $C\cap
  \partial D =\{ \zeta\}$;
\item $(\zeta, z_1,\ldots, z_d)$ is a tuple of distinct points in
  $\partial D$ whose cyclic order is compatible with the
  counterclockwise orientation of $\partial D$;
\item $(\zeta,w_1, \ldots, w_e)$ is a tuple of distinct points in $C$
  whose cyclic order is compatible with the counterclockwise
  orientation of $C$.
\end{enumerate}
An {\em isomorphism} of nodal $(d,e)$-marked quilted disks from
$(D,C,\ul{z},\ul{w})$ to $(D',C',\ul{z}',\ul{w}')$ is a holomorphic
isomorphism of the ambient disks mapping the circles resp. markings of
the first to those of the second:
$$ \psi: D \to D', \quad \psi(C) = C', \quad \psi(z_j) = z_j', \ \ j =
0,\ldots, d, \quad \psi(w_k) = w_k', \ \ k = 1,\ldots, e .$$
\end{definition}
\noindent Denote by $\RR^{d,e}$ the moduli space of isomorphism classes of
stable $(d,e)$-marked quilted disks.  An element of $\RR^{d,e}$ can be
identified with a configuration in the upper half plane $\bH = \{
\on{Im}(z) \ge 0 \}$, of points
$$z_1, \ldots, z_d \in \R, \quad w_1, \ldots, w_e \in \R + i$$
modulo the group $\Aut(\bH)$ generated by real translations.  It
follows that $\RR^{d,e}$ is a manifold with $\dim_{\R} \RR^{d,e} = d+e-1$.  A compactification $\ol{\RR}^{d,e}$ of
$\RR^{d,e}$ is obtained by allowing stable nodal quilted disks.

\begin{definition}    {\rm (Stable quilted disks with inner and outer markings)}  
\label{stableinnerandouter}
\begin{enumerate} 
\item A {\em nodal $(d,e)$-marked quilted disk} $C$ is a collection of
  unquilted disks $D_i$, quilted disks $(D_j,C_j)$, and {\em quilted
    spheres} $(S_k,C_k)$: holomorphic spheres $S_k$ equipped with a
  circle {\em seam} $C_k \subset S_k$, isomorphic to the usual
  projective line $S_k \cong \C P^1$ with its real locus
  $C_k \cong \R P^1$ identified with the equator, with $d$ markings on
  the boundary $z_1,\ldots, z_d \in \partial D$ and $e$ markings on
  the seams disjoint from each other and the nodes.  The quilted
  spheres are attached to each other and the quilted disk by points on
  the seam, as explained below.
\item The {\em combinatorial type} of any nodal $(d,e)$-marked quilted
  disk $(D,C)$ is a tree $\Gamma$ whose vertices $\Ver(\Gamma)$
  correspond to components that are either a marked disk, a quilted
  disk (with or without inner markings) or a quilted sphere, finite
  edges $e \in \Edge_{< \infty}(\Gamma)$ represent nodes, and
  semi-infinite edges $e \in \Edge_\infty(\Gamma)$ represent the
  marked points.  The set of such edges is equipped with a bijection
  to the sets
  $\{ 0 \} \sqcup \{ 1 \} \times \{ 1,\ldots, d \} \sqcup \{ 2 \}
  \times \{ 1,\ldots, e \}$
  representing the root, outer and inner markings.  The other
  semi-infinite edges not corresponding to the root marking are called
  {\em leaves}.  The combinatorial type of $\Gamma$ is required to be
  a tree, and is equipped with a distinguished subset of {\em colored
    vertices} $\Ver^{\on{col}}(\Gamma)$ corresponding to quilted
  disks.  The combinatorial type $\Gamma$ is required to satisfy the
  following {\em monotonicity condition}: on any non-self-crossing
  path of vertices $v_1,\ldots, v_l \in \Ver(\Gamma)$ from the root
  edge to a leaf in $\Gamma$, exactly one vertex $v_i$ in the path is
  colored, that is, corresponds to a quilted disk, and the components
  after the quilted disk are either all unquilted disks or all quilted
  spheres.  One obtains a tree $\Gamma_1$ by taking $\Ver(\Gamma_1)$
  to be consist of those vertices on paths between the root edge and a
  leaf corresponding to an outer marking, and $\Gamma_2$ to be the
  vertices on a path from the root edge to an edge corresponding to an
  inner marking.  Each subtree $\Gamma_1,\Gamma_2$ then has a planar
  structure (ordering of the edges meeting each vertex) determined by
  the ordering of the leaves. \label{planarstr}
\item A nodal $(d,e)$-marked quilted disk is {\em stable} if it has no
  automorphisms, or equivalently, each disk component has at least $3$
  markings, seams, and nodes, and each sphere component has at least
  one seam and three markings or nodes.
\end{enumerate} 
\end{definition} 

\begin{example} \label{q35} 
An example of a nodal $(3,5)$-marked disk is given in Figure
\ref{fig:Q35}.  In the figure, we use black leaves for outer circle
markings, and red (in the online version) leaves for inner circle
markings. The black leaves are required to respect the order of the
outer marked points, likewise the red leaves respect the order of the
inner marked points.
\end{example} 

\begin{figure}[h]
\center{\includegraphics[height=2in]{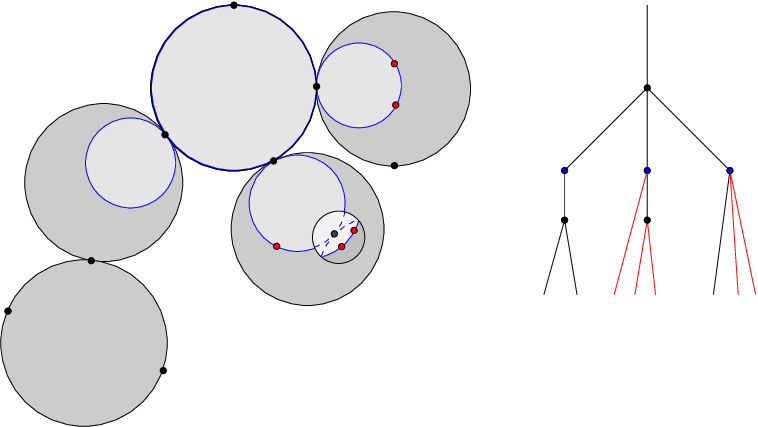}}
\caption{A nodal (3,5)-marked disk}\label{fig:Q35}
\end{figure}

\label{detop} A topology on the moduli space of marked quilted disks
can be defined by introducing a suitable notion of convergence.
Consider a sequence $(D_\nu, C_\nu, \ul{z}_\nu, \ul{w}_\nu)$ of marked
quilted disks with smooth domain.  That is, we assume that $D_\nu$ has
no nodes so that $D_\nu \cong \P^1 \cong \C \cup \{ \infty \}$. For
simplicity we may assume that the seam is given by
$\on{Im}(z) = \rho_\nu$ and the boundary given by
$\on{Im}(z) = \eps_\nu$.  Let $(D,C,\ul{z},\ul{w})$ be another marked
quilted disk. For $(D_\nu,C_\nu,\ul{z}_\nu,\ul{w}_\nu)$ to converge to
$(D,C,\ul{z},\ul{w})$ we require the following.  For each irreducible
component $D_i$ of $D$ let $\pi_i:D \to D_i$ denote the unique map
that is the identity on $D_i$ and constant on all other components.
We require a family $\phi_{\nu}:D_i \to D_\nu$ of holomorphic
embeddings (preserving the seams, if $D_i$ is quilted) such that for
each $k$, either $\phi_{\nu,i}^{-1}( z_{k,\nu} )$ converges to $z_k$,
if $z_k$ lies on $D_i$, or otherwise to a node $\pi_i(z_k)$ of $D_i$;
and similarly for each $j$ either $\phi_{\nu,i}^{-1}(w_{j,\nu})$
converges to $w_j$ or to a node $\pi_i(w_j)$.  The definition of
convergence for a fixed combinatorial type is similar, treating each
irreducible component separately.  In the case of sequences with
varying combinatorial type, a subsequence of each fixed type is
required to converge to the same limit.  We equip $\ol{\RR}^{d,e}$
with the topology given by declaring a subset to be closed if it is
closed under limits.  An argument analogous to \cite[5.6.5]{ms:jh}
shows that the convergence in the resulting topology is in fact the
above notion of convergence.  As before in \eqref{univcurve},
$\ol{\RR}^{d,e}$ comes with a universal curve $\ol{\UU}^{d,e}$
containing a fiberwise boundary $\partial \UU^{d,e}$ as well as a {\em
  universal seam} consisting of isomorphisms classes of tuples
$(D,C,\ul{z},\ul{w},y)$ where $y \in C$.

The local structure of these moduli spaces of quilted disks with seam
and boundary markings can be described in terms of balanced gluing
parameters.  Suppose that $\Gamma$ is a combinatorial type of stable
$(d,e)$-marked disk.  Recall that
$$Z_\Gamma \subset \Map(\Edge_{< \infty}(\Gamma),\R_{\ge 0})$$
is the set of balanced gluing parameters from Definition
\ref{balanced}.

\begin{theorem} [Existence of compatible tubular neighborhoods 
for the biassociahedra] \label{collarde} For positive integers $d,e$
  and for each combinatorial type $\Gamma$ of $(d,e)$-marked disk,
  there exist a collection of open neighborhoods $U_\Gamma$ of $0$ in
  $Z_\Gamma$ and collar neighborhoods
$$ G_\Gamma: \RR_\Gamma^{d,e} \times U_\Gamma \to \ol{\RR}^{d,e} $$
onto an open neighborhood of $\RR^{d,e}_\Gamma$ in $\ol{\RR}^{d,e}$ that
satisfy the following compatibility condition: If
$\RR^{d,e}_{\Gamma'}$ is contained in the closure of
$\RR^{d,e}_{\Gamma}$ and the local coordinates on $\RR^{d,e}_\Gamma$
are induced via the gluing construction from those on
$\RR^{d,e}_{\Gamma'}$ then the diagram
$$ 
\begin{diagram}
\node{\RR_{\Gamma'}^{d,e} \times U_{\Gamma'}} \arrow[2]{e}\arrow{se}
\node{} \node{\RR_{\Gamma}^{d,e} \times U_{\Gamma} } \arrow{sw}
\\ \node{} \node{\ol{\RR}^{d,e}} \node{}
\end{diagram} 
$$
commutes.  \end{theorem}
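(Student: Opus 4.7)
The plan is to adapt the proof of Theorem \ref{glueRd0} to the bigger moduli space $\ol{\RR}^{d,e}$, with additional bookkeeping for the inner markings on the seam and for the quilted sphere components that appear in Definition \ref{stableinnerandouter}. Let $\Gamma$ be a combinatorial type of $(d,e)$-marked quilted disk. For each vertex $v$ of $\Gamma$, I would first choose, smoothly in $r \in \RR^{d,e}_\Gamma$, a family of local coordinates at each node incident to $v$: for an uncolored disk or for a disk/sphere component entirely above (respectively below) the colored vertices, identify the component minus a distinguished node/marking with $\H$ or $\C$; for a colored disk, identify it with $\H$ with seam $\{\on{Im}(z)=1\}$; for a quilted sphere, identify it with $\P^1$ with equator as seam. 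The space of such coordinates at each node is convex (the condition $D\phi(0)>0$ is convex), hence contractible, so these choices can be made coherently over $\RR^{d,e}_\Gamma$.

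Next, I would define the gluing map $G_\Gamma: \RR^{d,e}_\Gamma \times U_\Gamma \to \ol{\RR}^{d,e}$ node-by-node exactly as in Theorem \ref{glueRd0}: at a node carrying parameter $\gamma_j > 0$, remove small half-disks (or disks, for nodes in the seam) around $w_j^\pm$, and identify annuli by $z \mapsto \gamma_j z$. For nodes meeting seams, the seam on the glued component is defined by $\{\on{Im}(z) = \gamma_j\}$ in the local coordinate on the side closer to the root. The balanced condition \eqref{balancedrel} on $Z_\Gamma$ is exactly what is needed for the seam to be globally well-defined: propagating the seam along the shortest path $\gamma_{12}$ between two colored vertices multiplies its height by $\prod_e \gamma(e)^{\pm 1}$, and self-consistency of the seam on every glued quilted sphere component on this path requires this product to equal $1$. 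The monotonicity condition in Definition \ref{stableinnerandouter} guarantees that every path from root to leaf meets exactly one colored vertex, so each glued component inherits a unique seam.

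Third, I would check that $G_\Gamma$ is a homeomorphism onto an open neighborhood of $\RR^{d,e}_\Gamma$ in $\ol{\RR}^{d,e}$. Injectivity follows because the gluing parameters can be recovered from the conformal moduli of the necks produced (the annular modulus at the $j$-th node is determined by $\gamma_j$), and the quotient by the balance relations \eqref{balancedrel} matches the freedom in choosing the seam height. Surjectivity onto a neighborhood follows from a convergence argument: any sequence in $\ol{\RR}^{d,e}$ approaching $\RR^{d,e}_\Gamma$ in the topology described in the discussion of $\ol{\RR}^{d,e}$ is, after possibly restricting to a subsequence, eventually of the form $G_\Gamma(r_\nu, \delta_\nu)$ with $(r_\nu,\delta_\nu) \to (r,0)$, since the annular moduli of the pinching necks converge to zero and the off-neck parts converge in the sense of Definition \ref{detop}.

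Finally, the compatibility of collar neighborhoods follows from the contractibility of the space of local coordinates at each node. If $\Gamma' \leq \Gamma$ (so $\Gamma'$ is obtained from $\Gamma$ by degenerating a subset $I$ of gluing parameters to zero), and if we have already chosen the family of local coordinates on $\RR^{d,e}_{\Gamma'}$, then the gluing construction with parameters in $I$ induces a family of local coordinates on a neighborhood of $\RR^{d,e}_\Gamma$ inside $\RR^{d,e}$. By convexity of the space of such coordinates, the originally chosen family on $\RR^{d,e}_\Gamma$ may be modified to agree with the induced one, after which the two gluing maps tautologically factor through the same glued surface, giving commutativity of the stated diagram. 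Proceeding by induction on the depth of the stratum in the partial order $\preceq$ produces a globally compatible system. The main technical obstacle is the balanced condition: one must verify that the independence of $\gamma_j$ in the resulting seam height across a glued quilted sphere is captured precisely by \eqref{balancedrel}, and that the dimension of $Z_\Gamma$ (equal to $|\Edge_{<\infty}(\Gamma)|$ minus the number of independent balance relations) matches the normal codimension of $\RR^{d,e}_\Gamma$ in $\ol{\RR}^{d,e}$; this combinatorial matching is essentially the same count performed in \cite{mau:mult} for the quilted disk case.
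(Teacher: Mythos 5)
Your proposal follows essentially the same route as the paper's proof: choose local coordinates near the nodes (with the seam normalized to $\{\Im(z)=1\}$ on disk components and to the real line on quilted sphere components), glue via $z\sim\delta_j z$, observe that the balanced condition \eqref{balancedrel} makes the seam on the glued component well defined, and obtain compatibility of the collars from the contractibility of the space of local coordinates. The extra detail you supply on injectivity, surjectivity onto a neighborhood, and the dimension count of $Z_\Gamma$ is consistent with (and somewhat more explicit than) the paper's sketch.
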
 

\begin{proof}  The proof is a combination of disk gluing and sphere gluing; 
the former already appeared in the proof of Theorem \ref{glueRd0}.
Suppose the following are given:
\begin{itemize}
\item a collection of balanced gluing parameters
  $\delta_1,\ldots,\delta_m$,
\item a collection of coordinates $D_j - \{ w_j \} \to \H = \{ \Im(z)
  \ge 0 \}$ (for the disk components) so that the seam $C_j$ is
  identified with the set $\{ \Im(z) = 1 \}$; and
\item a collection of coordinates  $D_j - \{ w_j \} \to \C$ so that
the seam $C_j$ is $\{ \Im(z) = 0 \}$ (for the quilted sphere
components).
\end{itemize} 
Given these data define a {\em glued $(d,e)$-marked disk}
$G_\Gamma(D,\delta)$ by removing small balls around the nodes and
gluing small annuli via the relation $z \sim \delta_j z$ in the given
coordinates, starting recursively with the components furthest away
from the root marking $z_0$.  The seam on the glued component is given
by $\{ \Im(z) = \delta_j \}$.  The definition of the seam is
independent of $\delta_j$ by the balanced condition \eqref{balancedrel}.
This construction also works in families: given a family of such
coordinates over a stratum $\RR^{d,e}_\Gamma$, one obtains a collar
neighborhood as in the statement of the theorem.  Since the space of
coordinates on the disks is contractible, we may assume that we have
chosen local coordinates so that whenever a point $r \in
\RR^{d,e}_{\Gamma}$ is in the image of such a gluing map from
$\RR^{d,e}_{\Gamma'}$, the local coordinates are those induced from
$\RR^{d,e}_{\Gamma'}$.
\end{proof} 

In this sense, the stratified space $\ol{\RR}^{d,e}$ is {\em equipped
  with quilt data} as in Definition \ref{quiltdata}: each stratum
comes with a collar neighborhood described by gluing parameters
compatible with the lower dimensional strata.  As with the
multiplihedra, the nature of the relations on the gluing parameters
only give toric singularities.  That is, all corners are polyhedral.
The moduli space $\ol{\RR}^{2,1}$ is shown in Figure \ref{m21}.

\begin{figure}[h]
\includegraphics[width=3in]{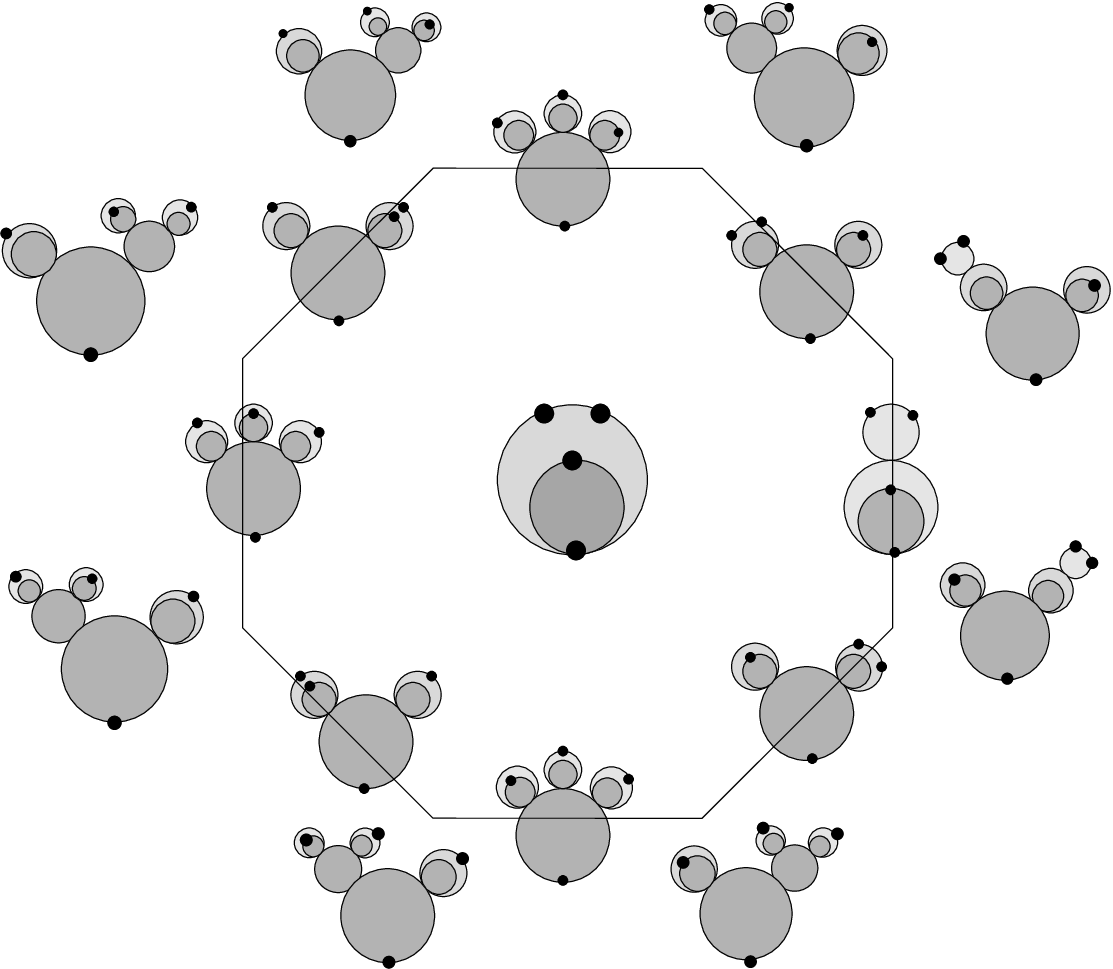}
\caption{$\ol{\RR}^{2,1}$}
\label{m21}
\end{figure}

The moduli space of quilted disks with boundary and seam markings
admits natural forgetful maps forgetting both boundary and seam
markings generalizing those in \eqref{fi}: For indices
$i \in \{ 1,\ldots, d \} $ and $j \in \{ 1,\ldots, e \}$ maps (for
$d,e \ge 1$)
\begin{equation} \label{fide} f_i: \ol{\RR}^{d,e} \to 
  \ol{\RR}^{d-1,e}, \quad f^j: \ol{\RR}^{d,e} \to 
  \ol{\RR}^{d,e-1} \end{equation}
given by forgetting the $i$-th boundary resp. $j$-th seam marking and
collapsing unstable components.  As in \eqref{univcurve}, the
forgetful morphisms $f_i$ may be identified with projection from part
of the boundary of the universal curve while the morphism $f^j$ may be
identified with the part of the universal seam between two markings,
with the additional caveat that each nodal point on the seam is
potentially replaced with two points as in Figure \ref{spart} (so the
map to the universal seam is two-to-one in this case.)

\begin{figure}[ht]
\begin{picture}(0,0)%
\includegraphics{spart.pstex}%
\end{picture}%
\setlength{\unitlength}{3947sp}%
\begingroup\makeatletter\ifx\SetFigFont\undefined%
\gdef\SetFigFont#1#2#3#4#5{%
  \reset@font\fontsize{#1}{#2pt}%
  \fontfamily{#3}\fontseries{#4}\fontshape{#5}%
  \selectfont}%
\fi\endgroup%
\begin{picture}(3101,2680)(2659,-2423)
\put(4059,-2381){\makebox(0,0)[lb]{{{{$z_0$}%
}}}}
\put(5265,-1959){\makebox(0,0)[lb]{\smash{{{{{$w_1$}%
}}}}}}
\put(2863,-1974){\makebox(0,0)[lb]{\smash{{{{{$w_5$}%
}}}}}}
\put(2821,-1280){\makebox(0,0)[lb]{\smash{{{{{$w_4$}%
}}}}}}
\put(5211,-1253){\makebox(0,0)[lb]{\smash{{{{{$w_2$}%
}}}}}}
\put(4002, 98){\makebox(0,0)[lb]{\smash{{{{$z_1$}%
}}}}}
\end{picture}%
\caption{The part of the seam between two markings} 
\label{spart}
\end{figure}

We leave it to the reader to check that these maps are continuous with
respect to the topology on $\ol{\RR}^{d,e}$ constructed above and
(using the gluing construction) a topological fibration with interval
fibers.  By induction and the fact that the moduli space
$\ol{\RR}^{1,0}$ is a point, $\ol{\RR}^{d,e}$ is a topological disk.
In particular, $\ol{\RR}^{d,e} $ is compact and Hausdorff.

\begin{proposition} \label{isode} For any positive integers $d,e$,
  there exists an isomorphism of decomposed spaces from
  $\ol{\RR}^{d,e}$ to $\K^{d,e}$. \end{proposition}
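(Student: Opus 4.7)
The plan is to proceed by induction on $d+e$, matching the stratification of $\ol{\RR}^{d,e}$ by combinatorial types of stable $(d,e)$-marked quilted disks with the stratification of $\K^{d,e}$ by faces indexed by bracketed expressions. The base cases $\ol{\RR}^{1,0}$ and $\ol{\RR}^{0,1}$ are points, matching $\K^{1,0}$ and $\K^{0,1}$ trivially. For the inductive step, the key observation is that the combinatorial types $\Gamma$ of stable nodal quilted disks as in Definition \ref{stableinnerandouter} are in natural bijection with the bicolored trees indexing faces of $\K^{d,e}$: colored vertices correspond to quilted disk or quilted sphere components, uncolored vertices correspond to unquilted disk components, and the monotonicity condition on colored vertices in Definition \ref{stableinnerandouter} matches the bracketing condition for expressions $h \left( \begin{smallmatrix} t_1\ldots t_e \\ a_1 \ldots a_d \end{smallmatrix}\right)$. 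Under this bijection the product decompositions match: $\RR^{d,e}_\Gamma = \prod_{v \in \Ver(\Gamma)} \RR^{t(v)}$ corresponds to $\K^\Gamma = \prod_{v} \K^{t(v)}$, and both factors are isomorphic by the inductive hypothesis since $\dim(\RR^{t(v)}) < d+e-1$ for each $v$ in a type $\Gamma$ with at least two vertices.

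First I would construct, using the inductive hypothesis, a homeomorphism of decomposed spaces $\varphi_\partial: \partial \ol{\RR}^{d,e} \to \partial \K^{d,e}$. This requires assembling the stratum-wise homeomorphisms $\RR^{d,e}_\Gamma \to \on{int}(\K^\Gamma)$ into a single continuous map on the boundary. The consistency of the maps across incidences of strata $\Gamma' \prec \Gamma$ follows from the compatibility of the collar neighborhoods given in Theorem \ref{collarde} together with the compatibility relations $\iota_\Gamma^{\Gamma'}$ defining $\partial \K^{d,e}$, provided the collar neighborhoods on the $\ol{\RR}$-side are chosen compatibly with the recursive gluing construction. Because the inductive hypothesis gives homeomorphisms of decomposed spaces on each $\K^{t(v)}$, the product homeomorphisms on $\K^\Gamma$ glue consistently.

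The second step extends $\varphi_\partial$ to a homeomorphism $\varphi: \ol{\RR}^{d,e} \to \K^{d,e}$. For this I would exploit the forgetful fibration structure described after Definition \ref{stableinnerandouter}: the maps $f_i, f^j$ in \eqref{fide} are fiber bundles with interval fibers over lower-dimensional moduli, so by induction $\ol{\RR}^{d,e}$ is a topological disk. Similarly $\K^{d,e} = \Cone(\partial \K^{d,e})$ is a topological disk. Since both spaces are topological disks of dimension $d+e-1$ with a given identification of boundaries, the extension to a homeomorphism of the interiors follows from the cone structure on $\K^{d,e}$ combined with a collar neighborhood of $\partial \ol{\RR}^{d,e}$ in $\ol{\RR}^{d,e}$, which exists since $\ol{\RR}^{d,e}$ is locally polyhedral (and hence its boundary is collared).

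The main obstacle is the compatibility of collar neighborhoods in the first step: ensuring that the gluing parameters used in the local model $\RR^{d,e}_\Gamma \times U_\Gamma$ of Theorem \ref{collarde} correspond, under the stratum-wise homeomorphism, to the gluing parameters implicit in the cone construction of $\K^{d,e}$. This is a matter of matching the toric structure on the space of balanced gluing parameters $Z_\Gamma$ (cut out by the balanced relations \eqref{balancedrel}) with the normal fan structure at each face of $\K^{d,e}$. Since both sides are normal fans of convex polyhedral cones of the same combinatorial type (each vertex of $\Gamma$ contributes a factor, and the balanced relations encode the same identification of quilting levels across nodes that a consistent bracketed $h$-expression encodes), a piecewise-linear identification can be chosen uniformly.
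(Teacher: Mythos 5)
Your proposal follows essentially the same route as the paper's proof: induction on $(d,e)$ with the point base cases, the bijection between combinatorial types of stable nodal quilted disks and the bicolored trees indexing faces of $\K^{d,e}$, and the observation that both spaces are cones on their (inductively identified) boundaries because $\ol{\RR}^{d,e}$ is a topological disk via the forgetful fibrations \eqref{fide}. The paper compresses your first step into the phrase ``by the combinatorial description of $\K^{d,e}$ and induction,'' so your version simply spells out the stratum-matching and collar-compatibility details that the paper leaves implicit.
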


\begin{proof} In the base cases $(d,e) \in \{ (1,0), (0,1) \}$ both
  moduli spaces are points and so the result holds.  By the
  combinatorial description of $\K^{d,e}$ and induction, it suffices
  to show that the compactified moduli space $\ol{\RR}^{d,e}$ is the
  cone on its boundary.  But since $\ol{\RR}^{d,e}$ is a topological
  disk, it is homeomorphic to the cone on its boundary, a sphere.
\end{proof}

In order to define the natural transformations, we count quilted
surfaces with strip-like ends for a family constructed from the space
of quilted disks with interior and exterior markings constructed in
the previous section.  For analytical reasons, this requires replacing
the family of quilted disks in the previous section with one in which
degeneration is given by neck-stretching:

\begin{proposition} 
\label{Ade} 
{\rm (Existence of families of quilted surfaces with strip-like ends
  over the bimultiplihedra)} For any positive integers $d,e$, there
exists a collection of families of quilted surfaces $\ol{\SS}^{d,e}$
with strip-like ends over $\ol{\RR}^{d,e}$, with the property that the
restriction $\SS^{d,e}_\Gamma$ of the family to a stratum
$\RR_\Gamma^{d,e}$ isomorphic to a product of multiplihedra,
biassociahedra, and associahedra is a product of the corresponding
families of surfaces and quilted surfaces with strip-like ends, and
collar neighborhoods of $\SS^{d,e}_\Gamma$ are given by gluing along
strip-like ends.
\end{proposition}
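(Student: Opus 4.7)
The plan is to proceed by induction on the dimension $d+e-1$ of $\ol{\RR}^{d,e}$, applying Theorem \ref{A} at each step with the quilt data supplied by Theorem \ref{collarde}. In the base cases $(d,e)\in\{(1,1)\}$ (or rather the lowest-dimensional nontrivial cases) $\ol{\RR}^{d,e}$ is an interval or a point, and one simply fixes by hand a single quilted surface with strip-like ends of the prescribed combinatorial type, choosing a compatible complex structure and seam structure (which is possible since the relevant space of choices is contractible by Lemmas \ref{product} and \ref{homtriv}).

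For the inductive step, assume that the families $\ol{\SS}^{e,0}$, $\ol{\SS}^{f}$, and $\ol{\SS}^{d',e'}$ have already been constructed for $d'+e'<d+e$, together with the analogous families of associahedra from Proposition \ref{Ad} and multiplihedra from Proposition \ref{Ad0}, in such a way that on each stratum of the base they are products, and collar neighborhoods are obtained via the gluing construction \eqref{qglues}. The stratified space $\ol{\RR}^{d,e}$ is equipped with quilt data in the sense of Definition \ref{quiltdata}: by Theorem \ref{collarde} each stratum $\RR^{d,e}_\Gamma$ admits a collar neighborhood parametrized by the space $Z_\Gamma$ of balanced gluing parameters, and these collars are compatible for $\Gamma \leq \Gamma'$. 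Since each stratum $\RR^{d,e}_\Gamma$ is isomorphic to a product of spaces of the form $\RR^{e_i,0}$, $\RR^{d_j,e_j}$, and $\RR^{f_k}$, the product of the inductively constructed families defines a family $\ol{\SS}^{d,e}_{\partial}$ of quilted surfaces with strip-like ends over $\partial\ol{\RR}^{d,e}$, and gluing along strip-like ends extends this to a family over a neighborhood $U$ of the boundary in which degeneration is realized by neck-stretching.

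One then invokes Theorem \ref{extension} to extend $\ol{\SS}^{d,e}$ from the open neighborhood $U$ of the stratified boundary over the interior of $\ol{\RR}^{d,e}$. The hypothesis of Theorem \ref{extension} that the family near the boundary be smoothly trivializable is guaranteed by Lemma \ref{trivfam}, since every patch of every fiber is homeomorphic to a disk with at least one marking (the corresponding combinatorial types $\Gamma$ in Definitions \ref{qddef} and \ref{stableinnerandouter} ensure this). The extension produces compatible complex structures, strip-like ends, and seam maps on all of $\ol{\RR}^{d,e}$; contractibility of the space of each kind of auxiliary datum allows the patching across the boundary of $U$ to be carried out using cutoff functions, preserving the prescribed data on a slightly smaller neighborhood of $\partial\ol{\RR}^{d,e}$.

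The main point of care is ensuring that the product families from different codimension-one strata glue coherently along deeper strata where several facets meet. This coherence, however, is exactly what is enforced by the compatibility diagram in Theorem \ref{collarde}: the induced data from distinct facets agree on overlaps because each is obtained from the common deeper stratum by the gluing construction, applied to balanced gluing parameters sitting in the appropriate face of $Z_\Gamma$. Once this coherence is in place, the extension step is essentially formal, and the resulting family $\ol{\SS}^{d,e}\to \ol{\RR}^{d,e}$ satisfies both the recursive and the gluing properties stated in the proposition.
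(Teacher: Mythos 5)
Your proposal is correct and follows essentially the same route as the paper: the paper's proof is a one-line induction using Theorem \ref{A} (which itself is proved by recursively applying Theorem \ref{extension}, with trivializability supplied by Lemma \ref{trivfam}), starting from the already constructed families of Proposition \ref{Ad}, with the quilt data on $\ol{\RR}^{d,e}$ coming from Theorem \ref{collarde}. You have simply unpacked the same argument in more detail, including the coherence of the collars along deeper strata, which is indeed exactly what the compatibility diagram in Theorem \ref{collarde} provides.
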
 

\begin{proof} 
By induction using Theorem \ref{A}, starting from the case of
three-marked disk where we choose a genus zero surface with strip-like
ends, using the already constructed families of surfaces with
strip-like ends in Proposition \ref{Ad}.
\end{proof} 
  
\subsection{Natural transformations for cocycles} 

In this section we construct pseudoholomorphic quilts with interior seam
conditions on various Lagrangian correspondences.

\begin{proposition}  \label{Bde} 
{\rm (Existence of families of pseudoholomorphic quilts)} Let $M_0,M_1$ be symplectic backgrounds with the
same monotonicity constant, $d,e$ positive integers,
$\ul{L}^0,\ldots,\ul{L}^d$ admissible generalized Lagrangian branes in
$M_0$, and $L_{01}^0,\ldots, L_{01}^e$ admissible Lagrangian
correspondences from $M_0$ to $M_1$.  For generic choice of
perturbation data the moduli spaces of pseudoholomorphic quilts with
boundary and seam conditions $(\ul{L}^i)_{i=0}^d$ and
$(L_{01}^j)_{j=0}^e$ has
\begin{enumerate} 
\item finite zero dimensional
component $\M^{d,e}_0$ and 
\item the one-dimensional component $\ol{\M}^{d,e}_1$ has boundary
  equal to the union
$$ \partial \ol{\M}^{d,e}_1 = \bigcup_{\Gamma}
{\M}^{d,e}_{\Gamma,1} $$
where either (1) $\Gamma$ is stable so that $\RR_{\Gamma}^{d,e}$ is a
codimension one stratum in $\ol{\RR}^{d,0}$ in which case
${\M}^{d,e}_{\Gamma,1}$ is the product of quilted and unquilted
components corresponding to the vertices of $\Gamma$, or (2) $\Gamma$
is unstable and corresponds to bubbling off a Floer trajectory.
\end{enumerate} 
\end{proposition}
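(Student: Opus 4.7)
The plan is to deduce Proposition \ref{Bde} from the general machinery of Sections \ref{partone1} and \ref{partone2}, in close parallel to the proofs of Proposition \ref{holassoc} and Proposition \ref{Bd0}. Proposition \ref{Ade} already provides the family $\ol{\SS}^{d,e} \to \ol{\RR}^{d,e}$ of quilted surfaces with strip-like ends, in which degeneration toward boundary strata is governed by neck-stretching rather than nodal degeneration. Equip each patch with its prescribed symplectic background and each seam/boundary with its prescribed admissible Lagrangian correspondence, so that $\ol{\SS}^{d,e}$ carries the symplectic data of Definition \ref{datum}.

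First I would construct the perturbation data $(\ul{J},\ul{K})$ by induction on the dimension of strata of $\ol{\RR}^{d,e}$. Over the stratified boundary $\partial_s \ol{\RR}^{d,e}$, each stratum $\RR_\Gamma^{d,e}$ is a product of associahedra $\RR^{f_i}$, multiplihedra $\RR^{e_j,0}$, and lower-dimensional bimultiplihedra $\RR^{d',e'}$; by induction the corresponding families $\SS^{f_i}$, $\SS^{e_j,0}$, $\SS^{d',e'}$ carry regular perturbation data making quilts of formal dimension at most one regular, and these combine on the product strata. Near the boundary these product data extend via the gluing construction in Definition \ref{datum}(c), giving an initial pair $(\ul{J}_0,\ul{K}_0)$ on a collar neighborhood $V$ of $\partial_s \ol{\SS}^{d,e}$. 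Theorem \ref{familytransversality} then furnishes a comeager set of extensions over the interior which are regular for formal dimensions $0$ and $1$; fix such $(\ul{J},\ul{K})$.

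Next I would establish compactness. Monotonicity of all $M_i$ with a common constant, together with the admissibility hypotheses on the $\ul{L}^i$ and $L_{01}^j$ (minimal Maslov number $\geq 3$, or $=2$ with vanishing disk invariant, and torsion $\pi_1$ image), feeds into the family Gromov compactness theorem \ref{gromov}. Because disk and sphere bubbles capture Maslov index $\geq 2$, any bubbling would reduce the index of the limiting configuration by at least two, pushing it into a negative-dimensional component forbidden by regularity; hence the bubbling set $Z$ is empty for formal dimension $\leq 1$ and the only nontrivial limiting behavior is Floer-trajectory breaking at the quilted strip-like ends or convergence to a boundary stratum of $\ol{\RR}^{d,e}$. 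This gives finiteness of $\M^{d,e}_0$ and sequential compactness of $\ol{\M}^{d,e}_1$ modulo breaking, with the usual Hausdorff-versus-Gromov argument as in \cite[5.6.5]{ms:jh}.

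Finally I would identify the boundary of the compactified one-dimensional component via Theorem \ref{masterthm} (i.e.\ Theorem \ref{familymod}), which describes $\partial \ol{\M}^{d,e}_1$ as the union of the contributions from codimension-one strata of $\ol{\RR}^{d,e}$ (giving the stable $\Gamma$ contributions, which under the identification of Proposition \ref{isode} correspond exactly to the facet types enumerated in Remark \ref{facrem}) together with broken configurations $\M(y_{\ul{f}},y_{\ul{f}}')_0 \times \M_{\ol{\SS}^{d,e}}(\ldots;\ul{y}_{\ul{f}}\mapsto\ul{y}_{\ul{f}}')_0$ from Floer-trajectory bubbling at the ends (the unstable $\Gamma$ contributions). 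The main obstacle is bookkeeping rather than new analysis: one must check that the gluing construction for bimultiplihedra produces genuine collar neighborhoods at every codimension-one stratum, so that gluing (as in Ma'u \cite{mau:gluing}) exhausts one end of each boundary component of $\ol{\M}^{d,e}_1$. This is where Theorem \ref{collarde} and Proposition \ref{Ade} are essential: they guarantee that the gluing parameters obey the balanced relation \eqref{balancedrel} and that the neck-stretching description extends through the higher-codimension strata, so the gluing theorem of \cite{mau:gluing} applies verbatim to each facet type classified in Remark \ref{facrem}.
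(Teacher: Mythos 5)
Your proposal is correct and follows essentially the same route as the paper's own (two-sentence) proof: recursive application of Theorem \ref{B} to the family over $\ol{\RR}^{d,e}$ built via Theorem \ref{collarde} and Proposition \ref{Ade}, with product-form perturbation data over the boundary strata, compactness from Theorem \ref{gromov}, and the boundary description from Theorem \ref{masterthm} together with the gluing results of \cite{mau:gluing}. You have simply filled in more of the details that the paper leaves implicit.
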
 

\begin{proof} The result follows by applying Theorem \ref{B} recursively
to the stratified space $\ol{\RR}^{d,e}$ constructed in Theorem
\ref{collarde}.  The perturbation over the boundary of $\SS^{d,e}$ is
the product of those for the lower-dimensional moduli spaces.
\end{proof}  

From Theorem \ref{C} and the family of quilted surfaces over the
moduli space of quilted disks with boundary and seam markings we
obtain cochain-level family quilt invariants giving natural
transformations:

\begin{definition} \label{nattransform} {\rm (Pre-natural
    transformations for quilted cochains)} Let $M_0,M_1$ be symplectic
  backgrounds with the same monotonicity constant, $d,e$ positive
  integers, $\ul{L}^0,\ldots,\ul{L}^d$ admissible generalized
  Lagrangian branes in $M_0$, and $L_{01}^0,\ldots, L_{01}^e$
  admissible Lagrangian correspondences from $M_0$ to $M_1$ equipped
  with widths $\delta^0,\ldots, \delta^e$.  To save space in the
  notation we write $\Phi(L_{01}^k) := \Phi(L_{01}^k, \delta_0^k)$.
  Given a sequence of homogeneous elements
  $\alpha_j \in CF(L_{01}^{j-1},L_{01}^j), j = 1,\ldots, e $
define
$$\TT^e(\alpha_1,\ldots,\alpha_e) \in \Hom(\Phi(L_{01}^0),
\Phi(L_{01}^e)) $$
as follows: For intersection points $x_1,\ldots,x_d$ of
$\ul{L}^0,\ldots,\ul{L}^d$ set
$$ (\TT^e(\alpha_1,\ldots,\alpha_e))^d(\bra{x_1},\ldots,\bra{x_d}) =
\Phi_{\SS^{d,e}}(\bra{x_1},\ldots,\bra{x_d},
\alpha_1,\ldots,\alpha_e) (-1)^{\heartsuit + \Box} $$
where $ \Box = \sum_{i=1}^e i | \alpha_i | .$
\end{definition} 

\begin{theorem} 
\label{firstfunc}
{\rm (Categorification functor, first version)} Let $M_0,M_1$ be
symplectic backgrounds with the same monotonicity constant. The maps
$$L_{01}
\mapsto \Phi(L_{01}), (\alpha_1,\ldots,\alpha_e) \mapsto
\TT^e(\alpha_1,\ldots,\alpha_e) $$ 
define an \ainfty functor $\Fuk(M_0^- \times M_1) \to
\Fun(\GFuk(M_0),\GFuk(M_1))$.
\end{theorem}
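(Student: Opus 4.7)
The plan is to apply the master equation of Theorem \ref{C} to the family $\ol{\SS}^{d,e} \to \ol{\RR}^{d,e}$ of quilted surfaces with strip-like ends constructed in Proposition \ref{Ade}, with patches labelled by $M_0, M_1$ and seam/boundary conditions from the generalized Lagrangian branes $\ul{L}^0,\ldots,\ul{L}^d$ in $M_0$ and the Lagrangian correspondences $L_{01}^0,\ldots,L_{01}^e$ from $M_0$ to $M_1$. By Proposition \ref{Bde}, generic perturbation data make all relevant moduli spaces $\ol{\M}^{d,e}$ regular, the zero-dimensional component finite, and the one-dimensional component a compact one-manifold whose boundary decomposes into strata indexed by the codimension one facets of $\ol{\RR}^{d,e}$ together with bubbling of Floer trajectories on the strip-like ends. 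The master equation $\partial \Phi_{\SS^{d,e}} = \Phi_{\partial \SS^{d,e}}$ is then, up to the signs $\heartsuit + \Box$ built into Definition \ref{nattransform}, precisely the \ainfty functor axiom for the prescribed assignment on objects and morphisms.

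The key step is to identify the contributions of the three types of facets enumerated in Remark \ref{facrem} (under the homeomorphism $\ol{\RR}^{d,e} \cong \K^{d,e}$ of Proposition \ref{isode}) with the corresponding terms in the \ainfty functor equation. Facets of type (a), where an unquilted disk bubbles off carrying a consecutive subsequence $x_{i+1},\ldots,x_{i+k}$ of outer markings, together with bubbling of Floer trajectories on the outer strip-like ends and the coboundary acting on the output, assemble into the terms of $\mu^1_{\Fun}(\TT^e(\alpha_1,\ldots,\alpha_e))$ coming from precomposition with $\mu^k_{\GFuk(M_0)}$ and postcomposition with $\mu^1_{\GFuk(M_1)}$. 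Facets of type (b), where a single quilted bubble carries a consecutive subsequence $\alpha_{j+1},\ldots,\alpha_{j+k}$ of inner markings, together with Floer breaking on the inner strip-like ends, produce precisely the terms $\TT^{e-k+1}(\alpha_1,\ldots,\mu^k_{\Fuk(M_0^- \times M_1)}(\alpha_{j+1},\ldots,\alpha_{j+k}),\ldots,\alpha_e)$ on the source side of the functor equation. Facets of type (c), where the seam markings are partitioned among $m$ distinct quilted bubbles connected by an outer unquilted configuration of type $\ol{\RR}^m$, produce the terms $\mu^m_{\Fun}(\TT^{e_1}(\ldots),\ldots,\TT^{e_m}(\ldots))$ on the target side, using the definition of $\mu^m_{\Fun}$ via the \ainfty operations of $\GFuk(M_1)$ applied to the outputs of the composed functors $\Phi(L_{01}^{j})$.

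The main obstacle will be the sign computation, which extends those performed in the proofs of Theorems \ref{ainftycat} and \ref{ainftyfunctorthm}. For facets of type (c), the relevant gluing map has the form of an extension of \eqref{gluemap2} incorporating the additional seam marked points, and the orientation analysis uses the identification \eqref{convent} together with analogous determinant lines $\DD^{\pm}_{\alpha_j}$ associated to the inner intersection points on the Lagrangian correspondences. The sign $\heartsuit + \Box$ introduced in Definition \ref{nattransform} is designed to absorb the permutation signs that arise when moving determinant lines past each other and past the factors $\det(T\RR^m)$ and $\det(T\RR^{e_l,0})$; a computation parallel to the one culminating in \eqref{total} should show that all contributions combine, modulo $2$, into precisely the sign appearing in the \ainfty functor axiom. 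For facets of type (b) the sign analysis is analogous but uses an extension of Proposition \ref{signs2} to bicolored trees, combined with the $\Box$ contribution from the inner markings. Although each individual sign computation is routine, the bookkeeping across the three facet types is notationally heavy, and verifying that the signs cancel cleanly to produce the functor axiom is the primary technical hurdle.
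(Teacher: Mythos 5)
Your proposal follows essentially the same route as the paper's proof: both apply the boundary description of the one-dimensional moduli spaces over $\ol{\RR}^{d,e}$ (Proposition \ref{Bde}, equivalently the master equation of Theorem \ref{C}), identify the quilted-sphere facets (inner markings colliding) with the $\mu^j_{\Fuk(M_0^-\times M_1)}$ insertions, the quilted-disk-bubble facets with the $\mu^m_{\Fun}$ terms built from \eqref{mu1} and \eqref{T2T1}, and the unquilted-disk and trajectory facets with the $\mu^k_{\GFuk(M_0)}$ and $\mu^1$ contributions, and both defer the sign verification to an extension of the computations in Theorems \ref{ainftycat} and \ref{ainftyfunctorthm} absorbed by $\heartsuit+\Box$. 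Your level of detail on the orientation bookkeeping is comparable to the paper's, which likewise only sketches how the Koszul and gluing signs combine.
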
 

\begin{proof}  
We show the \ainfty functor axiom \eqref{faxiom}.  The axiom
follows from a signed count of the boundary components of the
one-dimensional component of the moduli space $\M^{d,e}$ in
Proposition \ref{Bde}.  The boundary components are of three
combinatorial types:
\begin{enumerate}
\item {\rm (Quilted sphere bubbles)} 
Facets where some subset of the
  markings $w_1,\ldots,w_e$ on the interior circle have bubbled off
  onto a quilted sphere with values in $M_0,M_1$;
\item {\rm (Quilted disk bubbles)} Facets corresponding to partitions
  of the interior and exterior markings, corresponding to quilted disk
  bubbles;
\item {\rm (Disk bubbles)}
Facets where some subset of the
  markings $z_1,\ldots,z_d$ on the boundary have bubbled off
  onto a quilted sphere with values in $M_0$; 
\item {\rm (Trajectory bubbling)} Bubbling off trajectories at the
  interior or exterior markings.
\end{enumerate}  
See Figure \ref{m12} for the case of two interior markings; the facet
on the left represents the limit when the two interior marked points
come together.  Counting boundaries of the first type gives an
expression
\begin{multline} 
 \sum_{i,j} \pm \TT^{e-j+1}(\alpha_1,\ldots,\alpha_i,
 \\ \mu^j_{\Fuk(M_0^- \times M_1)}(\alpha_{i+1},\ldots,\alpha_{i+j}),
 \alpha_{i+j+1},\ldots,\alpha_e)(\bra{x_1},\ldots,\bra{x_d}) .\end{multline}
The second type of boundary component contributes a sum of terms of
the form
\begin{multline}
 \sum \pm \mu^{m}_{\GFuk(M_1)} (\Phi(L_{01}^0))^{i_1}(\ldots) \ldots
 (\Phi(L_{01}^0))^{i_{j_1 -1}}(\ldots) \TT^{i_{j_1},k_1} (\ldots) \\
(\Phi(L_{01}^1))^{i_{j_1+1}}(\ldots) \ldots
 (\Phi(L_{01}^1)^{i_{j_2-1}}(\ldots)
 \TT^{i_{j_2},k_2}(\ldots) \\ 
\ldots \TT^{i_{j_s},k_s}(\ldots) \Phi(L_{01}^{e})^{i_{j_s + 1}}(\ldots) \ldots (\Phi(L_{01}^{e})^{i_m}(\ldots))
\end{multline} 
where each $\ldots$ is an expression in the $\alpha_i$ and
$\bra{x_j}$'s, $r$ is the number of bubbles, $s$ is the number of
bubbles with interior markings, $i_l$ represents the number of
exterior markings on the bubbles, $k_l$ represents the number of
interior markings on bubbles with interior markings, and $j_1,\ldots,
j_s$ are the indices of the bubbles with interior markings.  The third
and fourth type of boundary are similar to those considered before and
will not be discussed.  Combining this with \eqref{T2T1} and
\eqref{mu1} proves the \ainfty functor axiom up to sign.
\begin{figure} 
\includegraphics[width=3in,height=3in]{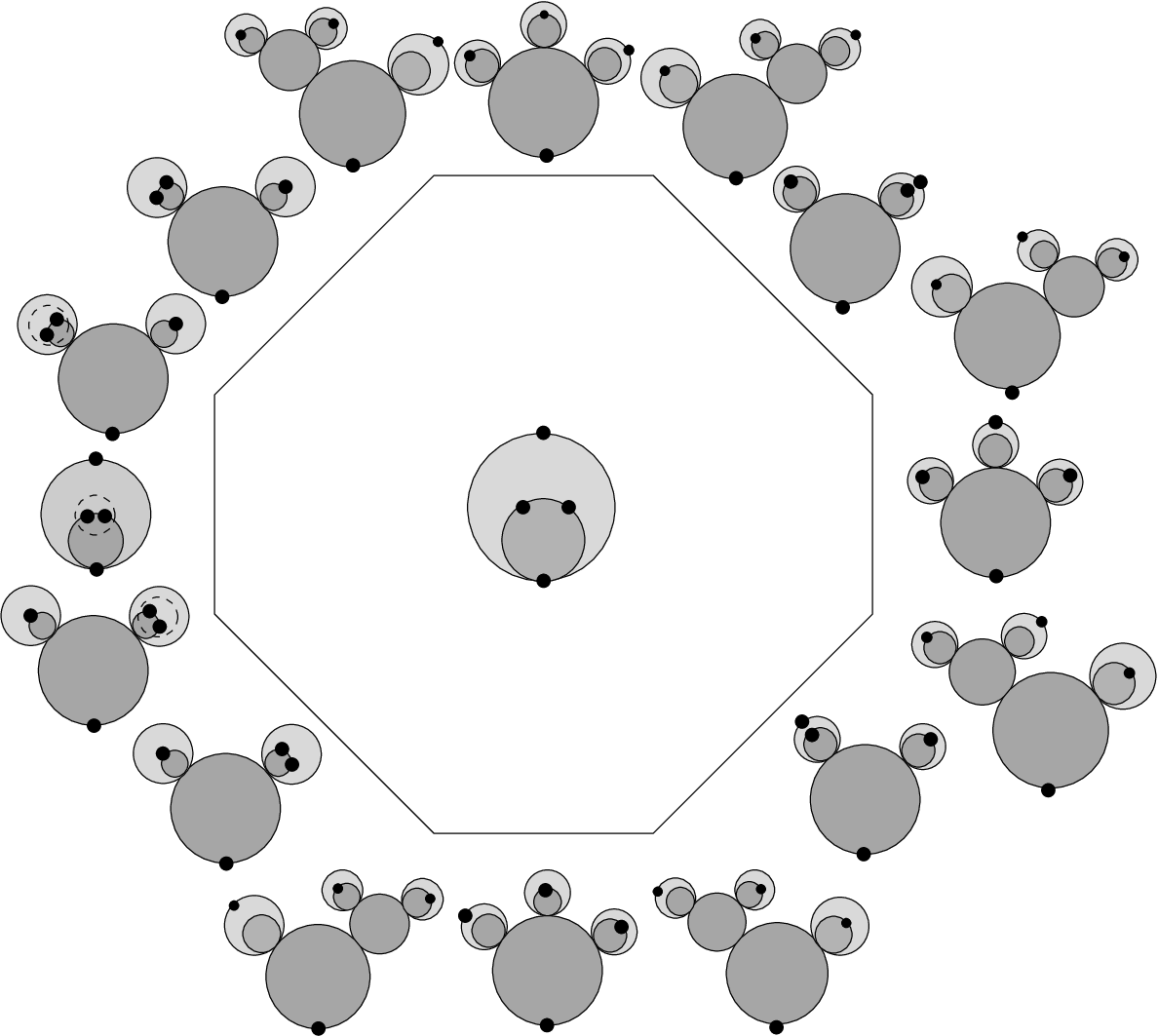}
\caption{The moduli space $\ol{\RR}^{1,2}$}
\label{m12}
\end{figure}  
It remains to check the signs.  The sign for the inclusion of a facet
equal to the image of an embedding
$ \prod_i {\RR}^{d_i,e_i} \times {\RR}^{f,0} \to \ol{\RR}^{d,e} $ is
$(-1)^{\sum_{i=1}^m (d_i - 1 + e_i)i + \sum_{i < j}(d_i - 1)(e_i)}$.
The degree of $\TT^j(\alpha_1,\ldots,\alpha_j)$ is
$ 1 - \sum_{i = 1}^j ( 1 - |\alpha_i| ) .$ The signs appearing in
\eqref{T2T1} and in the higher compositions are given by sums
$ \sum_{i<k,j,l} (|x_i^j| - 1) ( |y_k^l| - 1) $
where $x_i^j,y_k^l$ are intersection points corresponding to the inner
and outer markings respectively on on the $i$-th resp. $k$-th bubble.
The terms of the form $|x_i^j| |y_k^l|$ are accounted for by Koszul
signs.  Combining with the two occurrences of $\heartsuit$, this gives
the signs claimed in \eqref{faxiom}.
\end{proof}  

\begin{remark} \label{units} {\rm (Behavior of units under categorification)}  
The functor of Theorem \ref{mainfunc} is cohomologically unital in the
sense that the associated functor 
$$H(\Fuk(M_0^- \times M_1)) \to \Fun(H \GFuk(M_0), H \GFuk(M_1))$$ 
is unital, by the results of the cohomology level discussion in
\cite{we:co}.
\end{remark} 

To complete the proof of Theorem \ref{mainfunc}, it remains to extend
the definition of natural transformations above to the case of
generalized Lagrangian correspondences from $M_0$ to $M_1$:

\begin{remark} {\rm (Natural transformations for cochains for generalized correspondences)}  
\label{completemainfunc}
Suppose a sequence of generalized Lagrangian correspondences
$\ul{L}_{01}^j = L_{01}^{j,1},\ldots, L_{01}^{j,n_j}$ of length $n_j$
is given.  Construct (inductively on the strata) a bundle
$\ol{\SS}^{d,e} \to \ol{\RR}^{d,e}$ whose fiber at $r \in
\ol{\RR}^{d,e}$ is a quilted surface with $d$ exterior and $e$
interior quilted strip-like ends, and whose combinatorial type is that
of $r$ except that the $j$-th interior segment has been replaced by a
collection of strips of length $n_j - 1$.  Counting pairs $(r,\ul{u})$
consisting of a point $r \in \ol{\RR}^{d,e}$ together with a
pseudoholomorphic quilt $\ul{u}: \ol{\SS}^{d,e}_r \to \ul{M}$ with the given
Lagrangian boundary and seam conditions defines the natural
transformations.  The discussion of signs and gradings is similar to
that for Theorem \ref{firstfunc}.
\end{remark}

\label{indep}  As an application of Theorem \ref{mainfunc}, we show that the functors
for Lagrangian correspondences constructed in the previous section are
independent of all choices up to quasiisomorphism.

\begin{theorem} \label{indepthm}
{\rm (Independence of the functors up to quasiisomorphism)} Let
$M_0,M_1$ be symplectic backgrounds with the same monotonicity
constant, and let $\ul{L}_{01}$ be an object in $\GFuk(M_0,M_1)$.  The
functor $\Phi(L_{01})$ constructed in Section \ref{functors} is
independent up to quasiisomorphism of all choices (the choice of
family of quilts, that is, holomorphic structures on the fibers of
$\ol{\SS}^{d,e} \to \ol{\RR}^{d,e}$, and the perturbation data.)
\end{theorem}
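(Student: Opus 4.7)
The plan is to construct a natural transformation between the two functors by interpolating the data over the interval, apply the general family-quilt machinery to verify the natural transformation axioms, and then identify its leading term with a Floer-theoretic continuation cocycle to establish the quasiisomorphism property.

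Concretely, let $(\ol{\SS}^{d,0}_{(j)}, \ul{J}_{(j)}, \ul{K}_{(j)})$ for $j=0,1$ denote the two choices of data, producing functors $\Phi^{(j)}(L_{01})$. Consider the stratified space $[0,1] \times \ol{\RR}^{d,0}$, whose codimension-one boundary splits into the endpoint strata $\{0,1\} \times \ol{\RR}^{d,0}$ and the stratum $[0,1] \times \partial \ol{\RR}^{d,0}$. Using the given data on the endpoints and inductively-constructed lower-dimensional interpolations on the remaining boundary, I would apply Theorem \ref{A} and Theorem \ref{B} to obtain a regular extension of the family of quilted surfaces and perturbation data over the interior. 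The family-quilt invariants of this parametrized family, together with the master equation of Theorem \ref{C}, would then define operations $T^d$ that satisfy the closed pre-natural transformation equation $\mu^1(T) = 0$, giving a morphism $T \colon \Phi^{(0)}(L_{01}) \to \Phi^{(1)}(L_{01})$ in $\Fun(\GFuk(M_0), \GFuk(M_1))$. The sign computation would follow the pattern established in the proof of Theorem \ref{firstfunc}.

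To verify that $T$ is a quasiisomorphism, it suffices by standard \ainfty category theory to check that its leading term $T^0_{\ul{L}}$ induces an isomorphism on Floer cohomology for every object $\ul{L}$. This leading term is a count of parametrized pseudoholomorphic quilted strips interpolating between the two data sets, which I would identify with the standard Floer continuation cocycle; hence it is a quasiisomorphism by the usual continuation argument (as in \cite{ww:quiltfloer}). The case of distinct strip widths $\delta_0^{(j)}$ would be reduced to the equal-width case by an auxiliary interpolation of widths, exploiting the inserting-strips construction of Remark \ref{insertrem}.

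The main obstacle I expect lies in the correct identification of the boundary contributions over the codimension-one strata $\ol{\RR}^m \times \prod_j \ol{\RR}^{i_j, 0}$ of $\ol{\RR}^{d,0}$, corresponding to the formation of multiple quilted bubbles. A single interpolation parameter $s \in [0,1]$ assigns the same data to every bubble, but the natural transformation axiom demands boundary contributions that mix $\Phi^{(0)}$- and $\Phi^{(1)}$-factors separated by a single $T$-factor. Resolving this asymmetry requires enriching the parametrizing stratified space at each multi-bubble stratum so that the $s$-parameter can take independent values on each quilted component, consistent with the tree structure of the degeneration. The contractibility of the space of such enriched interpolations ensures that a coherent global choice exists, and the construction would proceed by induction on $d$, with Theorem \ref{A} guaranteeing extendability at each step.
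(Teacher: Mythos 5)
Your route is genuinely different from the paper's, and it is viable in outline, but it is the long way around. The paper's proof is a two-line application of machinery it has already built: the categorification functor of Theorem \ref{mainfunc} sends the Floer cocycle $\alpha \in CF(L_{01},L_{01})$ representing the identity of $HF(L_{01},L_{01})$ to a natural transformation $\Phi(L_{01}) \to \Phi'(L_{01})$ (the two functors being defined with the two sets of choices), and its transpose to one in the other direction; the composite of the two natural transformations is computed, again by Theorem \ref{mainfunc}, as the image of $\mu^2(\alpha,\alpha)$, which is cohomologous to the identity. Invertibility is thus inherited directly from invertibility of the cocycle, with no separate continuation or leading-term argument needed. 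Your direct construction over $[0,1]\times\ol{\RR}^{d,0}$ buys an explicit ``continuation'' natural transformation, but at the cost of rebuilding combinatorics the paper already has.

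The soft spot in your proposal is exactly the obstacle you name and then wave off. The ``enriched parametrizing stratified space'' in which the interpolation parameter can take independent values on each quilted bubble, compatibly with the tree structure of the degenerations, is not produced by contractibility of choices alone: one needs a stratified space whose codimension-one facets reproduce, term by term, the differential \eqref{mu1} on pre-natural transformations (a single $\TT$-factor flanked by $\Phi^{(0)}$- and $\Phi^{(1)}$-factors, plus the $\mu^e_{\cC_1}$ terms). That space is precisely $\ol{\RR}^{d,1}$ of Section \ref{natural} (see Remark \ref{facrem}), so carrying out your plan honestly collapses back into the paper's construction of natural transformations for cocycles. Separately, your reduction of quasiisomorphism to invertibility of the leading term $T^0_{\ul{L}}$ on cohomology requires cohomological unitality of the categories involved (Remark \ref{units}); this is available but should be invoked, whereas the paper's argument sidesteps it by computing the composite of the two natural transformations directly via $\mu^2$ of the cocycles.
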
 

\begin{proof}  Suppose two such choices are given, with corresponding functors $\Phi(L_{01}),\Phi'(L_{01})$.  The Floer cocycle
$\alpha \in CF(L_{01},L_{01})$ corresponding to the identity in
  $HF(L_{01},L_{01})$ defines a natural transformation $\beta$ from
  $\Phi(L_{01})$ to $\Phi'(L_{01})$.  Its transpose defines a natural
  transformation from $\Phi'(L_{01})$ to $\Phi(L_{01})$.  The
  composition of the two natural transformations is given by the
  product of $\alpha$ with $\beta$ under the composition map $\mu^2$
  in $\GFuk(M_0,M_1)$.  By Theorem \ref{mainfunc}, the composition of
  natural transformations is the identity transformation.
\end{proof} 

\begin{proposition} \label{diagonal} 
{\rm (Functor for the diagonal correspondence)} Let $M$ be a compact
monotone symplectic background.  Suppose that $M$ is spin and $\Delta
\subset M^- \times M$ is the diagonal equipped with the relative spin
structure corresponding to a spin structure on $M$.  Then
$\Phi(\Delta)$ is quasiisomorphic to the identity functor from
$\GFuk(M)$ to $\GFuk(M)$.
\end{proposition}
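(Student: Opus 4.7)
The plan is to identify pseudoholomorphic quilts with $\Delta$-seam conditions with pseudoholomorphic maps from the underlying ``unfolded'' surface, and then reduce to Proposition \ref{emptyset}. To this end, first I would choose the auxiliary data used to construct $\Phi(\Delta)$ in a very specific way, following the approach of Proposition \ref{emptyset}. Namely, on the family $\ol{\SS}^{d,0}$ of quilted surfaces with strip-like ends for $\Phi(\Delta)$, I would pick complex structures, strip-like ends, almost complex structures, and Hamiltonian perturbations that are pulled back from corresponding choices on the family $\ol{\SS}^{d}$ used to define composition in $\GFuk(M)$, so that the two patches adjacent to a $\Delta$-seam carry identical data under the tautological identification $\Delta \cong M$. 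This is legitimate because the seam condition $\Delta$ is real analytic automatically (being the graph of the identity, in particular a submanifold of the product with the correct regularity), so we do not need to deform the complex structure on the quilted surface near the seam, just as in Proposition \ref{emptyset}.

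Second, with these compatible choices, a pseudoholomorphic quilt $\ul{u} = (u_-, u_+)$ on a fiber $\ul{S}^{d,0}_r$ with the $\Delta$-seam condition $(u_-(z), u_+(z)) \in \Delta$ is equivalent to a single pseudoholomorphic map $u$ on the surface $S^{d}_{f(r)}$ obtained by removing the interior seam and gluing, where $f: \ol{\RR}^{d,0} \to \ol{\RR}^d$ is the forgetful map. The two partial maps $u_\pm$ agree on the seam (so assemble into a single map), and under the data-matching the inhomogeneous pseudoholomorphic equations on the two patches become the single pseudoholomorphic equation on the glued surface. Because the linearized forgetful map on $T_r \ol{\RR}^{d,0}$ is surjective onto $T_{f(r)} \ol{\RR}^d$, regularity of perturbation data on $\ol{\SS}^d$ pulls back to regularity on $\ol{\SS}^{d,0}$, so one obtains a bijection of moduli spaces $\M^{d,0} \to \M^d$ respecting ends.

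Third, and this will be the main technical obstacle, I would verify that the orientations agree under this bijection. This is precisely where the hypothesis that $\Delta$ carries the relative spin structure induced from a spin structure on $M$ enters: the determinant line $\det(D_{\ul{u}})$ factorized via the one-pointed disk model of \cite{orient} as in \eqref{split2} splits along the diagonal seam into a product of determinant lines for the glued surface $S$ with the boundary Lagrangians of $M$, and the induced isomorphism $\det(D_{\ul{u}}) \cong \det(D_u)$ is orientation-preserving precisely when the relative spin structure on $\Delta$ is the one pulled back from the spin structure on $M$. Checking this at the level of the ``bubbling off one-pointed disks'' prescription is the most delicate step; it amounts to tracing through the Fukaya-Oh-Ohta-Ono construction of orientations for the diagonal as in \cite{fooo}.

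Fourth, combining these three steps, the family quilt invariants defining $\Phi(\Delta)^d$ agree up to sign with those defining $\Phi(\emptyset)^d$ for this specific choice of data, so $\Phi(\Delta) = \Phi(\emptyset)$ as $A_\infty$ functors for these choices. By Proposition \ref{emptyset}, $\Phi(\emptyset)$ is the identity functor, and by Theorem \ref{indepthm} the functor $\Phi(\Delta)$ is independent of all choices up to quasi-isomorphism, so $\Phi(\Delta) \simeq \Id$ as claimed.
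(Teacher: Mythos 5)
Your proposal has a genuine gap, and it is not the orientation issue you flag as the ``main technical obstacle'' but the central claim of Step 4 that $\Phi(\Delta) = \Phi(\emptyset)$ for suitable choices. These two functors cannot be equal, because they do not even agree on objects: by \eqref{defobj}, $\Phi(\Delta)$ sends a generalized brane $\ul{L}$ to the \emph{longer} sequence $\ul{L}\sharp\Delta$, whereas $\Phi(\emptyset)$ is the identity on objects. Consequently the morphism-level maps $\Phi(\Delta)^d$ and $\Phi(\emptyset)^d$ have different targets: $\Hom(\ul{L}^0\sharp\Delta,\ul{L}^d\sharp\Delta)$ is the quilted Floer complex of a cyclic correspondence containing two extra $\Delta$-labelled seams, with its own widths, perturbations and generators, and is only quasiisomorphic to $\Hom(\ul{L}^0,\ul{L}^d)$ -- not equal to it. Your unfolding argument in Step 2 removes the interior seam of the quilted disks but does not address the $\Delta$-seams running out along the quilted strip-like ends; removing those is essentially the geometric composition isomorphism for $\ul{L}\circ\Delta=\ul{L}$, which is a nontrivial statement about the Floer complexes, not about the interior of the quilt. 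Finally, Theorem \ref{indepthm} compares different auxiliary choices for the \emph{same} correspondence, so it cannot be invoked to pass from a statement about $\Phi(\emptyset)$ to one about $\Phi(\Delta)$: what is needed is a pair of natural transformations between $\Phi(\Delta)$ and $\Id$ whose compositions are cohomologous to the identities, and your argument never produces these.

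The paper's proof is entirely different and much shorter: it works one categorical level up. The objects $\Delta$ and $\emptyset$ of $\GFuk(M,M)$ are quasiisomorphic, with the quasiisomorphism given by the cohomological unit in $CF(\Delta)$ (this is where admissibility and the relative spin structure induced from a spin structure on $M$ are used). The categorification $A_\infty$ functor of Theorem \ref{mainfunc} sends quasiisomorphic objects to quasiisomorphic functors, so $\Phi(\Delta)\simeq\Phi(\emptyset)$ in $\Fun(\GFuk(M),\GFuk(M))$, and Proposition \ref{emptyset} identifies $\Phi(\emptyset)$ with the identity. Your geometric unfolding idea is sound as far as it goes -- it is exactly the mechanism behind Proposition \ref{emptyset} -- but to promote it to a proof of this proposition you would still need to supply the natural transformations comparing $\ul{L}\sharp\Delta$ with $\ul{L}$ functorially, which is precisely what Theorem \ref{mainfunc} packages for you.
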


\begin{proof}  The correspondence 
$\Delta$ is quasiisomorphic to $\emptyset$ in $\GFuk(M,M)$, with
  isomorphism given by the cohomological unit in $CF(\Delta)$.  By
  Proposition \ref{emptyset} and Theorem \ref{mainfunc} $\Phi(\Delta)$
  is quasiisomorphic to the identity functor in
  $\Fun(\GFuk(M),\GFuk(M))$.
\end{proof}  

\section{Algebraic and geometric composition}
\label{compose}

In this section we study the composition of \ainfty functors for
Lagrangian correspondences.  In particular, we prove Theorem
\ref{maincompose} on the homotopy equivalence of the \ainfty functor
for a geometric composition and the \ainfty composition of the
corresponding \ainfty functors.

\subsection{Quilted disks with multiple seams}

The proof of the composition result \ref{maincompose} depends on
generalizations of the multiplihedra which feature multiple interior
circles.  For $d\geq 1$, let $\K^{d,0,0}$ denote the cell complex
whose cells correspond to parenthesized expressions in formal
variables $a_1,\ldots,a_d$ and operations $h_1,h_2$ with $h_2$ always
following $h_1$.  More precisely, these expressions correspond to {\em
  bicolored trees} which are trees equipped with {\em two} types of
colored vertices:

\begin{definition} \label{bitree} {\rm (Bicolored trees)}  
A {\em bicolored, rooted tree with $d$ leaves} consists of data
$(\Edge(\Gamma),\on{Vert}(\Gamma),\Edge_\infty(\Gamma),\Ver^{(1)}(\Gamma),\on{Vert}^{(2)}(\Gamma))$
where:
\begin{enumerate}
\item {\rm (Tree)} $\Gamma=(\Edge(\Gamma),\Ver(\Gamma))$ is a tree
  with vertices $\Ver(\Gamma)$, a collection of (possibly
  semi-infinite) edges $\Edge(\Gamma)$, and labelling of the
  semi-infinite edges $\Edge_\infty(\Gamma)$ by
  $\{e_0,e_1,\ldots,e_d\}$.  We call $e_0$ the {\em root edge} and
  $e_1, \ldots, e_d$ the {\em leaves}. 
\item \label{colver} {\rm (Colored vertices)} There are distinguished subsets
  $\Ver^{(1)}(\Gamma) \subset \Ver(\Gamma)$
  resp. $\Ver^{(2)}(\Gamma)\subset \Ver(\Gamma)$ of {\em
    vertices of color 1 resp. color 2}, such that 
\begin{enumerate} 
\item any geodesic from a
  leaf to the root passes through exactly one vertex in
  $\Ver^{(1)}(\Gamma)$ and exactly one vertex in
  $\Ver^{(2)}(\Gamma)$;
\item \label{allornothing} either $\Ver^{(1)}(\Gamma) =
  \Ver^{(2)}(\Gamma)$ or $\Ver^{(1)}(\Gamma) \cap \Ver^{(2)}(\Gamma) =
  \emptyset$;
\item 
in the case that $\Ver^{(1)}(\Gamma) \cap \Ver^{(2)}(\Gamma) = \emptyset$, 
the vertices $\Ver^{(1)}(\Gamma)$ are closer to
  the root edge.
\end{enumerate}
\end{enumerate}

Such a bicolored tree $\Gamma$ is {\em stable} if the valency of every
vertex $v\in V$ is 3 or more for $v \notin \Ver^{(1)} \cup
\Ver^{(2)}$, and $2$ or more otherwise.
\end{definition}

An example of a bicolored tree with one vertex in $\Ver^{(1)}$ shaded darkly
and four \label{fivetofour} vertices in $\Ver^{(2)}$ shaded lightly is shown in Figure \ref{bicol};
the remaining vertices are filled. 

\begin{figure}[ht]
\begin{picture}(0,0)%
\includegraphics{bicol.pstex}%
\end{picture}%
\setlength{\unitlength}{3947sp}%
\begingroup\makeatletter\ifx\SetFigFont\undefined%
\gdef\SetFigFont#1#2#3#4#5{%
  \reset@font\fontsize{#1}{#2pt}%
  \fontfamily{#3}\fontseries{#4}\fontshape{#5}%
  \selectfont}%
\fi\endgroup%
\begin{picture}(8608,3057)(1770,-3406)
\put(4825,-2435){\makebox(0,0)[lb]{\smash{{\SetFigFont{12}{14.4}{\rmdefault}{\mddefault}{\updefault}{\color[rgb]{0,0,0}$h_1(h_2((a_1a_2))( h_2(a_3) h_2((a_4a_5)))  h_2 ((a_5 a_6)   ) $                 )}%
}}}}
\end{picture}%

\caption{A stable bicolored tree and the associated algebraic expression}
\label{bicol} 
\end{figure}

Given such a bicolored tree, the corresponding expression is obtained
by using the part of the tree below the second type of colored vertex
to determine the parenthesization of the expressions $a_1,\ldots,
a_d$, the part of the tree in between the first and colored vertices
to determine the parenthesizations of the expressions $h_2( \cdot)$,
the part of the tree above the first colored vertices to determine the
parenthesizations of the expressions $h_1(\cdot)$.

A space associated to markings and two operations is constructed
inductively from those with fewer markings.  For each bicolored tree
$\Gamma$ and each vertex $v$ of $\Gamma$, let $\K^v$ denote the
corresponding associahedron, multiplihedron, or lower dimensional
space depending on the valence and whether the vertex is once or twice
colored.  Define \label{KGamma}
$$ \K_\Gamma = \prod_{v \in \Ver(\Gamma)} \K^v $$
and 
$ \partial \K^{d,0,0} = \cup_{\Gamma} \K_\Gamma / \sim $
where $\sim$ are the equivalences obtained by boundary inclusions.
The space is then the cone
$ \K^{d,0,0} = \Cone(\partial \K^{d,0,0} ) .$ For example, the cell
complex $\K^{2,0,0}$ is a pentagon with vertices labelled by the
expressions
$$h_1(h_2(a_1)h_2(a_2)), h_1(h_2(a_1))h_1(h_2(a_2)),
(h_1h_2)(a_1)(h_1h_2)(a_2), h_1(h_2(a_1a_2)), (h_1h_2)(a_1a_2). $$
Similar spaces also appear in Batanin \cite[end of Section 8]{bat:sym}
and \cite[Example 8.1]{bat:notes}. \label{batnotes}

The generalized multiplihedron above has a realization as a moduli
space of biquilted disks, described as follows.

\begin{definition}\label{2qdef} {\rm (Biquilted disks)}  For an integer
$d \ge 1$ a {\em biquilted disk with $d+1$ markings} is a tuple
  $(D,C_1, C_2, z_0, \ldots, z_d)$ where
\begin{enumerate}
\item $D$ is a holomorphic disk; 
\item $(z_0, \ldots, z_d)$ is a tuple of distinct points in $\partial
  D$ whose cyclic order is compatible with the orientation of
  $\partial D$;
\item $C_1$ and $C_2$ are nested circles in $D$ with
$$0 < \radius(C_1) < \radius(C_2) < \radius(D)$$ 
and unique intersection point 
$$C_1\cap \partial D = \{z_0\} = C_2 \cap \partial D .$$
\end{enumerate}
An {\em isomorphism} of biquilted disks $(D,C_1, C_2, z_0, \ldots,
z_d), (D',C_1', C_2', z_0', \ldots, z_d')$ is a holomorphic
isomorphism of the disks 
$$\psi: D \to D' ,\quad \psi(C_j) = (C_j'), \ j = 1,2, \quad \psi(z_k)
= z_k', \ k = 0,\ldots, d .$$
Let $\RR^{d,0,0}$ denote the set of isomorphism classes of 
biquilted disks with $d+1$ markings. 
\end{definition}

The moduli space of biquilted disks has a compactification which
includes nodal biquilted disks.

\begin{definition} \label{twicequilted} {\rm (Stable biquilted disks)}
  A {\em nodal biquilted disk} with combinatorial type a bicolored
  tree $\Gamma$ is a collection of unquilted, quilted, and biquilted
  disks corresponding to the vertices of $\Gamma$ attached at disjoint
  collections of pairs of points called {\em nodes}, together with a
  collection of {\em markings} on the boundary in cyclic order
  disjoint from the nodes, with the properties that
\begin{enumerate}  
\item a single inner circle circle appears in the component $D_v$
  corresponding to $v \in \Ver(\Gamma)$ if and only if $ (\Ver^{(1)}(\Gamma) \cap
  \Ver^{(2)}(\Gamma)) = \emptyset$ and $v$ lies in
  $(\Ver^{(1)}(\Gamma) \cup \Ver^{(2)}(\Gamma))$;
\item exactly two inner circles appear in the component $D_v$
  corresponding to $v \in \Ver(\Gamma)$ if and only if $v$ lies in
$ (\Ver^{(1)}(\Gamma)
  \cap \Ver^{(2)}(\Gamma)) $;
\item \label{ratiopart} for any biquilted disk component $D_v, v \in
  \Ver^{(2)}(\Gamma)$ the ratio
$$\radius(C_2)/\radius(C_1) \in [1,\infty)$$ 
of radii of the two inner circles $C_1,C_2$ is independent of the
choice of component
\end{enumerate} 
and satisfying the {\em root marking property} that the {\em root
  marking} $z_0$ is, if it lies on a quilted component, the
intersection of the inner circle(s) with the boundary and the {\em
  monotonicity property} that on any non-self-crossing path of
components from the root marking $z_0$ to the component containing a
marking $z_i$, there is exactly one component with an inner circle and
one component with an outer circle.  A nodal biquilted disk is {\rm
  stable} if the corresponding bicolored tree is stable, that is, each
disk with at least one resp. no interior circles has at least two
resp. three special points.  Denote by $\ol{\RR}^{d,0,0}$ the set of
isomorphism classes of stable nodal biquilted disks.
\end{definition}   

The topology on the moduli space of stable nodal biquilted disks is
defined in a similar way as that for the moduli space of stable nodal
quilted disks with interior and seam markings in Section
\ref{biassoc}.  A sequence of biquilted marked disks is said to
converge to a stable limit if, for each component of the limit, there
exist a family of reparametrizations satisfying certain properties
that we will not detail here.  The moduli space $\ol{\RR}^{1,0,0}$ is
a topological interval while the moduli space $\ol{\RR}^{2,0,0}$ is
shown in Figure \ref{pentagon3}.

  \begin{figure}[ht]
\includegraphics[height=2in]{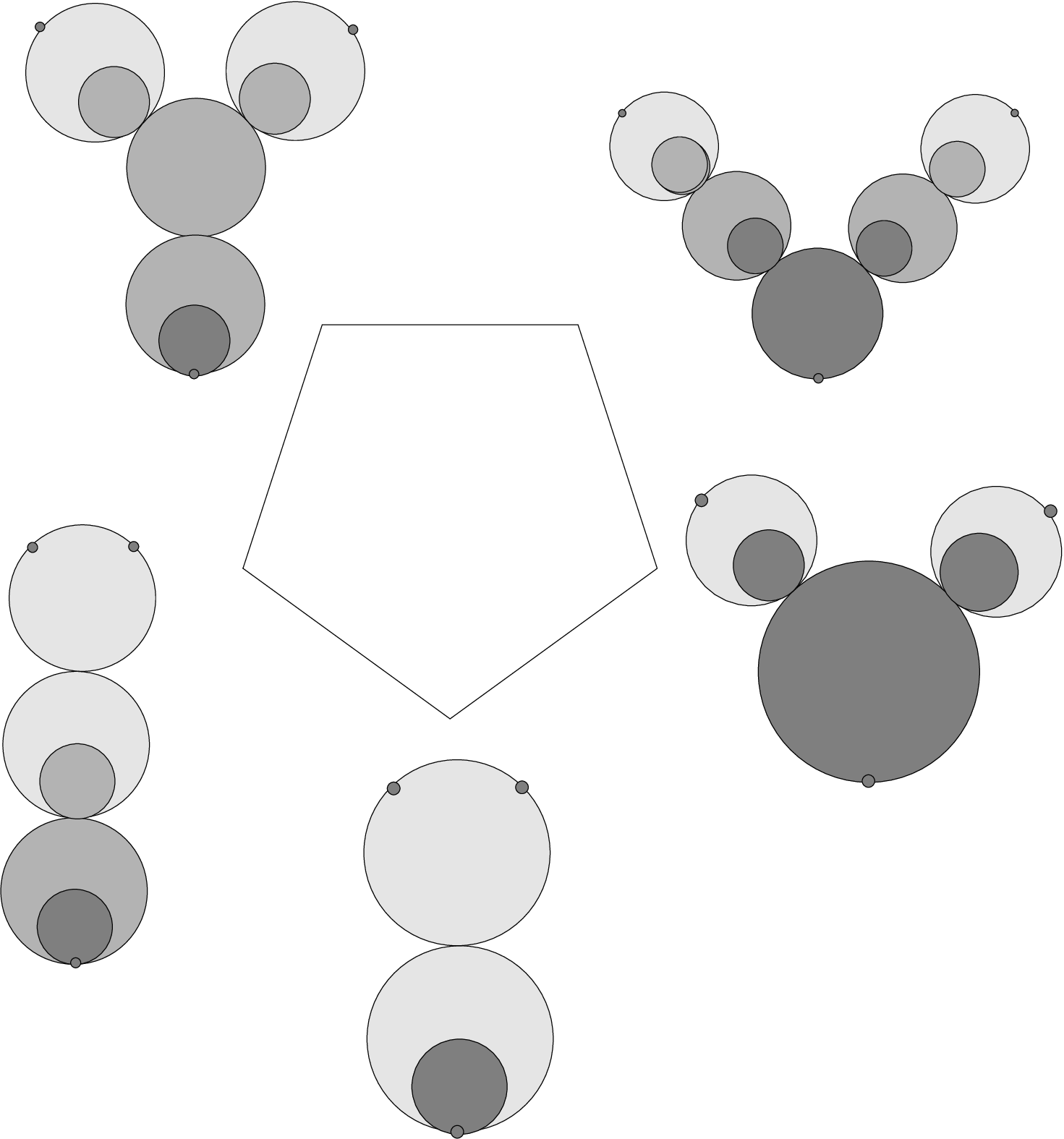}
\caption{Vertices of $\ol{\RR}^{2,0,0}$}
\label{pentagon3}
\end{figure}

The local structure of the moduli space of stable nodal biquilted
disks may be described in terms of gluing parameters taking values in
the positive real part of a toric variety.
  
\begin{definition} {\rm (Balanced gluing parameters)}  
Let 
$$\Gamma=(\Edge_{<
  \infty}(\Gamma),\Ver(\Gamma),\Edge_\infty(\Gamma),\Ver^{(1)}(\Gamma),\Ver^{(2)}(\Gamma))$$
be a combinatorial type of a nodal biquilted disk with $d+1$ markings.
The set of {\em balanced gluing parameters} for $\Gamma$ is the subset
$Z_\Gamma$ of functions 
$$\delta: \Edge_{< \infty}(\Gamma) \to [0,\infty)$$
satisfying the following relations:
\begin{itemize}
\item[] for each $j = 1,2$ and for each pair of vertices $v,v'$ in
  $\Ver^{(j)}(\Gamma)$, $\gamma$ the shortest path in $\Gamma$ from
  $v$ to $v'$, the relation 
$$1= \prod_{e \in \gamma} \delta(e)^{ \pm 1}$$ 
holds as in Definition \ref{balanced}.
\end{itemize}
\end{definition}

The various strata are glued together by means of local charts.

\begin{theorem}
\label{glueRd00} 
{\rm (Existence of Compatible Tubular Neighborhoods)} For any integer $d \ge 1$ and any combinatorial
type $\Gamma$ of $d+1$-marked biquilted disks there exists a
neighborhood $U_\Gamma$ of $0$ in $Z_\Gamma$ and a collar neighborhood
$$ G_\Gamma: \RR^{d,0,0}_\Gamma \times U_\Gamma \to
\ol{\RR}^{d,0,0} $$
of $\RR^{d,0,0}_\Gamma$ mapping onto onto an open neighborhood of
$\RR_\Gamma^{d,0,0}$ in $\ol{\RR}^{d,0,0}$ satisfying the following
compatibility property: Suppose that $\RR^{d,0,0}_{\Gamma'}$ is
contained in the closure of $\RR^{d,0,0}_{\Gamma}$ and the local
coordinates on $\RR^{d,0,0}_\Gamma$ are induced via the gluing
construction from those on $\RR^{d,0,0}_{\Gamma'}$.  Then the diagram
$$ 
\begin{diagram}
\node{\RR_{\Gamma'}^{d,0,0} \times U_{\Gamma'}} \arrow[2]{e}\arrow{se}
\node{} \node{\ol{\RR}_{\Gamma}^{d,0,0} \times U_{\Gamma} } \arrow{sw}
\\ \node{} \node{\ol{\RR}^{d,0,0}} \node{}
\end{diagram} 
$$
commutes. 
\end{theorem}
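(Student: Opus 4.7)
The plan is to follow the same strategy as in the proofs of Theorem \ref{collard}, Theorem \ref{glueRd0}, and Theorem \ref{collarde}, namely to construct collar neighborhoods via a gluing construction on local coordinates at the nodes, and then invoke the contractibility of the space of such coordinates to arrange the required compatibility between strata. The essential new feature is the presence of \emph{two} nested inner circles $C_1, C_2$ rather than a single seam, which forces us to check that the balanced gluing condition preserves both the inner circles \emph{and} the ratio $\radius(C_2)/\radius(C_1)$ appearing in Definition \ref{twicequilted} \eqref{ratiopart}.

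I would first treat a single biquilted nodal disk $D$ of combinatorial type $\Gamma$ together with gluing parameters $\delta \in Z_\Gamma$. For each component $D_v$ let $w_v$ denote the node connecting $D_v$ to the component closer to the root marking $z_0$ (or $z_0$ itself if $D_v$ contains $z_0$). Fix a holomorphic identification of $D_v - \{w_v\}$ with the upper half plane $\H$, chosen so that: on an unquilted component the identification is arbitrary modulo the convex choice of two extra markings/nodes; on a singly quilted component the seam becomes $\{\Im(z) = 1\}$; and on a biquilted component the inner circles become $\{\Im(z) = 1\}$ and $\{\Im(z) = \rho\}$ for the given ratio $\rho \ge 1$. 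In each case the space of such coordinates is contractible, and in the biquilted case convex in $\rho$ after fixing the inner circle. The glued disk $G_\delta(D)$ is then obtained, exactly as in Theorem \ref{glueRd0}, by removing small half-disks around the nodes and identifying annuli via $z \sim \delta_j z$, proceeding recursively from the leaves toward the root.

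The heart of the argument is the check that the inner circles are well defined on $G_\delta(D)$. For each colored vertex $v \in \Ver^{(j)}(\Gamma)$, $j=1,2$, the seam $C_j$ on the corresponding component is some horocycle $\{\Im(z) = c_v^{(j)}\}$ in the local coordinates at $w_v$. When two such components get glued along a path in $\Gamma$, the product of gluing parameters along that path rescales these constants, so the balanced condition
$\prod_{e \in \gamma} \delta(e)^{\pm 1} = 1$
for every path $\gamma$ between two vertices of the same color in $\Ver^{(j)}(\Gamma)$ is exactly what is needed for $C_j$ to descend to a well-defined nested circle in $G_\delta(D)$. The condition in Definition \ref{bitree}\eqref{allornothing} ensures that the two balance relations are either the same (when the colored vertices coincide) or compatible (when they are separated by intermediate tree edges that may be gluing freely), and the monotonicity of the ordering of $\Ver^{(1)}$ and $\Ver^{(2)}$ guarantees that $C_1$ remains strictly inside $C_2$ for small $\delta$, so the ratio of radii stays in $[1,\infty)$ as required. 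Thus for $U_\Gamma$ a sufficiently small neighborhood of $0 \in Z_\Gamma$, the construction $G_\Gamma(D,\delta) := G_\delta(D)$ gives an open embedding
$G_\Gamma : \RR_\Gamma^{d,0,0} \times U_\Gamma \to \ol{\RR}^{d,0,0}$
onto a neighborhood of $\RR_\Gamma^{d,0,0}$, using that the biquilted analogue of the topology on $\ol{\RR}^{d,e}$ (defined in Section \ref{biassoc}) is induced by precisely this notion of reparametrization-convergence.

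Finally, for the compatibility across nested strata $\RR_{\Gamma'}^{d,0,0} \subset \ol{\RR}_{\Gamma}^{d,0,0}$, I would proceed by induction on the depth of the stratification, exactly as in the proofs of Theorems \ref{collard}, \ref{glueRd0}, \ref{collarde}. The key input is that the space of germs of local coordinates on each disk component (with the constraints on seam position fixed as above) is contractible; hence after shrinking neighborhoods one may inductively choose local coordinates on $\RR_\Gamma^{d,0,0}$ that agree with the coordinates induced by gluing from any deeper stratum $\RR_{\Gamma'}^{d,0,0}$. The expected main obstacle is bookkeeping for biquilted components where both balanced relations interact: one must verify that the contractible space of allowable coordinates (fixing the inner circles at $\Im(z)=1$ and $\Im(z)=\rho$) remains contractible across the various strata where a biquilted component degenerates to a pair of singly quilted components, so that the inductive patching goes through. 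This amounts to a convexity check on the admissible values of $\rho$ and the horocycle constants, analogous to the convexity of the vector field space in Lemma \ref{product}, and the commutativity of the diagram then follows by construction.
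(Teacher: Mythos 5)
Your proposal is correct and follows exactly the route the paper takes: its proof of this theorem simply defers to the gluing construction of Theorem \ref{glueRd0}, and your write-up carries out that construction in the biquilted setting, with the balanced condition applied separately to each color of vertex so that both seams descend to the glued disk, and contractibility of the coordinate choices giving the compatibility across strata. The extra detail you supply (well-definedness of both inner circles and preservation of the ratio of radii) is precisely what the paper leaves to the reader.
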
 

\begin{proof}   The proof uses the same gluing construction 
as in Theorem \ref{glueRd0}, and is left to the reader.
\end{proof} 

In this sense, the stratified space of biquilted disks is {\em
  equipped with quilt data} as in Definition \ref{quiltdata}: each
stratum comes with a collar neighborhood described by gluing
parameters compatible with the lower dimensional strata.  As before,
there are forgetful morphisms
$$ f_i: \ol{\RR}^{d,0,0} \to \ol{\RR}^{d-1,0,0} $$
giving $\ol{\RR}^{d,0,0}$ the structure of a topological fibration
with interval fibers as in \eqref{univcurve}.  By induction and the
fact that $\ol{\RR}^{1,0,0}$ is a topological interval, which may be
checked explicitly, the space $\ol{\RR}^{d,0,0}$ is a topological
disk, and in particular a cone on its boundary.  By another induction,
there is a homeomorphism from $\ol{\RR}^{d,0,0}$ to $\K^{d,0,0}$ which
respects the combinatorial structure.

\begin{lemma}  \label{ratio} {\rm (Ratio of radii map)}  For any integer
$d \ge 1$ there is a continuous map $\rho: \ol{\RR}^{d,0,0} \to
  [0,\infty]$ given on the open stratum by
\begin{equation} \label{rhomap} 
\rho([ D,C_2,C_1,z_0,\ldots,z_d ]) = \radius(C_2)/\radius(C_1) -1
  .\end{equation} 
\end{lemma}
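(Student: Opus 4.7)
The plan is to verify that $\rho$ is well-defined on the open stratum, extend it to the boundary stratum by stratum using the combinatorial structure of $\ol{\RR}^{d,0,0}$, and then check continuity at each inclusion of boundary strata using the gluing construction of Theorem \ref{glueRd00}. For well-definedness on the open stratum, I would map $D$ biholomorphically to the upper half plane $\H$ sending $z_0 \mapsto \infty$, so that the inner circles tangent to $\partial D$ at $z_0$ become horizontal lines $\{\Im(z) = h_i\}$ with $0 < h_1 < h_2$. The residual group of automorphisms of $\H$ fixing $\infty$ consists of affine maps $z \mapsto \lambda z + b$ with $\lambda > 0$ and $b \in \R$, and these preserve the ratio $h_2/h_1 = \radius(C_2)/\radius(C_1)$, so $\rho$ is well-defined and takes values in $(0,\infty)$ on the open stratum.

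Next I would extend $\rho$ to each boundary stratum $\RR^{d,0,0}_\Gamma$ using the dichotomy in Definition \ref{bitree} \ref{allornothing}. If $\Ver^{(1)}(\Gamma) = \Ver^{(2)}(\Gamma)$, then every colored vertex of $\Gamma$ corresponds to a biquilted component, and item \ref{ratiopart} of Definition \ref{twicequilted} guarantees that the ratio $\radius(C_2)/\radius(C_1)$ is common to all such components, allowing me to set $\rho = r - 1 \in [0,\infty)$ where $r$ is this ratio. If instead $\Ver^{(1)}(\Gamma) \cap \Ver^{(2)}(\Gamma) = \emptyset$, then the inner circles $C_1$ and $C_2$ lie on distinct quilted components, no biquilted component exists, and I would define $\rho \equiv \infty$ on this stratum.

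The main step is to verify continuity of the resulting map. Continuity within the closure of strata where biquilted components persist is immediate, since the ratio varies continuously with the modulus of a biquilted component and is preserved under degenerations that do not separate $C_1$ from $C_2$. The essential case is continuity as one approaches a stratum of type $\Ver^{(1)}(\Gamma) \cap \Ver^{(2)}(\Gamma) = \emptyset$. For this I would work in local coordinates provided by Theorem \ref{glueRd00}: consider an edge $e$ of $\Gamma$ whose removal separates a color-1 vertex from a color-2 vertex, with gluing parameter $\delta(e)$. In the half-plane model with $z_0 \mapsto \infty$, normalize the $C_1$-seam to lie at height $1$; after gluing with parameter $\delta(e)$, the $C_2$-seam on the glued biquilted disk sits at height proportional to $\delta(e)^{-1}$, so $\rho = \delta(e)^{-1} - 1 \to \infty$ as $\delta(e) \to 0$. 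The balanced relations defining $Z_\Gamma$ imply that along any path in $\Gamma$ from a color-1 to a color-2 vertex, the product $\prod \delta^{\pm 1}$ that governs the ratio of radii tends to zero uniformly, so $\rho$ extends continuously to $\infty$ on these strata.

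The hard part will be the bookkeeping in this last step: tracking, for an arbitrary combinatorial type and arbitrary path in $\Gamma$ connecting color-1 to color-2 vertices, which products of balanced gluing parameters govern the ratio in the glued biquilted disk, and showing this description is independent of the path chosen. The balanced conditions of Definition \ref{balanced} are designed precisely to make these ratios well-defined and compatible across nested strata, so once the gluing description of $\rho$ in terms of gluing parameters is established on a single stratum, continuity across the collar neighborhoods of Theorem \ref{glueRd00} follows formally from the compatibility of the charts. This yields continuity of $\rho : \ol{\RR}^{d,0,0} \to [0,\infty]$, completing the proof.
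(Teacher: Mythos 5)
Your route is genuinely different from the paper's. The paper's proof is essentially one line: $\rho$ is identified with the forgetful morphism $\ol{\RR}^{d,0,0}\to\ol{\RR}^{1,0,0}\cong[0,\infty]$ obtained by forgetting the markings $z_1,\ldots,z_d$ and recursively collapsing unstable components starting with those furthest from $z_0$; continuity is then inherited from the continuity of forgetful maps already established for these moduli spaces, and all boundary strata (including the $C_1=C_2$ facet and the strata with $\Ver^{(1)}(\Gamma)\cap\Ver^{(2)}(\Gamma)=\emptyset$) are handled at once. Your stratum-by-stratum extension plus gluing-chart verification can be made to work, but note that the step you flag as ``the hard part'' -- showing that for an arbitrary combinatorial type the ratio in the glued disk is governed by a path-independent product of balanced gluing parameters, compatibly across nested strata -- is exactly the content that the forgetful-map formulation packages for free; as written you only carry out the single separating-edge case and assert the rest follows formally, so your argument is not yet complete at precisely the point where the paper's argument does its (hidden) work. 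Two smaller slips to repair if you keep this route: the Euclidean radius ratio $\radius(C_2)/\radius(C_1)$ in the disk model is not literally equal to the height ratio in the half-plane model (a circle of radius $r$ tangent at $z_0$ maps to the line at height $(1-r)/r$, so the larger circle $C_2$ sits at the \emph{lower} height, and the radius ratio by itself is not even M\"obius invariant -- the well-defined invariant is the height ratio, which is what the forgetful map to $\ol{\RR}^{1,0,0}$ records); and with the gluing convention of Theorems \ref{glueRd0} and \ref{glueRd00} the seam on the glued component lies at height $\gamma$, not $\gamma^{-1}$, so your formula for the glued ratio needs the corresponding adjustment, though the qualitative conclusion $\rho\to\infty$ as the gluing parameter tends to $0$ is unaffected.
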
 

\begin{proof} The map  $\rho$ is given by the forgetful morphism 
$\ol{\RR}^{d,0,0} \to \ol{\RR}^{1,0,0} \cong [0,\infty]$ defined by 
  forgetting all but the $0$-th marking and recursively collapsing
  unstable components, starting with the components furthest away from
  the $0$-th marking.
\end{proof} 

The following description of the boundary of the moduli space of
twice-quilted disks is immediate from Definition \ref{twicequilted}:

\begin{proposition} {\rm (Facets of moduli of biquilted disks)}  
  Suppose that a combinatorial type $\Gamma$ of $d$-marked biquilted
  disks contains $k+1$ vertices with vertices $v_1,\ldots, v_k$
  corresponding to biquilted disks, and a vertex $w$ corresponding to
  an unquilted disk, and corresponds to a facet.  Then there exists a
  homeomorphism
  \begin{equation} \label{fiberprod} \RR^{d,0,0}_\Gamma \cong
    (\RR^{|v_1|,0,0}\times_{[0,\infty]} \ldots \times_{[0,\infty]} \RR^{|v_k|,0,0})
    {\times} \RR^{|w|}
\end{equation} 
where the fiber product $ \RR^{v_1,0,0}\times_{[0,\infty]} \ldots \times_{[0,\infty]}
\RR^{v_k,0,0} $ is such that the functions $\rho$ as defined in
\eqref{rhomap}, are all equal on all the components.
\end{proposition}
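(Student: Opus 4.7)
The plan is to unpack Definition \ref{twicequilted} and build the homeomorphism by identifying each side with the same data set. By hypothesis, the combinatorial type $\Gamma$ has exactly $k+1$ vertices: $k$ biquilted vertices $v_1,\ldots,v_k$ (which, by Definition \ref{bitree}(\ref{allornothing}), must lie in $\Ver^{(1)}(\Gamma) = \Ver^{(2)}(\Gamma)$, since an unquilted vertex is present and we are not in the fully separated case) and one unquilted vertex $w$. A representative of a point in $\RR^{d,0,0}_\Gamma$ is thus specified by the isomorphism class of a biquilted disk $D_{v_j} \in \RR^{|v_j|,0,0}$ for each $j = 1,\ldots, k$, together with an isomorphism class of unquilted disk $D_w \in \RR^{|w|}$, plus the node identifications encoded by $\Gamma$. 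Since the finite edges of $\Gamma$ are fixed combinatorial data, the map
$$\Phi : \RR^{d,0,0}_\Gamma \to \RR^{|v_1|,0,0} \times \ldots \times \RR^{|v_k|,0,0} \times \RR^{|w|}$$
sending a configuration to the tuple of its components is well-defined and evidently continuous in the topology on $\ol{\RR}^{d,0,0}$ described after Definition \ref{twicequilted}.

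Next I would identify the image of $\Phi$. The only global constraint in Definition \ref{twicequilted} beyond the existence of the individual disks is condition \ref{ratiopart}: the ratio $\radius(C_2)/\radius(C_1)$ must take a common value across all biquilted components of a stable nodal biquilted disk. This common value is exactly the image of each $[D_{v_j}]$ under the continuous map $\rho : \RR^{|v_j|,0,0} \to [0,\infty]$ from Lemma \ref{ratio} (applied to each factor). Therefore the image of $\Phi$ is precisely
$$\bigl\{(D_{v_1},\ldots,D_{v_k},D_w) \,:\, \rho(D_{v_1}) = \ldots = \rho(D_{v_k}) \bigr\},$$
which is the fiber product $(\RR^{|v_1|,0,0} \times_{[0,\infty]} \ldots \times_{[0,\infty]} \RR^{|v_k|,0,0}) \times \RR^{|w|}$ in the statement.

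To construct the inverse, given a tuple satisfying the matching-$\rho$ condition, one assembles a biquilted disk of type $\Gamma$ by gluing the components along the node pairs prescribed by $\Gamma$. The result satisfies Definition \ref{twicequilted} precisely because the common value of $\rho$ ensures \ref{ratiopart}, while the root-marking and monotonicity properties are built into the hypothesis that $\Gamma$ is a bicolored tree with a unique vertex of each color on every root-to-leaf geodesic (Definition \ref{bitree}). Continuity of the inverse follows because the gluing charts of Theorem \ref{glueRd00} give a local model for $\ol{\RR}^{d,0,0}$ near $\RR^{d,0,0}_\Gamma$ which, restricted to the open stratum $\RR^{d,0,0}_\Gamma$ itself (that is, with all gluing parameters zero), factors through the componentwise product. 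Since $\Phi$ is a continuous bijection between the stratum and the fiber product with a continuous inverse, it is a homeomorphism.

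The only subtle point to verify carefully is the image identification: one must rule out any further hidden constraint beyond the $\rho$-matching. This reduces to noting that Definition \ref{twicequilted} imposes no extra condition on positions of nodes or markings beyond distinctness and cyclic order, both of which are already built into the individual moduli factors $\RR^{|v_j|,0,0}$ and $\RR^{|w|}$. The facet hypothesis itself is only used insofar as it ensures $\Gamma$ has the stated vertex structure; the product decomposition would hold for any combinatorial type of this shape, and the statement of the proposition simply records the codimension-one instances that arise in the master equation for family quilt invariants.
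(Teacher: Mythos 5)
Your proposal is correct and follows exactly the route the paper intends: the paper states this proposition with no proof at all, declaring it ``immediate from Definition \ref{twicequilted},'' and your argument is precisely the careful unpacking of that definition --- the only global constraint linking the components of a nodal biquilted disk of type $\Gamma$ is the equal-ratio condition, which is exactly the fiber product over $[0,\infty]$ via the map $\rho$ of Lemma \ref{ratio}. Your identification of which vertices carry two seams (those in $\Ver^{(1)}(\Gamma)\cap\Ver^{(2)}(\Gamma)$) and your observation that the facet hypothesis enters only through the vertex structure are both consistent with how the paper uses the result in Proposition \ref{d00facets}.
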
 

\begin{proposition}  \label{d00facets} {\rm (Classification of facets of the bimultiplihedron)}   The facets of $\ol{\RR}^{d,0,0} \cong \K^{d,0,0}$ are of the following
types:
\begin{enumerate}
\item {\rm (Once-quilted bubbles)} a collection of $k$ quilted disks
  all with seam $C_2$ attached to a $k+1$-marked quilted disk with
  seam $C_1$, corresponding to an expression given by
$$h_1( h_2(\ul{a}_{1,1}), \ldots, h_2( \ul{a}_{1,l_1} ) ) \ldots h_1(
  h_2(\ul{a}_{i_r,1}), \ldots, h_2(\ul{a}_{i_r,l_r})) $$
where $\ul{a}_{1,1} \cup \ldots \cup \ul{a}_{i_r,l_r} = (a_1,\ldots,
a_d)$ is an ordered double partition of the inputs; in which case the
facet is the image of an embedding
$$(\K^{i_1,0} \times \ldots \times \K^{i_r,0}) \times \K^{r,0} \to
  \K^{d,0,0} $$
with $i_1+\ldots + i_r = d$;
\item {\rm (Unquilted bubbles)} an unquilted disk attached to a
  biquilted disk, corresponding to an expression given by
$$h_1h_2(a_1,\ldots,a_{i- 1}(a_i,\ldots,a_{i+j-1}),\ldots,a_d)$$ 
in which case the facet is the image of an embedding
$$\K^{d_1} \times \K^{d_2,0,0} \to \K^{d_1 + d_2,0,0} ;$$
\item {\rm (Biquilted bubbles)} a collection of $k$ biquilted disks
  with seams $C_1$ and $C_2$ and the same ratio of radii, attached to
  a single unquilted disk with $k+1$ markings; corresponding to an
  expression given by
$$h_1h_2(a_1 \ldots
  a_{i_1}) \ldots h_1h_2(a_{d - i_k + 1},\ldots, a_d) ;$$ 
in which case the facet is the image of an embedding
$$\K^{i_1,0,0} \times_{[0,\infty]} \ldots \times_{[0,\infty]} \K^{i_k,0,0}\times \K^j
\mapsto \K^{d,0,0}$$
for some partition $i_1 + \ldots + i_k = d$;
\item {\rm (Seams coming together)} a quilted disk with a single seam
  $C_1 = C_2$, 
corresponding to the expression $(h_1h_2)(a_1,\ldots,a_d)$.  In this
case the facet is the image of an embedding $\K^{d,0} \to \K^{d,0,0}$.
\end{enumerate}
\end{proposition}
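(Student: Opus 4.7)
The proof will be an enumeration: by Theorem \ref{glueRd00}, each stratum $\RR^{d,0,0}_\Gamma$ of $\ol{\RR}^{d,0,0} \cong \K^{d,0,0}$ has a collar neighborhood described by the cone $Z_\Gamma$ of balanced gluing parameters, so the facets of $\K^{d,0,0}$ are exactly the strata of codimension one. The plan is to compute $\dim \RR^{d,0,0}_\Gamma$ explicitly as a function of the combinatorial type and then enumerate the types for which this dimension equals $d-1 = \dim \ol{\RR}^{d,0,0} - 1$.

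First I would compute the dimension of a general stratum. Let $\Gamma$ have $u$ unquilted vertices, $q_1+q_2$ singly-quilted vertices (split according to whether the circle is $C_1$ or $C_2$), and $b$ biquilted vertices, and denote the valences by $|v|$. Each unquilted vertex contributes $|v|-3$ to the moduli dimension, each singly-quilted vertex contributes $|v|-2$, and each biquilted vertex contributes $|v|-1$ (boundary markings plus one free radius per inner circle, modulo $\PSL(2,\R)$). Using $\sum_v |v| = (d+1) + 2(|\Ver(\Gamma)|-1)$, the total is $d + b - u - 1$. The ratio condition in Definition \ref{twicequilted}(\ref{ratiopart}) imposes $\max(b-1,0)$ independent constraints on biquilted radii, so $\dim \RR^{d,0,0}_\Gamma = d - u$ when $b \ge 1$ and $d - u - 1$ when $b = 0$.

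Next I would use the dichotomy in Definition \ref{bitree}(\ref{allornothing}) to enumerate types of codimension one. In the case $\Ver^{(1)} = \Ver^{(2)}$, every quilted vertex is biquilted, so $q_1 = q_2 = 0$ and $b \ge 1$; codimension one forces $u = 1$. If $b = 1$, the tree consists of one biquilted disk joined to one unquilted disk by a single node, producing Type 2 and the embedding $\K^{d_1} \times \K^{d_2,0,0} \hookrightarrow \K^{d,0,0}$. If $b = k \ge 2$, all $k$ biquilted disks must meet a single unquilted disk (of valence $k+1$), producing Type 3; the shared-ratio constraint of Definition \ref{twicequilted}(\ref{ratiopart}) is precisely the fiber product over $[0,\infty]$ in the embedding $\K^{i_1,0,0} \times_{[0,\infty]} \cdots \times_{[0,\infty]} \K^{i_k,0,0} \times \K^k \hookrightarrow \K^{d,0,0}$. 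In the case $\Ver^{(1)} \cap \Ver^{(2)} = \emptyset$ (the two seams have separated onto distinct components), we have $b = 0$, so codimension one forces $u = 0$; since every root-to-leaf path must meet exactly one $\Ver^{(1)}$ vertex and then exactly one $\Ver^{(2)}$ vertex, the tree consists of one $C_1$-quilted root vertex attached to $r$ $C_2$-quilted vertices, each of which absorbs a subset of the boundary markings. Allowing further degeneration within each singly-quilted piece packages the full stratum as the embedding $(\K^{i_1,0} \times \cdots \times \K^{i_r,0}) \times \K^{r,0} \hookrightarrow \K^{d,0,0}$, yielding Type 1.

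Finally I would identify Type 4 using the ratio map $\rho:\ol{\RR}^{d,0,0} \to [0,\infty]$ of Lemma \ref{ratio}. The $\rho = 0$ locus corresponds to $\radius(C_1) = \radius(C_2)$, where the two inner circles coalesce into a single seam on an otherwise smooth biquilted disk. This is a codimension one face naturally identified with $\K^{d,0}$ via the embedding $\K^{d,0} \hookrightarrow \K^{d,0,0}$. The main obstacle will be verifying exhaustiveness and carefully justifying the fiber-product identification in Type 3: specifically, showing that the constraint in Definition \ref{twicequilted}(\ref{ratiopart}) reduces the dimension by exactly $k-1$ and that this is globally realized by taking the fiber product over $[0,\infty]$ of the ratio maps on the individual factors $\K^{i_j,0,0}$, as well as ruling out additional codimension-one configurations in Case (ii) with an intervening unquilted structure (which the dimension count precludes since any such structure would force $u \ge 1$ and hence dimension at most $d - 2$).
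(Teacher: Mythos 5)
Your proposal is correct and follows essentially the same route as the paper: the paper computes the codimension of a stratum as (number of edges) minus (number of independent balanced relations), which is exactly your count $d-u$ (resp. $d-u-1$) phrased via component moduli dimensions and the $b-1$ ratio constraints, and then both arguments enumerate the codimension-one types by the same case split (all colored vertices biquilted versus the two seam-colorings separated, plus the $\rho=0$ face treated separately). Your write-up is in fact more explicit than the paper's; the only point worth adding is that in the $b=1$, $u=1$ case the configuration with an unquilted \emph{root} and a single biquilted bubble is excluded because the monotonicity property forbids markings on an unquilted root, leaving it with only two special points and hence unstable.
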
 

\begin{proof} By Theorem \ref{glueRd00}, the codimension of a
  combinatorial type $\Gamma$ is the number of edges modulo the number
  of independent relations generated by the balanced condition.
  Suppose that the root component is not quilted.  If there are
  twice-quilted components, then since the \label{balancedpage} number
  of independent balanced relations is at least the number of quilted
  components minus one, and the number of gluing parameters is at
  least the number of quilted components, no other components besides
  the root component and twice-quilted components can occur in the
  codimension one case.  If the root component is once-quilted, then
  the same dimension count implies that all components are
  once-quilted.  Thus codimension one types occur if either there are
  no nodes and the seams must coincide, a single node, or all nodes
  must be attached to root component of the configuration.  The
  Proposition follows.
\end{proof} 

\begin{example} \label{facex} The facets of $\ol{\RR}^{2,0,0}$ are shown in Figure
  \ref{M211facets}.  The five facets are homeomorphic to
  $\RR^{1,0}\times \RR^{1,0} \times \RR^{2,0},
  (\RR^{1,0,0}\times_{[0,\infty]} \RR^{1,0,0} )\times \RR^2,
  \RR^{2,0}, \RR^{2} \times \RR^{1,0,0}, \RR^{2,0} \times \RR^{1,0}$
  respectively.
\end{example} 

\begin{figure}[h]
\includegraphics[height=2.5in]{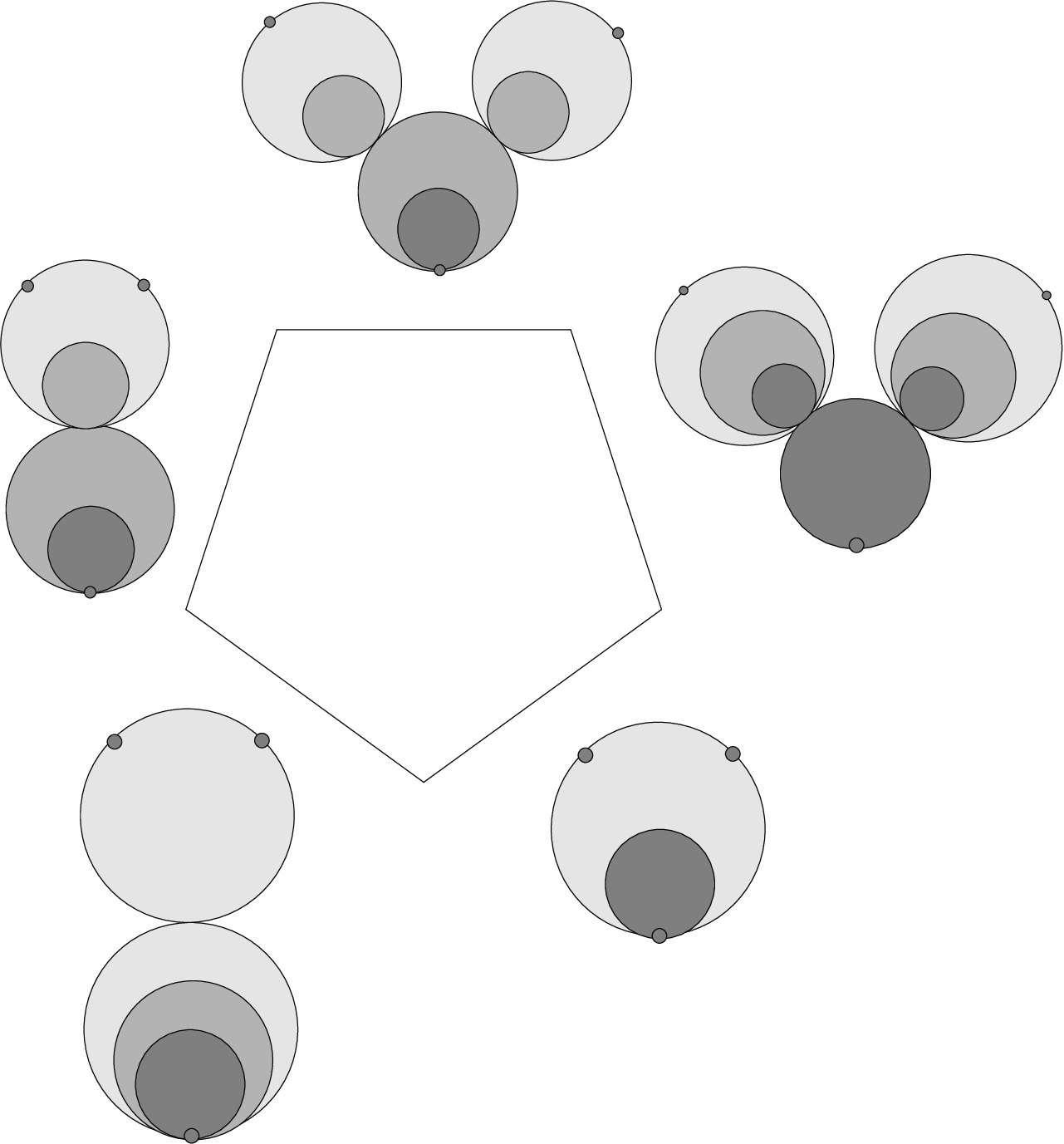}
\caption{Facets of $\ol{\RR}^{2,0,0}$}
\label{M211facets}
\end{figure}

The moduli space of biquilted disks as cut out of a larger moduli
space of {\em free biquilted disks} whose definition is obtained by
dropping the ratio equality \eqref{ratiopart} from Definition
\ref{twicequilted}, see Figure \ref{free}.  More precisely,

\begin{definition} \label{pre}  {\rm (Moduli space without relations)}  
Let $\Gamma$ be a combinatorial type of biquilted disk with $k > 0$
biquilted disks.  Define $\ol{\RR}^{\pre}_\Gamma$ as the product of
moduli spaces for the vertices,
$$ \ol{\RR}_\Gamma^{\pre} = \prod_{v \in \Ver(\Gamma)} \ol{\RR}_v .$$
Let
$$ \rho_\Gamma: \ol{\RR}_\Gamma^{\pre} \to [0,\infty]^{k}, \quad r :=
(r_v)_{v \in \Ver(\Gamma)} \mapsto \rho_\Gamma(r) := 
( \rho(r_v) )_{v \in \Ver^{(1)}(\Gamma) \cap
  \Ver^{(2)}(\Gamma)} $$
be the map derived from the ratios of the radii of the circles for
each biquilted component as in \eqref{rhomap}, if there are biquilted
components, or taking the value $\infty$, if there are no biquilted
components.
\end{definition}

\begin{figure}[ht]
\includegraphics[height=1in]{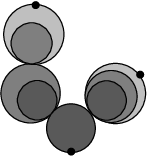}
\caption{A free biquilted disk} 
\label{free} 
\end{figure} 

In terms of the map $\rho_\Gamma$ defined in Definition \ref{pre} we
have
\begin{equation} \label{rhodiag}
 \ol{\RR}_\Gamma = \rho_\Gamma^{-1}(\Delta) \end{equation}
where $\Delta = \{ (x,\ldots, x) | x \in \R \} \subset \R^{k}$ is the
thin diagonal.  The combinatorial type of a free biquilted disk is a
free bicolored tree which is defined in the same way as Definition
\ref{bitree}, but without item \eqref{allornothing}.  If $\Gamma'$ is
a combinatorial type of free biquilted disk that is a refinement of
$\Gamma$ then the subset
$\ol{\RR}_\Gamma^{\pre} \cap \ol{\RR}_{\Gamma'}^{\pre}$ has a
neighborhood in $\ol{\RR}_\Gamma$ given by the gluing construction
\ref{glueRd00}.

\subsection{Transversality for the ratio map} 

In this and the following section we define a homotopy between the
functor for the geometric composition and the algebraic composition by
counting biquilted surfaces with strip-like ends for families of
quilted surfaces parametrized by the polytopes $\ol{\RR}^{d,0,0}$.
Proving transversality of pseudoholomorphic biquilted disks for this
family is more delicate than in all the other cases treated
previously.  This difficulty is due to the fact that in our particular
realization of the homotopy multiplihedron as the moduli space
$\ol{\RR}^{d,0,0}$, some of the boundary strata are {\em fiber
  products} of lower-dimensional strata rather than simply products as
in \eqref{rhodiag}.

In order to construct moduli spaces of expected dimension, we allow
perturbations in the construction of fiber products.  Let
$M_0,M_1,M_2$ be symplectic backgrounds with the same monotonicity
constant and $L_{01} \subset M_0^- \times M_1$ and $L_{12} \subset
M_1^- \times M_2$ admissible Lagrangian correspondences with brane
structures.  Let $\ul{L}^0,\ldots,\ul{L}^d$ be admissible generalized
Lagrangian branes in $M_0$.  Given a marked biquilted disk
$(D,C_1,C_2, z_0,\ldots,z_d)$ we label the inner disk resp. middle
region resp. outer region by $M_2$ resp.  $M_1$ resp. $M_0$, the seams
by $L_{01}$ and $L_{12}$, and the components of the outer boundary by
$\ul{L}^0,\ldots, \ul{L}^d$.  Let $\M^{d,0,0}$ denote the moduli space
of biquilted pseudoholomorphic disks, where the surfaces and perturbation
data are to be defined inductively.

To carry out the induction we introduce the following notation. 

\begin{definition} \label{notations} 
Let $\Gamma$ be a combinatorial type of free biquilted disk with
uncolored root vertex $v_0$.  Denote as follows:
\begin{center} 
\begin{tabular}{l|l} 
$v_1, \ldots, v_k$ & vertices corresponding to outermost quilted or biquilted disk
  components \\ 
$\Gamma_0$ & smallest subtree of $\Gamma$
containing $v_0, v_1, \ldots, v_k$ \\ 
$\gamma_i \subset \Gamma_0$ & non-self-crossing path from $v_i$ to the
root component $v_0$ \\
$|v|$ & the valency of
  $v$ \\ 
$\M_v$ & moduli space of pseudoholomorphic quilts $(r,u)$ 
  parametrized by \\
\ &
$r \in \RR^{|v|-1,0,0}$
  if $v \in \{v_1, \ldots, v_k\}$ is doubly-quilted  \\
\ & $r \in \RR^{|v|-1,0}$
  if $v \in \{v_1, \ldots, v_k\}$ is singly-quilted, and \\
\ & $r \in \RR^{|v|-1}$ otherwise.
\end{tabular} 
\end{center} 
If $k = 0$ we take $\Gamma_0$ to be the smallest subtree containing
all the quilted components.
\end{definition} 

We suppose that the moduli spaces $\M_v$ have been constructed
inductively; the actual construction of $\M_v$ is carried out in
Section \ref{inductive}.  

\begin{definition} \label{delay} {\rm (Delay functions etc.)}  Let
  $\Gamma$ be a combinatorial type of free biquilted disk with
  unquilted root component.  We think of $[0,\infty]$ as a smooth
  manifold with boundary, via identification with a finite closed
  interval.  Any element of $\tau \in \R$ defines an automorphism of
  $[0,\infty]$ fixing $0$ and $\infty$, given by
  $x \mapsto x {\exp(\tau)}$.
\begin{enumerate} 
\item {\rm (Delay functions)} A {\em delay function} for $\Gamma$ is a
  collection of smooth functions depending on $r \in
  \ol{\RR}^{\pre}_\Gamma$ (see Definition \eqref{pre})
$$ \tau_{\Gamma} = \left( \tau_e \in C^\infty(\ol{\RR}^{\pre}_\Gamma)
  \right)_{e\in \Edge(\Gamma_0)}.$$
\item {\rm (Delayed evaluation map)} Letting $\rho_i := \rho(r_{v_i})$
  where $\rho$ is the map of \eqref{rhomap} taking the ratio of radii
 of  circles, the {\em delayed evaluation map} is
\begin{eqnarray}
\rho_{\tau_\Gamma} : \prod\limits_{v\in \Ver(\Gamma)} {\M}_{v}
 & \to &
    [0,\infty]^k \label{delayeval} \\ (r_v, u_v)_{v\in \Ver{\Gamma} }
    & \mapsto & \left( \rho_i \exp\left( \sum_{e\in \gamma_i}
    \tau_e(r)\right) \right)_{i=1,\ldots,k} \nonumber
.\end{eqnarray}
That is, the evaluation map is shifted by the sum of delays along each
path $\gamma_i$ from the root vertex $v_0$ to the vertex $v_i$
corresponding to a biquilted or outer quilted disk component.
\item {\rm (Regular delay functions)} Call $\tau_\Gamma$ {\em
  regular} if the delayed evaluation map $\rho_{\tau_\Gamma}$ is
  transverse to the diagonal $\triangle \subset (0,\infty)^k \subset
  [0,\infty]^k$: \label{There}
$$ \on{Im}( D_{r,u} \rho_{\tau_\Gamma}) \oplus T_{\rho_{\tau_\Gamma}(r,u)}
  \triangle = \R^k, \quad \forall (r,u) \in \prod\limits_{v\in
    \Ver(\Gamma)} {\M}_{v} .$$
\item {\rm (Delayed fiber product)} Given a regular delay function
  $\tau_\Gamma$, we define 
\begin{equation} \label{delayedfiber} 
{\M}_\Gamma := \rho_{\tau_\Gamma}^{-1}(\triangle) . \end{equation}
For a regular delay function $\tau_\Gamma$, the delayed fiber product
has the structure of a smooth manifold, of local dimension
\begin{equation} \label{localdim}
 \dim \M_\Gamma = 1-k + \sum_{v\in \Ver \Gamma} \dim \M_v   \end{equation}
where $k$ is the number of biquilted disk components.

\end{enumerate}
\end{definition}

The delay functions make the fiber product transverse, so that the
zero dimensional moduli spaces behave ``as expected'':

\begin{proposition} \label{mostly0s} {\rm (Only one zero-dimensional bubble for a regular
delay function)} Let $\Gamma$ be a combinatorial type consisting of a
  single unquilted disk indexed by a vertex $v \in \Ver \Gamma$ and $k
  > 0$ biquilted disks indexed by vertices $v_1, \ldots, v_k$.  If
  $\tau_\Gamma$ is regular, then an isolated point in $\M_\Gamma$
  consists of an isolated $k$-marked unquilted disk $(r_v,u_v)$ in
  $\M_v$, together with a tuple of pseudoholomorphic quilted disks
  $(r_{v_i}, \ul{u}_{v_i})$ in $\M_{v_i}, i = 1,\ldots, k$, where
  exactly one of the entries $(r_{v_j}, \ul{u}_{v_j})$ in the tuple
  comes from a zero-dimensional moduli space $\M_{v_j}^0$, and the
  remaining entries $(r_{v_i}, \ul{u}_{v_i}), i \neq j $ come from
  one-dimensional moduli spaces $\M_{v_i}^1$.
\end{proposition}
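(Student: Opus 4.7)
The plan is to perform a dimension count, combining the transversality of the delayed evaluation map to the diagonal (guaranteed by regularity of $\tau_\Gamma$) with the regularity of the perturbation data for each moduli space $\M_v, \M_{v_i}$ (guaranteed by inductive application of Theorem \ref{B}). First I would observe that regularity of $\tau_\Gamma$ makes $\rho_{\tau_\Gamma}$ transverse to the thin diagonal $\triangle \subset [0,\infty]^k$, which has codimension $k-1$. Hence $\M_\Gamma = \rho_{\tau_\Gamma}^{-1}(\triangle)$ is cut out as a smooth manifold whose local dimension at a point $(r_v,u_v)_{v\in\Ver\Gamma}$ is given by \eqref{localdim}, namely $1 - k + \dim \M_v + \sum_{i=1}^k \dim \M_{v_i}$. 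The vanishing of this local dimension at an isolated point produces the constraint
\[
\dim \M_v + \sum_{i=1}^k \dim \M_{v_i} \;=\; k - 1.
\]

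Next, because the perturbation data were chosen (inductively via Theorem \ref{B}) to make transverse all pseudoholomorphic quilt moduli spaces of formal dimension at most one in the compact monotone setting, and because isolated points of $\M_\Gamma$ arise on the boundary of the one-dimensional component $\ol{\M}^{d,0,0}_1$, the dimensions $\dim \M_v$ and $\dim \M_{v_i}$ at such a point must all lie in $\{0,1\}$. The arithmetic constraint then reads: among $k+1$ nonnegative integers, each bounded by $1$, summing to $k-1$; the only solutions have exactly two entries equal to $0$ and the remaining $k-1$ entries equal to $1$.

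Finally, to identify which two entries vanish, I would use the facet classification of $\ol{\RR}^{d,0,0}$ from Proposition \ref{d00facets}: the combinatorial type $\Gamma$ under consideration matches a facet of type (3), in which $k$ biquilted disks with equal radius ratio are attached to a single unquilted root disk with $k$ nodes. The parameter space $\RR^{|v|-1}$ of the root component has dimension determined combinatorially by the number of outer markings it carries, and combining this with the Fredholm index formula for monotone Lagrangians forces $\dim \M_v = 0$ at any boundary point of $\ol{\M}^{d,0,0}_1$ of this combinatorial type. With $\dim \M_v = 0$ secured, the remaining constraint $\sum_i \dim \M_{v_i} = k-1$ with each term in $\{0,1\}$ forces exactly one index $j$ with $\dim \M_{v_j} = 0$ and all others with $\dim \M_{v_i} = 1$.

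The main obstacle is the final step: ruling out the alternative dimension assignment where $\dim \M_v \geq 1$ and two of the $\M_{v_i}$ are zero-dimensional. I expect this will require careful comparison with the combinatorial structure of the facets of $\ol{\RR}^{d,0,0}$ together with a further genericity (Sard--Smale) argument on the space of delay functions $\tau_\Gamma$, in the spirit of Theorem \ref{familytransversality}, to ensure that the delayed evaluation map remains regular only on the claimed stratum and not on the alternative.
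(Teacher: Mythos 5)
Your opening dimension count is exactly the paper's argument: the published proof consists of citing the local dimension formula \eqref{localdim} and then asserting that regularity of $\tau_\Gamma$ forces the stated distribution, so your first two paragraphs reproduce it faithfully (with the caveat that restricting every $\dim\M_w$ to $\{0,1\}$ is really a consequence of the perturbation scheme only regularizing, and only using, components of formal dimension at most one, not of the fact that the point sits on the boundary of $\ol{\M}^{d,0,0}_1$).

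The genuine gap is the one you flag yourself, and the mechanism you propose for closing it does not work. There is no Fredholm-index reason forcing $\dim\M_v=0$ for the root: the root disk carries $k+1$ special points, its moduli space fibers over $\RR^{k}$, and it has one-dimensional components in general (these are exactly what feed other boundary terms of the master equation). What is supposed to exclude the alternative assignment --- root one-dimensional, two rigid biquilted bubbles $v_{j_1},v_{j_2}$ --- is the regularity of $\tau_\Gamma$ itself, i.e.\ transversality of the delayed evaluation map $\rho_{\tau_\Gamma}$ to the thin diagonal. In that configuration $\rho(r_{v_{j_1}})$ and $\rho(r_{v_{j_2}})$ are fixed numbers, so the ratio maps contribute nothing to $D\rho_{\tau_\Gamma}$ in those two slots, and the diagonal equation $\rho(r_{v_{j_1}})\exp(\tau_{e_{j_1}})=\rho(r_{v_{j_2}})\exp(\tau_{e_{j_2}})$ becomes a coincidence that a regular choice of delay functions is required to avoid; the Sard--Smale argument you gesture at in your last paragraph is Lemma \ref{compat}, but the conclusion you need to extract from it is that the preimage of the diagonal misses these bad components entirely, not that an index computation pins down the root. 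As written, neither your index step nor your closing sketch establishes this, so the exclusion of the alternative distribution remains unproved in your proposal.
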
 

\begin{proof}  Note the dimension formula 
\eqref{localdim}.  The regularity condition on $\tau_\Gamma$ implies
that $\dim(\M_v) = 0$ and $\dim(\M_{v_j}) =1$ for all $j$ except for
one $j$ for which $\dim(\M_{v_j}) = 0$.
\end{proof} 

Of course in order to retain compactness the delay functions must be
chosen for the strata in a compatible way, detailed below.  A delay
function for a combinatorial type $\Gamma$ not containing any
biquilted disks by convention assigns to any edge the number zero.
Let $\tau^d = \{ \tau_{\Gamma} \}_{\Gamma}$ be a collection of delay
functions for each combinatorial type $\Gamma$ of $\partial
\ol{\RR}^{d,0,0}$.  By $\tau_{\Gamma} | \Gamma'$ we mean the subset of
$\tau_\Gamma$ given by edges of $\Gamma'$, that is, $ \{ \tau_e , e
\in \Edge(\Gamma') \}$.

\begin{definition}  \label{delayfns}
{\rm (Compatible collections of delay functions)} A collection
$\{\tau_\Gamma \}$ of delay functions is {\em compatible} if the
following properties hold.  Let $\Gamma$ be a combinatorial type of
free nodal biquilted disk and $v_0,\ldots,v_k$ as in Definition
\ref{notations}.
\begin{enumerate}

\item {\rm (Subtree property)} \label{subtreeprop}  Let
  $\Gamma_1, \ldots, \Gamma_{|v_0|-1}$ denote the subtrees of $\Gamma$
  attached to $v_0$ at its incoming edges; then
  $\Gamma_1, \ldots, \Gamma_{|v_0|-1}$ are combinatorial types for
  nodal biquilted disks.  Let $r_i = (r_v)_{v \in \Ver(\Gamma_i)}$ be
  the components of
  $r = (r_v)_{v \in \Ver(\Gamma)} \in \RR^{\pre}_\Gamma$ corresponding
  to $\Gamma_i$.  We require that
  $ \tau_{\Gamma}(r) \big\lvert_{\Gamma_i} = \tau_{\Gamma_i}(r_i) .$
  That is, for each edge $e$ of $\Gamma_i$, the delay function
  $\tau_{\Gamma,e}(r)$ is equal to $\tau_{\Gamma_i,e}(r_i)$.  See
  Figure \ref{subtreefig}.

\begin{figure}[ht]
\begin{picture}(0,0)%
\includegraphics{subtree.pstex}%
\end{picture}%
\setlength{\unitlength}{3947sp}%
\begingroup\makeatletter\ifx\SetFigFont\undefined%
\gdef\SetFigFont#1#2#3#4#5{%
  \reset@font\fontsize{#1}{#2pt}%
  \fontfamily{#3}\fontseries{#4}\fontshape{#5}%
  \selectfont}%
\fi\endgroup%
\begin{picture}(4453,1667)(472,-3792)
\put(3595,-2967){\makebox(0,0)[lb]{\smash{{\SetFigFont{12}{14.4}{\rmdefault}{\mddefault}{\updefault}{\color[rgb]{0,0,0}$\tau_{\Gamma_2,e}$}%
}}}}
\put(1096,-2856){\makebox(0,0)[lb]{\smash{{\SetFigFont{12}{14.4}{\rmdefault}{\mddefault}{\updefault}{\color[rgb]{0,0,0}$\tau_{\Gamma,e}$}%
}}}}
\end{picture}%

\caption{The (Subtree property)} 
\label{subtreefig}
\end{figure}

\item {\rm (Infinite or zero ratio property)} For each $i$, there
  exists an open neighborhood of $\rho_i^{-1}(0)$ resp. an open
  neighborhood $\rho_i^{-1}(\infty)$ in which all the delays
  $\tau_{\Gamma,e}$ between the root vertex $v_0$ and $v_i$ vanish.
  See Figure \ref{infratio}.
\begin{figure}[ht]
\begin{picture}(0,0)%
\includegraphics{infratio.pstex}%
\end{picture}%
\setlength{\unitlength}{3947sp}%
\begingroup\makeatletter\ifx\SetFigFont\undefined%
\gdef\SetFigFont#1#2#3#4#5{%
  \reset@font\fontsize{#1}{#2pt}%
  \fontfamily{#3}\fontseries{#4}\fontshape{#5}%
  \selectfont}%
\fi\endgroup%
\begin{picture}(2219,1448)(491,-3707)
\put(812,-3638){\makebox(0,0)[lb]{\smash{{\SetFigFont{12}{14.4}{\rmdefault}{\mddefault}{\updefault}{\color[rgb]{0,0,0}$\tau_{\Gamma,e_1}$=0}%
}}}}
\put(1626,-3557){\makebox(0,0)[lb]{\smash{{\SetFigFont{12}{14.4}{\rmdefault}{\mddefault}{\updefault}{\color[rgb]{0,0,0}$\tau_{\Gamma,e_2}$=0}%
}}}}
\end{picture}%

\caption{ The (Infinite ratio property)} 
\label{infratio}
\end{figure}

\item {\rm (Refinement property)} Suppose that the combinatorial type
  $\Gamma^\prime$ is a refinement of $\Gamma$.  That is, suppose there
  is a surjective morphism $f: \Gamma^\prime \to \Gamma$ of trees
  preserving the labels and mapping colored vertices to colored
\label{refineprop}  vertices; let $r$ be the image of $r'$ under gluing.
Let $U$ \label{Uhere} be an open neighborhood of
$\ol{\RR}^{\pre}_\Gamma$ that is the image of an open neighborhood of
$\ol{\RR}^{\pre}_{\Gamma^\prime}$ in
$\ol{\RR}^{\pre}_{\Gamma^\prime} \times \cG_{\Gamma^\prime}$ by the
gluing procedure.  We require that $\tau_\Gamma\big\lvert_U$ is
determined by $\tau_{\Gamma^\prime}$ as follows: for each
$e \in \Edge(\Gamma)$, and $r \in U$, the delay function for $\Gamma$
is given by \label{isgivenby}
\begin{equation} \label{delayedsum} \tau_{\Gamma,e}(r) = \tau_{\Gamma',e} + \sum_{e'}
\tau_{\Gamma',e'}(r') \end{equation} 
where the sum is over edges $e'$ in $\Gamma'$ that are collapsed by
$f: \Gamma' \to \Gamma$, and the $e$ is the next-furthest-away edge
from the root vertex.  If $\Gamma^\prime$ has no twice-quilted
vertices then by the previous item $\tau_{\Gamma}$ vanishes in the
gluing region.  See Figures \ref{refine} and \ref{casetwo}.

\begin{figure}[ht]
\begin{picture}(0,0)%
\includegraphics{refine.pstex}%
\end{picture}%
\setlength{\unitlength}{3947sp}%
\begingroup\makeatletter\ifx\SetFigFont\undefined%
\gdef\SetFigFont#1#2#3#4#5{%
  \reset@font\fontsize{#1}{#2pt}%
  \fontfamily{#3}\fontseries{#4}\fontshape{#5}%
  \selectfont}%
\fi\endgroup%
\begin{picture}(4944,2004)(472,-4074)
\put(1361,-3350){\makebox(0,0)[lb]{\smash{{\SetFigFont{12}{14.4}{\rmdefault}{\mddefault}{\updefault}{\color[rgb]{0,0,0}$\tau_{\Gamma,e_1}$}%
}}}}
\put(999,-2875){\makebox(0,0)[lb]{\smash{{\SetFigFont{12}{14.4}{\rmdefault}{\mddefault}{\updefault}{\color[rgb]{0,0,0}$\tau_{\Gamma,e_3}$}%
}}}}
\put(1597,-2967){\makebox(0,0)[lb]{\smash{{\SetFigFont{12}{14.4}{\rmdefault}{\mddefault}{\updefault}{\color[rgb]{0,0,0}$\tau_{\Gamma,e_2}$}%
}}}}
\put(3580,-2988){\makebox(0,0)[lb]{\smash{{\SetFigFont{12}{14.4}{\rmdefault}{\mddefault}{\updefault}{\color[rgb]{0,0,0}$\tau_{\Gamma,e_3}+\tau_{\Gamma,e_1}$}%
}}}}
\put(4297,-2842){\makebox(0,0)[lb]{\smash{{\SetFigFont{12}{14.4}{\rmdefault}{\mddefault}{\updefault}{\color[rgb]{0,0,0}$\tau_{\Gamma,e_2}+\tau_{\Gamma,e_1}$}%
}}}}
\end{picture}%
\caption{The (Refinement property), first case} 
\label{refine}
\end{figure}

In the case that the collapsed edges connect twice quilted components
with unquilted components, this means that the delay functions are
equal for both types, as in Figure \ref{casetwo}.

\item {\rm (Core property)} Let two combinatorial types $\Gamma$ and
  $\Gamma^\prime$ have the same core $\Gamma_0$, let $r,r'$ be disks
  of type $\Gamma$ resp. $\Gamma^\prime$ and $r_0,r'_0$ the disks of
  type $\Gamma_0$ obtained by removing the components except for those
  corresponding to vertices of $\Gamma_0$.  If $r_0 = r'_0$ then
$$\tau_{\Gamma,e}(r) = \tau_{\Gamma^\prime,e}(r') .$$
That is, the delay functions depend only on the region between the
root vertex and the outermost colored vertices.
\end{enumerate}
\label{positivehere} A collection of compatible delay functions $\{ \tau_\Gamma \}$ is {\em
  positive} if, for each type $\Gamma$ and every vertex
$v \in \Gamma_0$ with $k$ incoming edges labeled in counterclockwise
order by $e_1, \ldots, e_{k}$ (that is, the ordering induced by the
ordering of the labels on the leaves) their associated delay functions
$\tau_{\Gamma,e}$ satisfy
$\tau_{\Gamma,e_1} < \tau_{\Gamma,e_2} < \ldots < \tau_{\Gamma,e_{k}}
.$

\begin{figure}[ht]
\begin{picture}(0,0)%
\includegraphics{refine2.pstex}%
\end{picture}%
\setlength{\unitlength}{3947sp}%
\begingroup\makeatletter\ifx\SetFigFont\undefined%
\gdef\SetFigFont#1#2#3#4#5{%
  \reset@font\fontsize{#1}{#2pt}%
  \fontfamily{#3}\fontseries{#4}\fontshape{#5}%
  \selectfont}%
\fi\endgroup%
\begin{picture}(4843,1730)(472,-3837)
\put(1110,-2865){\makebox(0,0)[lb]{\smash{{\SetFigFont{12}{14.4}{\rmdefault}{\mddefault}{\updefault}{\color[rgb]{0,0,0}$\tau_{\Gamma',e_3}$}%
}}}}
\put(1668,-2950){\makebox(0,0)[lb]{\smash{{\SetFigFont{12}{14.4}{\rmdefault}{\mddefault}{\updefault}{\color[rgb]{0,0,0}$\tau_{\Gamma',e_2}$}%
}}}}
\put(1350,-3346){\makebox(0,0)[lb]{\smash{{\SetFigFont{12}{14.4}{\rmdefault}{\mddefault}{\updefault}{\color[rgb]{0,0,0}$\tau_{\Gamma',e_1}$}%
}}}}
\put(3769,-3351){\makebox(0,0)[lb]{\smash{{\SetFigFont{12}{14.4}{\rmdefault}{\mddefault}{\updefault}{\color[rgb]{0,0,0}$\tau_{\Gamma',e_1}$}%
}}}}
\end{picture}%

\caption{The (Refinement property), second case} 
\label{casetwo} 
\end{figure}

\end{definition}

In Figure \ref{casetwo}, the second case of the (Refinement property)
is illustrated by a situation in which the two nodes labelled $e_2$
and $e_3$ are resolved, creating a biquilted disk with five unquilted
disks attached.

\begin{remark}   \label{explainprops}
  The (Subtree property) allows the inductive construction of delay
  functions, starting with the strata of lowest dimension, so that the
  moduli spaces have expected dimension.  The (Refinement property)
  implies that the boundary of the one-dimensional component of the
  moduli space is the union of moduli spaces for the combinatorial
  types corresponding to refinements, since the sum \eqref{delayeval}
  matches the sum in \eqref{delayedsum}.  The (Core Property) implies
  that for the types corresponding to bubbling off an unquilted disk
  is the product of moduli spaces for the components.  The (Infinite
  or Zero Ratio Property) implies that the fiber product is the one
  expected in the case that there are no biquilted disks.
\end{remark}

\subsection{Inductive construction of regular, positive, compatible delay functions.}
\label{inductive}

In order to achieve transversality we construct regular, positive
delay functions compatibly by induction.  The next lemma furnishes the
inductive step.

\begin{lemma}\label{compat} {\rm (Inductive definition of regular positive delay functions)}  
Let $d \ge 1$ be an integer and $\ul{L} =
(\ul{L}^0,\ul{L}^1,\ldots,\ul{L}^d, L_{01}, L_{12})$ a Lagrangian
labeling for biquilted disks with $d+1$ boundary markings.  Suppose
that for each $ 1 \leq k < d$, the moduli spaces
$\M^{k,0,0}(\ul{L}^\prime)$ have been constructed for all Lagrangian
labellings $\ul{L}^\prime$ for biquilted disks in $\RR^{k,0,0}$ using
a compatible, regular, positive collection $\{\tau^k(\ul{L}^\prime)
=(\tau_\Gamma(\ul{L}^\prime)) \}_{1 \leq k < d}$ of delay functions
for less than $d$ leaves. Then there exists an extension of this
collection to a regular, compatible, positive collection
$\{\tau^k(\ul{L})=(\tau_\Gamma(\ul{L}))\}_{1 \leq k \leq d}$
for at most $d$ leaves.
\end{lemma}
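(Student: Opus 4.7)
The plan is to proceed by induction within the stratification of $\ol{\RR}^{d,0,0}$: having defined $\tau_{\Gamma'}$ for all combinatorial types $\Gamma'$ that strictly refine $\Gamma$ (i.e.\ live in deeper strata), I will construct $\tau_\Gamma$. The key observation is that the compatibility properties of Definition \ref{delayfns} pin down $\tau_\Gamma$ on an open neighborhood of $\partial \ol{\RR}_\Gamma^{\pre}$, leaving only interior values free, which can then be adjusted to achieve regularity and positivity.

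First, I would construct $\tau_\Gamma$ on a collar neighborhood $N_\Gamma \subset \ol{\RR}_\Gamma^{\pre}$ of the stratified boundary. Using the coordinates of Theorem \ref{glueRd00}, the (Refinement property) \eqref{delayedsum} determines $\tau_\Gamma$ in the image of each gluing chart from a strictly refining $\Gamma'$, given the inductively-constructed $\tau_{\Gamma'}$. Simultaneously the (Subtree property) determines $\tau_\Gamma$ in terms of delay functions on the subtrees $\Gamma_1,\ldots,\Gamma_{|v_0|-1}$ attached at the root vertex $v_0$; since each $\Gamma_i$ has fewer than $d$ leaves, these are supplied by the outer induction hypothesis. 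The (Core property) further forces $\tau_\Gamma$ to factor through the core region, and the (Infinite or Zero Ratio Property) forces vanishing in neighborhoods of $\rho_i^{-1}(\{0,\infty\})$. One must verify that these prescriptions agree on overlaps; this follows from the compatibility of the inductively-given lower-leaf delay functions together with the commutativity of the gluing diagrams in Theorem \ref{glueRd00}.

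Second, I would extend $\tau_\Gamma$ from $N_\Gamma$ to all of $\ol{\RR}_\Gamma^{\pre}$ by interpolating against a partition of unity subordinate to the cover of $\ol{\RR}_\Gamma^{\pre}$ by $N_\Gamma$ and a relatively compact interior open set $U_\Gamma^{\on{int}}$. Positivity, being a strict inequality condition on a compact region, is automatic in $N_\Gamma$ by induction; on $U_\Gamma^{\on{int}}$ it can be arranged by adding a nonnegative cutoff function times a sufficiently increasing sequence of constants indexed by the counterclockwise ordering of edges.

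Third, I would achieve regularity of $\rho_{\tau_\Gamma}$ by a Sard--Smale perturbation supported in $U_\Gamma^{\on{int}}$. Let $\TT^\ell_\Gamma$ denote the Banach manifold of $C^\ell$ delay functions on $\ol{\RR}_\Gamma^{\pre}$ agreeing with the prescribed values on a neighborhood of $N_\Gamma$. The universal evaluation map
\[
\widetilde{\rho}: \Bigl( \prod_{v \in \Ver(\Gamma)} \M_v \Bigr) \times \TT^\ell_\Gamma \to [0,\infty]^k,
\qquad (\ul{r},\tau) \mapsto \rho_\tau(\ul{r})
\]
has surjective differential at any pair with $\rho_\tau(\ul{r}) \in \triangle$: perturbing $\tau_e$ at an edge $e$ lying on the path $\gamma_i$ shifts the $i$-th component of $\rho_\tau$ by a prescribed smooth multiplicative factor, so by selecting, for each $i$, an edge on $\gamma_i$ but not on $\gamma_j$ for $j\neq i$ (which exists since each $\gamma_i$ terminates at a distinct outer vertex $v_i$), one obtains independent perturbations in each coordinate of $[0,\infty]^k$. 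Thus the universal zero set $\widetilde{\rho}^{-1}(\triangle)$ is a Banach manifold of class $C^{\ell-1}$; the projection to $\TT^\ell_\Gamma$ is Fredholm; Sard--Smale yields a comeager set of regular $\tau$; and a Taubes-style argument (as in the proof of Theorem \ref{familytransversality}) passes to $C^\infty$. Shrinking the perturbation preserves positivity.

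The principal obstacle is the bookkeeping in the first step, namely checking that the several constraints from (Subtree), (Refinement), (Core), and (Infinite or Zero Ratio) agree on the overlaps of the collar neighborhoods that define $N_\Gamma$. This consistency is ultimately a statement that the sum \eqref{delayedsum} is independent of the order in which nodes are resolved, which reduces, by the inductive hypothesis, to the analogous consistency for the deeper strata combined with the universality of the gluing construction. With this in hand, all the remaining steps are standard.
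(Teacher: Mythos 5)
Your overall architecture coincides with the paper's: use the compatibility conditions to pin down $\tau_\Gamma$ on a collar of $\partial\ol{\RR}^{\pre}_\Gamma$, extend over the interior, obtain regularity via a Sard--Smale argument in a Banach space of $C^\ell$ delay functions followed by a Taubes-type passage to $C^\infty$, and observe that positivity is an open condition. However, your transversality mechanism in the third step has a genuine gap. The (Subtree property) fixes $\tau_{\Gamma,e}(r)=\tau_{\Gamma_i,e}(r_i)$ for \emph{every} edge $e$ lying in one of the subtrees $\Gamma_1,\ldots,\Gamma_{|v_0|-1}$ attached to the root, and it does so over all of $\ol{\RR}^{\pre}_\Gamma$, not merely near its boundary. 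Consequently the only delay functions you are free to perturb are those of the finitely many edges adjacent to the root vertex $v_0$. Your prescription of choosing, for each $i$, an edge on $\gamma_i$ but not on $\gamma_j$ typically selects an edge deep inside a subtree, and perturbing it destroys compatibility. If instead you restrict to the root-adjacent edges, then whenever two outermost quilted vertices $v_i,v_j$ lie in the \emph{same} subtree $\Gamma_l$, the unique available perturbation rescales $\rho_i$ and $\rho_j$ by the same factor, so you cannot move them independently and your claimed surjectivity fails.

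The repair is the one the paper uses: the equalities of ratios \emph{within} each subtree $\Gamma_l$ are already cut out transversally, because $\M_{\Gamma_l}$ was constructed inductively as a regular delayed fiber product; hence only the $n-1$ cross-subtree equalities remain to be made transverse, and the evaluation map should be taken with values in $\R^{n-1}$ (differences of the subtrees' common ratios) rather than onto $[0,\infty]^k$. For that map the $n$ root-adjacent delays do act independently, one per coordinate $\rho_{\Gamma_j}\exp(\tau_j)$, so $0$ is a regular value of the universal map and Sard--Smale applies. Note also that even in your formulation the relevant condition is transversality to the thin diagonal $\triangle\subset[0,\infty]^k$ (codimension $k-1$), not surjectivity of the differential onto all of $[0,\infty]^k$. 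With this correction the remainder of your argument (openness/density of regular $C^\ell$ data on compact exhaustions, intersection over $\ell$, openness of positivity) goes through as in the paper.
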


\begin{proof}  
  Let $\Gamma$ be a combinatorial type with $d$ incoming markings.  We
  suppose that we have constructed inductively regular delay functions
  for types $\Gamma'$ with $e$ incoming markings for $e < d$, as well
  as for types $\Gamma'$ appearing in the (Refinement property) for
  $\Gamma$.  Note that the case that $\Gamma$ is a maximally-refined
  combinatorial type, so that the corresponding stratum of the moduli
  space is zero-dimensional, is covered by the inductive step although
  in this case there is no ``boundary'' of the stratum.

  We now construct a regular functions
  $\tau_\Gamma := \tau_\Gamma(\ul{L})$.  We may assume that $\Gamma$
  has no components ``beyond the quilted components''.  Indeed, by the
  (Core property) the delay functions are independent of the
  \label{compatproof} 
  additional components.  (For example, in the case that $\Gamma$ has
  a single unquilted component as the root, with two nodal points on
  its boundary, at each of which is attached a biquilted component
  with a single boundary marking, the moduli space
  $\ol{\RR}^{\pre}_\Gamma$ is a square, with coordinates given by the
  ratios between the quilted circles.  The inductive hypothesis
  prescribes the delay functions on the boundary.)  The (Subtree
  property) implies that all the delay functions in
  $\tau_\Gamma := \tau_\Gamma(\ul{L})$ {\em except} those for the
  finite edges adjacent to $v_0$, the root component, are already
  fixed.  It remains to find regular delay functions for the finite
  edges adjacent to the root component of each combinatorial type, in
  a way that is also compatible with conditions (Infinite or zero
  ratio property) and (Refinement property).  Choose an open
  neighborhood $U$ of $\partial \ol{\RR}^{\pre}_\Gamma $ in
  $\ol{\RR}^{\pre}_\Gamma$ in which the delay functions $\tau_\Gamma$
  for the incoming edges adjacent to the root vertex are already
  determined by the gluing construction and the delay functions on the
  boundary. By an argument similar to that of Theorem
  \ref{familytransversality} there exists a smaller open neighborhood
  $V$ of $\partial \RR^{\pre}_\Gamma $ in $\ol{\RR}^{\pre}_\Gamma$
  with $\ol{V} \subset U$ such that every element in the zero and
  one-dimensional components of the $\tau_\Gamma$-deformed moduli
  space $\M_\Gamma$ is regular.  We show that $\tau_\Gamma$ extends
  over the interior of $\ol{\RR}^{\pre}_\Gamma$.  To set up the
  relevant function spaces let $l\ge 0$ be an integer.  Let
  $C_{\tau_{\Gamma,i}}^l(\RR^{\pre}_\Gamma)$ denote the Banach
  manifold of functions with $l$ bounded derivatives on
  $\RR^{\pre}_\Gamma$, equal to the $i$-th component $\tau_{i}$ of
  $\tau_i$ on $\ol{V}$.  Let $\Gamma_i, i =1,\ldots, n$ be the trees
  attached to the root vertex $v_0$.  Consider the evaluation
  map \label{evalmap}
\bea & & \on{ev}: \M_{{\Gamma_1}}\times \ldots \times
\M_{{\Gamma_{n}}} \times \M_{v_0} \times \prod_{i=1}^{n} C^l_{
  \tau_{\Gamma,i}}(\RR^{\pre}_{\Gamma}) \to \R^{n-1}\\ & & \left((r_1, u_1),
\ldots, (r_n, u_n), (r_0,u_0), \tau_1, \ldots, \tau_n\right) \mapsto
\\ & & \left( \rho_{\Gamma_j}(r_j) \exp( \tau_j(r)) -
\rho_{\Gamma_{j+1}}(r_{j+1}) \exp( \tau_{j+1}(r)) \right)_{j=1}^{n-1}
\eea
where $ r = (r_0,\ldots, r_n)$.  Note that $0$ is a regular value.
The Sard-Smale theorem implies that for $l$ sufficiently large the
regular values of the projection
\bea \Pi : \on{ev}^{-1}(0) \to \T^l(\RR^{\pre}_\Gamma)
:=\prod_{i=1}^{n} C^l_{
  \tau_i}(\RR^{\pre}_\Gamma) \eea
form a comeager subset.  Denote the subset of $
\T^l(\RR^{\pre}_\Gamma)$ consisting of smooth functions by
$\T(\RR^{\pre}_\Gamma)$.

We use Taubes' argument (see \cite[Section 3.2]{ms:jh}) to show
that the subset $\T_{\reg}(\RR^{\pre}_\Gamma)$ of
$\T(\RR^{\pre}_\Gamma)$ consisting of regular delay functions of class
$C^\infty$ is dense in $\T(\RR^{\pre}_\Gamma)$.  (Note here that each
of the functions in such a collection extends to the boundary and so
defines a function on the closure $\ol{\RR}^{\pre}_\Gamma$.)  For each
$i = 1,\ldots, k$ fix a component of each $\M_{\Gamma_i}$ and
$\M_{v_0}$ of fixed dimension.  The product of these components is a
connected finite dimensional manifold $X$.  Let $K$ be a compact
subset of $X$ and let $\T_{\reg, K}(\RR^{\pre}_\Gamma)$ be the subset
of smooth delay functions that are regular on $K$.  We will show that
$\T_{\reg, K}(\RR^{\pre}_\Gamma)$ is open and dense in
$\T(\RR^{\pre}_\Gamma)$.

To show that $\T_{\reg, K}(\RR^{\pre}_\Gamma)$ is open, we show that
the complement is closed.  Let $\tau_\nu$ be a sequence of smooth
delay functions in the complement $\T_{\reg, K}(\RR^{\pre}_\Gamma)^c$
converging to a smooth delay function $\tau \in
\T(\RR^{\pre}_\Gamma)$.  Thus we can find a sequence of points $p_\nu
\in K$ where each $p_\nu$ is a critical point of the delayed
evaluation map $\rho_{\tau_\nu}: X \to \R^k$.  Passing to a
subsequence if necessary, we can assume by the compactness of $K$ that
$p_\nu \to p \in K$.  The delayed evaluation map $\rho_{\tau}: X \to
\R^k$ for the limit $\tau$ cannot be regular at $p$.  Indeed, because
if it were regular then for sufficiently large $\nu$ the delay
evaluations $\rho_{\tau_\nu}$ would be regular at $p_\nu$.  Hence
$p\in K$ is a critical point of $\rho_{\tau}$, and $\tau \in \T_{\reg,
  K}(\RR^{\pre}_\Gamma)^c$.  So $\T_{\reg, K}(\RR^{\pre}_\Gamma)^c$ is
closed in $\T(\RR^{\pre}_\Gamma)$.  Similarly, let $\T_{\reg,
  K}^l(\RR^{\pre}_\Gamma)$ be the set of $C^l$ delay functions that
are regular on $K$.  By the same argument as above, $\T_{\reg,
  K}^l(\RR^{\pre}_\Gamma)$ is open in $\T^l(\RR^{\pre}_\Gamma)$.
Moreover $\T_{\reg, K}^l(\RR^{\pre}_\Gamma)$ is dense in
$\T^l(\RR^{\pre}_\Gamma)$.  Indeed $\T_{\reg, K}^l(\RR^{\pre}_\Gamma)
\supset \T_{\reg}^l(\RR^{\pre}_\Gamma)$ and we already know that
$\T_{\reg}^l(\RR^{\pre}_\Gamma)$ is dense in $\T^l(\RR^{\pre}_\Gamma)$
for sufficiently large $l$.

To show that $\T_{\reg, K}(\RR^{\pre}_\Gamma)$ is dense, fix $\tau \in
\T(\RR^{\pre}_\Gamma)$.  Since $\T^l_{\reg, K}(\RR^{\pre}_\Gamma)$ is
dense in $\T^l(\RR^{\pre}_\Gamma)$ we can find  
$$\tau_l \in
\T^l_{\reg, K}(\RR^{\pre}_\Gamma), \quad \|\tau - \tau_l\|_{C^l}
\leq 2^{-l} .$$ 
Moreover, $\tau_l \in \T^l_{\reg, K}(\RR^{\pre}_\Gamma)$ is open in
$\T^l(\RR^{\pre}_\Gamma)$.  So there exists an $\epsilon_l>0$ such
that
$$\| \widehat{\tau} -
\tau_l\|_{C^l} < \epsilon_l \implies \widehat{\tau} \in \T^l_{\reg,
  K}(\RR^{\pre}_\Gamma) .$$  
Since smooth functions are dense in $C^l$ this means that we can find
$$\widehat{\tau}_l \in \T(\RR^{\pre}_\Gamma) \cap \T^l_{\reg,
  K}(\RR^{\pre}_\Gamma), \quad \|\widehat{\tau}_l - \tau_l\|_{C^l} <
\min(\epsilon_l, 2^{-l}) .$$  
It therefore follows that $\widehat{\tau}_l \in \T_{\reg,
  K}(\RR^{\pre}_\Gamma)$, and $\widehat{\tau}_l$ converges as $l \to
\infty$ to $\tau$ in the $C^\infty$ topology.

Thus $ \T_{\reg, K}(\RR^{\pre}_\Gamma)$ is open and dense in
$\T(\RR^{\pre}_\Gamma)$.  We exhaust $X$ with a countable sequence
of compact subsets $K$, and there are at most countably many
components of each $\M_{\Gamma_i}$ and $\M_{v_0}$ of a given dimension
(i.e. there are countably many $X$).  Hence $\T_{\reg}
(\RR^{\pre}_\Gamma)$ is comeager in $\T(\RR^{\pre}_\Gamma)$. Finally,
the positivity condition is an open condition in
$\T(\RR^{\pre}_\Gamma)$.  So the set of smooth, regular, compatible
and positive delay functions is non-empty.

Thus, by induction, there exists a smooth, positive, compatible,
regular delay function $\tau_\Gamma$ for each combinatorial type of
biquilted $d+1$-marked disk, and hence a regular compatible collection
$\tau^d$.

To apply the induction, note that for $d = 1$ the regularity condition
is vacuously satisfied (there is no transversality condition to the
diagonal) so we may take that as the base step.
\end{proof}

\subsubsection{Perturbed quilts parametrized by the bimultiplihedron}  

We now build the family of quilts that will be the domains for our
biquilted holomorphic disks.  The definition incorporates the
inductive procedure for choosing delay functions in the last
subsection.  The construction here is somewhat different from the
constructions in previous chapters in that the families of quilts
depend on the choice of Lagrangians.

\begin{definition} \label{biquilts} 
{\rm (Family of quilts parametrized by the bimultiplihedron)} Given a
positive integer $d$, Lagrangians $\ul{L} = (L_0,\ldots, L_d)$, and a
delay function $\tau^d$, we first define the bundle $\partial
\SS^{d,0,0}(\ul{L}) \lra \partial \RR^{d,0,0}$ of nodal quilted
surfaces with striplike ends over the boundary, $\partial
\RR^{d,0,0}$.  Then we extend it over a neighborhood of the boundary.
Finally we choose a smooth interpolation over the remaining interior of
$\RR^{d,0,0}$.  Let
$$\ol{\RR}_\Gamma^\tau := \rho_{\tau_\Gamma}^{-1}(\triangle) \subset
\ol{\RR}_\Gamma^{\pre}$$
denote the inverse image of the shifted diagonal as in
\eqref{delayedfiber}.  For different choices of $\tau$ (which are all
homotopic) these spaces are isomorphic as decomposed spaces (see
Definition \ref{decomposed}) and so in particular
$\ol{\RR}_\Gamma^\tau$ is isomorphic to $\ol{\RR}_\Gamma$ as a
decomposed space.  Let
$$ \pi_\Gamma: \ \ol{\SS}_\Gamma^{\pre} \to \ol{\RR}_\Gamma^{\pre} $$
denote the product of surface bundles for the vertices of $\Gamma$
already constructed.  Let
$$ \ol{\SS}_\Gamma^\tau := \pi_\Gamma^{-1}(\ol{\RR}_\Gamma^\tau) $$
denote the restriction to the shifted fiber product.  Thus each element
of $\ol{\M}_\Gamma^\tau$ in \eqref{delayedfiber} has domain an element
of $\ol{\SS}_\Gamma^\tau$.

Having defined the family of quilts over the boundary, we extend the
bundle over a neighborhood of the boundary.  On the facets of
$\ol{\RR}^{d,0,0}$ for which $\rho \in (0,\infty)$, we use the gluing
construction to extend the bundle over a neighborhood of those
facets. The codimension one facet where $\rho = 0$ does not correspond
to the formation of a nodal disk, but rather corresponds to the
convergence of the inner and outer seams.  Given a family of quilted
surfaces over the moduli space of quilted disks with a single seam, we
extend the family in a neighborhood of the boundary by replacing the
seam by a strip of small width $\rho$ so that is compatible with the
gluing construction near its higher codimension boundary strata.  For
the codimension one facets corresponding to $\rho = \infty$, we take
the delay functions to vanish as above in the (Infinite or zero ratio
property) and then use the usual gluing construction for the perturbed
gluing parameters to extend the bundle over a neighborhood of these
boundary strata.  Once the bundle is extended over a neighborhood of
the boundary we fix a smooth interpolation over the remainder of the
interior.  This ends the definition. 
\end{definition} 

By Theorem \ref{familytransversality} we may also extend the regular
collection of perturbation data over the boundary
$\partial \RR^{d,0,0}$ to get regular perturbation data over all of
$\ol{\RR}^{d,0,0}$.  With this data the moduli space $\M^{d,0,0}
$ \label{:=} of pseudoholomorphic quilts whose domain is in the family
$\SS^{d,0,0,\tau}$, is transversally defined.

\begin{corollary} 
\label{Bd00} {\rm (Description of the 
ends of the one-dimensional components of the moduli space of
biquilted disks)} Let $M_0,M_1,M_2$ be symplectic backgrounds with the
same monotonicity constant and 
$$L_{01} \subset M_0^- \times M_1,\quad L_{12} \subset M_1^- \times
M_2$$ 
admissible Lagrangian correspondences with brane structure.  For
regular, coherent perturbation data the moduli spaces of
pseudoholomorphic quilts ${\M}^{d,0,0}$ have finite zero dimensional
component $\M^{d,0,0}_0$ and one-dimensional component
$\ol{\M}^{d,0,0}_1$ that admits a compactification as a one-manifold
with boundary the union
\begin{equation} \label{bounddes}
 \partial \ol{\M}^{d,0,0}_1 = \bigcup_{\Gamma}
 {\M}^{d,0,0}_{\Gamma,1} \end{equation}
where either (1) $\Gamma$ is stable and so $\RR_{\Gamma}^{d,0,0}$ is a
codimension one stratum in $\ol{\RR}^{d,0,0}$, or (2) $\Gamma$ is
unstable and corresponds to bubbling off a Floer trajectory.
\end{corollary}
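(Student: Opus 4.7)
The plan is to combine the transversality setup built up in Definition \ref{biquilts} and Lemma \ref{compat} with Gromov compactness (Theorem \ref{gromov}) and the description of boundary strata in Theorem \ref{familymod}, but with extra care at the biquilted strata where the boundary is a fiber product rather than a product. First I would establish that the zero-dimensional component $\M^{d,0,0}_0$ is finite. Since it lies over the open stratum $\RR^{d,0,0}$ by regularity (higher-codimension strata $\RR_{\Gamma}^{d,0,0}$ impose the additional fiber-product conditions $\rho_{\tau_\Gamma} \in \triangle$, cutting expected dimension further), the finiteness follows from Theorem \ref{gromov}: monotonicity rules out disk and sphere bubbling for $d \leq 1$, and the reduction of expected dimension by breaking off a Floer trajectory is at least one, so bubbling trajectories on the strip-like ends also does not occur.

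Next I would establish that $\ol{\M}^{d,0,0}_1$ is a one-manifold with boundary of the claimed form. Smoothness of the interior follows from the regularity assumption together with the standard implicit function theorem. For the boundary, Gromov compactness gives that any sequence without limit in the interior must converge to a configuration with either (a) a bubbled Floer trajectory at one of the ends, or (b) a configuration with domain in a codimension one stratum $\ol{\RR}^{d,0,0}_\Gamma$. For (b) the combinatorial types are classified by Proposition \ref{d00facets}: codimension one configurations correspond either to a single unquilted bubble (facets of product type $\RR^{d_1} \times \RR^{d_2,0,0}$), to a collection of once-quilted bubbles attached to a quilted root (facets of type $(\K^{i_1,0} \times \ldots \times \K^{i_r,0}) \times \K^{r,0}$), to a collection of biquilted bubbles attached to an unquilted root (facets of fiber-product type $\K^{i_1,0,0} \times_{[0,\infty]} \ldots \times_{[0,\infty]} \K^{i_k,0,0} \times \K^j$), or to the seams coming together (facets of type $\K^{d,0}$). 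In each of the first, second, and fourth cases the boundary is cut transversally by the perturbation data constructed in Theorem \ref{familytransversality}.

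The main obstacle, and where the delay functions play their key role, is the third (biquilted-bubble) case. Here the naive fiber product $\M_{v_1} \times_{[0,\infty]} \cdots \times_{[0,\infty]} \M_{v_k} \times \M_{v_0}$ need not be cut out transversally, since without modification the radius-ratio evaluation maps $\rho_{v_i}$ are not transverse to the thin diagonal $\triangle$. The inductive hypothesis (Lemma \ref{compat}) replaces this with the \emph{delayed} fiber product \eqref{delayedfiber}, for which the evaluation map $\rho_{\tau_\Gamma}$ is transverse to $\triangle$ by construction. Proposition \ref{mostly0s} then shows that an isolated boundary point in the delayed fiber product consists of exactly one rigid component and one-dimensional data on the remaining components, matching the expected codimension-one description. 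The (Refinement Property) in Definition \ref{delayfns} ensures that the delay structure used on this facet is compatible with those prescribed on higher-codimension strata in its closure, so local coordinates extend across corners.

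Finally, to see that each boundary component occurs with the expected multiplicity (i.e.\ that every broken configuration is actually the limit of a unique family in the interior), I would invoke the gluing theorem of Ma'u \cite{mau:gluing} in the product-type cases (unquilted bubble, once-quilted bubble, coinciding seams, Floer trajectory) and extend it to the biquilted-bubble case by gluing along strip-like ends in each factor simultaneously, using the regularity of $\rho_{\tau_\Gamma}$ transverse to $\triangle$ to identify the local gluing model with a half-neighborhood of $\ol{\RR}_\Gamma^\tau$. This gluing argument, parallel to the one in Theorem \ref{masterthm}, combined with the compactness and transversality above, yields the description \eqref{bounddes} of $\partial \ol{\M}^{d,0,0}_1$.
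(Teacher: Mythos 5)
Your proposal is correct and follows essentially the same route as the paper: the paper's own (two-sentence) proof simply cites Theorem \ref{gromov} for compactness and monotonicity to exclude extra components of negative index, relying on the preceding construction of regular delay functions and perturbation data (Lemma \ref{compat}, Definition \ref{biquilts}, Theorem \ref{familytransversality}) for transversality of the fiber-product strata. Your write-up just makes explicit the ingredients — the facet classification of Proposition \ref{d00facets}, the role of the delayed fiber product, and the Ma'u-style gluing — that the paper leaves implicit.
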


\begin{proof}   Compactness is by Theorem \ref{gromov}. 
The description \eqref{bounddes} of the boundary of the
one-dimensional moduli spaces follows from the monotonicity
assumptions in Definition \ref{branes}:  Any limiting configuration
with more than one component has a component with negative index, and
so does not exist.
\end{proof} 

\begin{remark} {\rm (Orientations for moduli of biquilted disks)}  
 The construction of orientations for $\M^{d,0,0}(x_0,\ldots,x_d) $ is
 similar to the previous cases.  It depends on a choice of orientation
 on $\ol{\RR}^{d,0,0}$, which itself depends on a choice of slice for
 the $SL(2,\R)$ action on the set of biquilted disks with marking.  We
 take as slice the set of disks with first two marked points fixed and
 the first inner circle fixed at radius $1/2$.  Define a
 diffeomorphism
$$ \RR^{d,0,0} \cong \{ (z_2 < \ldots < z_d, \rho_2) \} \subset
 \R^d $$
where $\rho_2 \in ( 1/2,1)$ is the radius of the second disk.  The
standard orientation on $\R^d$ induces an orientation on
$\RR^{d,0,0}$.
\end{remark} 

\begin{figure}
\includegraphics[width=3in,height=1in]{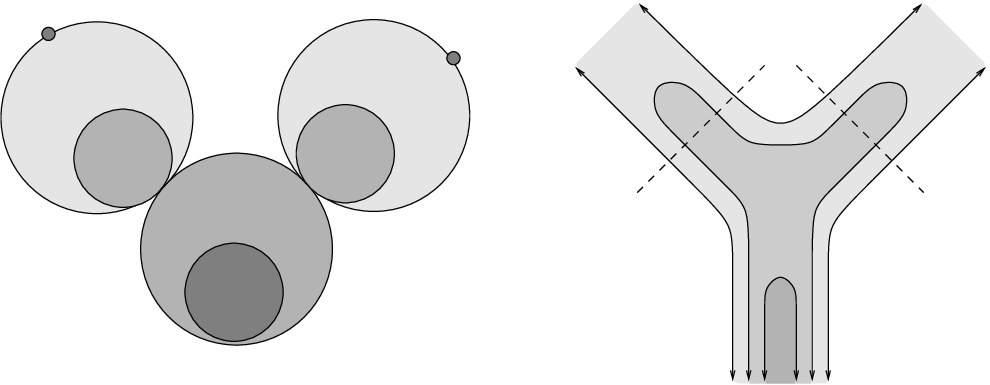}
\caption{Desingularization of a biquilted nodal disk}
\label{desing}
\end{figure}

\subsection{Homotopy for algebraic composition of correspondences}

In this section we compare the composition of \ainfty functors for
Lagrangian correspondences with the \ainfty functor for their
algebraic composition.  Recall from \eqref{algcomp} the definition of
algebraic composition: Let
$$L_{01} \subset M_0^- \times M_1, \quad L_{12} \subset
M_1^- \times M_2$$ 
be admissible Lagrangian correspondences with brane structure and
$\zeta > 1$.  The {\em algebraic composition} of $L_{01}, L_{12}$ with
width $\zeta$ is the generalized Lagrangian correspondence
$L_{01} \sharp_\zeta L_{12}$ with width $\zeta$ associated to the
strips taking values in the manifold $M_1$.  We compare the functor
for the algebraic composition with the composition of functors:
\begin{equation} \label{comparephis} 
\Phi(L_{01}
\sharp_\zeta L_{12}), \ \Phi(L_{12}) \circ
\Phi(L_{01}): \GFuk(M_0) \to \GFuk(M_2)  .\end{equation} 
Note that these functors act the same way on objects of $\GFuk(M_0)$.
The following is a preliminary result towards Theorem
\ref{maincompose}:

\begin{theorem} \label{compose1}  
{\rm (Algebraic composition theorem)} Let $M_0,M_1,M_2$ be symplectic
backgrounds with the same monotonicity constant and $L_{01}, L_{12}$
Lagrangian correspondences with admissible brane structures.  The
\ainfty composition $\Phi({L_{12}}) \circ \Phi({L_{01}})$ is \ainfty
homotopic to $\Phi( L_{01} \sharp_\zeta L_{12})$ in the sense of
\eqref{homotopy}.
\end{theorem}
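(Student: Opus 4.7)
The plan is to exhibit the $\ainfty$ homotopy as a pre-natural transformation $T$ from $\Phi(L_{12}) \circ \Phi(L_{01})$ to $\Phi(L_{01} \sharp_\zeta L_{12})$, whose components are defined by counting the isolated elements of the moduli space $\M^{d,0,0}$ of pseudoholomorphic biquilted disks constructed in Corollary \ref{Bd00}. Concretely, for generalized Lagrangian branes $\ul{L}^0,\dots,\ul{L}^d$ in $M_0$ and intersection points $x_j$ of $(\ul{L}^{j-1},\ul{L}^j)$, define
$$ T^d(\bra{x_1},\dots,\bra{x_d}) := (-1)^{\heartsuit}\, \Phi_{\SS^{d,0,0}}(\bra{x_1},\dots,\bra{x_d}) $$
where $\heartsuit$ is the usual sign from \eqref{heartsuit}. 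The family of quilted surfaces $\SS^{d,0,0} \to \ol{\RR}^{d,0,0}$ is constructed in Definition \ref{biquilts} using the inductively chosen regular, positive, compatible delay functions produced by Lemma \ref{compat}.

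Next, I would apply the master equation of Theorem \ref{C} to the one-dimensional component $\ol{\M}^{d,0,0}_1$ and identify each type of boundary contribution listed in Proposition \ref{d00facets} with a term in the $\ainfty$ homotopy axiom. The four families of codimension-one facets correspond to:
\begin{enumerate}
\item[(i)] ``seams coming together'' ($\rho = 0$), which by the (Infinite or zero ratio property) and the gluing description of $\SS^{d,0,0}$ near this facet reduces the biquilted surface to a singly-quilted surface with a thin strip of width $\zeta$ replacing the merged seams; this yields precisely the components $\Phi(L_{01}\sharp_\zeta L_{12})^d$;
\item[(ii)] ``biquilted bubbles at infinite ratio'' ($\rho \to \infty$), where the delayed fiber product degenerates into two independent once-quilted layers, giving the components of $\Phi(L_{12}) \circ \Phi(L_{01})$ assembled via a root associahedral tree;
\item[(iii)] ``once-quilted bubbles'', producing terms of the form $T^{m}\bigl(\Phi(L_{01})^{i_1}(\cdots),\dots\bigr)$ and mixed products with $\Phi(L_{12})$, which are exactly the pre-composition terms of $\mu^1 T$;
\item[(iv)] ``unquilted bubbles'' and trajectory breaking, giving the $\mu^k$-terms on $\GFuk(M_0)$ and the $\mu^1$ differential respectively.
\end{enumerate}
Positivity and regularity of the delay functions (Definition \ref{delayfns}, Lemma \ref{compat}) ensure by Proposition \ref{mostly0s} that only the expected boundary configurations occur in the zero and one-dimensional strata, so no spurious boundary components appear.

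The sign computation would proceed as in the proofs of Theorems \ref{ainftycat} and \ref{ainftyfunctorthm}: identify $\det(T\M^{d,0,0})$ with a tensor product of determinant lines $\DD^\pm_{x_j}$ using the ``bubbling off one-pointed disks'' convention of \cite{orient}, and compute the signs of the gluing maps corresponding to each facet in Proposition \ref{d00facets} using the explicit coordinates \eqref{gluemap2} for the once-quilted gluing together with the analogous map for the biquilted degeneration at $\rho \to \infty$. The delay function parameters do not affect signs (being a homotopy-trivial deformation of the diagonal), so the contributions from (ii) and (iii) reproduce the two sides of $T\circ \mu^{\GFuk(M_0)} + \mu^{\GFuk(M_2)}\circ T$ appearing in the $\ainfty$ homotopy relation \eqref{homotopy}, while (i) contributes $\Phi(L_{01}\sharp_\zeta L_{12}) - \Phi(L_{12})\circ \Phi(L_{01})$ with appropriate signs.

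The main obstacle will be the transversality and compactness analysis at the facet where the two seams merge ($\rho = 0$), since unlike the other facets this is not a nodal degeneration but a genuine change in the quilt geometry: the quilted strip between $L_{01}$ and $L_{12}$ collapses to zero width and is replaced by the seam corresponding to $L_{01}\sharp_\zeta L_{12}$. This requires verifying that the family of perturbation data constructed in Definition \ref{biquilts} interpolates correctly between the two regimes, and that the gluing theorem matches isolated biquilted disks at small $\rho$ with elements of $\M_{\Phi(L_{01}\sharp_\zeta L_{12})}$; this parallels the ``strip-shrinking'' analysis of \cite{ww:isom} but is easier here because algebraic (not geometric) composition is used, so the target manifold $M_1$ is not collapsed. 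A secondary subtlety is the compatibility of the delay functions with the facet (ii): by the (Infinite or zero ratio property) the delays vanish near $\rho = \infty$, ensuring that the fiber-product structure degenerates cleanly into the ordinary product needed to recover $\Phi(L_{12})\circ \Phi(L_{01})$.
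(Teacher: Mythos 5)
There is a genuine gap, and it sits exactly at the facet you gloss over in item (iii). The codimension-one stratum of $\ol{\RR}^{d,0,0}$ consisting of $m\ge 2$ biquilted disks attached to an unquilted disk mapping to $M_2$ is a \emph{fiber product} over the ratio $\rho$ (constrained by the delayed evaluation map), not a product. By the dimension formula \eqref{localdim} and Proposition \ref{mostly0s}, an isolated boundary point on this facet consists of exactly one biquilted bubble from a \emph{zero}-dimensional moduli space and $m-1$ bubbles from \emph{one}-dimensional moduli spaces cut at a fixed ratio. Consequently the contribution of this facet is $\sum_{\rho}\mu^m\bigl(\HH^{1,\rho+\tau_1},\ldots,\HH^{0,\rho+\tau_i},\ldots,\HH^{1,\rho+\tau_m}\bigr)$ as in \eqref{facetcontrib2}, where the fixed-ratio one-dimensional counts $\HH^{1,\rho}$ are not components of either functor nor of your proposed $T$. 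So if you define $T^d$ as the total count over $\M^{d,0,0}_0$, the master equation does \emph{not} close up into the homotopy relation \eqref{homotopy}: the boundary terms are not of the form $\mu^m(\F_1^{i_1},\ldots,T^{i_k},\ldots,\F_2^{i_m})$.

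The paper's proof resolves this by slicing $(0,\infty)$ into intervals $[a_i,a_{i+1}]$ containing at most one zero-dimensional contribution each (using injectivity of $\rho$ on the zero-dimensional stratum for generic data), showing via the positivity of the delay functions that $\HH^{1,\delta}=\HH^{1,a_i}$ or $\HH^{1,a_{i+1}}$ depending on which side of the distinguished bubble one is on, and interpreting each $\HH^{0,\delta_i}$ as a homotopy between ``functors at ratio $a_i$'' and ``at ratio $a_{i+1}$.'' The final homotopy is the \emph{composition} $\HH^{0,\delta_1}\circ(\HH^{0,\delta_2}\circ\cdots)$ in the sense of \eqref{composehom}, which differs from the naive sum $\sum_i\HH^{0,\delta_i}$ by the quadratic terms $\mu^2(\TT_1,\TT_2)$; one also needs an inductive-limit argument when there are infinitely many contributions. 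Your proposed homotopy is therefore not merely unproved to satisfy \eqref{homotopy} — it is, in general, the wrong cochain. The rest of your outline (identification of the $\rho=0$ facet with $\Phi(L_{01}\sharp_\zeta L_{12})$, the separated-seams facet with the composition of functors, and the remaining facets with the $\mu^1$-terms) is consistent with the paper.
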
  

\begin{proof}  We suppose that we have constructed
smooth moduli spaces of biquilted disks of expected dimension by
choosing the perturbation data as in Corollary \ref{Bd00}.  Given
admissible generalized Lagrangian branes $\ul{L}^0,\ldots, \ul{L}^d$
for $M_0$, define maps
\begin{equation} \label{hmaps} \HH^0_d: \Hom(\ul{L}^0,\ul{L}^1) \times
  \ldots \Hom(\ul{L}^{d-1},\ul{L}^d) \to \Hom(\Phi(L_{01} \sharp
  L_{12}) \ul{L}^0 , \Phi(L_{01} \sharp_\zeta L_{12}) \ul{L}^d
  ) \end{equation}
by setting for generalized intersection points
$x_1,\ldots,x_d$
\begin{equation} \label{HH0} \HH^0_d(\bra{x_1},\ldots,\bra{x_d}) = \sum_{u \in 
  \M^{d,0,0}(x_0,\ldots,x_d)_0}
(-1)^\heartsuit \eps(u) \bra{x_0} \end{equation} 
where $\eps(u)$ are the orientation signs as in \eqref{epseq} and
$\heartsuit$ is defined in \eqref{heartsuit}.   We write 
\begin{equation} \label{HH0rho} \HH^{0,\rho'}_d = \sum_{\rho'}
  \HH^{0,\rho'}_d \end{equation}
where $\HH^{0,\rho'}_d$ is the contribution from biquilted disks
$(r,u)$ with \eqref{rhomap} $\rho(r) = \rho'$.   Since 
the locus of pairs of elements
\begin{equation} \label{samerho} \{ (r_1,u_1),(r_2,u_2) \in
  \M^{d_1,0,0}(x_0,\ldots,x_{d_1}) \times
  \M^{d_2,0,0}(x_0,\ldots,x_{d_2}) \ | \ \rho(r_1) = \rho(r_2)
  \} \end{equation}
is negative expected dimension, for generic perturbations the
restriction of $\rho$ to the union of the spaces
$\M^{d,0,0}(x_0,\ldots,x_d)_0$ is injective.  Similarly, define
$$ \HH^{1,\rho'}_d(\bra{x_1},\ldots,\bra{x_d}) = \sum_{u \in 
  \rho^{-1}(\rho') \subset \M^{d,0,0}(x_0,\ldots,x_d)_1}
(-1)^\heartsuit \eps(u) \bra{x_0} $$
for any $\rho'$ such that the intersection in the definition is
transverse.

Consider the boundary of the one-dimensional moduli space
$\ol{\M}^{d,0,0}(x_0,\ldots,x_d)_1$. \label{typeshere} By Corollary
\ref{Bd00}, the boundary points correspond to stable types
corresponding to either boundary facets in $\ol{\RR}^{d,0,0}$ or
bubbled off trajectories.  The types of facets are listed in
Proposition \ref{d00facets}.  The first type of facet, in which at
least two singly-quilted components appear, corresponds to the terms
in the definition of the composition of \ainfty functors
\eqref{composefunc}.  The last type of facet corresponds to the terms
in $\Phi({L_{12} \sharp L_{01}})$.  The remaining boundary components
of $\ol{\M}^{d,0,0}(x_0,\ldots,x_d)_1$ are elements of the strata
$ {\M}_{\Gamma}^{d,0,0}(x_0,\ldots,x_d)_1 $ where $\Gamma$ is either
unstable, corresponding to bubbling off a Floer trajectory, or a
stable combinatorial type.  The stable combinatorial types are either
an unquilted disk mapping to $M_0$ and a biquilted disk, or a
collection of biquilted disks attached to a unquilted disk mapping to
$M_2$.  Facets corresponding to bubbling off unquilted disks
correspond to the last set of terms in the definition of homotopy of
\ainfty functors in \eqref{mu1} with the homotopy $\TT^d = \HH^0_d$
defined in \eqref{hmaps}.  It remains to show that facets representing
bubbling off quilted disks correspond to the first set of terms in
\eqref{mu1}.  \label{remainstoshow} On the $m$ biquilted disks we have $m-1$ relations,
requiring that the inner/outer ratios be equal up to the shifts
$\tau_\Gamma$.  By Proposition \ref{mostly0s}, for $m-1$ of the
bubbles, the unconstrained moduli space is dimension $1$, and exactly
for one of the bubbles, say the $i$-th, the unconstrained moduli space
is dimension $0$.  Thus the contribution of this type of facet is
\begin{multline} \label{facetcontrib2}
\sum_{\rho,I_1,\ldots,I_r,i} \mu^m_{\GFuk(M_2)}( \HH^{1,\rho +
  \tau_{\Gamma,1}}(\bra{x_{I_1}}), \ldots, \HH^{1,\rho +
  \tau_{\Gamma,i-1}}(\bra{x_{I_{i-1}}}), \HH^{0,\rho +
  \tau_{\Gamma,i}}(\bra{x_{I_i}}), \\
\HH^{1,\rho +
  \tau_{\Gamma,i+1}}(\bra{x_{I_{i+1}}}) ,\ldots, \HH^{r,\rho +
  \tau_{\Gamma,r}}(\bra{x_{I_r})})
\end{multline}
where
$$ \HH^{1,\rho_0}_d(\bra{x_1},\ldots,\bra{x_d}) = \sum_{u \in
  \M^{d,0,0}({x_0},\ldots,{x_d})_1, \rho(u) = \rho_0}
(-1)^{\heartsuit} \eps(u) \bra{x_0} $$
counts over the moduli space of expected dimension one, of fixed ratio
$\rho_0$.

In order to define a homotopy between
$\Phi(L_{12}) \circ \Phi(L_{01})$ and
$\Phi(L_{01} \sharp_\zeta L_{12})$, we ``integrate over $\rho$'' in
the following sense.  First we consider the case that there are
finitely many contributions to $\HH^0$.  Since the restriction of
$\rho$ to the zero-dimensional component of the moduli space is
injective, see the discussion after \eqref{samerho}, we may divide
$(0,\infty)$ into finitely many intervals
$[a_i,a_{i+1}], i =0,\ldots,s$ such that there is at most one
contribution to $\HH^0$ in each interval occurring at say
$\rho^{-1}(\delta_i)$. \label{aipar} The intersections
$\M^{d,0,0}( x_1,\ldots, x_d )_1 \cap \rho^{-1}(a_i)$ and
$\M^{d,0,0}( x_1,\ldots, x_d)_1 \cap \rho^{-1}(a_{i+1})$ are
components in the boundary of
$\M^{d,0,0}(x_1,\ldots,x_d)_1 \cap \rho^{-1}([a_i,a_{i+1}])$.  The
other boundary components correspond to bubbling off unquilted disks
from a twice-quilted disk, or bubbling off a number of quilted
disks. Thus
\begin{multline}
\label{bigH}
 \HH^{1,a_{i+1}}(\bra{x_1},\ldots,\bra{x_d}) =
 \HH^{1,a_i}(\bra{x_1},\ldots,\bra{x_d}) + \\ \pm \sum_{I_1,\ldots,I_r}
 \mu^m_{\GFuk(M_2)}( \HH^{n_1,\delta + \tau_{\Gamma,1}}(\bra{x_{I_1}}),
 \ldots, \HH^{n_m,\delta + \tau_{\Gamma,m}}(\bra{x_{I_m}})) \\ \pm
 \sum_{\delta_i,j,k} \HH^{0,\delta}(\bra{ x_1}, \ldots,
 \mu^k_{\GFuk(M_0)} (\bra{x_{j+1}}, \ldots,\bra{x_{j+k}}),
 \bra{x_{j+k+1}},\ldots, \bra{x_d}) .\end{multline}
where each $n_l \in \{0,1 \}$ and $\sum_{i=1}^m (n_i - 1) = -1$, by
the transversality assumption.  For each $u \in
\HH^{0,\delta_i}(\cdot)$, we have $n_{k(u)} = 0$ for some $k(u)$ and
otherwise $n_l = 1, l \neq k(u)$, see Proposition \ref{mostly0s}.  Now
by assumption, there are no other values of the restriction of $\rho$
to $\M^{l,0,0}(\cdot)_0, l \leq d$ in $[a_i,a_{i+1}]$.  It follows
that the moduli spaces $\M^{|I_j|,0,0}(x_{I_j})_1 \cap
\rho^{-1}([a_i,\delta])$ have boundary given by
\begin{equation} \label{Ijhere} \partial \M^{|I_j|,0,0}(x_{I_j})_1
\cap \rho^{-1}([a_i,\delta]) = \M^{|I_j|,0,0}(x_{I_j})_1 \cap
\rho^{-1}( \{a_i,\delta \}), j < k(u) \end{equation}
$$ \partial \M^{|I_j|,0,0}(x_{I_j})_1 \cap
\rho^{-1}([\delta,a_{i+1}]) = \M^{|I_j|,0,0}(x_{I_j})_1 \cap
\rho^{-1}( \{\delta, a_{i+1} \}), j > k(u) .$$
Since the delay functions are positive by assumption, these equalities
hold after replacing $\delta$ with the nearby values $a_i, a_{i+1}$:
\begin{equation} \label{sameHH} \HH^{1,\delta}(\bra{x_{I_j}}) =
\HH^{1,a_i}(\bra{x_{I_j}}), \quad j < k(u), \quad 
 \HH^{1,\delta}(\bra{x_{I_j}}) =
\HH^{1,a_{i+1}}(\bra{x_{I_j}}), \quad j > k(u) .\end{equation}
By substituting these equalities into \eqref{bigH}, we obtain the
terms in the definition of \ainfty homotopy between the functors with
ratio $a_i$ and those for ratio $a_{i+1}$.  Taking the composition of
these homotopies as in \eqref{composehom} proves the theorem for $\rho
> 0$, up to sign in the case that the number of contributions to
$\HH^0$ is finite.

In general we define the homotopy by an inductive limit.  For each
$d_0$ there are finitely many contributions to $\HH^{0,d}$ for $d \leq
d_0$.
The construction of the previous paragraph yields a map
$$ \TT^{\leq d} = \HH^{0,\delta_1} \circ ( \HH^{0,\delta_2} \circ ( \ldots
\circ \HH^{0,\delta_s} ) \ldots ) $$
that is a homotopy of \ainfty functors from $\Phi(L_{12}) \circ
\Phi(L_{01})$ to $\Phi(L_{01} \sharp_\zeta L_{12})$ up to terms involving
composition maps involving more than $d$ entries.  That is, the
collections
$$ (\Phi(L_{12}) \circ \Phi(L_{01}))_{d \leq d_0}, \quad (\Phi(L_{01}
\sharp L_{12}))_{d \leq d_0}, \quad (\HH^{0,n})_{d \leq d_0} $$
satisfy equation \eqref{mu1} for $d \leq d_0$.  Furthermore by
construction if $d < e$ then $\TT^{\leq d,i} = \TT^{\leq e,i}$ for
$i \leq d$, since the higher corrections only involve maps with high
numbers of inputs.  It follows that the limit
$$ \TT := \lim_{d_0 \to \infty} \TT^{\leq d_0 } $$
is well-defined.  Furthermore the ``differential'' of $\TT$ is the
limit
$$ (\mu^1_{\Hom(\F_1,\F_2)} \TT)^d = \lim_{d_0 \to \infty}
(\mu^1_{\Hom(\F_1,\F_2)} \TT^{\leq d_0})^d .$$
So $\TT = (\TT^d)_{d \ge 0}$ is a homotopy from $\Phi(L_{12}) \circ
\Phi(L_{01})$ to $\Phi(L_{01} \sharp_\zeta L_{12})$.

It remains to check the signs.  Since $|\TT| = |\HH| = -1$, the signs
in the formula \eqref{homotopy} vanish.  Consider the signs of the
inclusions of strata into $\ol{\RR}^{d,0,0}$: An embedding
$\RR^f \times \RR^{e,0,0} \to \ol{\RR}^{d,0,0}$ corresponding to an
unquilted bubble containing the markings $i+1,\ldots,i+f$ has sign
$ (-1)^{if + i }$, cf. \eqref{signs}.  For the facets induced by
embeddings
$ (\RR^{i_1,0} \times \ldots \times \RR^{i_m,0}) \times \RR^{e,0} \to
\ol{\RR}^{d,0,0} $
gluing acts on signs by $ 1+ (-1)^{\sum_{j=1}^m (m- j) (i_j - 1)} $
c.f. Lemma \ref{signs2}.  For the facets induced by embeddings
$ (\RR^{i_1,0,0} \times_{[0,\infty]} \ldots \times_{[0,\infty]} \RR^{i_m,0,0}) \times
\RR^{m} \to \ol{\RR}^{d,0,0} $
(where the real number is the ratio of the radii of the two interior
circles) the gluing map has sign
$1+ (-1)^{ \sum_{j = 1}^m (m-j)i_j }$.  For the facet given by the
embedding $\RR^{d,0} \to \ol{\RR}^{d,0,0}$ the gluing map is orientation
preserving.  The signs for the embeddings of the facets combine with
the Koszul signs to give the signs in the formulas \eqref{homotopy}
and \eqref{mu1}.
\end{proof} 

\subsection{Homotopy for geometric composition of correspondences}

In this section we prove Theorem \ref{maincompose} relating the
composition of functors with the functor for the composition of
correspondences.  First we show that the \ainfty functors are
quasi-isomorphic.  Let $L_{01},L_{12}$ be Lagrangian correspondences
as above with the property that the geometric composition $L_{02} :=
L_{01} \circ L_{12} $ is smooth and embedded by projection into $M_0^-
\times M_2$ as in \eqref{geomcomp}.

\begin{proposition} \label{versusprop} {\rm (Algebraic versus geometric composition)}  
  Let $M_0,M_1,M_2$ be symplectic backgrounds with the same
  monotonicity constant and $L_{01} \subset M_0^- \times M_1$ and
  $L_{12} \subset M_1^- \times M_2$ admissible Lagrangian
  correspondences with brane structure.  Suppose that
  $ L_{02} = L_{01} \circ L_{12}$ is transverse and embedded, and
  admissible in $M_0^- \times M_2$. Then for any $\zeta > 0$ the
  functors $\Phi(L_{01} \sharp_\zeta L_{12})$ and $\Phi(L_{02})$ are
  quasi-isomorphic in $\Fun(\GFuk(M_0),\GFuk(M_2))$.
\end{proposition}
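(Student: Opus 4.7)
The plan is to construct a natural transformation between the two functors which is objectwise a quasi-isomorphism, using the quilted Floer cohomology isomorphism for embedded geometric composition from \cite{ww:isom} together with the categorification functor of Theorem \ref{mainfunc}. First, by the main result of \cite{ww:isom} applied to the pair of generalized correspondences $L_{01}\sharp_\zeta L_{12}$ and $L_{02}$, the strip-shrinking procedure (letting the middle strip of width $\zeta$ collapse onto $L_{02}$) induces a canonical isomorphism
$$ \Psi : HF(L_{01} \sharp_\zeta L_{12}, L_{02}) \overset{\sim}{\to} HF(L_{02}, L_{02}). $$
I would fix a Floer cocycle $\alpha \in CF(L_{01} \sharp_\zeta L_{12}, L_{02})$ whose class maps under $\Psi$ to the cohomological unit $e_{L_{02}} \in HF(L_{02},L_{02})$.

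Next, apply the categorification functor from Theorem \ref{mainfunc}, extended to generalized Lagrangian correspondences as in Remark \ref{completemainfunc}. This converts $\alpha$ into a natural transformation
$$ \TT_\alpha : \Phi(L_{01} \sharp_\zeta L_{12}) \longrightarrow \Phi(L_{02}) $$
in the \ainfty functor category $\Fun(\GFuk(M_0), \GFuk(M_2))$, where $\TT_\alpha = \TT^1(\alpha)$ in the notation of Definition \ref{nattransform}.

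To verify that $\TT_\alpha$ is a quasi-isomorphism, it suffices by standard \ainfty category theory to check that for each generalized Lagrangian brane $\ul{L}_0 \in \GFuk(M_0)$, the component $\TT_\alpha^0(\ul{L}_0)$ is a quasi-isomorphism in $\GFuk(M_2)$. This component is a Floer cocycle in $\Hom_{\GFuk(M_2)}(\Phi(L_{01} \sharp_\zeta L_{12})\ul{L}_0,\Phi(L_{02})\ul{L}_0)$ defined by counting pseudoholomorphic quilts parametrized by $\ol{\RR}^{0,1}$ with a single interior seam marking at $\alpha$ and outer boundary labels determined by $\ul{L}_0$. The strip-shrinking argument of \cite{ww:isom} extends to this broader setting with an additional generalized boundary component $\ul{L}_0$, giving an isomorphism on the corresponding quilted Floer cohomologies; hence $\TT_\alpha^0(\ul{L}_0)$ represents this isomorphism and in particular is a quasi-isomorphism in $\GFuk(M_2)$.

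The main obstacle is identifying, up to cochain homotopy, the cochain map defined by insertion of the marking at $\alpha$ with the explicit strip-shrinking cochain map of \cite{ww:isom}. I expect the cleanest approach is to construct a chain homotopy between these two maps using an auxiliary one-parameter family of quilted surfaces interpolating between the two constructions, analogous to the families used in Sections \ref{family} and \ref{natural}, whose boundary recovers the two cochain maps and whose associated family quilt invariant is the desired homotopy via the master equation of Theorem \ref{C}. Alternatively, one may argue more abstractly: any two chain maps between the same pair of quilted Floer complexes that induce the same class in $HF(L_{02},L_{02})$ via the strip-shrinking identification are chain homotopic by a standard uniqueness argument in the derived Fukaya category, since both represent the unique morphism corresponding to the cohomological unit. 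Either route yields the stated quasi-isomorphism of \ainfty functors.
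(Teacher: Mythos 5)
Your route differs from the paper's and has two genuine gaps. The paper's proof does not work objectwise at all: it takes from \cite{we:co} (or \cite{ll:geom}) a \emph{pair} of Floer cocycles $\phi \in CF(L_{01}\sharp_\zeta L_{12}, L_{02})$ and $\psi \in CF(L_{02}, L_{01}\sharp_\zeta L_{12})$ whose compositions in \emph{both} orders are the cohomological units, pushes both through the categorification functor of Theorem \ref{mainfunc} to get natural transformations $\TT(\phi),\TT(\psi)$, and concludes from the functoriality of $\mu^2$ under categorification that $\mu^2(\TT(\phi),\TT(\psi))$ and $\mu^2(\TT(\psi),\TT(\phi))$ are cohomologous to the identity natural transformations. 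That is literally the paper's definition of quasi-isomorphism of \ainfty functors (a two-sided inverse in $H(\Fun(\GFuk(M_0),\GFuk(M_2)))$), so nothing further is needed.

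Your first gap is that you only produce the one-directional $\TT_\alpha$ and then appeal to "standard \ainfty category theory" to upgrade an objectwise quasi-isomorphism to a quasi-isomorphism of functors. With the paper's definition you must exhibit an inverse natural transformation up to cohomology; passing from objectwise invertibility to this requires cohomological unitality of $\GFuk(M_2)$ and a Seidel-type invertibility lemma in $H(\Fun)$, none of which you invoke or verify, and which the paper deliberately avoids by building the two-sided inverse at the level of Floer cocycles. Your second gap is the one you yourself flag: identifying the seam-insertion cochain map $\TT_\alpha^0(\ul{L}_0)$ with the strip-shrinking isomorphism of \cite{ww:isom}. Your fallback "uniqueness argument in the derived Fukaya category" is circular as stated (it presupposes that the relevant morphism space is controlled exactly by the class of the unit, which is what needs proving), and the interpolating-family argument would be a substantial new analytic construction. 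The paper's argument needs neither: it only uses that $[\psi]\circ[\phi]$ and $[\phi]\circ[\psi]$ are units in Floer cohomology, plus the already-established compatibility of categorification with $\mu^2$.
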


\begin{proof} Theorem \ref{compose1} shows that
  $\Phi(L_{12}) \circ \Phi(L_{01})$,
  $\Phi(L_{01} \sharp_\zeta L_{12})$ are homotopic, in particular,
  quasiisomorphic.  To show that
  $\Phi(L_{01} \sharp_\zeta L_{12}), \Phi(L_{02})$ are
  quasi-isomorphic recall that in \cite{we:co} the second two authors
  constructed cocycles
  $ \phi \in CF(L_{01} \sharp_\zeta L_{12},L_{02})$ and
  $ \psi \in CF(L_{02}, L_{01} \sharp_\zeta L_{12}) $ with the
  property that
\begin{equation} \label{identity} 
[\psi] \circ [\phi] = 1_{L_{02}} \in HF(L_{02},L_{02}), \quad [\psi]
\circ [\phi] = 1_{L_{01} \sharp_\zeta L_{12}} \in HF(L_{01} \sharp
L_{12},L_{01} \sharp_\zeta L_{12}) .\end{equation}
An alternative argument for the existence of the cocycle is given in
Lekili-Lipyanskiy \cite{ll:geom}.  Let $T(\phi), T(\psi)$ denote the
corresponding natural natural transformations.  It follows from
\eqref{identity} that 
$$T(\phi) \circ T(\psi) \in \Aut(\Phi(L_{01} \sharp_\zeta L_{12})), \quad 
 T(\psi) \circ T(\phi)\in \Aut(\Phi(L_{02})) $$
are cohomologous to the identity natural transformations.  The
proposition follows by combining \eqref{identity}, Theorem
\ref{mainfunc}, and Theorem \ref{compose1}.
\end{proof} 

\begin{proof}[Proof of Theorem \ref{maincompose}] 
\label{maincomposeproof}  The statement of the Theorem is a parametrized version of the main
  result of \cite{ww:isom}.  We construct a family of quilted surfaces
  $\ol{\SS}^{d,0,0}$ over the bimultiplihedron
  $\ol{\RR}^{d,0,0}$ for which the strip with boundary conditions
  $L_{01},L_{12}$ has varying width between $0$ and $\infty$; this
  differs from the algebraic composition theorem where the width was
  bounded below by a non-zero constant $\rho_0$.  Such a family can be
  obtained from $\ol{\SS}^{d,0}$ by inserting a strip of width $\zeta$
  for $ \zeta$ small and varying, and extended to a family
  $\ol{\SS}^{d,0,0}$ over $\ol{\RR}^{d,0,0}$ by the previous
  procedure.  The argument will be the same as that for the algebraic
  composition in Theorem \ref{compose1}, but the facet of
  $\ol{\RR}^{d,0,0}$ where the seams come together has fibers given by
  quilted disks with a single seam.  The corresponding boundary
  stratum in the moduli space of pseudoholomorphic quilts now
  corresponds to the terms for the geometric composition
  $\Phi(L_{01} \circ L_{12}) $ in the statement of Theorem
  \ref{maincompose}.

To show that the moduli space of pseudoholomorphic quilts over this new
family has the same properties as before requires the arguments of
\cite{ww:isom}:  Near any pseudoholomorphic quilt with seam in $L_{02}$ there
is a unique nearby pseudoholomorphic quilt with small width $\zeta$ of the
strip in between the seams labelled $L_{01}$ and $L_{12}$, by a
parametrized version of the implicit function theorem given in
\cite{ww:isom}.  The operation of replacing the seam with a strip of
width $\zeta$ defines a {\em thickening map}
$$ T_r : \RR^{d,0} \times [0,\delta) \to \ol{\RR}^{d,0,0} .$$
Given a quilt $\ul{u} :\ul{S} \to \ul{M}$ with seam $I$ mapping to
$L_{02}$, we denote by $\ul{S}(\zeta)$ the quilt obtained by replacing
the seam $I$ with a strip $S_\zeta = \R \times [0,\zeta]$.  Let $\gamma:
I \to L_{01} \times_{M_1} L_{12}$ be the unique lift of the
restriction of $\ul{u}$ to $I$.  Let $\ul{u}(\zeta): \ul{S}(\zeta) \to
\ul{M}$ be the map equal to $\ul{u}$ on the components except
$S_\zeta$, and $ \ul{u}(\zeta)(s,t) = \pi_1(\gamma(s))$ on $S_\zeta$.
Given a section $\ul{\xi}$ of $\ul{u}(\zeta)^* T \ul{M}$ with
Lagrangian boundary and seam conditions, a suitable {\em exponential}
$e_{\ul{u}}(\ul{\xi}): \ul{S} \to \ul{M}$ with the Lagrangian boundary
and seam conditions is defined in \cite[20]{ww:isom}.  Pseudoholomorphic
quilts near $\ul{u}$ correspond to pairs $r' \in \SS^{d,0,0},
\ul{v}: \ul{S}_r \to \ul{M}$ with $ r' = T_r(\sigma,\zeta), \quad
\ul{v} = e_{\ul{u}}(\ul{\xi}) $.  These are zeroes of the map
\begin{multline} 
\cF_{\ul{u},\zeta} : T_r \RR^{d,0} \times
\Omega^0(\ul{S}_{T_r(\sigma,\zeta)}^{d,0},\ul{u}(\zeta)^* T \ul{M}) \to
\Omega^{0,1}(\ul{S}_{T_r(\sigma,\zeta)}^{d,0},\ul{u}(\zeta)^* T \ul{M}),
\\ (\sigma, \ul{\xi}) \mapsto \Phi_{\ul{u}(\zeta)}^{-1}
\olp_{T_r(\sigma,\zeta),\ul{J},\ul{K}}
e_{\ul{u}}(\ul{\xi}) \end{multline}
where $\Phi_{\ul{u}(\zeta)}^{-1}$ denotes almost complex parallel
transport.  In a suitable Sobolev completion, the map
$\cF_{\ul{u},\zeta}$ is Fredholm and satisfies uniform quadratic and
error estimates, and has a derivative with uniformly bounded right
inverse.  As in \cite{ww:isom} the argument requires splitting off two
more strips of width $\zeta$ near the seam, and using the folding
construction in \cite[Section 3]{ww:isom}.  The Sobolev completions
are described in \cite[Section 3.1]{ww:isom}, the difference here
being the presence of additional $T_r \RR^{d,0}$ in the map.  This
change does not affect any of the estimates: the variation of the
complex structure on the once-quilted strip does not affect the
complex structure of the corresponding biquilted strips near the seam.
It follows that the additional terms as in \cite{mau:gluing} are
independent of $\zeta$.  To show compactness of the resulting moduli
spaces, the argument of \cite[Section 3.3]{ww:isom} shows that, due to
the monotonicity assumptions, disk, sphere, and figure eight bubbles
cannot occur in the limit $\zeta \to 0$.  Indeed, by energy
quantization such bubbles can only occur at finitely many points, for
any sequence of pseudoholomorphic quilts of index one or zero.  By removal
of singularities one obtains in the limit on the complement of the
bubbling set a configuration with lower energy, hence index.  Such a
configuration cannot occur by the regularity hypotheses.
\end{proof}

The following simple case of Theorem \ref{maincompose} gives
independence of the Fukaya category from all choices:

\begin{corollary} {\rm (Independence of the Fukaya category from choices
up to homotopy equivalence)} Let $M$ be a symplectic background.  The
  Fukaya category $\Fuk(M)$ and generalized Fukaya category $\GFuk(M)$
  are independent of all choices used to construct it, up to \ainfty
  homotopy equivalence.
\end{corollary}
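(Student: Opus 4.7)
The plan is to deduce the corollary directly from the geometric composition theorem (Theorem~\ref{maincompose}) applied to the empty correspondence, together with Proposition~\ref{emptyset} identifying the empty-correspondence functor with the identity. Suppose we are given two collections of perturbation data (families of quilted surfaces with strip-like ends, almost complex structures, Hamiltonian perturbations, etc.) defining generalized Fukaya categories $\GFuk(M)$ and $\GFuk'(M)$. I want to construct a pair of mutually inverse \ainfty functors
$$\Phi(\emptyset): \GFuk(M) \to \GFuk'(M), \qquad \Phi'(\emptyset): \GFuk'(M) \to \GFuk(M)$$
whose compositions are \ainfty homotopic to the identities.

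First, I would extend the construction of Section~\ref{functors} to allow the source and target categories to be built from different perturbation data, taking the given data on the strip-like ends of the families $\ol{\SS}^{d,0} \to \ol{\RR}^{d,0}$ and choosing any coherent extension over the interior compatible with gluing. Theorem~\ref{B} supplies a regular extension, and the proof of Theorem~\ref{ainftyfunctorthm} goes through verbatim to produce an \ainfty functor $\Phi(\emptyset)$ (and similarly $\Phi'(\emptyset)$ with the roles reversed). Next, applying Theorem~\ref{maincompose} to the concatenation of two empty correspondences (which is again empty, and trivially has a smooth embedded geometric composition) yields
$$\Phi'(\emptyset) \circ \Phi(\emptyset) \;\simeq\; \Phi(\emptyset \circ \emptyset) \;=\; \Phi(\emptyset) : \GFuk(M) \to \GFuk(M),$$
and symmetrically for the other composition.

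It remains to identify the endofunctor $\Phi(\emptyset): \GFuk(M) \to \GFuk(M)$ on the right-hand side with the identity, up to homotopy. For a carefully chosen family of complex structures compatible with the forgetful morphism $\ol{\RR}^{d,0} \to \ol{\RR}^d$ (as in the proof of Proposition~\ref{emptyset}) the relevant moduli spaces consist of constant maps in dimension zero, and $\Phi(\emptyset)$ is literally the identity functor. For an arbitrary choice, Theorem~\ref{indepthm} gives that $\Phi(\emptyset)$ for two choices of perturbation data are quasi-isomorphic in $\Fun(\GFuk(M),\GFuk(M))$; combining this with the explicit identification above shows that $\Phi(\emptyset)$ is always quasi-isomorphic to the identity. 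Hence $\Phi(\emptyset)$ and $\Phi'(\emptyset)$ provide the desired homotopy equivalence between the two versions of the generalized Fukaya category. Restricting to objects of length one recovers the statement for $\Fuk(M)$.

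The main technical point — rather than a serious obstacle — is justifying that the construction of $\Phi(\emptyset)$ works when the perturbation data on the two ends of each strip come from genuinely different systems, so that the strip-like ends carry different Floer data on the source and target sides. This is handled exactly as in the construction of natural transformations (Remark~\ref{completemainfunc}): the inductive choice of perturbation data over $\ol{\RR}^{d,0}$ needs only to restrict correctly on the ends, and Theorem~\ref{A}, Theorem~\ref{B}, and Theorem~\ref{C} apply without change. Once this is set up, the argument is a formal consequence of the two results already proved.
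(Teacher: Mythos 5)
Your proposal is correct and follows essentially the same route as the paper: construct $\Phi(\emptyset)$ between the two differently-perturbed categories, use Proposition~\ref{emptyset} to realize the empty correspondence as the identity for a single choice of data, and apply Theorem~\ref{maincompose} to the composition of two empty correspondences. The only caution is that for the stated conclusion (\ainfty\ \emph{homotopy} equivalence) you should rely on the carefully-chosen-data identification of $\Phi(\emptyset)$ with the literal identity, as the paper does, rather than the fallback via Theorem~\ref{indepthm}, which only yields a quasi-isomorphism.
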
 

\begin{proof} Let $\Fuk(M)^0, \Fuk(M)^1$ denote the Fukaya categories
defined using two different sets of perturbation data.  The empty
generalized correspondence gives functors $\Phi(\emptyset)^{01}:
\Fuk(M)^0 \to \Fuk(M)^1$ and vice versa, for different choices of
data.  For the same choice of data the empty correspondence gives the
identity functor by Proposition \ref{emptyset}.  Since the composition
of two empty correspondences is empty, we obtain from Theorem
\ref{maincompose} an \ainfty homotopy between $\Phi(\emptyset)^{01}
\circ \Phi(\emptyset)^{10}$ and the identity, and similarly for
$\Phi(\emptyset)^{10} \circ \Phi(\emptyset)^{01}$.
\end{proof}

\section{Conventions on \ainfty categories}
\label{app:ainfty}

The machinery of \ainfty (homotopy associative) algebras was
introduced by Stasheff \cite{st:ho} as a way of recognizing chains on
loop spaces.  Later Fukaya \cite{fuk:garc} introduced \ainfty
categories as a way of understanding product structures in Lagrangian
Floer cohomology.  In this appendix we describe our conventions for
\ainfty categories, which attempt to follow those of Seidel
\cite{se:bo}.  Other references for this material are Fukaya
\cite{fu:fl1}, Lef\'evre-Hasegawa \cite{le:ai}, and Lyubashenko
\cite{ly:ca}.  Kontsevich-Soibelman \cite{ks:ainfty} introduce a more
conceptual framework in which \ainfty algebras are non-commutative
formal pointed differential-graded manifolds; in particular this
approach gives a conceptual framework for the signs in the definitions
below.  Let $N \in 2\N \cup \{ \infty \}$ be an even positive integer,
  or infinity.  In the case $N = \infty$, we adopt the convention
  $\Z_N = \Z$.

\begin{definition} \label{etc} {\rm (\ainfty categories, functors
    etc.)}  
\begin{enumerate} 
\item {\rm (\ainfty categories)} A {\em $\Z_N$-graded \ainfty category
    $\CC$} consists of the following data:
\begin{enumerate}
\item A {\em class of objects} $\Obj(\CC)$; 
\item for each pair $C_1,C_2 \in \Obj(\CC)$, a $\Z_N$-graded abelian
  { \em group of morphisms} 
$\Hom_\CC(C_1,C_2) = \bigoplus_{i \in
    \Z_N} \Hom_\CC^i(C_1,C_2) ;$
\item
 for each $d \ge 0$ and $(d+1)$-tuple $C_0,\ldots,C_d \in \Obj(\CC)$,
 a multilinear {\em composition map}
$$ \mu^d_\CC:= \mu^d_{\CC,C_0,\ldots, C_d}:  \ \Hom_\CC(C_0,C_1) \otimes \ldots \otimes
\Hom_\CC(C_{d-1},C_d) \to \Hom_\CC(C_0,C_d)[2-d] $$
\end{enumerate}
satisfying the {\em \ainfty-associativity equations}
\begin{multline} \label{ainftyassoc} 
0 = \sum_{n+m \leq d} (-1)^{ n + \sum_{i=1}^n |a_i|}
\mu_\CC^{d-m+1}(a_1,\ldots,a_n, \\ 
\mu_\CC^m(a_{n+1},\ldots,a_{n+m}),
a_{n+m+1},\ldots,a_d)
\end{multline}
for any tuple of homogeneous elements $a_1,\ldots,a_d$.  The signs are
the {\em shifted Koszul signs}, that is, the Koszul signs for the
shifted grading in which the structure maps have degree one as in
Kontsevich-Soibelman \cite{ks:ainfty}.  The element
$\mu_{\CC,C_0}^0(1) \in \Hom(C_0,C_0)[2]$ is the {\em curvature} of
the object $C_0$.  An \ainfty category is {\em flat} if
$\mu_{\CC,C_0}^0(1)$ vanishes for every object $C_0$.  We remark that
we do not assume that our \ainfty categories have units.
%
%
%
%
%
\item {\rm (\ainfty functor)}  
 \label{ainftyfunctor}
 Let $\CC_0,\CC_1$ be flat \ainfty categories.  An {\em \ainfty
   functor} $\F$ from $\CC_0$ to $\CC_1$ consists of the following
 data:
\begin{enumerate}
\item a map $\F: \Obj(\CC_0) \to \Obj(\CC_1)$; and 
\item for any $d \ge 1$ and $d +1$-tuple $C_0,\ldots,C_d \in
\Obj(\CC_0)$, a map
$$\F^d: \ \Hom(C_0,C_1) \times \ldots \Hom(C_{d-1},C_d) \to
\Hom(\F(C_0),\F(C_d))[1-d] $$
\end{enumerate}
such that the following holds:
\begin{multline} \label{faxiom}
 \sum_{i + j \leq d} (-1)^{i + \sum_{j=1}^i |a_j|} \F^{d - j +
     1}(a_1,\ldots,a_i, \mu_{\CC_0}^j(a_{i+1
   },\ldots,a_{i+j}),a_{i+j+1},\ldots,a_d) = \\ \sum_{m \ge 1} 
\sum_{i_1 + \ldots + i_m = d}
   \mu_{\CC_1}^m(\F^{i_1}(a_1,\ldots, a_{i_1}), \ldots,
   \F^{i_m}(a_{i_1 + \ldots + i_{m-1} + 1},\ldots,a_d)).
\end{multline}
\item {\rm (Composition of \ainfty functors)} The {\em composition} of
  \ainfty functors $\F_1,\F_2$ is defined by composition of maps on
  the level of objects, and
\begin{multline} \label{composefunc}
 (\F_1 \circ \F_2)^d(a_1,\ldots,a_d)
 = \sum_{m \ge 1}  \sum_{i_1 + \ldots + i_m =d}
  \F_1^{m}( \F_2^{i_1}(a_1,\ldots,a_{i_1}),  \\ \ldots,
  \F_2^{i_m}(a_{i_1 + \ldots + i_{m-1} +1 },\ldots,a_d)) \end{multline}
on the level of morphisms.  
\item \label{cohfunc} {\rm (Cohomology functor)} Any \ainfty functor
  $\F: \cC_1 \to \cC_2$ between flat \ainfty categories $\cC_1,\cC_2$
  defines an ordinary functor $H(\F): H(\cC_1) \to H(\cC_2) $ acting
  in the same way as $\F$ on objects and on morphisms of fixed degree
  by $H(\F)([a]) = [\F(a)]$.
\item {\rm (\ainfty natural transformations)}  
  Let $\F_1,\F_2: \CC_0 \to \CC_1$ be \ainfty functors between flat
  \ainfty categories.  A {\em pre-natural transformation} $\TT$ from
  $\F_1$ to $\F_2$ consists of the following data:
For each $d \ge 0$ and $d+1$-tuple of objects $C_0,\ldots,C_d \in
\Obj(\CC_0)$ a multilinear map
\begin{equation} \label{TTd} \TT^d(C_0,\ldots,C_d): \ \Hom(C_0,C_1) \times \ldots \times
\Hom(C_{d-1},C_d) \to \Hom(\F_1(C_0),\F_2(C_d))[|T| - d] .\end{equation}
Let $\Hom(\F_1,\F_2)$ denote the space of pre-natural transformations
from $\F_1$ to $\F_2$.  Define a differential on $\Hom(\F_1,\F_2)$ by
\begin{multline} \label{mu1}
 (\mu^1_{\Hom(\F_1,\F_2)} \TT)^d (a_1,\ldots,a_d) = \sum_{k,m \ge 1}
  \sum_{i_1 + \ldots + i_m = d} (-1)^\dagger \mu^m_{\cC_2}(
  \F_1^{i_1}(a_1,\ldots,a_{i_1}), \F_1^{i_2}(a_{i_1 + 1},\ldots),
  \ldots, \\ \TT^{i_k}(a_{i_1 + \ldots + i_{k-1} + 1},\ldots, a_{i_1 +
    \ldots + i_k}), \F_2^{i_{k+1}}( a_{i_1 + \ldots + i_k + 1},\ldots,
  ) ,\ldots, \F_2^{i_m}(a_{i_1 + \ldots + i_{m-1} + 1},\ldots, a_d)) \\ - \sum_{i,e}
  (-1)^{i + \sum_{j=1}^i |a_j| + |\TT| - 1} \TT^{d - e +
    1}(a_1,\ldots,a_i, \mu^e_{\cC_1}(a_{i+1},\ldots,
  a_{i+e}),a_{i+e+1},\ldots,a_d) \end{multline}
where 
$ \dagger = (|\TT|-1)( |a_1| + \ldots + |a_{i_1 + \ldots +
  i_{k-1}}| - i_1 - \ldots - i_{k-1}) .$
A {\em natural transformation} is a closed pre-natural transformation.
\item {\rm (Composition of natural transformations)} Given two
  pre-natural transformations
$\TT_1:\F_0 \to \F_1, \quad  \TT_2:\F_1 \to
  \F_2 $
  as above define $\mu^2(\TT_1,\TT_2):\ \F_0\to\F_2$ by
\begin{multline}  \label{T2T1}
 (\mu^2(\TT_1,\TT_2))^d(a_1,\ldots,a_d) =
\sum_{m,k<l} \sum_{i_1 + \ldots + i_m = d} 
(-1)^\ddag \mu^m_{\cC_2}( \F_0^{i_1}(a_1,\ldots,a_{i_1}),
\ldots, \F_0^{i_{k-1}}(\ldots),
 \\
\TT_1^{i_k}(a_{i_1 + \ldots + i_{k-1} + 1},\ldots, a_{i_1 + \ldots + i_k}),
\F_1^{i_{k+1}}(\ldots),\ldots, \F_1^{i_{l-1}}(\ldots), 
\\
\TT_2^{i_l} (a_{i_1 + \ldots + i_{l-1} + 1},\ldots, a_{i_1 + \ldots + i_{l}}),
\F_2^{i_{l+1}}(\ldots),\ldots, \F_2^{i_m}(a_{i_1 + \ldots + i_{m-1} + 1},\ldots,a_d)) 
\end{multline}
where 
$$ \ddag = \sum_{i = 1}^{i_1 + \ldots + i_{k-1}} ( |\TT_1| - 1) ( |a_i| - 1) + 
\sum_{i = 1}^{i_1 + \ldots + i_{l-1}} ( |\TT_2| - 1) ( |a_i| - 1)  .$$
With $\CC_0,\CC_1$ flat \ainfty categories let $\Fun(\CC_0,\CC_1)$
denote the space of \ainfty functors from $\CC_0$ to $\CC_1$, with
morphisms given by pre-natural transformations.  The higher
compositions give $\Fun(\CC_0,\CC_1)$ the structure of an \ainfty
category \cite[10.17]{fu:fl1}, \cite[8.1]{le:ai}, \cite[Section
1d]{se:bo}.
\item {\rm (Cohomology natural transformations)} Any \ainfty natural
  transformation $\TT: \F_1 \to \F_2$ induces a natural transformation
  of the corresponding homological functors $H(\F_1) \to H(\F_2)$.
\item {\rm (\ainfty homotopies)} Suppose that $\F_1,\F_2: \CC_0 \to
  \CC_1$ are functors that act the same way on objects.  A {\em
    homotopy} from $\F_1$ to $\F_2$ is a pre-natural transformation
  $\TT \in \Hom(\F_1,\F_2)$ of degree $-1$ such that
\begin{equation} \label{homotopy} \F_1 - \F_2
= \mu^1(\TT)  \end{equation}  
where $\mu^1(\TT)$ is defined by \eqref{mu1}.  Note that the
assumption on degree substantially simplifies the signs.  Homotopy of
\ainfty functors is an equivalence relation \cite[p.15]{se:bo}.
\item {\rm (Composition of homotopies)} Given homotopies $\TT_1$ from
  $\F_0$ to $\F_1$, and $\TT_2$ from $\F_1$ to $\F_2$, the sum
\begin{equation} \label{composehom}
\TT_2 \circ \TT_1: = \TT_1 + \TT_2 + \mu^2(\TT_1,\TT_2) \in
\Hom(\F_0,\F_2) \end{equation}
is a homotopy from $\F_0$ to $\F_2$.
\item {\rm (Quasi-isomorphisms)} Two \ainfty functors $\F_1,F_2$ are
  {\em quasiisomorphic} if there exist natural transformations
  $\TT_{12}$ from $\F_1$ to $\F_2$ and $\TT_{21}$ from $\F_2$ to
  $\F_1$ such that $\TT_{12} \circ \TT_{21}$ and $\TT_{21} \circ
  \TT_{12}$ are cohomologous to the identity natural transformation on
  $\F_1$ resp. $\F_2$.
\end{enumerate} 
\end{definition}

\def\cprime{$'$} \def\cprime{$'$} \def\cprime{$'$} \def\cprime{$'$}
  \def\cprime{$'$} \def\cprime{$'$}
  \def\polhk#1{\setbox0=\hbox{#1}{\ooalign{\hidewidth
  \lower1.5ex\hbox{`}\hidewidth\crcr\unhbox0}}} \def\cprime{$'$}
  \def\cprime{$'$}

\end{document}